\newcommand\Span{\operatorname{Span}}
\newcommand\Err{\operatorname{Err}}
\theoremstyle{plain}
\newtheorem{thm}{\protect\theoremname}
  \theoremstyle{definition}
  \newtheorem{defn}[thm]{\protect\definitionname}
  \theoremstyle{remark}
  \newtheorem{rem}[thm]{\protect\remarkname}
  \theoremstyle{plain}
  \newtheorem{prop}[thm]{\protect\propositionname}
  \theoremstyle{plain}
  \newtheorem{cor}[thm]{\protect\corollaryname}
  \theoremstyle{plain}
  \newtheorem{lem}[thm]{\protect\lemmaname}
  \theoremstyle{remark}
  \newtheorem{claim}[thm]{\protect\claimname}
\numberwithin{equation}{subsection}
\numberwithin{thm}{section}
\providecommand{\claimname}{Claim}
  \providecommand{\corollaryname}{Corollary}
  \providecommand{\lemmaname}{Lemma}
  \providecommand{\propositionname}{Proposition}
  \providecommand{\remarkname}{Remark}
  \providecommand{\theoremname}{Theorem}
\providecommand{\theoremname}{Theorem}
\providecommand{\claimname}{Claim}
  \providecommand{\corollaryname}{Corollary}
  \providecommand{\lemmaname}{Lemma}
  \providecommand{\propositionname}{Proposition}
  \providecommand{\remarkname}{Remark}
  \providecommand{\theoremname}{Theorem}
\providecommand{\theoremname}{Theorem}
\providecommand{\claimname}{Claim}
  \providecommand{\corollaryname}{Corollary}
  \providecommand{\lemmaname}{Lemma}
  \providecommand{\propositionname}{Proposition}
  \providecommand{\remarkname}{Remark}
  \providecommand{\theoremname}{Theorem}
\providecommand{\theoremname}{Theorem}
\providecommand{\claimname}{Claim}
  \providecommand{\corollaryname}{Corollary}
  \providecommand{\lemmaname}{Lemma}
  \providecommand{\propositionname}{Proposition}
  \providecommand{\remarkname}{Remark}
  \providecommand{\theoremname}{Theorem}
\providecommand{\theoremname}{Theorem}
\providecommand{\claimname}{Claim}
  \providecommand{\corollaryname}{Corollary}
  \providecommand{\lemmaname}{Lemma}
  \providecommand{\propositionname}{Proposition}
  \providecommand{\remarkname}{Remark}
  \providecommand{\theoremname}{Theorem}
\providecommand{\theoremname}{Theorem}
\providecommand{\claimname}{Claim}
  \providecommand{\corollaryname}{Corollary}
  \providecommand{\lemmaname}{Lemma}
  \providecommand{\propositionname}{Proposition}
  \providecommand{\remarkname}{Remark}
  \providecommand{\theoremname}{Theorem}
\providecommand{\theoremname}{Theorem}
\providecommand{\claimname}{Claim}
  \providecommand{\corollaryname}{Corollary}
  \providecommand{\lemmaname}{Lemma}
  \providecommand{\propositionname}{Proposition}
  \providecommand{\remarkname}{Remark}
  \providecommand{\theoremname}{Theorem}
\providecommand{\theoremname}{Theorem}
  \providecommand{\claimname}{Claim}
  \providecommand{\corollaryname}{Corollary}
  \providecommand{\definitionname}{Definition}
  \providecommand{\lemmaname}{Lemma}
  \providecommand{\propositionname}{Proposition}
  \providecommand{\remarkname}{Remark}
\providecommand{\theoremname}{Theorem}
\subjclass[2020]{Primary 35Q51, 35Q53, 35B40; Secondary 37K10, 37K40}
\keywords{mKdV equation, breather, soliton, multi-breather, multi-soliton, N-breather}
\begin{document}

\parindent=5mm

\begin{abstract}
We consider the modified Korteweg-de Vries equation \eqref{1mKdV} and prove that given any sum $P$ of solitons and breathers of \eqref{1mKdV}
(with distinct velocities), there exists a solution  $p$ of \eqref{1mKdV} such
that $p(t) - P(t) \to 0$ when $t\rightarrow+\infty$, which we call
multi-breather. In order to do this, we work at the
$H^{2}$ level (even if usually solitons are considered at the $H^{1}$
level). We will show that this convergence takes place in any $H^{s}$
space and that this convergence is exponentially fast in time.

We also show that the constructed multi-breather
is unique in two cases:  in the class of solutions which converge to the profile $P$ faster than the inverse of a polynomial of a large enough degree in time (we will call this a super polynomial convergence), or (without hypothesis on the convergence rate), when all the velocities are positive.
\end{abstract}

\title{On the uniqueness of multi-breathers of the modified Korteweg-de Vries equation}

\author{Alexander Semenov}

\maketitle

\section{Introduction}

\subsection{Setting of the problem}

We consider the modified Korteweg-de Vries equation on $\mathbb{R}$:
\begin{align} \label{1mKdV}
\tag{mKdV}
\begin{cases}
\begin{array}{c}
u_{t}+(u_{xx}+u^{3})_{x}=0\\
u(0)=u_{0}
\end{array} & \begin{array}{c}
(t,x)\in\mathbb{R}^{2}\\
u(t,x)\in\mathbb{R}
\end{array}\end{cases}
\end{align}

	The \eqref{1mKdV} equation appears as a model
of some physical problems as  plasma physics \cite{key-40,key-42}, electrodynamics \cite{key-43}, fluid mechanics \cite{key-48}, ferromagnetic vortices \cite{key-47}, and more.

In \cite{key-19},  Kenig, Ponce and Vega established local well-posedness in $H^{s}$, for $s\geq \frac{1}{4}$, of
the Cauchy problem for \eqref{1mKdV}, by fixed point argument in $L^p_xL^q_t$ type spaces. Moreover,
if $s>\frac{1}{4}$, the Cauchy problem is globally well posed \cite{key-39}. Recently, Harrop-Griffiths, Killip and Visan \cite{HGKV} proved local well-posedness in $H^s$ for  $s > -1/2$. However, in this paper, we will only use the global well-posedness in $H^2$.

 \eqref{1mKdV} is an integrable equation (like the original Korteweg-de Vries equation)
and thus it has an infinity of conservation laws, see \cite{key-27,key-28}.
We will use three of them (the first two of them are called \emph{mass}
and \emph{energy}; the third is sometimes called \emph{second energy}):
\begin{gather}
M[u](t):=\frac{1}{2}\int_{\mathbb{R}}u^{2}(t,x)\,dx,\label{1eq:-1} \\
E[u](t):=\frac{1}{2}\int_{\mathbb{R}}u_{x}^{2}(t,x)\,dx-\frac{1}{4}\int_{\mathbb{R}}u^{4}(t,x)\,dx,\label{1eq:-2} \quad \text{and}  \\
 F[u](t):=\frac{1}{2}\int_{\mathbb{R}}u_{xx}^{2}(t,x)\,dx-\frac{5}{2}\int_{\mathbb{R}}u^{2}(t,x)u_{x}^{2}(t,x)\,dx+\frac{1}{4}\int_{\mathbb{R}}u^{6}(t,x)\,dx.\label{1eq:-3}
\end{gather}

Observe that if $u$ is a solution of \eqref{1mKdV} then $-u$ and, for any $x_0 \in \mathbb R$, $(t,x) \mapsto u(t,x -x_0)$ are solutions
of \eqref{1mKdV} too.

\smallskip

\eqref{1mKdV} is a dispersive nonlinear equation that is a special case of a
more general class of equations: the general Korteweg-de Vries equation
(gKdV), where the nonlinearity $u^3$ is replaced by $f(u)$ for some real valued function $f$. The particularity of \eqref{1mKdV} in comparison to other (gKdV) equation
is that it admits special non linear solutions, namely breather solutions.

The most simple nonlinear solutions of \eqref{1mKdV} are solitons, i.e.
a bump of a constant shape that translates with a constant velocity
without deformation, that is, solutions of the
form $u(t,x)=Q_{c}(x-ct)$, where $c$ is the velocity and $Q_{c}$
is the profile function that depends only on one variable.  $Q_{c} \in H^1(\mathbb R)$ should solve the elliptic equation:
\begin{align}
Q_{c}''=cQ_{c}-Q_{c}^{3}.\label{1eq:-4}
\end{align}
We can show that necessarily $c>0$  and that, if $c>0$, (\ref{1eq:-4})
has a unique solution in $H^1(\mathbb R)$, up to translations and reflexion with respect to the $x$-axis. Actually, one has the explicit formula:
\begin{align}
Q_{c}(x):=\Bigg(\frac{2c}{\cosh^{2}\big(c^{1/2}x\big)}\Bigg)^{\frac{1}{2}}. \label{1eq:gr}
\end{align}
Observe that we chose $Q_c$ so that it is even and positive. 

A \emph{soliton} is a solution
of \eqref{1mKdV}, parameterized by a velocity parameter $c>0$, a sign
parameter $\kappa\in\{-1,1\}$ and a translation parameter $x_{0}\in\mathbb{R}$
(it corresponds to the initial position of the soliton) that has the
following expression: 
\begin{align}
R_{c,\kappa}(t,x;x_{0}):=\kappa Q_{c}(x-x_{0}-ct).\label{1eq:soliton}
\end{align}

When $\kappa=-1$, this object is sometimes called \emph{antisoliton}. Notice that solitons are smooth and decaying. The generalized Korteweg-de Vries equation (gKdV) also admit soliton type solutions, and the focusing nonlinear Schrödinger equation (NLS) as well.
Solitons have been extensively studied%in \cite{key-6,key-9,key-10}
, in particular their stability.
 Cazenave, Lions and Weinstein
in \cite{key-6,key-9,key-10,key-22} were interested in orbital stability
of (gKdV) and (NLS) solitons in $H^1$. A soliton of \eqref{1mKdV} is indeed
orbitally stable, i.e. if a solution is initially close to a soliton in $H^1(\mathbb R)$,
then it stays close to the soliton, up to a space translation defined for any time,  in $H^1(\mathbb R)$. General
results about orbital stability of nonlinear dispersive solitons are
presented by Grillakis, Shatah and Strauss in \cite{key-13}.
The result about orbital stability of a soliton can be improved in
a result of asymptotic stability, as it was done in the works by Martel and Merle \cite{key-20,key-21,key-23}, see also \cite{key-29}.

\smallskip

A \emph{breather} is a solution
of \eqref{1mKdV}, parameterized by $\alpha,\beta>0$, $x_{1},x_{2}\in\mathbb{R}$
that has the following expression: 
\begin{align}
B_{\alpha,\beta}(t,x;x_{1},x_{2}):=2\sqrt{2}\partial_{x} \bigg[ \arctan \bigg( \frac{\beta}{\alpha}\frac{\sin(\alpha y_{1})}{\cosh(\beta y_{2})} \bigg) \bigg], \label{1eq:}
\end{align}
where 
\begin{gather*} 
y_{1}:=x+\delta t+x_{1} \quad \text{and} \quad y_{2}:=x+\gamma t+x_{2}, \\
\text{with} \quad \delta:=\alpha^{2}-3\beta^{2} \quad \text{and} \quad \gamma:=3\alpha^{2}-\beta^{2}.
\end{gather*}

It corresponds to a localized
periodic in time function (with frequency $\alpha$, and exponential localization with decay rate $\beta$) that propagates at a constant velocity $-\gamma$ in
time. Like solitons, breathers are smooth and decaying in space. Unlike solitons,
breather's velocities can be positive, zero or negative. $\alpha,\beta$
are the shape parameters and $x_{1},x_{2}$ are the translation parameters
of a breather. Note that if we replace the parameter $x_{1}$ by $x_{1}+\frac{\pi}{\alpha}$,
we transform $B_{\alpha,\beta}(\cdot,\cdot;x_{1},x_{2})$ in $-B_{\alpha,\beta}(\cdot,\cdot;x_{1},x_{2})$
(therefore, we do not need to talk about ``antibreathers'').

Breathers were first introduced by Wadati in \cite{key-31}, and they
were already used by Kenig, Ponce and Vega in \cite{key-45} to prove
that the flowmap associated to \eqref{1mKdV} equation is \emph{not} uniformly continuous in $H^{s}$
for $s<\frac{1}{4}$ : the point is that two breathers with close velocities
can be very close at $t=0$ and can separate as fast as we want in $H^{s}$ with $s<\frac{1}{4}$, if
$\alpha$ is taken large enough.

\eqref{1mKdV} breathers and their properties, as well as breathers for
other equations, are well studied by Alejo and Muñoz and co-authors in \cite{key-1,key-15,key-14,key-17,key-18}.

Let us singularize a result of $H^{2}$ orbital stability for breathers established in \cite{key-1}, and improved to $H^{1}$ orbital stability
in \cite{key-18}. In this last paper, a partial result of asymptotic stability is also given, for breathers traveling to the right only, with positive velocity $-\gamma >0$; asymptotic stability for breathers in full generality is still an open problem. 

When $\alpha\rightarrow0$, $B_{\alpha,\beta}$ tends to a solution
of \eqref{1mKdV} called \emph{double-pole solution} \cite{key-37}, the
methods employed in this article as well as the proof of orbital stability
made by Alejo and Muñoz seem not to apply for this limit, which is expected to be unstable according to the numerical computations in \cite{key-46}.

\smallskip

An important result regarding the long time dynamics of \eqref{1mKdV} is the soliton-breather resolution \cite{key-25}:  it asserts that any generic solution can be approached by a sum of solitons and breathers when $t\rightarrow+\infty$ (up to a dispersive and a self-similar term). Together with their stability properties, the soliton-breather resolution shows why solitons and breathers are essential objects to study.
This resolution was established for initial conditions in a weighted Sobolev space in \cite{key-25} (see also Schuur \cite{key-34})
by inverse scattering method; see also \cite{key-34} for the soliton resolution for (KdV). Observe that \eqref{1mKdV} breathers do not decouple into simple solitons for large time (it is a \emph{fully bounded state} as it is called in \cite{key-1}); therefore, it
must appear in the resolution. The soliton-breather resolution is one
of the motivations of the study of multi-breathers, which we define below.

There are works in the literature about a more complicated object obtained
from several solitons: a \emph{multi-soliton}. A multi-soliton is
a solution $r(t)$ of \eqref{1mKdV} such that there exists $0<c_{1}<c_{2}<...<c_{N}$, $\kappa_{1},...,\kappa_{N}\in\{-1,1\}$
and $x_{1},...,x_{N}\in\mathbb{R}$, such that 
\begin{align}
\lim_{t\rightarrow+\infty}\Bigg\lVert r(t)-\sum_{j=1}^{N}R_{c_{j},\kappa_{j}}(t,\cdot;x_{j})\Bigg\rVert _{H^{1}(\mathbb{R})}=0.\label{1eq:ms}
\end{align}

This definition is not specific to \eqref{1mKdV} and makes sense for many other nonlinear dispersive PDEs as soon as they admit solitons. This object is introduced by Schuur \cite{key-34} and Lamb \cite{key-35}, see also Miura \cite{key-36}, where explicit formulas are given: these were obtained by inverse scattering method thanks to the integrability of the equation. Multi-solitons were first constructed in a non integrable context by Merle \cite{key-50} for the mass critical (NLS). Martel \cite{key-2} constructed multi-solitons for mass-subcritical  and critical (gKdV) equations and proved that they are unique in $H^1(\mathbb R)$, smooth and converge exponentially fast to their profile in any Sobolev space $H^{s}$. Similar studies were done for other nonlinear dispersive PDEs. Martel and Merle \cite{key-3} have proved the existence of multi-solitons for (NLS) in $H^1$, Côte, Martel and Merle extended this construction to mass supercritical (gKdV) and (NLS) in \cite{key-12}.  Friederich and Côte in \cite{key-61} proved smoothness, and uniqueness in a class of algebraic convergence. Côte and Muñoz constructed in \cite{key-16} multi-solitons for the nonlinear Klein-Gordon equation. Ming, Rousset and Tzvetkov have constructed multi-solitons for the water-waves systems in \cite{key-66}. Valet has proved in \cite{key-60} the existence and uniqueness of multi-solitons  in $H^1$ for the Zakharov-Kuznetsov equation, which generalizes (gKdV) to higher dimension.

\subsection{Main results}

We prove in this article that given any sum of solitons and breathers
with distinct velocities, there exists a solution of \eqref{1mKdV} whose difference with this sum tends to zero when time goes to infinity. This solution will
be called a multi-breather. Let us make the definition more precise.

Let $J\in\mathbb{N}$ and $K,L\in\mathbb{N}$ such that $J=K+L$.
We will consider a set of $L$ solitons and $K$ breathers:
\begin{itemize}
\item the breather parameters are $\alpha_{k}>0$, $\beta_{k}>0$, $x_{1,k}^{0}\in\mathbb{R}$
and $x_{2,k}^{0}\in\mathbb{R}$ for $1\leq  k\leq  K$. 
\item the solitons parameters are $c_{l}>0$, $\kappa_{l}\in\{-1,1\}$ and $x_{0,l}^{0}\in\mathbb{R}$
for $1\leq  l\leq  L$. 
\end{itemize}
We define for $1\leq  k\leq  K$, the $k$th breather:
\begin{align}
B_{k}(t,x):=B_{\alpha_{k},\beta_{k}}(t,x;x_{1,k}^{0},x_{2,k}^{0}); \label{1eq:br}
\end{align}
and for $1\leq  l\leq  L$, the $l$th soliton:
\begin{align}
R_{l}(t,x):=R_{c_{l},\kappa_{l}}(t,x;x_{0,l}^{0}).\label{1eq:sol}
\end{align}

We now define the \emph{velocity} of our objects. Recall that for $1\leq  k\leq  K$, the velocity of $B_{k}$ is 
\begin{align}
v_{k}^{b}:=-\gamma_{k}=\beta_{k}^{2}-3\alpha_{k}^{2}, \label{1eq:vb}
\end{align}
and for $1\leq  l\leq  L$, the velocity of $R_{l}$ is 
\begin{align}
v_{l}^{s}:=c_{l}.\label{1eq:vs}
\end{align}

The most important assumption we make is that all these velocities are distinct:
\begin{align}
\forall k\neq k'\quad v_{k}^{b}\neq v_{k'}^{b},\qquad\forall l\neq l'\quad v_{l}^{s}\neq v_{l'}^{s},\qquad\forall k,l\quad v_{k}^{b}\neq v_{l}^{s}.\label{1eq:diff}
\end{align}
These implies for any two of these objects to be far from each other when
time is large, and this assumption is essential in our analysis.

It will be useful to order our breathers and solitons by increasing velocities. As these are distinct, we can define an increasing function:
\begin{align}
\underline{v}:\{1,...,J\}\longrightarrow\{v_{k}^{b},1\leq  k\leq  K\}\cup\{v_{l}^{s},1\leq  l\leq  L\}.\label{1eq:v}
\end{align}
The set $\{v_{1},...,v_{J}\}$ is thus the (ordered) set of all possible velocities
of our objects. We define $P_{j}$, for $1\leq  j\leq  J$, as the object
(either a soliton $R_{l}$ or a breather $B_{k}$) that corresponds
to the velocity $v_{j}$. Hence, $P_{1},...,P_{J}$ are the considered objects ordered by
increasing velocity. 

We will need both notations: the indexation by $k$ and $l$, and
the indexation by $j$, and we will keep these notations to avoid ambiguity.

We will denote by $x_{j}$ the center of mass of $P_{j}$, that is
\begin{itemize}
\item if $P_{j}=B_k$ is a breather, we set $x_{j}(t):=-x_{2,k}^{0}+v_{j}t$;
\item if $P_{j}=R_l$ is a soliton, we set $x_{j}(t):=x_{0,l}^{0}+v_{j}t$.
\end{itemize}

We denote: 
\begin{align}
R=\sum_{l=1}^{L}R_{l},\quad B=\sum_{k=1}^{K}B_{k},\quad P=R+B=\sum_{j=1}^{J}P_{j}.\label{1eq:sum}
\end{align}

We can now define a multi-breather: as solitons are objects which can be studied naturally in $H^1(\mathbb R)$, it turns out that breathers are best studied in $H^2(\mathbb R)$; therefore, it is in this latter space that we develop our analysis.

\begin{defn}
\label{1def:MB}A \emph{multi-breather} associated to the sum $P$ given in (\ref{1eq:sum})
of solitons and breathers is a solution $p\in \mathcal C([T^{*},+\infty),H^{2}(\mathbb{R}))$,
for a constant $T^{*}>0$, of \eqref{1mKdV} such that
\begin{align}
\lim_{t\rightarrow+\infty}\lVert p(t)-P(t)\rVert _{H^{2}}=0.\label{1eq:mb}
\end{align}
\end{defn}

We will prove two results in this article. The first one is the existence
and the regularity of a multi-breather, the second one is the uniqueness
of a multi-breather. The uniqueness is established in two settings: in the case when all velocities are positive, and without any assumption on the sign of the considered velocities. However, in the last case, the uniqueness is obtained in a narrower class of functions. 

\begin{thm}
\label{1thm:MAIN}Given solitons and breathers (\ref{1eq:br}), (\ref{1eq:sol}) whose velocities
(\ref{1eq:vb}) and (\ref{1eq:vs}) satisfy (\ref{1eq:diff}),
there exists a multi-breather $p$ associated to $P$ given in (\ref{1eq:sum}).
Moreover,
\begin{align*}
p\in\mathcal{C}^{\infty}(\mathbb{R}\times\mathbb{R})\cap\mathcal{C}^{\infty}(\mathbb{R},H^{s}(\mathbb{R}))
\end{align*} 
for any $s\geq 0$ and there exists $\theta>0$ such that for any $s\geq 0$,
there exists $A_{s}\geq 1$ and $T^{*}>0$ such that
\begin{align}
\forall t\geq  T^{*},\quad\lVert p(t)-P(t)\rVert_{H^{s}}\leq  A_{s}e^{-\theta t}.\label{1eq:-12-1-1}
\end{align}
\end{thm}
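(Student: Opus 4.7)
The plan is to follow the now-classical strategy of Martel for multi-solitons, adapted to the breather setting. Since breathers are orbitally stable only at the $H^2$ level (as established by Alejo-Muñoz), the analysis must be carried out using the second energy $F$ rather than the simpler $H^1$ framework used for (gKdV) solitons. I would fix an increasing sequence $T_n\to+\infty$ and define $p_n$ to be the global $H^2$ solution of \eqref{1mKdV} with final data $p_n(T_n):=P(T_n)$, and then prove a uniform backward estimate: there exist $T^{*}>0$, $\theta>0$ and $A>0$, independent of $n$, such that
\begin{align*}
\forall t\in [T^{*},T_n],\quad \lVert p_n(t)-P(t)\rVert_{H^{2}}\le A\,e^{-\theta t}.
\end{align*}
A weak-compactness argument on bounded time windows then extracts a subsequential limit $p\in \mathcal{C}([T^{*},+\infty),H^{2})$ satisfying the multi-breather definition above.

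The main ingredient for the uniform estimate is a localized Lyapunov functional. For $2\le j\le J$ I would pick cutoffs $\psi_{j}(t,x)=\Psi(x-m_{j}t)$ with $\Psi$ a smoothed Heaviside and $m_{j}$ strictly between the consecutive velocities $v_{j-1}$ and $v_{j}$, set $\phi_{j}=\psi_{j}-\psi_{j+1}$ (with $\psi_{1}\equiv 1$, $\psi_{J+1}\equiv 0$), and build
\begin{align*}
\mathcal{H}(t,u)=\sum_{j=1}^{J}\Big(F_{j}(u)-a_{j}^{(1)}E_{j}(u)-a_{j}^{(2)}M_{j}(u)\Big),
\end{align*}
where $F_{j},E_{j},M_{j}$ are the $\phi_{j}$-localized versions of $F,E,M$ and the coefficients $a_{j}^{(1)},a_{j}^{(2)}$ are tuned to each object: for a soliton $R_{c_{l},\kappa_{l}}$, one uses $a^{(1)}=c_{l}$, $a^{(2)}=c_{l}^{2}$; for a breather $B_{\alpha_{k},\beta_{k}}$, the combination identified in \cite{key-1,key-18} so that the second variation of the $j$-th summand at $P_{j}$ is the correct coercive quadratic form. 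Differentiating $\mathcal{H}(t,p_{n}(t))$ along the flow leaves only boundary terms supported where $\partial_{t}\psi_{j}+(\text{conservation flux})\neq 0$, and these are exponentially small in $t$ because of the spatial decay of solitons and breathers combined with the velocity gap \eqref{1eq:diff}.

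The coercive quadratic form has a nontrivial finite-dimensional kernel generated by translation and, for each breather, by the additional phase direction $\partial_{x_{1}}B_{\alpha_{k},\beta_{k}}$. To mod out this kernel I would introduce time-dependent modulation parameters $\tilde x_{0,l}(t)$, $\tilde x_{1,k}(t)$, $\tilde x_{2,k}(t)$ via the implicit function theorem, and let $\widetilde P(t)$ denote the correspondingly shifted profile, so that the error $\varepsilon(t)=p_{n}(t)-\widetilde P(t)$ is orthogonal to the kernel directions on each localization window. The orbital coercivity inequality of \cite{key-1,key-18} then delivers
\begin{align*}
\lVert\varepsilon(t)\rVert_{H^{2}}^{2}\lesssim \mathcal{H}(t,p_{n}(t))-\mathcal{H}(t,\widetilde P(t))+\text{(modulation remainders)}.
\end{align*}
Running a backward bootstrap --- assume $\lVert\varepsilon(t)\rVert_{H^{2}}\le Ae^{-\theta t}$ on $[t,T_{n}]$, integrate the smallness of $d\mathcal{H}/dt$ from $T_{n}$ down to $t$, control the modulation parameters via their defining equations, and recover a strictly better constant --- closes the estimate. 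I expect this step, and in particular identifying the correct Lyapunov combination for breathers together with the handling of the extra phase modulation $x_{1}$ (which has no soliton analogue), to be the main technical obstacle.

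Once the $H^{2}$ multi-breather $p$ has been constructed with the claimed exponential rate, I would upgrade to $H^{s}$ for every $s\ge 0$ by iterating the same functional scheme using the higher-order conserved quantities of the \eqref{1mKdV} hierarchy, which provide a Lyapunov functional at each integer Sobolev level; interpolation then covers non-integer $s$. Joint smoothness in $(t,x)$ follows from Sobolev embedding combined with the equation itself, which converts spatial regularity into time regularity via $p_{t}=-(p_{xx}+p^{3})_{x}$.
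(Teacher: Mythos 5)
Your overall architecture for the $H^2$ part (backward solutions $p_n(T_n)=P(T_n)$, a uniform backward estimate via a localized Lyapunov functional built from $F$, $E$, $M$, modulation, bootstrap, then weak compactness) is the same as the paper's. However, there are two concrete gaps in the way you set it up. First, your accounting of the kernel of the coercive quadratic form is wrong for solitons: at the $H^2$ level the relevant operator for a soliton is the second variation of $F+2cE+c^2M$, whose kernel is two-dimensional, spanned by $\partial_x Q_c$ \emph{and} the scaling direction $\partial_c Q_c=\tfrac{1}{2c}(Q_c+xQ_{c,x})$ --- not by translation alone as in the $H^1$ theory. You must therefore modulate a scaling parameter $c_{0,l}(t)$ for each soliton in addition to its position, impose two orthogonality conditions per soliton, and then pay for the fact that the modulated soliton no longer satisfies the right elliptic equation: the linear term in $\varepsilon$ no longer vanishes, and the zero-order quantity $F[\widetilde R_l]+2c_lE[\widetilde R_l]+c_l^2M[\widetilde R_l]$ is no longer constant in time. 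Controlling it requires a Taylor expansion in $c_{0,l}$ and the identities $\tfrac52 F[q]+3E[q]+\tfrac12 M[q]=0$ and $\tfrac{15}{8}F[q]+\tfrac34 E[q]-\tfrac18 M[q]=0$; none of this is visible in your sketch, and without it the coercivity step and the almost-conservation step both fail.

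Second, your choice of cutoffs $\Psi(x-m_jt)$ of fixed width cannot close the bootstrap in the generality of the theorem. The flux terms in $\tfrac{d}{dt}M_j$, $\tfrac{d}{dt}E_j$ involve the full solution $p_n$ on the transition strip, and under the bootstrap hypothesis $\lVert p_n-P\rVert_{H^2}\le Ae^{-\theta t}$ they are only $O\big((1+A^2)e^{-2\theta t}\big)$: the profile $P$ is exponentially small on the strip, but $p_n-P$ is controlled only globally. After integrating and applying coercivity you would get $\lVert p_n-P\rVert_{H^2}\le C(1+A)e^{-\theta t}$, and $C(1+A)\le A/2$ is impossible, so the constant cannot be improved. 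This is precisely why the paper uses the widening windows $\psi\big(\tfrac{x-\sigma_jt}{\delta t}\big)$: every derivative hitting the cutoff produces a factor $\tfrac{1}{\delta t}$, the flux is $O\big(\tfrac{A^2}{\delta^2 t}e^{-2\theta t}\big)$, and choosing $T^*=T^*(A)$ with $\tfrac{A^2}{\delta^2 T^*}\le 1$ closes the loop. (Fixed-width cutoffs do work when all velocities are positive via monotonicity, but that assumption is not available here.) Finally, your route to $H^s$ regularity through the higher conserved quantities of the hierarchy would require coercivity and kernel identification for higher-order quadratic forms around breathers, which is not established; the paper instead runs a direct energy estimate on $\partial_x^s(p_n-P)$ with a lower-order correction term and a Gronwall argument, needing no coercivity beyond the $H^2$ level.
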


\begin{rem} \label{1rm:3}
We will also show that $\theta$ does only depend on the shape parameters
of our objects: $\alpha_{k},\beta_{k},c_{l}$. Moreover, if there
exists $D>0$ such that for all $j\geq 2$, $x_{j}(0)\geq  x_{j-1}(0)+D$,
then $A_{s}$ and $T^{*}$ do not depend on $x_{1,k}^{0},x_{2,k}^{0},x_{0,l}^{0}$
but only on $\alpha_{k},\beta_{k},c_{l}$ and $D$. Finally,
if $D>0$ is large enough with respect to the problem data,
then (\ref{1eq:-12-1-1}) is true for $T^{*}=0$. See Section \ref{1sec:2.5} for further details.
\end{rem}

\begin{thm} \label{1thm:uniqueness}
Given the same set of solitons and breathers as in Theorem \ref{1thm:MAIN} whose velocities
satisfy (\ref{1eq:diff}) and $v_{1}>0$ (so that all the velocities are positive), the multi-breather $p$ associated
to $P$ by Theorem \ref{1thm:MAIN}, in the sense of Definition \ref{1def:MB},
is unique.
\end{thm}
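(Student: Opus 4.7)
Let $p$ be the multi-breather provided by Theorem~\ref{1thm:MAIN} and assume $\tilde p \in \mathcal{C}([T^\sharp,+\infty),H^2(\mathbb{R}))$ is another multi-breather associated to the same profile $P$. Setting $z := \tilde p - p$, the triangle inequality gives $\|z(t)\|_{H^2} \to 0$ as $t \to +\infty$, and the goal is to upgrade this to $z \equiv 0$. The plan is to propagate the smallness of $z(t)$ backward in time via a Lyapunov functional built from localized conservation laws, in the spirit of Martel's uniqueness proof for multi-solitons of (gKdV) in \cite{key-2}, but carried out at the $H^2$ level to accommodate the breathers. First I would perform a modulation step on both $p$ and $\tilde p$, writing them as $\sum_j P_j^{\vec b(t)} + \eta$ and $\sum_j P_j^{\vec a(t)} + \tilde\eta$, where $\vec a_j(t),\vec b_j(t)$ are time-dependent analogues of the nominal parameters of $P_j$ (four per breather, three per soliton) and $\eta,\tilde\eta$ satisfy the standard orthogonality conditions to the kernel directions of the linearization at each $P_j$. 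Since $p,\tilde p\to P$ in $H^2$, the modulation parameters tend to their nominal values and $\eta,\tilde\eta\to 0$ in $H^2$.

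The key construction is as follows. For each $j=1,\ldots,J$, pick a speed $\sigma_j$ in the gap $(v_{j-1},v_j)$ (with the convention $v_0:=0$, which makes sense precisely because $v_1>0$), and a smooth transition cutoff $\psi_j(t,x):=\phi(x-\sigma_j t-m_j)$ with $\phi$ nondecreasing, $\phi(-\infty)=0$, $\phi(+\infty)=1$, and $m_j$ chosen so that at large times $\psi_j\approx 1$ on the supports of $P_j,\ldots,P_J$ and $\psi_j\approx 0$ on those of $P_1,\ldots,P_{j-1}$. For each object I form, following Alejo--Muñoz \cite{key-18,key-1}, the localized combination of the conserved quantities \eqref{1eq:-1}--\eqref{1eq:-3} making $P_j$ a constrained critical point: for a breather it has the shape $\mathcal L_j := F^{\psi_j} + (\gamma_j+\delta_j) E^{\psi_j} + \gamma_j\delta_j M^{\psi_j}$, for a soliton $\mathcal L_j := E^{\psi_j} + c_j M^{\psi_j}$, where $M^{\psi},E^{\psi},F^{\psi}$ are the $\psi$-weighted versions of $M,E,F$. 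I then set
\begin{equation*}
\mathcal H(t) := \sum_{j=1}^{J}\bigl(\mathcal L_j[\tilde p](t)-\mathcal L_j[p](t)\bigr).
\end{equation*}
The positivity of every $\sigma_j$ ensures the standard Kato-type monotonicity for mKdV: the commutator terms produced when differentiating $\mathcal L_j$ along the flow are localized in the transition zone of $\psi_j$, where both $p$ and $\tilde p$ are exponentially small because $P_j$'s are spatially separated, giving $|\mathcal H'(t)| \leq C e^{-\mu t}$ for some $\mu>0$ depending only on the velocity gaps. The Hessian coercivity for a single breather \cite{key-18} and for a single soliton \cite{key-22}, combined with the modulation orthogonalities and the spatial decoupling of the objects for $t$ large, yields
\begin{equation*}
\mathcal H(t) \geq \lambda \|z(t)\|_{H^2}^2 - C\sum_{j=1}^{J}|\vec a_j(t)-\vec b_j(t)|^2.
\end{equation*}
Finally, continuity of the $\mathcal L_j$ on $H^2$ and $\tilde p(t),p(t)\to P(t)$ force $\mathcal H(t)\to 0$.

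Integrating the almost-monotonicity from $t$ to $+\infty$ yields $\mathcal H(t) = O(e^{-\mu t})$; combined with the coercivity and with modulation ODEs controlling $\vec a - \vec b$ in terms of $\|z\|_{H^2}$, this gives $\|z(t)\|_{H^2} = O(e^{-\mu t/2})$. Re-injecting this decay into the modulation equations bootstraps it to arbitrary polynomial, and then to exponential, rate; a final continuation argument on $\mathcal H$ (or, equivalently, the uniqueness-in-super-polynomial-class step which is the other thread of the paper) forces $z\equiv 0$. \textbf{Main obstacle.} The delicate part is the second-energy contribution: unlike $M$ and $E$, whose commutators with a cutoff $\psi_j$ produce only $L^2$- and $H^1$-controlled terms, $F$ contains $u_{xx}^2$, so $\tfrac{d}{dt}F^{\psi_j}[u]$ generates boundary terms of size $\int|u_{xx}u_{xxx}||\psi_j'|$ that are borderline at the $H^2$ level; taming them requires the sharp integrations by parts of Alejo--Muñoz and crucially rely on $\psi_j'\geq 0$ and $\sigma_j>0$, that is, on the hypothesis $v_1>0$. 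A second subtle point is arranging the double modulation so that $\vec a(t)$ and $\vec b(t)$ can be compared in a way that keeps the cross-terms in the coercivity estimate small, which is where the choice of $H^2$ (rather than $H^1$) as the working space is essential, since both the breather Hessian and its positivity modulo the four-dimensional kernel naturally live at this regularity.
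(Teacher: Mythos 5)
Your overall architecture is the correct one and coincides with the paper's: show that any solution converging to $P$ in $H^2$ must in fact converge exponentially fast (this is where $v_1>0$ and monotonicity enter; it is Proposition \ref{1prop:conv_exp}), and then conclude $z\equiv 0$ by invoking the uniqueness in the super-polynomial class (Proposition \ref{1lem:polyn}) --- which, as you rightly sense, cannot be replaced by a ``continuation argument on $\mathcal H$'' alone, since exponential smallness of $z$ does not by itself force $z=0$. The problem is in the middle: the functional you build does not work. With $\psi_j$ a monotone step equal to $1$ on the supports of $P_j,\dots,P_J$ and $\mathcal L_j$ carrying the coefficients adapted to the $j$-th object, the sum $\sum_j\mathcal L_j$ sees, near the $k$-th object, the accumulated coefficients $\sum_{j\le k}(\gamma_j+\delta_j)$ and $\sum_{j\le k}\gamma_j\delta_j$ rather than the unique combination making $P_k$ a critical point. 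Hence the first-order term $\sum_j\langle\mathcal L_j'[p],z\rangle$ is of size $O(\lVert z\rVert_{H^2})$ rather than $O(e^{-\theta t}\lVert z\rVert_{H^2})$, and the Hessian near $P_k$ is not the coercive one; the claimed lower bound on $\mathcal H$ collapses. Replacing $\psi_j$ by the bump $\psi_j-\psi_{j+1}$ restores the coefficients but destroys the Kato-type monotonicity, because the two edges then contribute with opposite signs. This is precisely the obstruction the paper points out (``one cannot build a functional adapted to all the nonlinear objects at once'') and circumvents by an induction on $j=2,\dots,J+1$: at step $j$ one uses the single combination $F_j+2(b_{j-1}^2-a_{j-1}^2)E_j+(a_{j-1}^2+b_{j-1}^2)^2M_j$ with a monotone cutoff covering objects $1,\dots,j-1$, modulates only around $P_{j-1}$, and absorbs the contribution of objects $1,\dots,j-2$ through the already-established bound $\int(v^2+v_x^2+v_{xx}^2)\Phi_{j-1}\le Ce^{-2\varpi t}$.

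A second genuine gap is the coercivity inequality $\mathcal H(t)\ge\lambda\lVert z\rVert_{H^2}^2-C\sum_j\lvert\vec a_j-\vec b_j\rvert^2$: for a breather this is false as stated. The Hessian of the breather Lyapunov functional has, besides its two-dimensional kernel, a simple negative eigenvalue, and orthogonality to $\partial_{x_1}B_k$ and $\partial_{x_2}B_k$ (which is all the modulation provides) only yields $\mathcal Q\ge\mu\lVert\epsilon\rVert_{H^2}^2-\mu^{-1}\big(\int\epsilon B_k\big)^2$ (Proposition \ref{1fact:coerc}). The quantity $\int zB_k$ is not a modulation parameter; estimating it is a separate, substantial step (Step~7 of the proof of Proposition \ref{1prop:conv_exp}), which consumes the one-sided monotonicity of all three localized quantities $M_j,E_j,F_j$ simultaneously. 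Relatedly, the two-sided bound $\lvert\mathcal H'(t)\rvert\le Ce^{-\mu t}$ is not available: monotonicity gives only $\mathcal H'\ge-Ce^{-\mu t}$ after discarding good-sign terms such as $\int \tilde p_x^2\lvert\psi_j'\rvert$, which for a solution merely known to converge to $P$ are $o(1)$ and not exponentially small; the one-sided bound together with $\mathcal H(t)\to0$ is what must be used. Finally, in the soliton case the scaling parameter must itself be shown to decay exponentially via a Taylor expansion of the localized mass and energy; carrying $\lvert\vec a-\vec b\rvert^2$ along as an unexamined error term does not capture this.
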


%We will also prove a version of Theorem \ref{1thm:uniqueness} that does not have any assumption on the sign of the considered velocities. However, the uniqueness is obtained in a narrower class.

\begin{prop}
\label{1lem:polyn}
Given the same set of solitons and breathers as in Theorem \ref{1thm:MAIN} whose velocities
satisfy (\ref{1eq:diff}), there exists $N>0$ large enough such that the multi-breather $p$ associated
to $P$ by Theorem \ref{1thm:MAIN} is the unique solution $u \in \mathcal{C}([T_0,+\infty),H^2(\mathbb{R}))$ of \eqref{1mKdV} such that
\begin{align}
\lVert u(t)-P(t) \rVert_{H^2} = O \bigg( \frac{1}{t^N} \bigg), \qquad \text{as } \ t \rightarrow +\infty.
\end{align}
\end{prop}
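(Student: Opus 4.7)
The plan is to start from any competing solution $u\in\mathcal{C}([T_0,+\infty),H^2(\mathbb R))$ with $\|u(t)-P(t)\|_{H^2}=O(t^{-N})$, set $w:=u-p$ where $p$ is the multi-breather produced by Theorem \ref{1thm:MAIN}, and show by a bootstrap on the decay rate that $w\equiv 0$ on $[T_0,+\infty)$ as soon as $N$ is chosen large enough (depending only on the shape parameters). Since Theorem \ref{1thm:MAIN} gives $\|p(t)-P(t)\|_{H^2}=O(e^{-\theta t})$, the triangle inequality yields $\|w(t)\|_{H^2}=O(t^{-N})$, and $w$ satisfies the perturbed equation
\begin{align*}
w_t+\bigl(w_{xx}+3p^2 w+3pw^2+w^3\bigr)_x=0.
\end{align*}

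First I would introduce modulation parameters $\vec\eta(t)$ attached to each $P_j$ -- time-dependent corrections to $\alpha_k,\beta_k,x_{1,k}^0,x_{2,k}^0$ for the $k$-th breather and to $x_{0,l}^0$ for the $l$-th soliton -- chosen by an implicit function theorem so that the modulated remainder $\tilde w(t)$ is orthogonal to the generalized kernel of the linearized operator at each $P_j$. This yields a modulation estimate $|\vec\eta\,'(t)|\lesssim\|\tilde w(t)\|_{H^2}+e^{-\theta t}$, together with a pointwise equivalence $\|w(t)\|_{H^2}^2\sim\|\tilde w(t)\|_{H^2}^2+|\vec\eta(t)|^2$ valid for $t$ large, obtained from non-degeneracy of the associated overlap matrix.

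Next, I would reuse the Lyapunov functional $\mathcal F$ built for the existence proof of Theorem \ref{1thm:MAIN}: a combination of localized versions of $M$, $E$, $F$ from \eqref{1eq:-1}--\eqref{1eq:-3}, with smooth cutoffs $\phi_j$ transitioning at the moving midpoints $\tfrac{v_j+v_{j+1}}{2}\,t$ between consecutive trajectories, the coefficients being tuned (using the $H^2$ breather stability from \cite{key-1} and the classical $H^1$ soliton stability) so that the block associated with each $P_j$ is positive definite modulo its modulation directions. The resulting functional satisfies simultaneously a coercivity estimate
\begin{align*}
\mathcal F\bigl(w(t)\bigr)\ \geq\ c\,\|\tilde w(t)\|_{H^2}^2-C\,|\vec\eta(t)|^2
\end{align*}
and an almost-conservation estimate in which the only quadratic-in-$w$ contribution is localized in the transition regions, where every profile $P_j$ is exponentially small in $t$:
\begin{align*}
\bigl|\mathcal F'(t)\bigr|\ \leq\ C\bigl(e^{-\theta t}\,\|w(t)\|_{H^2}^2+\|w(t)\|_{H^2}^3+e^{-\theta t}\bigr).
\end{align*}

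The last step is the bootstrap on the algebraic decay rate. Integrating the almost-conservation estimate from $t$ to $+\infty$, using that $\mathcal F(w(s))\to 0$ and $\vec\eta(s)\to 0$ as $s\to\infty$, and combining with the coercivity and modulation estimates, the a priori bound $\|w(s)\|_{H^2}=O(s^{-N})$ upgrades (through the dominant $\|w\|_{H^2}^3$ term, which integrates to $O(t^{1-3N})$) to $\|w(t)\|_{H^2}=O(t^{-N'})$ with $N'>N$ as soon as $N$ exceeds an explicit threshold. Iterating produces super-polynomial decay for $w$, after which a standard energy / unique continuation argument applied directly to the perturbed linear equation above (e.g.~Gronwall on $t^{M}\|w(t)\|_{H^2}$ for arbitrary $M$) forces $w\equiv 0$ on $[T_0,+\infty)$. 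The hardest part will be the coercivity of $\mathcal F$: each breather contributes four modulation directions, so the precise affine combination of $M,E,F$ that yields a positive definite quadratic form modulo these directions has to be identified -- this is ultimately what forces the analysis to take place in $H^2$ rather than $H^1$ and what pins down the admissible threshold for $N$.
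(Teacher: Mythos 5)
Your overall architecture (study $w=u-p$, project out the kernel directions, use a localized Lyapunov functional and its almost-conservation, then bootstrap the algebraic decay) is in the right family, but two of your key estimates would fail as stated, and the actual mechanism that forces $N$ to be large is missing. First, the claimed bound $\lvert\mathcal F'(t)\rvert\leq C\big(e^{-\theta t}\lVert w\rVert_{H^2}^2+\lVert w\rVert_{H^2}^3+e^{-\theta t}\big)$ is not achievable with fixed-width cutoffs transitioning at the moving midpoints $\tfrac{v_j+v_{j+1}}{2}t$ and without sign assumptions on the velocities: differentiating $\int(\cdots)\phi_j$ in time produces terms like $\int w_x^2\,\phi_j'$ whose coefficient is $O(1)$ on the transition region, and the smallness of the profiles $P_j$ there does nothing for these pure-$w$ quadratic terms; absent monotonicity (which is exactly what the positive-velocity hypothesis of Theorem \ref{1thm:uniqueness} buys) they cannot be discarded. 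The paper's remedy is to use \emph{dilating} cutoffs $\psi\big(\tfrac{x-\sigma_j t}{\delta t}\big)$, so that every derivative hitting the cutoff costs a factor $\tfrac{1}{\delta t}$ and the unavoidable quadratic error is $\tfrac{C}{t}\lVert \widetilde z\rVert_{H^2}^2$. This $\tfrac{C}{t}$ term is the \emph{dominant} one: after integration it yields $\lVert\widetilde z(t)\rVert_{H^2}\leq C\int_t^\infty \tfrac1s\lVert\widetilde z(s)\rVert_{H^2}\,ds+\cdots$, and plugging in the ansatz $\lVert\widetilde z(s)\rVert_{H^2}\leq A s^{-(N-1)}$ returns $\tfrac{CA}{N-1}t^{-(N-1)}$ — same power, but with a contraction factor $\tfrac{C}{N-1}<1$ once $N>2C+2$. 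Evaluating at a near-maximizer of $t^{N-1}\lVert\widetilde z(t)\rVert_{H^2}$ then forces $A=0$. Your proposal attributes the threshold on $N$ to the cubic term, but $\lVert w\rVert^3$ integrates to a strictly better power for every $N>1/2$ and never obstructs the bootstrap; without the $\tfrac{C}{t}$ term you have no mechanism producing a largeness condition on $N$, and with your standalone inhomogeneous $+Ce^{-\theta t}$ in $\mathcal F'$ you could at best conclude $\lVert w\rVert_{H^2}^2\lesssim e^{-\theta t}$, never $w\equiv 0$.

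Second, the closing step ``super-polynomial decay plus a standard energy / unique continuation argument (Gronwall on $t^M\lVert w\rVert_{H^2}$) forces $w\equiv0$'' is not a proof: forward-in-time Gronwall for the difference of two $H^2$ solutions gives growth, and backward uniqueness from decay at $+\infty$ is precisely the hard point — in the paper there is no separate such step, the conclusion $w\equiv0$ falls out of the weighted-supremum contraction itself. A further (repairable but real) design issue: you modulate the shape and translation parameters of the $P_j$, which reintroduces nonvanishing linear-in-$\widetilde w$ terms in the expansion of the conserved quantities together with modulation ODEs for $\alpha_k,\beta_k$; the paper deliberately avoids this by keeping $z=u-p$ as a difference of two exact solutions and performing only the \emph{linear} correction $\widetilde z=z+\sum_j c_j K_j$ with $K_j$ in the kernel of the linearized quadratic form, which is enough to restore the orthogonality needed for coercivity while keeping the error terms quadratic in $z$.
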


In \cite{key-37}, there exists a formula for a multi-breather, obtained
by inverse scattering method, that in some sense already gives the
existence of a multi-breather. %However, it is seems unclear how to derive from this formula
%the convergence to a sum of solitons and breathers uniformly
%in space and how to show that this convergence is exponential (local convergence around the center of mass of each object is already rather involved). 
However, the proof of the theorem \ref{1thm:MAIN} from this formula is rather involved.

In this paper, we give here a different approach to prove the existence
of a multi-breather and we clearly show that we have convergence of
the constructed multi-breather to the corresponding sum of solitons
and breathers in $H^{s}$, that this convergence is exponentially fast in time and that the constructed multi-breather is smooth. To do this, we use the variational structure of solitons and breathers. This is why, we give a proof that is potentially generalizable to non integrable equations, and that uses similar type of techniques as in the proof of the uniqueness (the latter cannot be deduced from the formula).
In any case, uniqueness of multi-breathers is new.

\smallskip

In this paper, we adapt the arguments given by Martel and Merle \cite{key-3}, by Martel \cite{key-2} and by Côte and Friederich \cite{key-61}
to the context of breathers. To do so, one needs to understand the
variational structure of breathers, in the same fashion as Weinstein
did in \cite{key-6} for  (NLS) solitons. Such results were obtained by Alejo and Muñoz in
\cite{key-1}: a breather is a critical point of a Lyapunov functional
at the $H^{2}$ level, whose Hessian is coercive up to several (but finitely many) orthogonal conditions, see Section \ref{1sec:constr} for details.
As we see from \cite{key-1}, the $H^{2}$ regularity level is the most natural
setting to study breathers, and the $H^{1}$ regularity level
is natural for the study of solitons (as we see  in \cite{key-2,key-3}). One important issue we face is therefore to understand soliton variational structure at $H^2$ level, and to adapt the Lyapunov functional in \cite{key-1} to accommodate for a sum of breathers (and solitons). Notice that arguments based on monotonicity may be adapted only if we suppose that all the considered velocities are positive. Because \cite{key-3,key-61} are not based on monotonicity (these are results for (NLS) which is not well suited for monotonicity), we can adapt their arguments to obtain existence and uniqueness results for our case without any condition on the sign of velocities. The uniqueness result obtained in this setting is however weaker than the one that is obtained with monotonicity arguments.

\subsection{Outline of the proof}

The proof of Theorem \ref{1thm:MAIN} (the existence of multi-breathers) is split into two main parts: the construction of an $H^2$ multi-breather and the proof that this multi-breather is smooth.

\subsubsection{An $H^2$ multi-breather}

Let us start with the first part. We consider an increasing sequence $(T_n)$ of $\mathbb{R}_+$ with
$T_{n}\rightarrow+\infty$, and for $n\in\mathbb{N}$, let $p_{n}$
the unique global $H^{2}$ solution of \eqref{1mKdV} such that $p_{n}(T_{n})=P(T_{n})$
(recall that the Cauchy problem for \eqref{1mKdV} is globally well-posed
in $H^{2}$).

We will prove the following \emph{uniform estimate}: 
\begin{prop}
\label{1prop:uest} There exists $T^{*}>0$, $A>0$, $\theta>0$ such
that, for any $n\in\mathbb{N}$ such that $T_{n}\geq  T^{*}$,
\begin{align}
\forall t\in[T^{*},T_{n}],\quad\lVert p_{n}(t)-P(t)\rVert_{H^{2}}\leq  Ae^{-\theta t}.\label{1eq:result}
\end{align}
\end{prop}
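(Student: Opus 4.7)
The plan is to run a bootstrap argument backward in time from $T_n$, based on a carefully designed $H^2$-Lyapunov functional adapted to the sum $P$. Working backward is essential, since the data is prescribed at $t = T_n$ and we want uniform control on $[T^*, T_n]$. The strategy is classical in the Martel/Merle/Côte-Friederich line of work, but must be adapted to breathers.

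First I would assemble a \emph{localized Lyapunov functional}. For each breather $B_k$, \cite{key-1} provides a functional of the form $\mathcal{H}_k = F + a_k E + b_k M$ (with coefficients depending on $\alpha_k, \beta_k$) which is critical at $B_k$ and whose Hessian is coercive in $H^2$ modulo the four directions generated by the parameters $(\alpha_k,\beta_k,x_{1,k},x_{2,k})$. For each soliton $R_l$ I would build an $H^2$-analogue by augmenting $E + c_l M$ with a suitable multiple of $F$, yielding a functional that is critical at $R_l$ and coercive in $H^2$ modulo the translation (and, if needed, scaling) direction. Introducing a smooth partition of unity $\phi_j(t,x)$ concentrated around $x_j(t)$ with transitions in the gap regions between consecutive centers, I would set
\begin{equation*}
\mathcal{G}(u,t) := \sum_{j=1}^{J} \mathcal{H}_j[u;\phi_j(t,\cdot)],
\end{equation*}
where each summand is the localized $H^2$-Lyapunov functional for $P_j$. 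Because $|x_j(t) - x_{j-1}(t)| \gtrsim (v_j - v_{j-1}) t$, the cutoff regions lie where each $P_{j'}$ is exponentially small in $t$.

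Next, I would \emph{modulate}: for each index $j$ introduce time-dependent parameters replacing the frozen ones in $P_j$, yielding a modulated profile $\widetilde{P}(t)$ and error $\varepsilon(t) := p_n(t) - \widetilde{P}(t)$. The implicit function theorem would fix these parameters so that $\varepsilon$ is orthogonal to the (finitely many) kernel directions at each $P_j$. Standard computations then give coercivity of the form
\begin{equation*}
\|\varepsilon(t)\|_{H^2}^2 \leq C \bigl(\mathcal{G}(p_n,t) - \mathcal{G}(\widetilde{P},t)\bigr) + (\text{interaction terms})
\end{equation*}
and modulation estimates $|\dot{\tilde \alpha}_k| + \cdots \lesssim \|\varepsilon\|_{H^2} + e^{-\theta t}$. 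The heart of the proof is then to differentiate $\mathcal{G}(p_n,t)$ in time using the equation: the critical property of each $\mathcal{H}_j$ at $P_j$ kills the leading order, and the commutators created by the cutoffs produce terms supported in the gap regions, hence of order $e^{-\theta t}$ (with $\theta$ governed by the slowest gap $|v_j - v_{j-1}|$ and the decay rates $\min(\beta_k, \sqrt{c_l})$). Integrating this estimate backward from $T_n$, where $p_n(T_n) = P(T_n)$ yields $\mathcal{G}(p_n,T_n) - \mathcal{G}(\widetilde{P},T_n) = O(\|\varepsilon(T_n)\|_{H^2}^2)$, and closing on the smallness of $\varepsilon(T_n)$ gives $\mathcal{G}(p_n,t) - \mathcal{G}(\widetilde{P},t) \lesssim e^{-2\theta t}$. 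Combined with coercivity this bootstraps $\|\varepsilon(t)\|_{H^2} \leq A e^{-\theta t}$ with a constant that, for $T^*$ large enough, absorbs all lower order contributions; since parameters differ from their limits by the same order, one recovers the claim for $p_n - P$ as well.

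The step I expect to be the main obstacle is the construction of the mixed $H^2$-Lyapunov functional that simultaneously handles solitons and breathers: the natural variational framework for solitons lives at $H^1$, and lifting it to $H^2$ while preserving coercivity modulo the correct (and only the correct) finite-dimensional kernel requires a non-trivial choice of coefficients in front of $F$, $E$, $M$. A related difficulty is that breathers are periodic in time, so one must verify that the coercivity constants obtained from \cite{key-1} are uniform along the breather trajectory and compatible with the soliton coercivity after localization; this is what forces the whole analysis to be carried out at the $H^2$-level rather than at $H^1$ as in the classical multi-soliton setting.
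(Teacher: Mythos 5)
Your overall architecture (backward bootstrap from $t=T_n$, modulation to kill kernel directions, localized Lyapunov functional built from $F,E,M$ with breather/soliton-adapted coefficients, coercivity plus almost-conservation, interaction terms exponentially small in the gaps) is the same as the paper's, and most of the differences are harmless variants. Two of them deserve comment. First, you localize $F$ as well as $E$ and $M$; the paper keeps $F$ global (exactly conserved) and only localizes $M_j$ and $E_j$, which spares you the almost-conservation computation for the localized second energy (that computation is only needed later, in the uniqueness part). Second, you propose to modulate the breather shape parameters $(\alpha_k,\beta_k)$; the paper deliberately modulates only the translation parameters $(x_{1,k},x_{2,k})$, precisely so that $F[\widetilde{B_k}]+2(\beta_k^2-\alpha_k^2)E[\widetilde{B_k}]+(\alpha_k^2+\beta_k^2)^2M[\widetilde{B_k}]$ stays exactly constant in time (for solitons, where the scaling \emph{is} modulated, a two-term Taylor expansion with the identities $\tfrac52F[q]+3E[q]+\tfrac12M[q]=0$ and $\tfrac{15}{8}F[q]+\tfrac34E[q]-\tfrac18M[q]=0$ is needed to show the drift is $O(e^{-2\theta t})$).

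The genuine gap is your treatment of the spectrum of the breather's linearized functional. You assert that the Hessian of $\mathcal{H}_k$ at $B_k$ is ``coercive modulo the four directions generated by the parameters $(\alpha_k,\beta_k,x_{1,k},x_{2,k})$.'' That is not what \cite{key-1} provides: the kernel is two-dimensional, spanned by $\partial_{x_1}B_k$ and $\partial_{x_2}B_k$, and there is in addition exactly \emph{one negative eigenvalue}. Under the two translation orthogonality conditions the correct statement is
\begin{equation*}
\mathcal{Q}^b_{\alpha,\beta}[\epsilon]\;\geq\;\mu\,\lVert\epsilon\rVert_{H^2}^2-\frac{1}{\mu}\Bigl(\int\epsilon\,B\Bigr)^{2},
\end{equation*}
so your coercivity inequality carries an uncontrolled negative term $(\int\varepsilon B_k)^2$ that your modulation cannot remove: since $\int B\,\partial_{x_1}B=\int B\,\partial_{x_2}B=0$, adjusting translations gains nothing in the direction of $B$, and adjusting $\alpha$ does not help either because the breather mass $\int B^2$ is independent of $\alpha$. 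Without an extra argument the bootstrap does not close. The paper supplies this missing ingredient by introducing the quantity $m_j(t)=\tfrac12\int p^2\sqrt{\varphi_j}$, proving it is almost conserved, and using $\varepsilon(T_n)=0$ to deduce $\bigl\lvert\int\varepsilon\widetilde{B_k}\sqrt{\varphi_k^b}\bigr\rvert\leq C(A^2/(\delta^2t)+1)e^{-\theta t}+\tfrac{\mu}{100}\lVert\varepsilon\rVert_{H^2}$, which is then absorbed into the coercivity estimate. You need this step (or an equivalent mass-constraint argument) for the proof to be complete.
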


With this proposition in hand, we can construct an $H^2$ multi-breather which converges exponentially fast to its profile,  which is the first part of Theorem \ref{1thm:MAIN}, as stated below.

\begin{prop}
\label{1thm:main} There exists $T^{*}\in\mathbb{R}$, $A>0$, $\theta>0$
and a solution $p\in C([T^{*},+\infty),H^{2}(\mathbb{R}))$ of \eqref{1mKdV}
such that
\begin{align}
\forall t\geq  T^{*},\quad\lVert p(t)-P(t)\rVert_{H^{2}}\leq  Ae^{-\theta t}.\label{1eq:-12}
\end{align}
\end{prop}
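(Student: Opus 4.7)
The plan is to construct $p$ by compactness: extract a weak limit of the sequence $(p_n(T^*))_n$ furnished by Proposition \ref{1prop:uest}, solve \eqref{1mKdV} forward from this initial datum, and verify that the exponential estimate passes to the limit.

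First, fix $T^*$, $A$, $\theta$ as in Proposition \ref{1prop:uest} and pick any increasing sequence $T_n \to +\infty$ with $T_n \ge T^*$. Since $\|p_n(T^*)-P(T^*)\|_{H^2} \le A e^{-\theta T^*}$, the family $(p_n(T^*))_n$ is bounded in $H^2(\mathbb{R})$, so we may extract a subsequence (still denoted $(p_n)$) with $p_n(T^*) \rightharpoonup u_\ast$ weakly in $H^2$. Define $p \in \mathcal{C}(\mathbb{R},H^2(\mathbb{R}))$ as the unique global $H^2$ solution of \eqref{1mKdV} with $p(T^*) = u_\ast$, which exists by the global well-posedness results cited in the introduction.

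The main step is to upgrade this to $p_n(t) \rightharpoonup p(t)$ weakly in $H^2$ for every $t \ge T^*$. Fix $T > T^*$; for $n$ large enough, $T_n \ge T$, and \eqref{1eq:result} yields that $(p_n)$ is uniformly bounded in $L^\infty([T^*,T],H^2)$. Using the equation, $\partial_t p_n = -(p_{n,xx} + p_n^3)_x$ is uniformly bounded in $L^\infty([T^*,T],H^{-1})$. An Aubin--Lions argument then gives relative compactness of $(p_n)$ in $\mathcal{C}([T^*,T],H^s_{\mathrm{loc}})$ for every $s<2$. Strong $H^s_{\mathrm{loc}}$ convergence with $s>1/2$ is enough to pass to the limit in the cubic nonlinearity against test functions, so every accumulation point $\tilde p$ solves \eqref{1mKdV} on $[T^*,T]$ with $\tilde p(T^*)=u_\ast$; by $H^2$ uniqueness $\tilde p = p$ on $[T^*,T]$. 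A diagonal extraction as $T \to +\infty$ then yields $p_n(t)\rightharpoonup p(t)$ in $H^2$ for every $t\ge T^*$.

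To conclude, since $P(t)$ is deterministic, $p_n(t)-P(t) \rightharpoonup p(t)-P(t)$ weakly in $H^2$, and weak lower semicontinuity of the $H^2$ norm together with \eqref{1eq:result} gives
\[
\|p(t)-P(t)\|_{H^2} \le \liminf_{n\to\infty} \|p_n(t)-P(t)\|_{H^2} \le A e^{-\theta t}
\]
for every $t\ge T^*$, which is \eqref{1eq:-12}. The delicate point in the argument is the weak sequential continuity of the \eqref{1mKdV} flow in $H^2$ on bounded time intervals: a priori one only controls $(p_n)$ weakly in $H^2$, and passing to the limit in the cubic term requires the local-in-space strong compactness supplied by Aubin--Lions on the uniform $H^2$ bound and the equation-based $H^{-1}$ bound on $\partial_t p_n$.
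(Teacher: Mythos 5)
Your argument is correct and reaches the same conclusion by the same overall skeleton (weak limit of $p_n(T^*)$, solve forward, pass the estimate to the limit by weak lower semicontinuity), but the middle step --- propagating the convergence from $t=T^*$ to all $t\ge T^*$ --- is handled genuinely differently from the paper. The paper first proves an $L^2$-tail-compactness estimate for $(p_n(T^*))$ (Lemma \ref{1lem:uest}, obtained from Proposition \ref{1prop:uest} and a localized mass computation), which upgrades the weak $H^2$ convergence at $t=T^*$ to strong $L^2$, hence by interpolation strong $H^1$, convergence of the initial data; it then invokes continuous dependence of the \eqref{1mKdV} flow in $H^1$ on compact time intervals (from \cite{key-2}) to get $p_n(t)\to p(t)$ in $H^1$, and only then identifies the weak $H^2$ limit of the bounded sequence $p_n(t)-P(t)$. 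You instead keep only weak convergence of the data and recover the identification of the limit by an Aubin--Lions argument in $\mathcal C([T^*,T],H^s_{\mathrm{loc}})$, $s<2$, together with uniqueness for the limit equation. This buys independence from both the tail estimate and the continuous-dependence theorem, at the price of needing unconditional uniqueness for \eqref{1mKdV} in the class $L^\infty([T^*,T],H^2)$: your Aubin--Lions accumulation point is a priori only a distributional solution in that class, not yet the solution produced by the well-posedness theory, so ``by $H^2$ uniqueness'' deserves a line of justification --- e.g. for two such solutions $u,v$ with $w=u-v$ one has $\frac{d}{dt}\frac12\int w^2=\int w_x\,w\,(u^2+uv+v^2)=-\frac12\int w^2\,(u^2+uv+v^2)_x\le C\lVert w\rVert_{L^2}^2$, using $H^2\hookrightarrow W^{1,\infty}$, and Gronwall closes at the $L^2$ level. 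With that point made explicit both routes are complete; the paper's version has the side benefit that the tail-compactness lemma and the localization machinery behind it are reused elsewhere, while yours is more self-contained at this particular step.
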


%The strategy of the proof will be to adapt the techniques developed
%in \cite{key-2,key-3} for the solitons to breathers.

\begin{proof}[Proof of Proposition \ref{1thm:main} assuming Proposition \ref{1prop:uest}]
%Let us assume this proposition proved for now, and deduce the proof of Proposition \ref{1thm:main}. 
We show that the sequence $\big(p_{n}(T^{*})\big)$ is
$L^{2}$-compact, in the following sense: 

\begin{lem}
\label{1lem:uest}For any $\varepsilon>0$, there exists $R>0$ such that 
\begin{align}
\forall n\in\mathbb{N}\quad\int_{\lvert x\rvert>R}p_{n}^{2}(T^{*},x)\,dx<\varepsilon.\label{1eq:compact}
\end{align}
\end{lem}

An analogous lemma has already been proved on p. 1111 of \cite{key-2},
which is the proof of formula (14) (and can also be found in \cite{key-3}).
The same proof works here. We need to use Proposition \ref{1prop:uest}
for $T_{n}$ large enough and then make a time variation to obtain
the result in $T^{*}$. We can first find $R$ that works for $P^2(t_0)$ instead of $p_n^2(T^*)$ for a fixed $t_0>T^*$ large enough. From Proposition \ref{1prop:uest}, we see that if we take $t_0$ large enough, we obtain the desired lemma for $p_n^2(t_0)$ instead of $p_n^2(T^*)$. To finish, with the help of a cut-off function, we control time variations of $\int_{\lvert x\rvert >R}p_n^2(t)\,dx$, where $R$ is taken larger if needed. This is why, we obtain the result at $t=T^*$.

\smallskip

As a consequence of the Proposition \ref{1prop:uest} above,
$(\lVert p_{n}(T^{*})\rVert_{H^{2}})$ is a bounded sequence. Thus,
there exists $p^{*}\in H^{2}(\mathbb{R})$ such that, up to a subsequence, 
\begin{align}
p_{n}(T^{*})\rightharpoonup p^{*}\quad \text{in }  H^{2}.\label{1eq:H2weak}
\end{align}
Thus, from Lemma \ref{1lem:uest}, there holds the strong convergence:
\begin{align}
p_{n}(T^{*})\rightarrow p^{*}\quad \text{in }  L^{2}.\label{1eq:H2L2}
\end{align}
Therefore, we obtain by interpolation: 
\begin{align}
p_{n}(T^{*})\rightarrow p^{*}\quad \text{in }  H^{1}.\label{1eq:interpol}
\end{align}

Now, let us consider the global $H^{1}$ (even $H^{2}$) solution
$p$ of \eqref{1mKdV} such that $p(T^{*})=p^{*}$. As shown in \cite{key-2},
the Cauchy problem for \eqref{1mKdV} has a continuous dependence in $H^{1}$
on compact sets of time. Let $t\geq  T^{*}$. By continuous dependence,
we deduce that $p_{n}(t)\rightarrow p(t)$ in $H^{1}$. $(p_{n}(t)-P(t))$
is a bounded sequence in $H^{2}$, which admits a unique weak limit and so
\begin{align}
 p_{n}(t)-P(t)\rightharpoonup p(t)-P(t) \quad \text{ in }H^{2}.\label{1eq:-13}
\end{align}

By weak convergence and from Proposition \ref{1prop:uest},
we obtain:
\begin{align}
\lVert p(t)-P(t)\rVert_{H^{2}}\leq \liminf_{n \to +\infty} \lVert  p_{n}(t)-P(t) \rVert_{H^2} \leq  Ae^{-\theta t}.\label{1eq:-14}
\end{align}
As this is true for any $t \geq T^*$. This ends the proof of the Proposition \ref{1thm:main}.
\end{proof}

It remains to prove Proposition \ref{1prop:uest}, for which we rest on
a bootstrap argument. More precisely, we will reduce the proof to
the following proposition: 
\begin{prop}
\label{1prop:bootstrap}There exists $T^{*}>0$, $A>0$, $\theta>0$,
such that for any $n\in\mathbb{N}$ such that $T_{n}\geq  T^{*}$,
for any $t^{*}\in[T^*,T_{n}]$, if 
\begin{align}
\forall t\in[t^{*},T_{n}],\quad\lVert p_{n}(t)-P(t)\rVert_{H^{2}}\leq  Ae^{-\theta t},\label{1eq:assumpb}
\end{align}
then 
\begin{align}
\forall t\in[t^{*},T_{n}],\quad\lVert p_{n}(t)-P(t)\rVert_{H^{2}}\leq \frac{A}{2}e^{-\theta t}.\label{1eq:conclb}
\end{align}
\end{prop}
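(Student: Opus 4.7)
I would close the bootstrap by a Lyapunov-type energy argument integrated backward in time from $T_n$. The hypothesis $\|p_n(t) - P(t)\|_{H^2} \le A e^{-\theta t}$ on $[t^*, T_n]$ serves two purposes: it keeps the error $p_n - P$ within the smallness regime where both a modulated decomposition and the coercivity of a Lyapunov functional are available; and, once a Gronwall-type inequality on $[t, T_n]$ is set up with zero boundary value at $t = T_n$ (since $p_n(T_n) = P(T_n)$), it provides the self-input that, after integration, improves the constant $A$ to $A/2$, provided $T^{*}$ is large and $\theta < \sigma/2$ for the decay rate $\sigma$ of the cross-object interactions.

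\textbf{Modulation.} For each $1 \le j \le J$, attach time-dependent shape and translation parameters to $P_j$, producing modulated objects $\tilde P_j(t)$; set $\tilde P = \sum_j \tilde P_j$ and $\varepsilon = p_n - \tilde P$. Choose the parameters so that $\varepsilon(t)$ is $L^2$-orthogonal to a finite family of directions attached to each $\tilde P_j$, namely those spanning the kernel of the second variation of the appropriate single-object Lyapunov functional (the breather directions identified in \cite{key-1} and the two standard soliton directions $\partial_{x_0} R_c$, $\partial_c R_c$). For $\|p_n - P\|_{H^2}$ small (ensured by the bootstrap once $T^{*}$ is large), the implicit function theorem gives $\mathcal{C}^1$ parameters, and the modulation equations yield
\[
\big|\text{(parameter drift)}(t)\big| \lesssim \|\varepsilon(t)\|_{H^2} + e^{-\sigma t},
\]
with $\sigma > 0$ depending only on the velocity gaps $|v_j - v_{j'}|$ and the spatial decay rates of the $P_j$. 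At $t = T_n$ the parameters can be chosen to match those of $P$, so $\varepsilon(T_n) = 0$.

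\textbf{Lyapunov functional and closure.} Following \cite{key-1}, $B_{\alpha,\beta}$ is a critical point at the $H^2$ level of
\[
\mathcal{H}_{\alpha,\beta}(u) := F(u) + 2(\beta^2 - \alpha^2)\, E(u) + (\alpha^2 + \beta^2)^2\, M(u),
\]
with Hessian coercive up to a finite-dimensional subspace. An analogous $H^2$ soliton functional $F + a_c E + b_c M$ has $R_c$ as a critical point, with coefficients $(a_c, b_c)$ determined by successive use of \eqref{1eq:-4} and with Hessian coercive modulo the two symmetry directions. Assemble a global functional $\mathcal{F}(t, \cdot)$ by inserting smooth cutoffs $\phi_j(t, x)$ centered on $x_j(t)$ (equal to $1$ near $P_j$, to $0$ near the other objects) into the $F, E, M$ integrands, each localized piece being weighted by the Lagrange multipliers dictated by $P_j$. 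By construction $\tilde P$ is critical for $\mathcal{F}(t, \cdot)$ up to $O(e^{-\sigma t})$ errors, and combining the single-object coercivity estimates with the orthogonality conditions gives
\[
\mu \|\varepsilon(t)\|_{H^2}^2 \le \mathcal{F}(t, p_n(t)) - \mathcal{F}(t, \tilde P(t)) + C e^{-\sigma t}.
\]
On the dynamical side, the mKdV flow preserves the global $F, E, M$, so only commutators between the cutoffs $\phi_j$ and the flow survive; these are supported in transition zones between consecutive objects, so exponential separation yields the almost-conservation
\[
\big|\tfrac{d}{dt} \mathcal{F}(t, p_n(t))\big| \lesssim e^{-\sigma t}\big(1 + \|\varepsilon(t)\|_{H^2}^2\big).
\]
Integrating from $t$ to $T_n$, using $\varepsilon(T_n) = 0$, and plugging the bootstrap hypothesis into the integrand yields
\[
\|\varepsilon(t)\|_{H^2}^2 \le C_1 e^{-\sigma t} + C_2 A^2 e^{-\min(\sigma, 2\theta)\, t}.
\]
Fixing $\theta$ with $2\theta < \sigma$, then taking $T^{*}$ and $A$ sufficiently large, gives $\|\varepsilon(t)\|_{H^2} \le (A/4) e^{-\theta t}$. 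Combined with $\|\tilde P - P\|_{H^2} \lesssim e^{-\sigma t}$ (from integrating the parameter drift), this yields $\|p_n(t) - P(t)\|_{H^2} \le (A/2) e^{-\theta t}$.

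\textbf{Main obstacle.} The central difficulty is assembling the $H^2$-level Lyapunov functional accommodating both solitons and breathers simultaneously. The Alejo--Muñoz breather functional and the $H^2$ soliton functional involve different Lagrange multipliers and different natural orthogonality conditions; splicing them via cutoffs without destroying criticality at each $P_j$, global coercivity, or smallness of $\partial_t \mathcal{F}$ is strictly harder than in the $H^1$ setting of \cite{key-2, key-3}, because second derivatives of the cutoffs couple in a nontrivial way with the $u_{xx}^2$ and $u^2 u_x^2$ terms in $F$. Verifying that the assembled Hessian remains coercive modulo the combined list of orthogonality conditions is the other delicate step, and amounts to a spectral analysis that extends \cite{key-1} to the multi-object context.
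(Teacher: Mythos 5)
Your overall skeleton (modulate, build a localized Lyapunov functional, invoke coercivity, integrate backward from $T_n$ where $\varepsilon$ vanishes) matches the paper's, but several steps fail as written. The most serious is the almost-conservation estimate $\lvert\frac{d}{dt}\mathcal F\rvert\lesssim e^{-\sigma t}(1+\lVert\varepsilon\rVert_{H^2}^2)$: it is not obtainable. When a derivative falls on a cutoff, the commutator is supported in the transition region, where only the profile $P$ is exponentially small --- the error $v=p_n-P$ is not, so the commutator genuinely contains a term of size $\lVert\varepsilon(t)\rVert_{H^2}^2$ (e.g.\ $\int v_x^2\,\partial_x\phi_j$) with no exponential prefactor. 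With fixed-width cutoffs this integrates to $CA^2e^{-2\theta t}$ and yields only $\lVert\varepsilon\rVert_{H^2}\le CAe^{-\theta t}$ for a fixed $C\ge 1$: the constant $A$ is never improved and the bootstrap does not close. The paper's device is to use cutoffs $\psi\big(\frac{x-\sigma_jt}{\delta t}\big)$ whose derivatives carry an extra factor $\frac{1}{\delta t}$; the commutator is then $O\big(\frac{A^2}{\delta^2 t}e^{-2\theta t}\big)$, and this $\frac1t$ gain is exactly what allows one to take $T^*=T^*(A)$ so large that $\frac{A^2}{\delta^2 t}\le 1$, producing a bound $Ce^{-2\theta t}$ with $C$ independent of $A$ and permitting the closing choice $A:=4C$. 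A second problem is that you localize $F$ along with $E$ and $M$: the time derivative of $\int\big(\frac12 u_{xx}^2-\cdots\big)\phi_j$ contains $-\frac32\int u_{xxx}^2\,\partial_x\phi_j$, which the $H^2$ bootstrap hypothesis cannot control (no exponentially small $H^3$ bound on $p_n-P$ is available at this stage --- that is only proved afterwards). The paper keeps $F$ global, hence exactly conserved, and localizes only $M$ and $E$.

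Two further gaps. The breather Hessian has a negative eigenvalue that orthogonality to its kernel does not remove: the single-object coercivity of \cite{key-1} reads $\mathcal{Q}_{\alpha,\beta}^{b}[\epsilon]\ge\mu\lVert\epsilon\rVert_{H^2}^2-\frac1\mu\big(\int\epsilon B\big)^2$, so your coercivity inequality must retain the terms $\big(\int\varepsilon\widetilde{B_k}\sqrt{\varphi_k^b}\big)^2$, and bounding these requires a separate argument (the paper uses the almost-conservation of a $\sqrt{\varphi_j}$-weighted localized mass). Finally, because the soliton scaling parameters are modulated, the zeroth-order term $\sum_j F[\widetilde{P_j}]+2(b_j^2-a_j^2)E[\widetilde{P_j}]+(a_j^2+b_j^2)^2M[\widetilde{P_j}]$ is not constant in time; its variation is harmless only because both the first- and second-order Taylor coefficients of $c\mapsto F[Q_c]+2c_lE[Q_c]+c_l^2M[Q_c]$ vanish at $c=c_l$ (the identities $\frac52 F[q]+3E[q]+\frac12 M[q]=0$ and $\frac{15}{8}F[q]+\frac34 E[q]-\frac18 M[q]=0$). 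Without the second cancellation one picks up an uncontrolled $O(A^2e^{-2\theta t})$ and again fails to improve $A$; your proposal does not address this point.
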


The proof of Proposotion \ref{1prop:uest} then follows from a simple continuity argument.

\begin{proof}[Proof of Proposition \ref{1prop:uest} assuming Proposition \ref{1prop:bootstrap}]
We define $t_{n}^{*}$ in the following way: 
\begin{align}
t_{n}^{*}:=\inf\{t^{*}\in[T^{*},T_{n}),\quad\forall t\in[t^{*},T_{n}],\quad\lVert p_{n}(t)-P(t)\rVert_{H^{2}}\leq  Ae^{-\theta t}\}.\label{1eq:-16}
\end{align}
The map $t\mapsto\lVert p_{n}(t)-P(t)\rVert_{H^{2}}$ is a continuous
function and $\lVert p_{n}(T_{n})-P(T_{n})\rVert_{H^{2}}=0$. This means
that there exists $T^{*}\leq  t^{*}<T_{n}$ such that 
\begin{align}
\forall t\in[t^{*},T_{n}],\quad\lVert p_{n}(t)-P(t)\rVert_{H^{2}}\leq  Ae^{-\theta t}.\label{1eq:-15}
\end{align}
Therefore, we have that
\begin{align}
T^{*}\leq  t_{n}^{*}<T_{n}.\label{1eq:-17}
\end{align}

We would like to prove that $t_{n}^{*}=T^{*}$. Let us argue by contradiction
and assume that $t_{n}^{*}>T^{*}$. The Proposition \ref{1prop:bootstrap} allows us
to deduce that 
\begin{align}
\forall t\in[t_{n}^{*},T_{n}],\quad\lVert p_{n}(t)-P(t)\rVert_{H^{2}}\leq \frac{A}{2}e^{-\theta t}.\label{1eq:-18}
\end{align}
This means that 
\begin{align}
\lVert p_{n}(t_{n}^{*})-P(t_{n}^{*})\rVert_{H^{2}}\leq \frac{A}{2}e^{-\theta t_{n}^{*}},\label{1eq:-19}
\end{align}
which means that $t_{n}^{*}$ could be chosen smaller, by continuity.
This is a contradiction.
\end{proof}

Hence, we are left to prove Proposition \ref{1prop:bootstrap}, which will be done in Section \ref{1sec:constr}.

\subsubsection{The $H^2$ multi-breather is smooth}

We now turn to the second part of Theorem \ref{1thm:MAIN}, which is strongly adapted from \cite{key-2}. The heart of this part is to prove uniform estimates in $H^s$ for $p_n - P$, for any $s \geq 0$:

\begin{prop}
\label{1prop:higher}There exists $T^{*}>0$, $\theta>0$, such that
for any $s\geq 0$, there exists $A_{s}\geq 1$ such that for any $n\in\mathbb{N}$
such that $T_{n}\geq  T^{*}$,
\begin{align}
\forall t\in[T^{*},T_{n}],\quad\lVert p_{n}(t)-P(t)\rVert_{H^{s}}\leq  A_{s}e^{-\theta t}.\label{1eq:higher}
\end{align}
\end{prop}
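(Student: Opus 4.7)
My plan is to proceed by induction on $s$, with base case $s = 2$ furnished by Proposition~\ref{1prop:uest}; by interpolation in Sobolev spaces it suffices to treat integer $s$, stepping up by one derivative at each stage.

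Set $\eta_n := p_n - P$. Since each $P_j$ solves \eqref{1mKdV}, $\eta_n$ satisfies
\begin{align*}
\partial_t \eta_n + \partial_x^3 \eta_n + \partial_x\bigl((\eta_n+P)^3 - P^3\bigr) = -G,
\end{align*}
where $G := \partial_x\bigl(P^3 - \sum_j P_j^3\bigr)$ collects the cross terms between objects of distinct velocities. Because the velocities are distinct (see \eqref{1eq:diff}) and each $P_j$ is smooth and spatially decaying, the separation $|x_j(t) - x_{j'}(t)|$ grows linearly in $t$, and therefore $G$ is exponentially small in every Sobolev norm: $\lVert G(t)\rVert_{H^r} \leq C_r e^{-\mu t}$, with $\mu > 0$ depending only on the shape parameters and on the velocity gaps. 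I would fix $\theta>0$ small once and for all with $2\theta \leq \mu$, and use the bootstrap $H^2$ smallness from Proposition~\ref{1prop:uest} throughout.

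For the step from $s$ to $s+1$, I would work with a modified energy
\begin{align*}
\mathcal{E}_{s+1}(t) := \tfrac{1}{2}\int_{\mathbb{R}} (\partial_x^{s+1}\eta_n)^2\,dx + \mathcal{Q}_{s+1}(\eta_n, P),
\end{align*}
where the correction $\mathcal{Q}_{s+1}$ is quadratic in $\eta_n$ with strictly lower derivative count, modelled on the $(s+1)$-st functional in the mKdV conservation law hierarchy. Differentiating in time and using the equation for $\eta_n$, the dispersive term vanishes after integration by parts; the problematic linear-in-$\eta_n$ contribution $\tfrac{3}{2}\int \partial_x(P^2)(\partial_x^{s+1}\eta_n)^2\,dx$ is cancelled by the time derivative of $\mathcal{Q}_{s+1}$; what remains consists of (i) source terms bounded by $C_s e^{-\mu t}\,\mathcal{E}_{s+1}^{1/2}$, (ii) genuinely nonlinear terms weighted by $\lVert \eta_n\rVert_{L^\infty} \lesssim A e^{-\theta t}$ (bootstrap), and (iii) lower-Sobolev-norm terms in $\eta_n$ controlled by the induction hypothesis. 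This produces the differential inequality
\begin{align*}
\Bigl\lvert \tfrac{d}{dt}\mathcal{E}_{s+1}(t)\Bigr\rvert \leq \varepsilon(t)\,\mathcal{E}_{s+1}(t) + D_{s+1} e^{-2\theta t}, \quad t \in [T^*, T_n],
\end{align*}
with $\varepsilon(t) \to 0$ as $t \to \infty$, uniformly in $n$. Since $p_n(T_n) = P(T_n)$ yields $\mathcal{E}_{s+1}(T_n) = 0$, backward integration from $T_n$ through a standard Gronwall argument delivers $\mathcal{E}_{s+1}(t) \leq A_{s+1}^2 e^{-2\theta t}$, and the coercivity $\mathcal{E}_{s+1} \geq c_s \lVert \eta_n \rVert_{H^{s+1}}^2 - C_s \lVert \eta_n\rVert_{H^s}^2$ closes the induction.

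The main obstacle is the construction of $\mathcal{Q}_{s+1}$ so that the unwanted $O(1)$-coefficient term $\tfrac{3}{2}\int \partial_x(P^2)(\partial_x^{s+1}\eta_n)^2\,dx$ cancels at every derivative level: without this cancellation one would only get $K \asymp \lVert P\rVert_{W^{1,\infty}}^2$ not small, and the backward Gronwall argument would demand $2\theta > K$, incompatible with taking $\theta$ uniform across all $s$. The integrable structure of the mKdV hierarchy supplies the needed corrections at every level, in the spirit of Martel's treatment of (gKdV) multi-solitons in \cite{key-2}. The breather content of $P$ introduces no essential new difficulty here since only smoothness and exponential spatial decay of each $P_j$ are used in the higher-regularity estimates, and breathers share both properties with solitons.
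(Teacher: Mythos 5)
Your proposal follows essentially the same route as the paper: induction on integer $s$ (non-integer $s$ by interpolation), a modified energy consisting of $\int(\partial_x^{s}v)^2$ plus a lower-order quadratic correction in $P$ that cancels the non-decaying term proportional to $\int(\partial_x^{s}v)^2\,(P^2)_x$, and backward Gronwall integration from $t=T_n$ where the energy vanishes. The only remark worth making is that the correction need not be drawn from the integrable hierarchy: the paper simply takes $F(t)=\int(\partial_x^{s}v)^2-(2s+1)\int(\partial_x^{s-1}v)^2P^2$, an elementary choice (as in Martel's treatment of (gKdV) multi-solitons) that works without any appeal to integrability.
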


With this improved version of Proposition \ref{1prop:uest}, one can prove by the same reasonning as in the proof of the Proposition \ref{1thm:main}, that for any $s \geq 0$, $p$ actually belongs to $L^\infty([T^*,+\infty), H^s(\mathbb R))$ and that the convergence of $p(t) - P(t)$ occurs in $H^s$ with an exponential decay rate. More precisely, 

\begin{thm}
For any $s \geq 2$, we have that $p\in \mathcal C([T^{*},+\infty),H^{s}(\mathbb{R}))$, and furthermore,
\begin{align}
\forall t\geq  T^{*},\quad\lVert p(t)-P(t)\rVert_{H^{s}}\leq  A_{s}e^{-\theta t}.\label{1eq:-12-1-1-1}
\end{align}
\end{thm}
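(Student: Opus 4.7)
The plan is to follow exactly the argument used to establish Proposition \ref{1thm:main} step by step, with the $H^2$ uniform bound of Proposition \ref{1prop:uest} replaced everywhere by the stronger $H^s$ uniform bound of Proposition \ref{1prop:higher}. Fix $s \geq 2$. First, I would observe that the sequence $(p_n(T^*))_n$ is bounded in $H^s(\mathbb{R})$: indeed $P(T^*) \in H^s$ since solitons and breathers are Schwartz, and Proposition \ref{1prop:higher} gives $\lVert p_n(T^*) - P(T^*) \rVert_{H^s} \leq A_s e^{-\theta T^*}$. Up to extracting a subsequence, there exists a weak limit $p_n(T^*) \rightharpoonup q^*$ in $H^s$. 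On the other hand, the proof of Proposition \ref{1thm:main} has already established $p_n(T^*) \to p(T^*)$ strongly in $H^1$, so by uniqueness of the limit $q^* = p(T^*)$; in particular, $p(T^*) \in H^s$.

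Next, I would invoke the global well-posedness of \eqref{1mKdV} in $H^s$ for $s\geq 2$: the conservation laws $M$, $E$, and $F$ yield an a priori $H^2$-bound, and standard persistence-of-regularity arguments (or the higher conservation laws of the integrable hierarchy, together with the Kenig-Ponce-Vega smoothing estimates recalled in the introduction) propagate this to arbitrary $H^s$ with $s\geq 2$. Consequently, the solution $p$ produced in Proposition \ref{1thm:main} with $p(T^*)=p^*\in H^s$ actually belongs to $\mathcal{C}([T^*,+\infty),H^s(\mathbb{R}))$.

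To derive the exponential decay estimate, I would fix $t \geq T^*$ and use the continuous dependence of \eqref{1mKdV} in $H^1$ on compact time intervals to get $p_n(t) \to p(t)$ strongly in $H^1$. Combined with the uniform bound $\lVert p_n(t) - P(t) \rVert_{H^s} \leq A_s e^{-\theta t}$ of Proposition \ref{1prop:higher}, any weak $H^s$ cluster point of the bounded sequence $(p_n(t) - P(t))_n$ must coincide with $p(t) - P(t)$, so the whole sequence converges weakly in $H^s$ to $p(t) - P(t)$. Weak lower semicontinuity of the $H^s$-norm then yields
\[
\lVert p(t) - P(t) \rVert_{H^s} \leq \liminf_{n \to +\infty} \lVert p_n(t) - P(t) \rVert_{H^s} \leq A_s e^{-\theta t},
\]
which is the desired bound.

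The proof is essentially formal once Proposition \ref{1prop:higher} is available; the only delicate point is the persistence of $H^s$-regularity for the \eqref{1mKdV} flow, which I expect to be the main obstacle in making the argument rigorous, but it is by now classical and can be handled by directly citing the global $H^s$ well-posedness results mentioned in the introduction, supplemented if necessary by a standard bootstrap using the conservation laws of the integrable hierarchy.
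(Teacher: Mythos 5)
Your proposal is correct and follows essentially the same route as the paper, which itself only remarks that the theorem follows ``by the same reasoning as in the proof of Proposition \ref{1thm:main}'' once the uniform $H^{s}$ estimates of Proposition \ref{1prop:higher} are available: weak compactness of $(p_{n}(T^{*}))$ in $H^{s}$, identification of the limit with $p(T^{*})$ via the $H^{1}$ convergence, persistence of $H^{s}$ regularity for the flow, and weak lower semicontinuity of the norm at each fixed time. No discrepancy to report.
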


It remains to prove Proposition \ref{1prop:higher}, which will be done in Section \ref{1sec:smooth}.

\subsubsection{The uniqueness result}

We denote $p$ the multi-breather constructed in the previous sections,
the existence of which is established. Let $u$ be a solution of \eqref{1mKdV}
such that 
\begin{align}
\lVert u- P\rVert_{H^{2}}\rightarrow_{t\rightarrow+\infty} 0.\label{1eq:-111}
\end{align}
Equivalently, there holds:
\begin{align}
\lVert u-p\rVert_{H^{2}}\rightarrow_{t\rightarrow+\infty} 0.\label{1eq:-112}
\end{align}

We denote
\begin{align}
z:=u-p.\label{1eq:-113}
\end{align}
The goal is to prove that $z=0$. We prove it in two configurations: when all the velocities are positive (Theorem \ref{1thm:uniqueness}), and without any assumption on velocities (Proposition \ref{1lem:polyn}), but in this last case we need to assume a stronger convergence than given in (\ref{1eq:-111}).
 
The proof of Theorem \ref{1thm:uniqueness} will be
carried out in two steps.

\smallskip

We start with Proposition \ref{1lem:polyn}, which is adapted from \cite{key-61}. For this, we do not study $u-P$ anymore,
we deal only with $z=u-p$. $z$ is the difference of two solutions of \eqref{1mKdV}, which is much more precise than $u-P$. Thus, we do not modulate parameters of the solitons, as it is needed in other parts of the proof in order to deal with the soliton part of the linear part of the Lyapunov functional, and we avoid some difficulty. In order to prove our inequalities,
we need again to use coercivity of the same type of quadratic forms.
In order to do this, we replace $z$ by $\widetilde{z}=z+\sum_{j=1}^{J} c_j K_j$,
where $K_{j}, \ j=1,...,J$ is a well chosen basis of  the kernel of the quadratic form,
in order to have $\widetilde{z}$ orthogonal to any $K_{j}$. A important idea is to use slow variations of localized functionals with adapted cut-off functions of the form $\varphi \big( \frac{x- vt}{\delta t} \big)$, which provides an extra $O(1/t)$ decay when derivatives fall on the cut-off, and ultimately explain why algebraic decay comes into play.

\smallskip

In the context of Theorem \ref{1thm:uniqueness}, we actually prove that 
\begin{align}
v:=u-P
\end{align}
converges exponentially fast to 0: this is the purpose of Proposition \ref{1prop:conv_exp},  which uses some ideas of \cite{key-2}. Due to Proposition \ref{1lem:polyn}, we deduce immediately from there that an exponential convergence is trivial, that is $z=0$. 

To prove Proposition \ref{1prop:conv_exp}, we use monotonicity properties combined with coercivity of an energy type functional very similar to that used for the existence result. This is why, we also need to modulate, and the choice of the orthogonality condition is essential: it allows
to bound linear terms in $w$ that appear in the computations. An issue  of the mixed breathers/solitons context is that one cannot build a functional adapted to all the nonlinear objects at once, as it is done in \cite{key-2}. Instead, we carry out an induction
and we argue successively around each object, soliton or breather, separately.

%We need, as for the existence to approximate the difference
%$u-P$ by $w=u-\widetilde{P}$ that is orthogonal to $\widetilde{B_{k}}_{1}$
%and $\widetilde{B_{k}}_{2}$ if we are doing our study around a breather,
%and that is orthogonal to $\widetilde{R_{l}}$ and $\widetilde{R_{l}}_{x}$
%if we are doing our study around a soliton, by modulation. As in the
%existence part, we do not take exactly the kernel $Span(\widetilde{R_{l}}_{c},\widetilde{R_{l}}_{x})$
%of the associated quadratic form for the soliton case for the orthogonality
%condition on the modulation, but we use an orthogonality condition
%that still implies the coercivity of the associated quadratic form.
%We need to take this orthogonality condition, because it allows us
%to bound linear terms in $w$ that appear in our calculation, thanks
%to the orthogonality between $w$ and $\widetilde{R_{l}}$. The terms
%that we bound due to this orthogonal condition appear because we
%change the shape parameter of our solitons by modulation. That is
%why, as in the existence part, the choice of this modulation is essential.

\smallskip

\subsubsection{Organisation of the paper}

Sections \ref{1sec:constr} and \ref{1sec:smooth} are devoted to the proof of the existence of a
multi-breather: Proposition  \ref{1prop:bootstrap} is proved in Section \ref{1sec:constr}, Proposition \ref{1prop:higher} is proved in Section \ref{1sec:smooth}. 
Section \ref{1sec:uniq} gathers the proofs of the uniqueness results: Section \ref{1sec:super-pol} is devoted to the proof of Proposition \ref{1lem:polyn}, and Sections \ref{1sec:uniq_mon} and \ref{1sec:uniq_ccl} are devoted to  the proof of Theorem \ref{1thm:uniqueness}.

% Section 4.1 is devoted to the Section 4.2 is concerned with the exponential
%convergence of the constructed multi-breather to our sum of breathers
%$P(t)$ in any $H^{s}$ space. 
%In Section 4.3, we will see that the
%constants $A_{s}$, $T^{*}$ and $\theta$ satisfy, in some sense,
%a uniformity condition, which will be useful to say that they do not
%depend a lot on the initial positions of our solitons and breathers.
%In Section 5 we prove the uniqueness of a multi-breather in the case
%$v_{1}>0$.

\subsection{Acknowledgments}
The author would like to thank his supervisor Raphaël Côte for suggesting the idea of this work, for fruitful discussions and for his useful advice.

\section{Construction of a multi-breather in $H^2(\mathbb{R})$}
\label{1sec:constr}

We set 
\begin{align}
\beta & :=\min\{\beta_{k},1\leq  k\leq  K\}\cup\{\sqrt{c_{l}},1\leq  l\leq  L\},\label{1eq:beta-tau}\\
\tau & :=\min\{v_{j+1}-v_{j},1\leq  j\leq  J-1\}.
\end{align}

Our goal in this section is to prove Proposition \ref{1prop:bootstrap}.

\subsection{Elementary results}

Let us first collect a few basic facts that will be used throughout
the article. One may check an exponential decay result for any of
our objects: 
\begin{prop}
\label{1prop:decay}Let $j=1,...,J$, $n,m\in\mathbb{N}$. Then, there
exists a constant $C>0$ such that for any $t,x\in\mathbb{R}$, 
\begin{align}
\lvert\partial_{x}^{n}\partial_{t}^{m}P_{j}(t,x)\rvert\leq  Ce^{-\beta\lvert x-v_{j}t\rvert}.\label{1eq:expd}
\end{align}
\end{prop}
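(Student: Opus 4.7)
The plan is to treat solitons and breathers separately, exploiting their explicit formulas, and in each case reduce the question to uniform bounds on partial derivatives of the respective profile.

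For a soliton $P_j = R_l$, we have $R_l(t,x) = \kappa_l Q_{c_l}(y)$ with $y = x - x_{0,l}^0 - c_l t$. Since $Q_c(y) = \sqrt{2c}\,\mathrm{sech}(c^{1/2} y)$, and $\mathrm{sech}$ and $\tanh$ are smooth, bounded, and satisfy $\partial\,\mathrm{sech} = -\mathrm{sech}\cdot\tanh$, $\partial\tanh = \mathrm{sech}^2$, a straightforward induction shows that every derivative $Q_c^{(k)}$ is a polynomial in $\mathrm{sech}(c^{1/2}y)$ and $\tanh(c^{1/2}y)$ with a factor $\mathrm{sech}(c^{1/2}y)$. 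Since $\mathrm{sech}(c^{1/2}y) \le 2 e^{-c^{1/2}|y|}$, we get $|Q_c^{(k)}(y)| \le C_k e^{-c^{1/2}|y|}$. For time derivatives, $\partial_t R_l = -c_l \kappa_l Q_{c_l}'(y)$, so $\partial_x^n \partial_t^m R_l = (-c_l)^m \kappa_l Q_{c_l}^{(n+m)}(y)$, and we conclude. Finally, $|x - v_j t| \le |y| + |x_{0,l}^0|$, so $e^{-\sqrt{c_l}|y|} \le e^{\sqrt{c_l}|x_{0,l}^0|} e^{-\sqrt{c_l}|x-v_j t|}$, and since $\beta \le \sqrt{c_l}$, this yields a bound by $C e^{-\beta|x-v_j t|}$.

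For a breather $P_j = B_k$, the profile $B_k(t,x) = 2\sqrt{2}\,\partial_x G(y_1, y_2)$ depends only on $y_1 = x + \delta_k t + x_{1,k}^0$ and $y_2 = x + \gamma_k t + x_{2,k}^0$, with $G(y_1, y_2) = \arctan\bigl((\beta_k/\alpha_k) \sin(\alpha_k y_1)\,\mathrm{sech}(\beta_k y_2)\bigr)$. A direct computation gives
\[
G_{y_1} = \frac{\beta_k \cos(\alpha_k y_1)\,\mathrm{sech}(\beta_k y_2)}{1 + g^2}, \qquad G_{y_2} = -\frac{(\beta_k^2/\alpha_k)\sin(\alpha_k y_1)\,\mathrm{sech}(\beta_k y_2)\tanh(\beta_k y_2)}{1+g^2},
\]
where $g = (\beta_k/\alpha_k)\sin(\alpha_k y_1)\,\mathrm{sech}(\beta_k y_2)$ so that $1+g^2 \ge 1$. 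Hence $|B_k| \le C\,\mathrm{sech}(\beta_k y_2)$. The key observation is stability under further differentiations: an easy induction in $(a,b)$ shows that each $\partial_{y_1}^a \partial_{y_2}^b G$ (for $a+b \ge 1$) is a rational expression in the bounded quantities $\sin(\alpha_k y_1), \cos(\alpha_k y_1), \mathrm{sech}(\beta_k y_2), \tanh(\beta_k y_2)$, with denominator a power of $1+g^2 \ge 1$, and contains at least one factor of $\mathrm{sech}(\beta_k y_2)$ in the numerator (since differentiating in $y_1$ preserves such factors, and differentiating in $y_2$ can only introduce additional ones via $\partial_{y_2}\mathrm{sech} = -\beta_k\mathrm{sech}\cdot\tanh$ and $\partial_{y_2}\tanh = \beta_k \mathrm{sech}^2$). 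Thus $|\partial_{y_1}^a \partial_{y_2}^b G| \le C_{a,b}\,\mathrm{sech}(\beta_k y_2) \le 2 C_{a,b}\, e^{-\beta_k |y_2|}$.

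To pass from derivatives in $(y_1, y_2)$ to derivatives in $(t,x)$, we use $\partial_x = \partial_{y_1} + \partial_{y_2}$ and $\partial_t = \delta_k \partial_{y_1} + \gamma_k \partial_{y_2}$, so $\partial_x^n \partial_t^m B_k$ is a finite linear combination of $\partial_{y_1}^a \partial_{y_2}^b B_k$ with $a+b = n+m$, and each such term enjoys the above bound. Using the triangle inequality $|y_2| \ge |x - v_k^b t| - |x_{2,k}^0|$ and $\beta \le \beta_k$, we obtain the desired estimate for $B_k$. Taking the maximum of the constants over $j=1,\dots,J$ and over $(n,m)$ fixed gives the proposition. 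The argument is entirely computational; the only mild subtlety is the inductive structure of the breather derivatives, which one needs to track in order to see that a $\mathrm{sech}(\beta_k y_2)$ factor persists at every step.
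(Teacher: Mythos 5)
Your proof is correct: the paper states this proposition without proof (``one may check''), and your direct computation from the explicit formulas \eqref{1eq:gr} and \eqref{1eq:} is exactly the verification the authors leave to the reader. The key structural observation --- that every monomial in the numerator of $\partial_{y_1}^a\partial_{y_2}^b G$ retains at least one factor of $\operatorname{sech}(\beta_k y_2)$ under differentiation, while the denominator $(1+g^2)^N$ stays bounded below by $1$ --- is tracked properly, and the passage from $|y_2|$ (resp.\ $|y|$) to $|x-v_jt|$ at the cost of a constant $e^{\beta_k|x_{2,k}^0|}$ (resp.\ $e^{\sqrt{c_l}|x_{0,l}^0|}$) is consistent with Remark \ref{1rm:3}, where the paper notes that $C$ depends on the translation parameters unless one recenters at $x_j(0)$.
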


\begin{cor}
\label{1cor:unifdecay}Let $r>0$. For $t,x$ such that $v_{j}t+r<x<v_{j+1}t-r$,
we have that
\begin{align}
\lvert P(t,x)\rvert\leq  Ce^{-\beta r}.\label{1eq:unifdecay}
\end{align}
The same is true for any space or time derivative of $P$. 
\end{cor}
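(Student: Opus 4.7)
The corollary should follow immediately from Proposition~\ref{1prop:decay} by a clean case split on the index $i$. The plan is as follows.

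First, observe that the region $\{v_j t + r < x < v_{j+1} t - r\}$ is non-empty only when $(v_{j+1}-v_j)t > 2r$, and since $v_{j+1} > v_j$ by the ordering of velocities, this forces $t > 2r/(v_{j+1}-v_j) > 0$. I may therefore assume $t > 0$ throughout.

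Next, fix such $(t,x)$ and any index $i \in \{1,\dots,J\}$. I claim $|x - v_i t| \geq r$. Indeed, if $i \leq j$ then $v_i \leq v_j$, so (using $t > 0$)
\begin{equation*}
x - v_i t \geq x - v_j t > r,
\end{equation*}
whereas if $i \geq j+1$ then $v_i \geq v_{j+1}$, so
\begin{equation*}
x - v_i t \leq x - v_{j+1} t < -r,
\end{equation*}
and in either case $|x - v_i t| > r$.

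Applying Proposition~\ref{1prop:decay} to each $P_i$ (with $n=m=0$) gives $|P_i(t,x)| \leq C_i e^{-\beta |x - v_i t|} \leq C_i e^{-\beta r}$, and summing over $i = 1,\dots,J$ yields the bound $|P(t,x)| \leq C e^{-\beta r}$ after absorbing $J$ and the $C_i$ into a single constant. The argument for a space or time derivative $\partial_x^n \partial_t^m P$ is identical: one applies Proposition~\ref{1prop:decay} with the corresponding $n,m$ to each summand and uses the same bound $|x - v_i t| \geq r$. There is no real obstacle here; the result is essentially a repackaging of Proposition~\ref{1prop:decay} once one checks that, on the specified $x$-region, every soliton or breather center $v_i t$ sits at distance at least $r$ from $x$.
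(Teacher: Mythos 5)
Your proof is correct and is exactly the argument the paper intends: the corollary is stated without proof as an immediate consequence of Proposition \ref{1prop:decay}, and your case split ($i\leq j$ gives $x-v_it>r$, $i\geq j+1$ gives $x-v_it<-r$, both using $t>0$ which the non-emptiness of the region forces) is the standard way to fill it in. Nothing further is needed.
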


We will also use the following cross-product result: 
\begin{prop}
\label{1prop:cross-product}Let $i\neq j\in\{1,...,J\}$ and $m,n\in\mathbb{N}$.
There exists a constant $C$ that depends only on $P$, such that
for any $t\in\mathbb{R}$, 
\begin{align}
\bigg\lvert\int\partial_{x}^{m}P_{i}\partial_{x}^{n}P_{j}\bigg\rvert\leq  Ce^{-\beta\tau t/2}.\label{1eq:cross}
\end{align}
\end{prop}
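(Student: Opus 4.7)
The plan is to reduce the integral to an elementary exponential computation via the pointwise decay stated in Proposition~\ref{1prop:decay}. Without loss of generality, assume $i<j$; then by the definition of $\tau$ and the fact that the velocities are ordered, $v_j - v_i \geq (j-i)\tau \geq \tau$. Applying Proposition~\ref{1prop:decay} to both factors gives
\begin{align*}
\left| \int_{\mathbb{R}} \partial_x^m P_i(t,x)\, \partial_x^n P_j(t,x)\, dx \right| \leq C \int_{\mathbb{R}} e^{-\beta |x - v_i t|}\, e^{-\beta|x - v_j t|}\, dx,
\end{align*}
where $C$ depends only on $m$, $n$ and the shape/translation parameters of $P_i$ and $P_j$, but not on $t$.

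The core step is the triangle-type inequality $|x - v_i t| + |x - v_j t| \geq (v_j - v_i)\, t \geq \tau t$, valid for $t \geq 0$. Splitting each exponent in half and using this bound on one copy yields
\begin{align*}
e^{-\beta(|x - v_i t| + |x - v_j t|)} = e^{-\frac{\beta}{2}(|x - v_i t| + |x - v_j t|)} \cdot e^{-\frac{\beta}{2}(|x - v_i t| + |x - v_j t|)} \leq e^{-\beta \tau t/2} \, e^{-\beta|x - v_i t|/2},
\end{align*}
where on the second factor we dropped the non-negative term $|x - v_j t|/2$. Integrating in $x$ produces $\int_{\mathbb{R}} e^{-\beta|x - v_i t|/2}\, dx = 4/\beta$, so the right-hand side is bounded by $C' e^{-\beta \tau t /2}$ with $C'$ depending only on $P$. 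This proves the proposition for $t\geq 0$.

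For $t \leq 0$ the right-hand side of~(\ref{1eq:cross}) satisfies $e^{-\beta \tau t/2} \geq 1$, whereas the integral on the left-hand side is uniformly bounded in $t$ because each $\partial_x^m P_i$, $\partial_x^n P_j$ remains in a fixed bounded subset of the Schwartz class as $t$ varies (solitons evolve by rigid translation, and breather profiles are time-periodic modulo translation with uniformly controlled Schwartz seminorms). Hence the bound holds trivially there after enlarging $C$ once and for all. The argument is essentially routine and I do not anticipate any substantive obstacle beyond the careful splitting of the exponent chosen to keep the remaining integrand integrable in $x$ while extracting the full factor $e^{-\beta \tau t /2}$.
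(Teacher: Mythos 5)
Your proof is correct, and it is the standard argument the paper has in mind (the paper omits the proof of this proposition as an elementary fact, but the same exponent-splitting computation appears in its Remark~\ref{1rem:If-,rem}). The splitting of $e^{-\beta(|x-v_it|+|x-v_jt|)}$ using $|x-v_it|+|x-v_jt|\geq (v_j-v_i)t\geq\tau t$, together with the trivial uniform bound for $t\leq 0$, is exactly what is needed.
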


There is also an orthogonality result for breathers that will be useful:
\begin{lem}
\label{1lem:orthob}Let $B:=B_{\alpha,\beta}$ be a breather. We denote
$B_{1}:=\partial_{x_{1}}B$ and $B_{2}:=\partial_{x_{2}}B$. Then,
\begin{align}
\int BB_{1}=\int BB_{2}=0.\label{1eq:orthob}
\end{align}
\end{lem}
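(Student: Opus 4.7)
The plan is to deduce both orthogonalities from two scalar identities: translation invariance in $x$ (giving $\int B\,\partial_x B = 0$) and conservation of the mass $M[B]$ along the mKdV flow (giving $\int B\,\partial_t B = 0$), and then to view these as a $2\times 2$ linear homogeneous system in the unknowns $\int BB_1$ and $\int BB_2$.

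The key starting observation is that $B(t,x;x_1,x_2)$ depends on $(t,x,x_1,x_2)$ only through the combinations $y_1 = x+\delta t + x_1$ and $y_2 = x+\gamma t+x_2$, so by the chain rule
\[
\partial_x B = B_1 + B_2, \qquad \partial_t B = \delta\,B_1 + \gamma\,B_2.
\]
Combined with integration by parts (legitimate thanks to the exponential decay provided by Proposition~\ref{1prop:decay}), the first identity gives
\[
0 = \tfrac{1}{2}\int \partial_x(B^2)\,dx = \int BB_1 + \int BB_2.
\]
Since $B$ solves \eqref{1mKdV} classically and decays rapidly together with all its derivatives, $\tfrac{d}{dt}\int B^2 = 0$; pairing this with the second chain-rule identity yields
\[
0 = \int B\,\partial_t B\,dx = \delta\int BB_1+\gamma\int BB_2.
\]

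The determinant of the resulting $2\times 2$ system is $\gamma-\delta = (3\alpha^2-\beta^2)-(\alpha^2-3\beta^2) = 2(\alpha^2+\beta^2)$, which is strictly positive since $\alpha,\beta>0$. Hence the unique solution is $\int BB_1 = \int BB_2 = 0$, as required. There is no real obstacle: the only technical point, namely justifying differentiation under the integral and the integration by parts, is immediate from Proposition~\ref{1prop:decay}. The conceptual content is that the two natural invariances of the breather ($x$-translation and time evolution) act on it as two linearly independent combinations of the shifts in $x_1$ and $x_2$, precisely because the two internal phase speeds $\delta$ and $\gamma$ differ.
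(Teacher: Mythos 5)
Your proof is correct and follows essentially the same route as the paper: the paper observes that $\Span(B_1,B_2)=\Span(B_x,B_t)$ (which is exactly your invertibility of the $2\times 2$ system with determinant $\gamma-\delta=2(\alpha^2+\beta^2)>0$) and then derives $\int BB_x=0$ from $\int(B^2)_x=0$ and $\int BB_t=0$ from mass conservation. You have merely made the change-of-basis step explicit.
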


\begin{proof}
Note that $\Span(B_{1},B_{2})=\Span(B_{x},B_{t})$. Therefore, it is enough
to prove that 
\begin{align}
\int BB_{x}=\int BB_{t}=0.\label{1eq:orthobs}
\end{align}
Firstly, 
\begin{align}
\int BB_{x}=\frac{1}{2}\int\Big(B^{2}\Big)_{x}=0.\label{1eq:orthobx}
\end{align}
Secondly, 
\begin{align}
\int BB_{t}=\frac{1}{2}\int\Big(B^{2}\Big)_{t}=\frac{1}{2}\frac{d}{dt}\int B^{2}=0,\label{1eq:orthobt}
\end{align}
by mass conservation and because a breather is a solution of \eqref{1mKdV}. 
\end{proof}

\subsection{Almost-conservation of localized conservation laws}
\label{1sec:2.2}
From now on, we will fix $n\in\mathbb{N}$. This is why, for the simplicity
of notations, we can write $T$ for $T_{n}$, and $p$ for $p_{n}$.
The goal will be to find constants $T^{*},A>1,\theta$ that do not
depend on $n$, nor on the translation parameters of the given objects,
and that will be chosen later ($T^{*}$ will depend on $A$ and $\theta$), such
that Proposition \ref{1prop:bootstrap} is verified. We will take 
$t^{*}\in[T^{*},T]$, and we will make the following bootstrap assumption
for the remaining of the article: 
\begin{align}
\forall t\in[t^{*},T],\quad\lVert p(t)-P(t)\rVert_{H^{2}}\leq  Ae^{-\theta t},\label{1eq:-28}
\end{align}
where $p(T)=P(T)$. 
\begin{rem}
\label{1rem:bound}We have the following property for solutions of
\eqref{1mKdV}: there exists $C_{0}>0$ such that for any solution $w$
of \eqref{1mKdV}, $w$ is global and 
\begin{align}
\forall t\in\mathbb{R},\quad\lVert w(t)\rVert_{H^{2}}\leq  C_{0}\lVert w(T)\rVert_{H^{2}}.\label{1eq:global}
\end{align}
Therefore, 
\begin{align}
\forall t\in\mathbb{R},\quad\lVert p(t)\rVert_{H^{2}}\leq  C_{0}\lVert P(T)\rVert_{H^{2}}\leq  C_{0}\sum_{j=1}^{J}\lVert P_{j}(T)\rVert_{H^{2}}\leq  C_{0}C,\label{1eq:gb}
\end{align}
where $C$ is a constant that depends only on the problem data (because
the $H^{s}$-norm of solitons or breathers can be easily bounded). 
\end{rem}

Let $\theta:=\frac{\beta\tau}{32}$. Let $\min(1,\frac{\tau}{4})>\delta>0$
be a constant to be chosen later.

This part of the proof is adapted from \cite{key-3}.

Let $\psi(x)$ be a $C^{3}$ function such that 
\begin{align}
0\leq \psi\leq 1\quad\textrm{on }\mathbb{R}, & \quad\psi'\geq 0\quad\textrm{on }\mathbb{R},\label{1eq:psi-1}\\
\psi(x)=0\quad\textrm{for }x\leq -1, & \quad\psi(x)=1\quad\textrm{for }x\geq 1,
\end{align}
and satisfying, for a constant $C>0$, for any $x\in\mathbb{R}$,
\begin{align}
(\psi'(x))^{4/3}\leq  C\psi(x),\quad(\psi'(x))^{4/3}\leq  C(1-\psi(x)),\quad\lvert\psi''(x)\rvert^{3/2}\leq  C\psi'(x).\label{1eq:psi}
\end{align}
Note that it is enough to take $\psi$ that is equal to $(1+x)^{4}$
on a neighbourhood of $-1$ and  equal to $1-(-1+x)^{4}$ on
a neighbourhood of $1$.

These conditions on $\psi$ will be needed for the proof of Proposition
\ref{1lem:coerc}.

For any $j=2,...,J$, let 
\begin{align}
\sigma_{j}:=\frac{1}{2}(v_{j-1}+v_{j}).\label{1eq:meanvel}
\end{align}

For any $j=2,...,J-1$, let 
\begin{align}
\varphi_{j}(t,x):=\psi\bigg(\frac{x-\sigma_{j}t}{\delta t}\bigg)-\psi\bigg(\frac{x-\sigma_{j+1}t}{\delta t}\bigg),\label{1eq:phij}
\end{align}
\begin{align}
\varphi_{1}(t,x):=1-\psi\Big(\frac{x-\sigma_{2}t}{\delta t}\Big),\qquad\varphi_{J}(t,x):=\psi\Big(\frac{x-\sigma_{J}t}{\delta t}\Big),\label{1eq:phij-1}
\end{align}
so that the function $\varphi_{j}$ corresponds obviously to the object
$P_{j}$. We will also use notations $\varphi_{l}^{s}$ and $\varphi_{k}^{b}$,
which represent the same functions, and where $\varphi_{l}^{s}$ corresponds
to the soliton $R_{l}$ and $\varphi_{k}^{b}$ corresponds to the
breather $B_{k}$. 

We will also denote, for $j=2,...,J-1$,
\begin{align}
\varphi_{1,j}(t,x):=\psi'\bigg(\frac{x-\sigma_{j}t}{\delta t}\bigg)-\psi'\bigg(\frac{x-\sigma_{j+1}t}{\delta t}\bigg),\label{1eq:phid}
\end{align}
\begin{align}
\varphi_{1,1}(t,x):=-\psi'\Big(\frac{x-\sigma_{2}t}{\delta t}\Big),\qquad\varphi_{1,J}(t,x):=\psi'\Big(\frac{x-\sigma_{J}t}{\delta t}\Big).\label{1eq:phid-1}
\end{align}
Of course, notations $\varphi_{1,k}^{b}$, $\varphi_{1,l}^{s}$ or
$\varphi_{2,j}$ will be used, with similar obvious definitions.

We have that, for $j=1,...,J$, 
\begin{align}
\lvert\varphi_{1,j}\rvert\leq  C\varphi_{j}^{3/4}.\label{1eq:phineq}
\end{align}

\begin{rem}
\label{1rem:If-,rem}If $\delta\leq \frac{\tau}{4}$, 
\begin{align}
\int_{-\infty}^{\sigma_{j}t+\delta t}e^{-2\beta\lvert x-v_{j}t\rvert}\,dx & =e^{-2\beta v_{j}t}\int_{-\infty}^{\sigma_{j}t+\delta t}e^{2\beta x}\,dx \\
 & =\frac{1}{2\beta}e^{-2\beta v_{j}t}e^{\beta(v_{j}+v_{j-1})t}e^{2\beta\delta t}\\
 & \leq  Ce^{-\beta\tau t}e^{2\beta\delta t} \leq  Ce^{-\beta\tau t/2},\label{1eq:local}
\end{align}
and
\begin{align}
\int_{\sigma_{j+1}t-\delta t}^{+\infty}e^{-2\beta\lvert x-v_{j}t\rvert}\,dx\leq  Ce^{-\beta\tau t/2},\label{1eq:locali}
\end{align}
for the same reason, and if $i\neq j$, e.g. $j>i$,
\begin{align}
\int_{\sigma_{j}t-\delta t}^{\sigma_{j+1}t+\delta t}e^{-2\beta\lvert x-v_{i}t\rvert}\,dx & =e^{2\beta v_{i}t}\int_{\sigma_{j}t-\delta t}^{\sigma_{j+1}t+\delta t}e^{-2\beta x}\,dx \\
 & \leq \frac{1}{2\beta}e^{2\beta v_{i}t}e^{-\beta(v_{j}+v_{j-1})t}e^{2\beta\delta t} \\
 & \leq  Ce^{-\beta\tau t}e^{2\beta\delta t} \leq  Ce^{-\beta\tau t/2}.\label{1eq:locall}
\end{align}
\end{rem}

And finally, we set for all $j=1,...,J$: 
\begin{align}
M_{j}(t) & :=\int\frac{1}{2}p^{2}(t,x)\varphi_{j}(t,x)\,dx=:M_{j}[p](t),\\
 E_{j}(t) & :=\int\bigg(\frac{1}{2}p_{x}^{2}(t,x)-\frac{1}{4}p^{4}(t,x)\bigg)\varphi_{j}(t,x)\,dx=:E_{j}[p](t).\label{1eq:lme}
\end{align}
Notations $M_{l}^{s},M_{k}^{b},E_{l}^{s},E_{k}^{b}$ will also
be used.

These are local versions of the mass and the energy of the solution
$p$ considered (localized around each breather or soliton). We will
prove the following result for the localized mass and energy: 
\begin{lem}
\label{1lem:lcq}There exists $C>0$ and $T_{1}^{*}:=T_{1}^{*}(A)$
such that, if $T^{*}\geq  T_{1}^{*}$, then for any $j=1,...,J$, for
any $t\in[t^{*},T]$, 
\begin{align}
\lvert M_{j}(T)-M_{j}(t)\rvert+\lvert E_{j}(T)-E_{j}(t)\rvert\leq \frac{C}{\delta^{2}t}A^{2}e^{-2\theta t}.\label{1eq:lcq}
\end{align}
\end{lem}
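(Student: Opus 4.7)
\textbf{Proof proposal for Lemma \ref{1lem:lcq}.}

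The plan is to differentiate $M_j$ and $E_j$ in time and show that $|\frac{d}{ds} M_j(s)| + |\frac{d}{ds} E_j(s)| \lesssim \frac{A^2}{\delta^2 s^2} e^{-2\theta s}$, then integrate on $[t,T]$. The output size $\frac{C}{\delta^2 t} A^2 e^{-2\theta t}$ comes from the standard estimate $\int_t^\infty \frac{1}{s^2} e^{-2\theta s}\,ds \le \frac{C_\theta}{t}\, e^{-2\theta t}$.

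First I would compute $\frac{d}{ds} M_j$. Using $p_t = -p_{xxx}-(p^3)_x$ and integrating by parts to move all spatial derivatives onto $\varphi_j$, one obtains the pointwise identity
\begin{equation*}
\frac{d}{ds} M_j = -\frac{3}{2}\int p_x^2\, \partial_x \varphi_j + \frac{1}{2}\int p^2\, \partial_x^3 \varphi_j + \frac{1}{4}\int p^4\, \partial_x \varphi_j + \frac{1}{2}\int p^2\, \partial_s \varphi_j.
\end{equation*}
A similar (longer but routine) integration by parts gives an analogous identity for $\frac{d}{ds}E_j$, where again every term carries a factor of $\partial_x^k \varphi_j$ ($k\ge 1$) or $\partial_s \varphi_j$. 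The key point is that no term involves $\varphi_j$ itself --- every integrand is localized in the support of some derivative of $\varphi_j$.

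Next I would exploit two features of these cutoff derivatives. On the one hand, from the explicit definition \eqref{1eq:phij}--\eqref{1eq:phid-1} and the chain rule $\partial_s \psi\bigl(\tfrac{x-\sigma s}{\delta s}\bigr) = -\frac{x}{\delta s^2}\psi'(\cdot)$, together with $|x|\le C s$ on the support of $\psi'$, one has the bounds
\begin{equation*}
|\partial_s \varphi_j| + \tfrac{1}{\delta s}|\partial_x \varphi_j| + \tfrac{1}{(\delta s)^2}|\partial_x^2 \varphi_j| + \tfrac{1}{(\delta s)^3}|\partial_x^3 \varphi_j| \ \le\ \frac{C}{\delta s} \mathbf 1_{\Omega_j(s)},
\end{equation*}
where $\Omega_j(s)$ denotes the (at most two) strips $\{|x-\sigma_i s|\le \delta s\}$ appearing in $\varphi_j$. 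On the other hand, since $\delta \le \tau/4$, each $\sigma_i$ is at distance at least $\tau/2$ from every velocity $v_i$, and Corollary \ref{1cor:unifdecay} together with Remark \ref{1rem:If-,rem} gives, for any $k$,
\begin{equation*}
\|P(s)\|_{L^\infty(\Omega_j(s))} + \|P_x(s)\|_{L^\infty(\Omega_j(s))} + \|P(s)\|_{L^2(\Omega_j(s))} + \|P_x(s)\|_{L^2(\Omega_j(s))} \ \le\ C e^{-\beta \tau s/4}.
\end{equation*}

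Now I would write $p = P + \eta$ with $\eta := p - P$, and use the bootstrap $\|\eta(s)\|_{H^2} \le A e^{-\theta s}$ together with the Sobolev embedding $H^1 \hookrightarrow L^\infty$ to get $\|\eta(s)\|_{L^\infty} \le A e^{-\theta s}$. Expanding $p^2, p_x^2, p^4$ as $(P+\eta)^k$ and using the global bound $\|p(s)\|_{H^2} \le C$ from Remark \ref{1rem:bound} (which controls cubic factors in $L^\infty$), every resulting term is either purely in $P$ (bounded by $e^{-\beta\tau s/2}$) or contains at least one factor of $\eta$ (bounded by $A e^{-\theta s}$, squared once we pair it with another small factor). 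Since $2\theta = \beta\tau/16 < \beta\tau/2$, and $A\ge 1$, the $P$-only contribution is absorbed into $A^2 e^{-2\theta s}$ provided $T^*$ is large enough (depending on $A$ through $\theta$, hence on the problem data). Altogether one gets
\begin{equation*}
\Big|\tfrac{d}{ds} M_j(s)\Big| + \Big|\tfrac{d}{ds} E_j(s)\Big| \ \le\ \frac{C}{\delta s}\cdot \frac{1}{\delta s}\cdot A^2 e^{-2\theta s} = \frac{C A^2}{\delta^2 s^2} e^{-2\theta s},
\end{equation*}
where the extra $\frac{1}{\delta s}$ comes from using $(\partial_x\varphi_j)^{4/3}\le C \varphi_j^{ }$ style pointwise inequalities \eqref{1eq:phineq} only where needed (for the lowest-order $\partial_x\varphi_j$ terms, one additional $\frac{1}{\delta s}$ factor is produced by integrating against the localized $L^2$ mass of $\eta$ on $\Omega_j$, which has length $\lesssim \delta s$, but in fact the simpler argument just uses $\|\eta\|_{L^2}\|\eta_x\|_{L^2}\le A^2 e^{-2\theta s}$ and the sup-norm bound on $\partial_x\varphi_j$).

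Finally, integrating on $[t,T]$:
\begin{equation*}
|M_j(T)-M_j(t)| + |E_j(T)-E_j(t)| \ \le\ \int_t^T \frac{C A^2}{\delta^2 s^2} e^{-2\theta s}\,ds \ \le\ \frac{C A^2}{\delta^2 t}\, e^{-2\theta t},
\end{equation*}
which is the claim.

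The delicate point is bookkeeping the $\delta$-dependence and the correct power of $1/s$: one must be careful that the cubic and quartic nonlinear terms, once written in $P+\eta$, do not produce extra powers that break either the $A^2$ scaling or the $e^{-2\theta t}$ decay. The choice $\theta = \beta\tau/32$ leaves a comfortable margin against the spatial decay rate $\beta\tau/4$ in $\Omega_j$, so all ``$P$-only'' remainders are absorbed; the cubic cross terms $\int P\eta^2\partial_x\varphi_j$ type are handled by $\|P\|_{L^\infty(\Omega_j)}\le C e^{-\beta\tau s/4}$ combined with $\|\eta\|_{L^2}^2\le A^2 e^{-2\theta s}$, giving decay strictly better than $A^2 e^{-2\theta s}$.
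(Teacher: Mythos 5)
Your strategy is the same as the paper's: differentiate the localized mass and energy, observe that every term carries a derivative of the cutoff and is therefore supported in the transition strips $\Omega_j(s)$, split $p=P+\eta$ there, use the exponential smallness of $P$ on $\Omega_j(s)$ (Remark \ref{1rem:If-,rem}) together with the bootstrap bound $\lVert\eta\rVert_{H^2}\le Ae^{-\theta s}$, and integrate in time. Two points deserve correction, though neither is fatal. First, your claimed derivative bound $\lvert\frac{d}{ds}M_j\rvert+\lvert\frac{d}{ds}E_j\rvert\le CA^2\delta^{-2}s^{-2}e^{-2\theta s}$ is not justified: the terms carrying a single $\partial_x\varphi_j$ only produce one factor $\frac{1}{\delta s}$, and the ``extra $\frac{1}{\delta s}$ from integrating against the localized $L^2$ mass of $\eta$ on $\Omega_j$, which has length $\lesssim\delta s$'' is not a gain at all — restricting an $L^2$ norm to a long interval does not make it smaller, so $\int_{\Omega_j}\eta^2$ is only bounded by $A^2e^{-2\theta s}$, and the honest bound is $\frac{C}{\delta^2 s}A^2e^{-2\theta s}$ (as in the paper, using $\delta\le 1$ and $\delta s\ge 1$ for $s\ge T_1^*$ to absorb the $(\delta s)^{-2}$ terms). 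This is harmless for the conclusion, since $\int_t^T s^{-1}e^{-2\theta s}\,ds\le t^{-1}(2\theta)^{-1}e^{-2\theta t}$ already yields \eqref{1eq:lcq}; you should just not claim the $s^{-2}$ rate. Second, a minor slip: the coefficient of $\int p^4\,\partial_x\varphi_j$ in the mass identity is $\frac{3}{4}$, not $\frac{1}{4}$ (cf.\ the identity quoted from \cite{key-2}); this is immaterial since only absolute values enter. With these repairs your argument coincides with the paper's proof.
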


\begin{proof}
We will use the results of the computations made on the bottom of
page 1115 and on the bottom of page 1116 of \cite{key-2}
to claim the following facts: 
\begin{align}
\frac{d}{dt}\frac{1}{2}\int p^{2}f & =\int\bigg(-\frac{3}{2}p_{x}^{2}+\frac{3}{4}p^{4}\bigg)f'-\int p_{x}pf'',\\
\frac{d}{dt}\int\bigg[\frac{1}{2}p_{x}^{2}-\frac{1}{4}p^{4}\bigg]f & =\int\bigg[-\frac{1}{2}(p_{xx}+p^{3})^{2}-p_{xx}^{2}+3p_{x}^{2}p^{2}\bigg]f'\\
&\quad -\int p_{xx}p_{x}f'',\label{1eq:me}
\end{align}
where $f$ is a $C^{2}$ function that does not depend on time.

$M_{j}(t)$ is a sum of quantities of the form $\frac{1}{2}\int p^{2}\psi(\frac{x-\sigma_{j}t}{\delta t})$. This is why, we compute:
\begin{align}
\MoveEqLeft\frac{d}{dt}\frac{1}{2}\int p^{2}\psi\bigg(\frac{x-\sigma_{j}t}{\delta t}\bigg)  =\frac{1}{\delta t}\int\bigg(-\frac{3}{2}p_{x}^{2}+\frac{3}{4}p^{4}\bigg)\psi'\bigg(\frac{x-\sigma_{j}t}{\delta t}\bigg) \\
 & -\frac{1}{(\delta t)^{2}}\int p_{x}p\psi''\bigg(\frac{x-\sigma_{j}t}{\delta t}\bigg)-\frac{1}{2}\int p^{2}\frac{x}{\delta t^{2}}\psi'\bigg(\frac{x-\sigma_{j}t}{\delta t}\bigg).\label{1eq:mt}
\end{align}
$\psi'\big(\frac{x-\sigma_{j}t}{\delta t}\big)$ is zero outside of $\Omega_{j}(t):=(-\delta t+\sigma_{j}t,\delta t+\sigma_{j}t)$.
Thus, for $x\in\Omega_{j}(t)$, $\big\lvert\frac{x}{t}\big\rvert\leq \lvert\sigma_{j}\rvert+\lvert\delta\rvert\leq \lvert\sigma_{j}\rvert+1$,
this means that $\lvert\frac{x}{t}\rvert$ is bounded by a constant
(that depends only on the given parameters). We can deduce that 
\begin{align}
\bigg\lvert\frac{d}{dt}\frac{1}{2}\int p^{2}\psi\bigg(\frac{x-\sigma_{j}t}{\delta t}\bigg)\bigg\rvert\leq \frac{C}{\delta^{2}t}\Bigg(\int_{\Omega_{j}(t)}p_{x}^{2}+\int_{\Omega_{j}(t)}p^{4}+\int_{\Omega_{j}(t)}p^{2}\Bigg).\label{1eq:mtc}
\end{align}
We bound $\int_{\Omega_{j}(t)}p^{4}$ by the following way: 
\begin{align}
\int_{\Omega_{j}(t)}p^{4} & \leq \lVert p\rVert_{L^{\infty}}^{2}\int_{\Omega_{j}(t)}p^{2} \\
 & \leq  C\lVert p\rVert_{H^{1}}^{2}\int_{\Omega_{j}(t)}p^{2}\qquad\textrm{by Sobolev embedding} \\
 & \leq  C\int_{\Omega_{j}(t)}p^{2}\qquad\textrm{by Remark \ref{1rem:bound}.}\label{1eq:42}
\end{align}
Thus, we have for any $t\in[t^{*},T]$, 
\begin{align}
\bigg\lvert\frac{d}{dt}\frac{1}{2}\int p^{2}\psi\bigg(\frac{x-\sigma_{j}t}{\delta t}\bigg)\bigg\rvert\leq \frac{C}{\delta^{2}t}\Bigg(\int_{\Omega_{j}(t)}p_{x}^{2}+\int_{\Omega_{j}(t)}p^{2}\Bigg).\label{1eq:mtc-1}
\end{align}

$E_{j}(t)$ is a sum of quantities of the form $\int[\frac{1}{2}p_{x}^{2}-\frac{1}{4}p^{4}]\psi(\frac{x-\sigma_{j}t}{\delta t})$. So, we compute:
\begin{align}
\MoveEqLeft\frac{d}{dt}\int\bigg[\frac{1}{2}p_{x}^{2}-\frac{1}{4}p^{4}\bigg]\psi\bigg(\frac{x-\sigma_{j}t}{\delta t}\bigg)\\
&  =\frac{1}{\delta t}\int\bigg[-\frac{1}{2}(p_{xx}+p^{3})^{2}-p_{xx}^{2}+3p_{x}^{2}p\bigg]\psi'\bigg(\frac{x-\sigma_{j}t}{\delta t}\bigg) \\
 & \quad-\frac{1}{(\delta t)^{2}}\int p_{xx}p_{x}\psi''\bigg(\frac{x-\sigma_{j}t}{\delta t}\bigg)-\int\bigg[\frac{1}{2}p_{x}^{2}-\frac{1}{4}p^{4}\bigg]\frac{x}{\delta t^{2}}\psi'\bigg(\frac{x-\sigma_{j}t}{\delta t}\bigg).\label{1eq:et}
\end{align}
We deduce from this, by using similar arguments as for the mass, that
for any $t\in[t^{*},T]$, 
\begin{align}
\bigg\lvert\frac{d}{dt}\int\bigg[\frac{1}{2}p_{x}^{2}-\frac{1}{4}p^{4}\bigg]\psi\bigg(\frac{x-\sigma_{j}t}{\delta t}\bigg)\bigg\rvert\leq \frac{C}{\delta^{2}t}\Bigg(\int_{\Omega_{j}(t)}p^{2}+\int_{\Omega_{j}(t)}p_{x}^{2}+\int_{\Omega_{j}(t)}p_{xx}^{2}\Bigg).\label{1eq:etc}
\end{align}

Now, we write $p(t)=P(t)+(p(t)-P(t))$, and we use the triangular
inequality:
\begin{align}
\int_{\Omega_{j}(t)}\big(p^{2}+p_{x}^{2}+p_{xx}^{2}\big)\leq 2\int_{\Omega_{j}(t)}\big(P^{2}+P_{x}^{2}+P_{xx}^{2}\big)+2\lVert p-P\rVert_{H^{2}}^{2}.\label{1eq:cct}
\end{align}
We have assumed that $\lVert p-P\rVert_{H^{2}}^{2}\leq  A^{2}e^{-2\theta t}$, so
we need to study $P$ on $\Omega_{j}(t)$. The following computations
work also for the derivatives of $P$:
\begin{align}
\int_{\Omega_{j}(t)}P^{2} & =\int_{\Omega_{j}(t)}\Bigg(\sum_{m=1}^{J}P_{m}(t,x)\Bigg)^{2}\,dx=\sum_{1\leq  m,l\leq  J}\int_{\Omega_{j}(t)}P_{m}(t,x)P_{l}(t,x)\,dx \\
 & \leq  C\sum_{1\leq  m,l\leq  J}\int_{\Omega_{j}(t)}e^{-\beta\lvert x-v_{m}t\rvert}e^{-\beta\lvert x-v_{l}t\rvert}\,dx,\label{1eq:P2}
\end{align}
where we use the Proposition \ref{1prop:decay}.

We assume that $m\geq  j$ (we argue similarly if $m\leq  j-1$). Then,
\begin{align}
x\in\Omega_{j}(t) & \Leftrightarrow-\delta t+\sigma_{j}t\leq  x\leq \delta t+\sigma_{j}t \\
 & \Leftrightarrow-\delta t+(\sigma_{j}-v_{m})t\leq  x-v_{m}t\leq \delta t+(\sigma_{j}-v_{m})t.\label{1eq:local?}
\end{align}
We note that $\sigma_{j}-v_{m}\leq -\frac{1}{2}\tau<0$, we can
thus deduce from the condition on $\delta$ that $\sigma_{j}-v_{m}+\delta\leq -\frac{1}{4}\tau<0$.
We deduce that $x-v_{m}t$ is negative for $x\in\Omega_{j}(t)$.
Similarly, if $m\leq  j-1$, $x-v_{m}t$ is positive for $x\in\Omega_{j}(t)$.
We will now make calculations for different cases.
If $m,l\leq  j-1$, 
\begin{align}
\int_{\Omega_{j}(t)}e^{-\beta\lvert x-v_{m}t\rvert}e^{-\beta\lvert x-v_{l}t\rvert}\,dx & \leq \int_{\Omega_{j}(t)}e^{-\beta(x-v_{m}t)}e^{-\beta(x-v_{l}t)}\,dx \\
 & =\frac{1}{2\beta}e^{\beta t(-v_{j}-v_{j-1}+v_{m}+v_{l})}(e^{2\beta\delta t}-e^{-2\beta\delta t}) \\
 & \leq  Ce^{\beta t(-v_{j}-v_{j-1}+v_{m}+v_{l}+2\delta)} \leq  Ce^{-\beta\tau t/2}.\label{1eq:crl}
\end{align}
Similarly, if $m,l\geq  j$, 
\begin{align}
\int_{\Omega_{j}(t)}e^{-\beta\lvert x-v_{m}t\rvert}e^{-\beta\lvert x-v_{l}t\rvert}\,dx\leq  Ce^{-\beta\tau t/2}.\label{1eq:crl-1}
\end{align}
And, if $m\leq j-1,l\geq  j$, 
\begin{align}
\int_{\Omega_{j}(t)}e^{-\beta\lvert x-v_{m}t\rvert}e^{-\beta\lvert x-v_{l}t\rvert}\,dx & \leq \int_{\Omega_{j}(t)}e^{-\beta(x-v_{m}t)}e^{\beta(x-v_{l}t)}\,dx \\
 & \leq 2\delta te^{\beta t(v_{m}-v_{l})} \leq  Ce^{-\frac{\beta\tau t}{2}}.\label{1eq:crl-2}
\end{align}
Thus, 
\begin{align}
\int_{\Omega_{j}(t)}P^{2}\leq  Ce^{-\frac{\beta\tau t}{2}},\label{1eq:crl-3}
\end{align}
and the same is valid for the derivatives of $P$.

Thus, for $t\in[t^{*},T]$, 
\begin{align}
\MoveEqLeft\sum_{j=1}^{J}\bigg\lvert\frac{d}{dt}\frac{1}{2}\int p^{2}\psi\bigg(\frac{x-\sigma_{j}t}{\delta t}\bigg)\bigg\rvert+\bigg\lvert\frac{d}{dt}\int\bigg[\frac{1}{2}p_{x}^{2}-\frac{1}{4}p^{4}\bigg]\psi\bigg(\frac{x-\sigma_{j}t}{\delta t}\bigg)\bigg\rvert \\
 & \leq \frac{C}{\delta^{2}t}A^{2}e^{-2\theta t}+\frac{C}{\delta^{2}t}e^{-\frac{\beta\tau t}{2}} \leq \frac{C}{\delta^{2}t}(A^{2}+e^{-2\theta t})e^{-2\theta t} \leq \frac{C}{\delta^{2}t}A^{2}e^{-2\theta t}.\label{1eq:lcqc}
\end{align}
Thus, for $j=1,...,J$, $t\in[t^{*},T]$, 
\begin{align}
\MoveEqLeft\lvert M_{j}(T)-M_{j}(t)\rvert+\lvert E_{j}(T)-E_{j}(t)\rvert\\
 & \leq \int_{t}^{T}\frac{C}{\delta^{2}s}A^{2}e^{-2\theta s}\,ds \leq \frac{C}{\delta^{2}t}A^{2}\int_{t}^{T}e^{-2\theta s}\,ds \\
 & =\frac{C}{\delta^{2}t}A^{2}\frac{1}{2\theta}(e^{-2\theta t}-e^{-2\theta T}) \leq \frac{C}{\delta^{2}t}A^{2}e^{-2\theta t}.\label{1eq:lcqci}
\end{align}
\end{proof}

\subsection{Modulation} \label{1sec:2.3}
\begin{lem}
\label{1lem:mod}There exists $C>0$, $T_{2}^{*}=T_{2}^{*}(A)$ such
that, if $T^{*}>T_{2}^{*}$, then there exist unique $C^{1}$ functions
$x_{1,k}:[t^{*},T]\rightarrow\mathbb{R}$, $x_{2,k}:[t^{*},T]\rightarrow\mathbb{R}$
for $1\leq  k\leq  K$ and $x_{0,l}:[t^{*},T]\rightarrow\mathbb{R}$,
$c_{0,l}:[t^{*},T]\rightarrow\mathbb{\mathbb{R}}$, such that if we
set 
\begin{align}
\varepsilon(t,x)=p(t,x)-\widetilde{B}(t,x)-\widetilde{R}(t,x)=p(t,x)-\widetilde{P}(t,x),\label{1eq:md}
\end{align}
where, for $1\leq k\leq K$,
\begin{align}
\widetilde{B}(t,x)=\sum_{k=1}^{K}\widetilde{B_{k}}(t),\quad\widetilde{B_{k}}(t,x)=B_{\alpha_{k},\beta_{k}}(t,x;x_{1,k}^{0}+x_{1,k}(t),x_{2,k}^{0}+x_{2,k}(t)),\label{1mb}
\end{align}
for $1\leq l\leq L$,
\begin{align}
\widetilde{R}(t,x):=\sum_{l=1}^{L}\widetilde{R_{l}}(t),\quad\widetilde{R_{l}}(t,x):=\kappa_lQ_{c_{l}+c_{0,l}(t)}(x-x_{0,l}^0+x_{0,l}(t)-c_lt),\label{1eq:mr}
\end{align}
\begin{align}
\widetilde{P}(t):=\widetilde{R}(t)+\widetilde{B}(t),\label{1eq:mp}
\end{align}
and 
\begin{align}
\widetilde{P}(t):=\sum_{j=1}^{J}\widetilde{P_{j}}(t),\label{1eq:mpj}
\end{align}
where there is the usual correspondence between $\widetilde{P_{j}}$
and $\widetilde{B_{k}}$ or $\widetilde{R_{l}}$,

then, $\varepsilon(t)$ satisfies, for any $k=1,...,K$, for any $l=1,...,L$
and for any $t\in[t^{*},T]$, 
\begin{align}
\int\widetilde{R_{l}}(t)\varepsilon(t)\sqrt{\varphi_{l}^{s}(t)}=\int\partial_{x}\widetilde{R_{l}}(t)\varepsilon(t)\sqrt{\varphi_{l}^{s}(t)} & =0,\\
\int\partial_{x_{1}}\widetilde{B_{k}}(t)\varepsilon(t)\sqrt{\varphi_{k}^{b}(t)}=\int\partial_{x_{2}}\widetilde{B_{k}}(t)\varepsilon(t)\sqrt{\varphi_{k}^{b}(t)} & =0.\label{1eq:orthom}
\end{align}

Moreover, for any $t\in[t^{*},T]$, 
\begin{align}
\lVert\varepsilon(t)\rVert_{H^{2}}+\sum_{k=1}^{K}(\lvert x_{1,k}(t)\rvert+\lvert x_{2,k}(t)\rvert)+\sum_{l=1}^{L}(\lvert x_{0,l}(t)\rvert+\lvert c_{0,l}(t)\rvert)\leq  CAe^{-\theta t},\label{1eq:ineg1}
\end{align}
and 
\begin{align}
\sum_{k=1}^{K}(\lvert x_{1,k}'(t)\rvert+\lvert x_{2,k}'(t)\rvert)+\sum_{l=1}^{L}(\lvert x_{0,l}'(t)\rvert+\lvert c_{0,l}'(t)\rvert)\leq  C\lVert\varepsilon(t)\rVert_{L^{2}}+Ce^{-\theta t}.\label{1eq:ineg2}
\end{align}
Finally, $p(T)=P(T)=\widetilde{P}(T)$ and $\varepsilon(T)=x_{0,l}(T)=x_{1,k}(T)=x_{2,k}(T)=c_{0,l}(T)=0$. 
\end{lem}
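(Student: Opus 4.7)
My plan is to apply the implicit function theorem backward from $t=T$: since $p(T)=P(T)$, setting all modulation parameters to $0$ at $t=T$ yields $\widetilde P(T)=P(T)$ and $\varepsilon(T)=0$, which trivially satisfies (\ref{1eq:orthom}). I would introduce the $C^1$ map $\Psi\colon \mathbb R^{2J}\times [t^*,T]\to \mathbb R^{2J}$ whose components are the $2J$ integrals on the left-hand sides of (\ref{1eq:orthom}), viewed as functions of the modulation parameters $X$ and of time; the task then reduces to solving $\Psi(X(t),t)=0$ on $[t^*,T]$ with $X(T)=0$ and to extracting the quantitative estimates (\ref{1eq:ineg1})--(\ref{1eq:ineg2}).

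The crucial step is the uniform invertibility of the parameter-Jacobian $D_X\Psi$ at the reference point (zero parameters, $\varepsilon=0$). Because the $P_j$ have distinct velocities and the cut-offs $\sqrt{\varphi_j}$ are essentially disjoint (with transitions of width $\delta t$ and $\delta<\tau/4$), the entries of $D_X\Psi$ coupling two different objects are $O(e^{-\beta\tau t/2})$ by Proposition \ref{1prop:cross-product} and Remark \ref{1rem:If-,rem}. Each diagonal $2\times 2$ block reduces, up to the same exponential error and to the replacement of $\sqrt{\varphi_j}$ by $1$ on the support of $P_j$, to an unweighted tangent-space computation: for a soliton $R_l$ the block is anti-diagonal, its nontrivial entries being $-\tfrac 12\partial_c\lVert Q_c\rVert_{L^2}^2\big|_{c=c_l}$ (nonzero by the Weinstein-type identity, which here can be checked explicitly from (\ref{1eq:gr})) and $-\lVert Q_{c_l}'\rVert_{L^2}^2$, while the diagonal entries vanish by parity and by an integration by parts absorbing $\partial_x\sqrt{\varphi_l^s}$; for a breather $B_k$ the block is the $L^2$ Gram matrix of $(\partial_{x_1}B_k,\partial_{x_2}B_k)$, which is positive definite since these tangent vectors are linearly independent (one reads this off the explicit formula (\ref{1eq:})). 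Choosing $T^*$ large absorbs the off-block-diagonal perturbations and yields a uniformly bounded inverse.

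With this in hand, the implicit function theorem provides a $C^1$ solution $X(t)$ on a neighborhood of $T$, which I extend to all of $[t^*,T]$ by continuation so long as $X$ stays small. The size bound (\ref{1eq:ineg1}) follows from a Taylor expansion of $\Psi$ around the reference: $\Psi(0,t)$ is the vector of orthogonality integrals of $p-P$ itself, hence bounded by $CAe^{-\theta t}$ from the bootstrap hypothesis, so inverting the Jacobian gives $\lvert X(t)\rvert \lesssim Ae^{-\theta t}$, and the $H^2$ bound on $\varepsilon$ follows since $\widetilde P - P$ is Lipschitz in $X$. For (\ref{1eq:ineg2}) I would differentiate each orthogonality condition in $t$, replace $\partial_t p$ via \eqref{1mKdV} and $\partial_t\widetilde P$ via the chain rule (using that each unmodulated $\widetilde P_j$ is an exact solution of \eqref{1mKdV}, so the residual is linear in $X'$), integrate by parts to transfer all derivatives onto smooth tangent vectors, and invert the resulting linear system for $X'$. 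Its matrix is essentially the same Jacobian and its source splits into a contribution bounded by $C\lVert\varepsilon\rVert_{L^2}$ plus an $O(e^{-\theta t})$ interaction term handled exactly as in Lemma \ref{1lem:lcq}.

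The step I expect to be the main obstacle is the uniform non-degeneracy of the breather block over the long interval $[t^*,T]$: because $B_k$ is time-periodic in its comoving frame, one must ensure that the weighted Gram matrix $\big(\int \partial_{x_i}B_k\, \partial_{x_j}B_k \sqrt{\varphi_k^b}\big)_{i,j=1,2}$ remains bounded away from singular for all $t$, not merely for each fixed $t$. The translation invariance of the $L^2$ inner product, the fact that $\sqrt{\varphi_k^b}\approx 1$ on the support of $B_k$, and a continuity-plus-periodicity argument over one temporal period of $B_k$ close this last point and complete the proof.
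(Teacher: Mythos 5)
Your proposal is correct and follows essentially the same route as the paper: a (uniform) implicit function theorem applied to the vector of weighted orthogonality integrals, invertibility of the block-diagonal Jacobian up to exponentially small cross terms (with the breather block handled by linear independence plus time-periodicity and the soliton block by the explicit scaling/Weinstein computation), the mean value theorem for \eqref{1eq:ineg1}, and time-differentiation of the orthogonality relations combined with the equation for $\varepsilon$ for \eqref{1eq:ineg2}. The only cosmetic difference is that you order the soliton equations so that its $2\times 2$ block is anti-diagonal, whereas the paper's ordering makes it essentially diagonal; the resulting determinant is the same up to sign.
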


\begin{proof}[Proof: see for example \cite{key-7} for reference]
Let, for $t\in[t^{*},T]$, 
\begin{align}
F_{t}:L^{2}(\mathbb{R})\times\mathbb{R}^{2K}\times\mathbb{R}^{2L}\rightarrow\mathbb{R}^{2K+2L},\label{1eq:Ft}
\end{align}
such that
\begin{align}
\MoveEqLeft (w,x_{1,k},x_{2,k},x_{0,l},c_{0,l}) \\
 & \longmapsto\Bigg(\int\sqrt{\varphi_{k}^{b}(t,x)}\partial_{x_{1}}B_{\alpha_{k},\beta_{k}}(t,x;x_{1,k}^{0}+x_{1,k},x_{2,k}^{0}+x_{2,k})\epsilon, \\
 & \qquad\int\sqrt{\varphi_{k}^{b}(t,x)}\partial_{x_{2}}B_{\alpha_{k},\beta_{k}}(t,x;x_{1,k}^{0}+x_{1,k},x_{2,k}^{0}+x_{2,k})\epsilon, \\
 & \qquad\int\sqrt{\varphi_{l}^{s}(t,x)}\partial_{x}\kappa_lQ_{c_{l}+c_{0,l}}(x-x_{0,l}^0+x_{0,l}-c_lt)\epsilon, \\
 & \qquad\int\sqrt{\varphi_{l}^{s}(t,x)}\kappa_lQ_{c_{l}+c_{0,l}}(x-x_{0,l}^0+x_{0,l}-c_lt)\epsilon\Bigg),\label{1Ftdef}
\end{align}
where
\begin{align}
\epsilon & :=w-\sum_{m=1}^{K}B_{\alpha_{m},\beta_{m}}(t,x;x_{1,m}^{0}+x_{1,m},x_{2,m}^{0}+x_{2,m})\\
&\quad -\sum_{n=1}^{L}\kappa_nQ_{c_{n}+c_{0,n}}(x-x_{0,n}^0+x_{0,n}-c_nt).
\end{align}

We observe that $F_{t}$ is a $C^{1}$ function and that $F_{t}(P(t),0,0,0,0)=0$.
Now, let us consider the matrix which gives the differential of $F_{t}$
(with respect to $x_{1,k},x_{2,k},x_{0,l},c_{0,l}$) in $(P(t),0,0,0,0)$
(we consider diagonal and extra-diagonal terms for each bloc): 
\begin{align}
DF_t=
\begin{pmatrix}
B_{k,k}^{1} & B_{k,k}^{3} & \times & \times & \times & \times & \times & \times\\
B_{k,k}^{3} & B_{k,k}^{2} & \times & \times & \times & \times & \times & \times\\
\times & \times & B_{k',k'}^{1} & B_{k',k'}^{3} & \times & \times & \times & \times\\
\times & \times & B_{k',k'}^{3} & B_{k',k'}^{2} & \times & \times & \times & \times\\
\times & \times & \times & \times & R_{l,l}^{1} & R_{l,l}^{4} & \times & \times\\
\times & \times & \times & \times & R_{l,l}^{3} & R_{l,l}^{2} & \times & \times\\
\times & \times & \times & \times & \times & \times & R_{l',l'}^{1} & R_{l',l'}^{4}\\
\times & \times & \times & \times & \times & \times & R_{l',l'}^{3} & R_{l',l'}^{2}
\end{pmatrix}
,\label{1matr}
\end{align}
where
\begin{align}
& B_{k,k}^{1}:=-\int\big(\partial_{x_{1}}B_{\alpha_{k},\beta_{k}}\big)^{2}\sqrt{\varphi_{k}^{b}},\quad B_{k,k}^{2}:=-\int\big(\partial_{x_{2}}B_{\alpha_{k},\beta_{k}}\big)^{2}\sqrt{\varphi_{k}^{b}},\\
&  B_{k,k}^{3}:=-\int\partial_{x_{1}}B_{\alpha_{k},\beta_{k}}\partial_{x_{2}}B_{\alpha_{k},\beta_{k}}\sqrt{\varphi_{k}^{b}},\label{1bm}
\end{align}
\begin{align}
& R_{l,l}^{1}:=-\int\big(\partial_{x}Q_{c_l}(y_{0,l}^0)\big)^{2}\sqrt{\varphi_{l}^{s}},\quad R_{l,l}^{3}:=-\int Q_{c_l}(y_{0,l}^0)\partial_{x}Q_{c_l}(y_{0,l}^0)\sqrt{\varphi_{l}^{s}},\\
& R_{l,l}^{2}:=-\frac{1}{2c_l}\int Q_{c_l}(y_{0,l}^0)\Big(Q_{c_l}(y_{0,l}^0)+y_{0,l}^0\partial_xQ_{c_l}(y_{0,l}^0) \Big)\sqrt{\varphi_{l}^{s}},\label{1rm}\\
& R_{l,l}^{4}:=-\frac{1}{2c_l}\int\partial_{x}Q_{c_l}(y_{0,l}^0)\Big(Q_{c_l}(y_{0,l}^0)+y_{0,l}^0\partial_xQ_{c_l}(y_{0,l}^0) \Big)\sqrt{\varphi_{l}^{s}},
\end{align}
denoting $y_{0,l}^0:=x-x_{0,l}^0-c_lt$,
and crosses stand for exponentially decaying terms when $t\rightarrow+\infty$,
and where  we consider variables in the following order: $x_{1,1},x_{2,1}$, $x_{1,2},x_{2,2}$, $x_{1,3},x_{2,3}$, ..., $x_{1,K},x_{2,K}$,
$x_{0,1},c_{0,1}$, ..., $x_{0,L},c_{0,L}$ and we order the coefficients
of the function in the similar way. This is a matrix with dominant
diagonal blocs. 

Note that $B_{k,k}^{1}$ is exponentially close to $-\int\big(\partial_{x_{1}}B_{\alpha_{k},\beta_{k}}\big)^{2}$,
because if $P_{j}=B_{k}$ is a breather,
\begin{align}
\int\Big(\partial_{x_{1}}B_{\alpha_{k},\beta_{k}}\Big)^{2}\Big(1-\sqrt{\varphi_{k}^{b}}\Big) & \leq \int_{-\infty}^{\sigma_{j}t+\delta t}\Big(\partial_{x_{1}}B_{\alpha_{k},\beta_{k}}\Big)^{2}+\int_{\sigma_{j+1}t+\delta t}^{+\infty}\Big(\partial_{x_{1}}B_{\alpha_{k},\beta_{k}}\Big)^{2} \\
 & \leq  C\int_{-\infty}^{\sigma_{j}t+\delta t}e^{-2\beta\lvert x-v_{j}t\rvert}+\int_{\sigma_{j+1}t+\delta t}^{+\infty}e^{-2\beta\lvert x-v_{j}t\rvert} \\
 & \leq  Ce^{-\frac{\beta\tau}{2}t},\label{1bre}
\end{align}
and the same is true for the other dominant diagonal terms of the
matrix (we can get rid of $\varphi$s).

Therefore, the determinant of the matrix is exponentially close to: 
\begin{align}
\MoveEqLeft\det(DF_t)\\
 & =\prod_{k=1}^{K}\Bigg(\int\big(\partial_{x_{1}}B_{\alpha_{k},\beta_{k}}(t,x;x_{1,k}^{0},x_{2,k}^{0})\big)^{2}\int\big(\partial_{x_{2}}B_{\alpha_{k},\beta_{k}}(t,x;x_{1,k}^{0},x_{2,k}^{0})\big)^{2}\\
& \qquad -\bigg(\int\partial_{x_{1}}B_{\alpha_{k},\beta_{k}}\partial_{x_{2}}B_{\alpha_{k},\beta_{k}}\bigg)^{2}\Bigg) \\
 &\ \cdot\prod_{l=1}^{L}\Bigg(\frac{1}{2c_l}\int Q_{c_l}(y_{0,l}^0)\Big(Q_{c_l}(y_{0,l}^0)+y_{0,l}^0\partial_xQ_{c_l}(y_{0,l}^0) \Big)\int\big(\partial_{x}Q_{c_l}(y_{0,l}^0)\big)^{2} \Bigg),\label{1Delta}
\end{align}
because $\int Q_{c_l}(y_{0,l}^0)\partial_x Q_{c_l}(y_{0,l}^0)\,dx=0$.

By Cauchy-Schwarz inequality and the fact that 
\begin{equation*}
\partial_{x_{1}}B_{\alpha_{k},\beta_{k}}(t,x;x_{1,k}^{0},x_{2,k}^{0})\quad\text{and}\quad\partial_{x_{2}}B_{\alpha_{k},\beta_{k}}(t,x;x_{1,k}^{0},x_{2,k}^{0})
\end{equation*}
are linearly independent as functions of the $x$ variable, for any
time $t$ fixed, we see that the first product is positive. Since
each member of the product is periodic in time, then the first product
is bounded below by a positive constant independent from time and translation
parameters.

For the second product, by translation of the variable in the integrations, for any time $t$ fixed, we see that we can replace $y_{0,l}^0$ by $x$. Then, by integration by parts,
\begin{align}
\int xQ_{c_l}(x)\partial_xQ_{c_l}(x)\,dx=-\frac{1}{2}\int Q_{c_l}(x)^2\,dx.
\end{align}
By scaling, if $q$ denotes the soliton with $c=1$, i.e. $q=Q_1$,
\begin{align}
\int Q_{c_l}^2=\sqrt{c_l}\int q^2,\quad \int\partial_xQ_{c_l}^2=c_l^{3/2}\int q_x^2.
\end{align}
Therefore,
\begin{align}
\MoveEqLeft\frac{1}{2c_l}\int Q_{c_l}(y_{0,l}^0)\Big(Q_{c_l}(y_{0,l}^0)+y_{0,l}^0\partial_xQ_{c_l}(y_{0,l}^0) \Big)\int(\partial_{x}Q_{c_l}(y_{0,l}^0))^{2}\\
 & =\frac{1}{4}c_{l}\int q^{2}\int(q_{x})^{2} \\
 & \geq \frac{1}{4}\min\{ c_{n},1\leq  n\leq  L\} \int q^{2}\int q_{x}^{2}.\label{1prod}
\end{align}
This means that the second product is bounded below by a positive constant
independent from time and translation parameters.

This means that if $T_{2}^{*}$ is large enough, the considered matrix 
is invertible.

Now, we may use the implicit function theorem (actually, we use a quantitative version of the implicit function theorem, see \cite[Section 2.2]{CH} for a precise statement). If $w$ is close enough
to $P(t)$, then there exists \begin{align}
\label{1param}
(x_{1,k},x_{2,k},x_{0,l},c_{0,l})
\end{align} 
such that \begin{align}
F_{t}(w,x_{1,k},x_{2,k},x_{0,l},c_{0,l})=0,
\end{align} where \eqref{1param}
depends in a regular $C^{1}$ way on $w$. It is possible to show that
the ``close enough'' in the previous sentence does not depend on $t$;
for this, it is required to use a uniform implicit function theorem.
This means that for $T_{2}^{*}$ large enough (depending on $A$),
$Ae^{-\theta t}$ is small enough for $t\in[t^{*},T]$, thus for $t\in[t^{*},T]$,
$p(t)$ is close enough to $P(t)$ in order to apply the implicit
function theorem. Therefore, we have for $t\in[t^{*},T]$, the existence
of $x_{1,k}(t)$, $x_{2,k}(t)$, $x_{0,l}(t)$ and $c_{0,l}(t)$.
It is possible to show that these functions are $C^{1}$ in time.
Basically, this comes from the fact that they are $C^{1}$ in $p(t)$
and that $p(t)$ has a similar regularity in time (see \cite{key-7}
for more details).

Now, we prove the inequalities (\ref{1eq:ineg1}) and (\ref{1eq:ineg2}).
We can take the differential of the implicit functions with respect to $p(t)$  for $t\in[t^{*},T]$. For this, we differentiate the following equation with respect
to $p(t)$:
\begin{align}
F_{t}\Big(p(t),x_{1,k}\big(p(t)\big),x_{2,k}\big(p(t)\big),x_{0,l}\big(p(t)\big),c_{0,l}\big(p(t)\big)\Big)=0.\label{1impl}
\end{align}
We know that the matrix that gives the differential of $F_{t}$ (with
respect to $x_{1,k}$, $x_{2,k}$, $x_{0,l}$, $c_{0,l}$) in 
\begin{align}
\Big(p(t),x_{1,k}\big(p(t)\big),x_{2,k}\big(p(t)\big),x_{0,l}\big(p(t)\big),c_{0,l}\big(p(t)\big)\Big)\label{1poit}
\end{align}
 is invertible and its inverse is bounded in time (from the formula giving the inverse of a matrix from the comatrix and the determinant).
The differential of $F_{t}$ with respect to the first variable is
also bounded. Thus, by the mean-value theorem: 
\begin{align}
\lvert x_{1,k}\rvert\leq  C\lVert p-P\rVert\leq  CAe^{-\theta t}.\label{1eq:-25}
\end{align}
The same is true for $x_{2,k}$, $x_{0,l}$ and $c_{0,l}$.

By applying the mean-value theorem (inequality) for $Q_{c_l}$ with respect to $x_{0,l}$ and $c_{0,l}$  or for $B_{\alpha_{k},\beta_{k}}$
with respect to $x_{1,k}$ and $x_{2,k}$, we deduce that 
\begin{align}
\lVert P_j(t)-\widetilde{P_j}(t)\rVert_{H^{2}}\leq  C\big(\lvert x_{1,k}(t)\rvert+\lvert x_{2,k}(t)\rvert\big),\label{1eq:-26}
\end{align}
if $P_j=B_k$ is a breather, and
\begin{align}
\lVert P_j(t)-\widetilde{P_j}(t)\rVert_{H^{2}}\leq  C\big(\lvert x_{0,l}(t)\rvert+\lvert c_{0,l}(t)\rvert\big),
\end{align}
if $P_j=R_l$ is a soliton.

Finally, by triangular inequality,
\begin{align}
\lVert\varepsilon(t)\rVert_{H^{2}} & \leq \lVert p(t)-P(t)\rVert_{H^{2}}+\lVert P(t)-\widetilde{P}(t)\rVert_{H^{2}}\leq \lVert p(t)-P(t)\rVert_{H^{2}} \\
 &\quad +C\Bigg(\sum_{k=1}^{K}\big(\lvert x_{1,k}(t)\rvert+\lvert x_{2,k}(t)\rvert\big)+\sum_{l=1}^{L}\big(\lvert x_{0,l}(t)\rvert+\lvert c_{0,l}(t)\rvert\big)\Bigg) \\
 & \leq  C\lVert p(t)-P(t)\rVert_{H^{2}}\leq  CAe^{-\theta t}.\label{1eq:-21}
\end{align}
This completes the proof of (\ref{1eq:ineg1}).

For (\ref{1eq:ineg2}), we will take time derivatives of the equations
\eqref{1eq:orthom}.
From now on, we write $\widetilde{B_{k}}_{1}$ for $\partial_{x_{1}}\widetilde{B_{k}}$
and $\widetilde{B_{k}}_{2}$ for $\partial_{x_{2}}\widetilde{B_{k}}$.
Firstly, we write the PDE verified by $\varepsilon$ (knowing that
$p,B_{1},...,B_{K},R_{1},...,R_{L}$ are solutions of \eqref{1mKdV}): 
\begin{align}
\partial_{t}\varepsilon & =-\varepsilon_{xxx}-\Bigg[\varepsilon\Bigg(\varepsilon^{2}+3\varepsilon\sum_{j=1}^{J}\widetilde{P_{j}}+3\sum_{i,j=1}^{J}\widetilde{P_{i}}\widetilde{P_{j}}\Bigg)\Bigg]_{x}\\
 &\quad -\sum_{k=1}^{K}x_{1,k}'(t)\widetilde{B_{k}}_{1} -\sum_{k=1}^{K}x_{2,k}'(t)\widetilde{B_{k}}_{2}-\sum_{l=1}^{L}x_{0,l}'(t)\widetilde{R_{l}}_{x}\\
 &\quad -\sum_{l=1}^{L}\frac{c_{0,l}'(t)}{2(c_l+c_{0,l}(t))}\Big(\widetilde{R_{l}}+y_{0,l}(t)\widetilde{R_{l}}_x\Big)-\sum_{h\neq i\textrm{ or }i\neq j}\Big(\widetilde{P_{h}}\widetilde{P_{i}}\widetilde{P_{j}}\Big)_{x}  ,\label{1eq:-22}
\end{align}
where $y_{0,l}(t):=x-x_{0,l}^0+x_{0,l}(t)-c_lt$.
Now, we will take the time derivative of the equation $\int\widetilde{B_{k}}_{1}\varepsilon\sqrt{\varphi_{k}^{b}}=0$
(and perform an integration by parts):
\begin{align}
& -\int\Big(\widetilde{B_{k}}^{3}\Big)_{1x}\varepsilon\sqrt{\varphi_{k}^{b}}-\int\widetilde{B_{k}}_{1}\sum_{h\neq i\textrm{ or }g\neq h}\Big(\widetilde{P_{h}}\widetilde{P_{i}}\widetilde{P_{g}}\Big)_{x}\sqrt{\varphi_{k}^{b}}\\
& +x_{2,k}'(t)\int\widetilde{B_{k}}_{12}\varepsilon\sqrt{\varphi_{k}^{b}}+\frac{1}{2\delta t}\int\widetilde{B_{k}}_{1}\varepsilon\Bigg(\varepsilon^{2}+3\varepsilon\sum_{i=1}^{J}\widetilde{P_{i}}+3\sum_{h,i=1}^{J}\widetilde{P_{h}}\widetilde{P_{i}}\Bigg)\frac{\varphi_{1,k}^{b}}{\sqrt{\varphi_{k}^{b}}} \\ &
+\int\widetilde{B_{k}}_{1x}\varepsilon\Bigg(\varepsilon^{2}+3\varepsilon\sum_{i=1}^{J}\widetilde{P_{i}}+3\sum_{h,i=1}^{J}\widetilde{P_{h}}\widetilde{P_{i}}\Bigg)\sqrt{\varphi_{k}^{b}}-\frac{1}{2\delta t^{2}}\int\widetilde{B_{k}}_{1}\varepsilon x\frac{\varphi_{1,k}^{b}}{\sqrt{\varphi_{k}^{b}}} \\ &
+\frac{1}{2\delta t}\int\widetilde{B_{k}}_{1}\varepsilon_{xx}\frac{\varphi_{1,k}^{b}}{\sqrt{\varphi_{k}^{b}}}-\frac{1}{2\delta t}\int\widetilde{B_{k}}_{1x}\varepsilon_{x}\frac{\varphi_{1,k}^{b}}{\sqrt{\varphi_{k}^{b}}}+\frac{1}{2\delta t}\int\widetilde{B_{k}}_{1xx}\varepsilon\frac{\varphi_{1,k}^{b}}{\sqrt{\varphi_{k}^{b}}}
  \\ & +x_{1,k}'(t)\int\widetilde{B_{k}}_{11}\varepsilon\sqrt{\varphi_{k}^{b}} =\sum_{m=1}^{K}x_{1,m}'(t)\int\widetilde{B_{k}}_{1}\widetilde{B_{m}}_{1}\sqrt{\varphi_{k}^{b}}
\\ & +\sum_{m=1}^{K}x_{2,m}'(t)\int\widetilde{B_{k}}_{1}\widetilde{B_{m}}_{2}\sqrt{\varphi_{k}^{b}} +\sum_{n=1}^{L}x_{0,n}'(t)\int\widetilde{B_{k}}_{1}\widetilde{R_{n}}_{x}\sqrt{\varphi_{k}^{b}}
\\
& +\sum_{n=1}^{L}\frac{c_{0,n}'(t)}{2\big(c_{n}+c_{0,n}(t)\big)}\int\widetilde{B_{k}}_{1}\Big(\widetilde{R_{n}}+y_{0,n}(t)\widetilde{R_{n}}_{x}\Big)\sqrt{\varphi_{k}^{b}} .\label{1eq:-27}
\end{align}
Similarly, taking the time derivative of $\int\widetilde{B_{k}}_{2}\varepsilon\sqrt{\varphi_{k}^{b}}=0$:
\begin{align}
& -\int\Big(\widetilde{B_{k}}^{3}\Big)_{2x}\varepsilon\sqrt{\varphi_{k}^{b}}-\int\widetilde{B_{k}}_{2}\sum_{h\neq i\textrm{ or }g\neq h}\Big(\widetilde{P_{h}}\widetilde{P_{i}}\widetilde{P_{g}}\Big)_{x}\sqrt{\varphi_{k}^{b}} \\ &
+x_{2,k}'(t)\int\widetilde{B_{k}}_{22}\varepsilon\sqrt{\varphi_{k}^{b}}+\frac{1}{2\delta t}\int\widetilde{B_{k}}_{2}\varepsilon\Bigg(\varepsilon^{2}+3\varepsilon\sum_{i=1}^{J}\widetilde{P_{i}}+3\sum_{h,i=1}^{J}\widetilde{P_{h}}\widetilde{P_{i}}\Bigg)\frac{\varphi_{1,k}^{b}}{\sqrt{\varphi_{k}^{b}}}\\
& +\int\widetilde{B_{k}}_{2x}\varepsilon\Bigg(\varepsilon^{2}+3\varepsilon\sum_{i=1}^{J}\widetilde{P_{i}}+3\sum_{h,i=1}^{J}\widetilde{P_{h}}\widetilde{P_{i}}\Bigg)\sqrt{\varphi_{k}^{b}}+\frac{1}{2\delta t}\int\widetilde{B_{k}}_{2}\varepsilon_{xx}\frac{\varphi_{1,k}^{b}}{\sqrt{\varphi_{k}^{b}}} \\ &
-\frac{1}{2\delta t}\int\widetilde{B_{k}}_{2x}\varepsilon_{x}\frac{\varphi_{1,k}^{b}}{\sqrt{\varphi_{k}^{b}}}+\frac{1}{2\delta t}\int\widetilde{B_{k}}_{2xx}\varepsilon\frac{\varphi_{1,k}^{b}}{\sqrt{\varphi_{k}^{b}}}
-\frac{1}{2\delta t^{2}}\int\widetilde{B_{k}}_{2}\varepsilon x\frac{\varphi_{1,k}^{b}}{\sqrt{\varphi_{k}^{b}}}  \\ 
& +x_{1,k}'(t)\int\widetilde{B_{k}}_{12}\varepsilon\sqrt{\varphi_{k}^{b}} =\sum_{m=1}^{K}x_{1,m}'(t)\int\widetilde{B_{k}}_{2}\widetilde{B_{m}}_{1}\sqrt{\varphi_{k}^{b}}\\
& 
+\sum_{m=1}^{K}x_{2,m}'(t)\int\widetilde{B_{k}}_{2}\widetilde{B_{m}}_{2}\sqrt{\varphi_{k}^{b}} +\sum_{n=1}^{L}x_{0,n}'(t)\int\widetilde{B_{k}}_{2}\widetilde{R_{n}}_{x}\sqrt{\varphi_{k}^{b}} \\ & +\sum_{n=1}^{L}\frac{c_{0,n}'(t)}{2\big(c_{n}+c_{0,n}(t)\big)}\int\widetilde{B_{k}}_{2}\Big(\widetilde{R_{n}}+y_{0,n}(t)\widetilde{R_{n}}_{x}\Big)\sqrt{\varphi_{k}^{b}}.\label{1eq:-29}
\end{align}
Similarly, taking the time derivative of $\int\widetilde{R_{l}}_{x}(t)\varepsilon(t)\sqrt{\varphi_{l}^{s}}=0$:
\begin{align}
& -\int\Big(\widetilde{R_{l}}^{3}\Big)_{xx}\varepsilon\sqrt{\varphi_{l}^{s}}+\frac{c_{0,l}'(t)}{2\big(c_{l}+c_{0,l}(t)\big)}\int\Big(\widetilde{R_{l}}_{x}+y_{0,l}(t)\widetilde{R_{l}}_{xx}\Big)\varepsilon\sqrt{\varphi_{l}^{s}}\\
& +x_{0,l}'(t)\int\widetilde{R_{l}}_{xx}\varepsilon\sqrt{\varphi_{l}^{s}}  +\frac{1}{2\delta t}\int\widetilde{R_{l}}_{x}\varepsilon\Bigg(\varepsilon^{2}+3\varepsilon\sum_{i=1}^{J}\widetilde{P_{i}}+3\sum_{h,i=1}^{J}\widetilde{P_{h}}\widetilde{P_{i}}\Bigg)\frac{\varphi_{1,l}^{s}}{\sqrt{\varphi_{l}^{s}}}\\
& +\int\widetilde{R_{l}}_{xx}\varepsilon\Bigg(\varepsilon^{2}+3\varepsilon\sum_{i=1}^{J}\widetilde{P_{i}}+3\sum_{h,i=1}^{J}\widetilde{P_{h}}\widetilde{P_{i}}\Bigg)\sqrt{\varphi_{l}^{s}}-\frac{1}{2\delta t^{2}}\int\widetilde{R_{l}}_{x}\varepsilon x\frac{\varphi_{1,l}^{s}}{\sqrt{\varphi_{l}^{s}}}
\\ & +\frac{1}{2\delta t}\int\widetilde{R_{l}}_{x}\varepsilon_{xx}\frac{\varphi_{1,l}^{s}}{\sqrt{\varphi_{l}^{s}}}-\frac{1}{2\delta t}\int\widetilde{R_{l}}_{xx}\varepsilon_{x}\frac{\varphi_{1,l}^{s}}{\sqrt{\varphi_{l}^{s}}}+\frac{1}{2\delta t}\int\widetilde{R_{l}}_{xxx}\varepsilon\frac{\varphi_{1,l}^{s}}{\sqrt{\varphi_{l}^{s}}}\\
& - \int\widetilde{R_{l}}_{x}\sum_{h\neq i\textrm{ or }g\neq h}\Big(\widetilde{P_{h}}\widetilde{P_{i}}\widetilde{P_{g}}\Big)_{x}\sqrt{\varphi_{l}^{s}}   =\sum_{n=1}^{L}x_{0,n}'(t)\int\widetilde{R_{l}}_{x}\widetilde{R_{n}}_{x}\sqrt{\varphi_{l}^{s}}\\
& +\sum_{n=1}^{L}\frac{c_{0,n}'(t)}{2\big(c_{n}+c_{0,n}(t)\big)}\int\widetilde{R_{l}}_{x}\Big(\widetilde{R_{n}}+y_{0,n}(t)\widetilde{R_{n}}_{x}\Big)\sqrt{\varphi_{l}^{s}} \\
& +\sum_{m=1}^{K}x_{1,m}'(t)\int\widetilde{R_{l}}_{x}\widetilde{B_{m}}_{1}\sqrt{\varphi_{l}^{s}}
 +\sum_{m=1}^{K}x_{2,m}'(t)\int\widetilde{R_{l}}_{x}\widetilde{B_{m}}_{2}\sqrt{\varphi_{l}^{s}}.\label{1eq:-30}
\end{align}
Finally, taking the time derivative of $\int\widetilde{R_{l}}\varepsilon\sqrt{\varphi_{l}^{s}}=0$:
\begin{align}
& -\int\Big(\widetilde{R_{l}}^{3}\Big)_{x}\varepsilon\sqrt{\varphi_{l}^{s}}+\frac{c_{0,l}'(t)}{2\big(c_{l}+c_{0,l}(t)\big)}\int\Big(\widetilde{R_{l}}+y_{0,l}(t)\widetilde{R_{l}}_{x}\Big)\varepsilon\sqrt{\varphi_{l}^{s}}\\
& +x_{0,l}'(t)\int\widetilde{R_{l}}_{x}\varepsilon\sqrt{\varphi_{l}^{s}}  +\frac{1}{2\delta t}\int\widetilde{R_{l}}\varepsilon\Bigg(\varepsilon^{2}+3\varepsilon\sum_{i=1}^{J}\widetilde{P_{i}}+3\sum_{h,i=1}^{J}\widetilde{P_{h}}\widetilde{P_{i}}\Bigg)\frac{\varphi_{1,l}^{s}}{\sqrt{\varphi_{l}^{s}}}\\
& +\int\widetilde{R_{l}}_{x}\varepsilon\Bigg(\varepsilon^{2}+3\varepsilon\sum_{i=1}^{J}\widetilde{P_{i}}+3\sum_{h,i=1}^{J}\widetilde{P_{h}}\widetilde{P_{i}}\Bigg)\sqrt{\varphi_{l}^{s}} -\frac{1}{2\delta t^{2}}\int\widetilde{R_{l}}\varepsilon x\frac{\varphi_{1,l}^{s}}{\sqrt{\varphi_{l}^{s}}} \\
 & +\frac{1}{2\delta t}\int\widetilde{R_{l}}\varepsilon_{xx}\frac{\varphi_{1,l}^{s}}{\sqrt{\varphi_{l}^{s}}}-\frac{1}{2\delta t}\int\widetilde{R_{l}}_{x}\varepsilon_{x}\frac{\varphi_{1,l}^{s}}{\sqrt{\varphi_{l}^{s}}} +\frac{1}{2\delta t}\int\widetilde{R_{l}}_{xx}\varepsilon\frac{\varphi_{1,l}^{s}}{\sqrt{\varphi_{l}^{s}}}
 \\ 
& -\int\widetilde{R_{l}}\sum_{h\neq i\textrm{ or }g\neq h}\Big(\widetilde{P_{h}}\widetilde{P_{i}}\widetilde{P_{g}}\Big)_{x}\sqrt{\varphi_{l}^{s}} =\sum_{n=1}^{L}x_{0,n}'(t)\int\widetilde{R_{l}}\widetilde{R_{n}}_{x}\sqrt{\varphi_{l}^{s}}\\
& +\sum_{n=1}^{L}\frac{c_{0,n}'(t)}{2\big(c_{n}+c_{0,n}(t)\big)}\int\widetilde{R_{l}}\Big(\widetilde{R_{n}}+y_{0,n}(t)\widetilde{R_{n}}_{x}\Big)\sqrt{\varphi_{l}^{s}} \\ &
+\sum_{m=1}^{K}x_{1,m}'(t)\int\widetilde{R_{l}}\widetilde{B_{m}}_{1}\sqrt{\varphi_{l}^{s}}+\sum_{m=1}^{K}x_{2,m}'(t)\int\widetilde{R_{l}}\widetilde{B_{m}}_{2}\sqrt{\varphi_{l}^{s}}.\label{1eq:-31}
\end{align}
By the Proposition \ref{1prop:decay-mod} below (that follows from
the first part of the lemma we prove) and its corollary, several terms of the equalities (\ref{1eq:-27}), (\ref{1eq:-29}), (\ref{1eq:-30}) and (\ref{1eq:-31}) are bounded by $Ce^{-\theta t}$;
other terms are $O(\lVert\varepsilon\rVert_{L^{2}})$. We remind that
$O(\lVert\varepsilon\rVert_{L^{2}})\leq  CAe^{-\theta t}$. From the basic
properties of $\varphi_{j}$ (see Section \ref{1sec:2.2}), $\frac{\varphi_{1,j}}{\sqrt{\varphi_{j}}}$
is bounded. Because of the compact support of $\varphi_{j}$, $\frac{x}{t}\frac{\varphi_{1,j}}{\sqrt{\varphi_{j}}}$
is bounded independently on $x$ and $t$. Using these bounds, and
after several linear combinations, we obtain the desired inequalities.
\end{proof}
\begin{rem}
\label{1rem:unif}As a consequence of Lemma \ref{1lem:mod}, there
exists a constant $C>0$ such that
\begin{align}
\forall t\in[t^{*},T]\quad\sum_{k=1}^{K}\big(\lvert x_{1,k}(t)\rvert+\lvert x_{2,k}(t)\rvert\big)+\sum_{l=1}^{L}\big(\lvert x_{0,l}(t)\rvert+\lvert c_{0,l}(t)\rvert\big)\leq  CAe^{-\theta T^{*}}.\label{1eq:-32}
\end{align}
\end{rem}
This means, that if we take $T_{2}^{*}$ eventually larger (which we will assume in the following of the article), we may extend
Proposition \ref{1prop:decay}\textbf{ }to $\widetilde{P_{j}}$ in
the following way, by integration of the bounds given by modulation
(the constant $C$ is a bit larger in a controled way, we write $\frac{\beta}{2}$
because the shape of the solitons is a bit modified in a controled
way):

\begin{prop}
\label{1prop:decay-mod}Let $j=1,...,J$, $n\in\mathbb{N}$. If $T^{*}>T_{2}^{*}$,
then there exists a constant $C>0$ such that for any $t,x\in\mathbb{R}$,
\begin{align}
\lvert\partial_{x}^{n}\widetilde{P_{j}}(t,x)\rvert\leq  Ce^{-\frac{\beta}{2}\lvert x-v_{j}t\rvert}.\label{1eq:-33}
\end{align}
\end{prop}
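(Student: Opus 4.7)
The plan is to read off the decay of $\widetilde{P_j}$ directly from the explicit formulas \eqref{1eq:gr} and \eqref{1eq:} for solitons and breathers, using the fact that by Remark \ref{1rem:unif} all the modulation parameters $x_{1,k}(t), x_{2,k}(t), x_{0,l}(t), c_{0,l}(t)$ are bounded by $CA e^{-\theta T^{*}}$. In particular they are uniformly bounded in time, and by enlarging $T_{2}^{*}$ if necessary we may assume they are as small as we wish — which is exactly what the preamble to the proposition allows.

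I would first dispatch the breather case, $\widetilde{P_j} = \widetilde{B_k}$. The modulation leaves the shape parameters $\alpha_k, \beta_k$ untouched and only shifts the two translation parameters, so $\widetilde{B_k}$ is still given by \eqref{1eq:} but with $\tilde y_2 := x - v_j t + x_{2,k}^{0} + x_{2,k}(t)$ (recall $v_j = -\gamma_k$). The $\cosh(\beta_k \tilde y_2)$ in the denominator provides the decay, and differentiation in $x$ preserves it; a direct induction on $n$ yields
\begin{equation*}
|\partial_x^n \widetilde{B_k}(t,x)| \leq C_n e^{-\beta_k |\tilde y_2|}.
\end{equation*}
Since $|x_{2,k}^{0}| + |x_{2,k}(t)|$ is bounded by a constant depending only on the problem data, the reverse triangle inequality gives $|\tilde y_2| \geq |x - v_j t| - C$, and absorbing the resulting factor $e^{\beta_k C}$ into the prefactor yields the desired estimate, in fact with the sharper rate $\beta_k \geq \beta$.

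The soliton case $\widetilde{P_j} = \widetilde{R_l}$ is the one where the loss from $\beta$ to $\beta/2$ arises, since now the scaling parameter itself is perturbed to $c_l + c_{0,l}(t)$. Setting $\tilde y := x - v_j t - x_{0,l}^{0} + x_{0,l}(t)$, the explicit formula \eqref{1eq:gr} and a direct computation on its derivatives give
\begin{equation*}
|\partial_x^n \widetilde{R_l}(t,x)| \leq C_n \, e^{-\sqrt{c_l + c_{0,l}(t)}\, |\tilde y|}.
\end{equation*}
By Remark \ref{1rem:unif}, taking $T_{2}^{*}$ large enough ensures $|c_{0,l}(t)| \leq c_l/2$, hence $\sqrt{c_l + c_{0,l}(t)} \geq \sqrt{c_l/2} \geq \sqrt{c_l}/2 \geq \beta/2$, the last step using $\beta \leq \sqrt{c_l}$ from \eqref{1eq:beta-tau}. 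Since $|x_{0,l}^{0}| + |x_{0,l}(t)|$ is also uniformly bounded, one finishes exactly as in the breather case via $|\tilde y| \geq |x - v_j t| - C$. The only real obstacle is thus bookkeeping: one must choose $T_{2}^{*}$ large enough that the perturbed soliton scaling parameter stays bounded away from $0$, and it is precisely this step that costs the factor $1/2$ in the final decay rate.
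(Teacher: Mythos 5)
Your proof is correct and follows essentially the same route as the paper, which justifies this proposition in a single sentence by extending Proposition \ref{1prop:decay} to $\widetilde{P_j}$ via the uniform smallness of the modulation parameters from Remark \ref{1rem:unif}. Your write-up simply makes explicit the two points the paper compresses into its parenthetical: the breather's shape parameters are untouched so only a bounded translation must be absorbed into $C$, while the soliton's scaling parameter $c_l+c_{0,l}(t)$ is perturbed, which is exactly where the loss from $\beta$ to $\beta/2$ comes from.
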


We will also use that any $\lVert\partial_{x}^{n}\widetilde{P_{j}}\rVert_{H^{2}}$
is bounded by $C$.
\begin{cor}
\label{1cor:decay-mod}
Let $i\neq j\in\{1,...,J\}$ and $m,n\in\mathbb{N}$. If $T^{*}>T_{2}^{*}$,
then there exists a constant $C$ that depends only on $P$, such
that for any $t\in\mathbb{R}$,
\begin{align}
\bigg\lvert\int\partial_{x}^{m}\widetilde{P_{i}}\partial_{x}^{n}\widetilde{P_{j}}\bigg\rvert\leq  Ce^{-\beta\tau t/8}.\label{1eq:-34}
\end{align}
\end{cor}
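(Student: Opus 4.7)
The plan is to deduce this corollary directly from the pointwise exponential decay in Proposition \ref{1prop:decay-mod}, following exactly the same template by which Proposition \ref{1prop:cross-product} follows from Proposition \ref{1prop:decay}. The only difference is that the available decay rate is now $\beta/2$ rather than $\beta$, which accounts for the factor $8$ in the exponent of the statement (instead of $2$ as in Proposition \ref{1prop:cross-product}). The content is essentially the fact that two objects localized around $v_i t$ and $v_j t$ with $|v_j - v_i| \geq \tau$ have overlap which decays exponentially with rate proportional to $\tau t$.

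Concretely, I would proceed in three short steps. First, assume without loss of generality $v_i < v_j$, so that $v_j - v_i \geq \tau$. Applying Proposition \ref{1prop:decay-mod} to both factors yields the pointwise bound
\begin{align*}
\lvert \partial_x^m \widetilde{P_i}(t,x)\, \partial_x^n \widetilde{P_j}(t,x) \rvert \leq C e^{-(\beta/2)(\lvert x - v_i t\rvert + \lvert x - v_j t\rvert)},
\end{align*}
with $C$ depending only on the shape parameters of the objects (the fact that this constant is not deteriorated by modulation is exactly Remark \ref{1rem:unif} together with the comment preceding Proposition \ref{1prop:decay-mod}).

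Second, I combine the reverse triangle inequality $\lvert x - v_i t\rvert + \lvert x - v_j t\rvert \geq (v_j - v_i) t \geq \tau t$ with the trivial $\lvert x - v_i t\rvert + \lvert x - v_j t\rvert \geq \lvert x - v_i t\rvert$ to obtain
\begin{align*}
\lvert x - v_i t\rvert + \lvert x - v_j t\rvert \geq \frac{\tau t}{2} + \frac{1}{2}\lvert x - v_i t\rvert,
\end{align*}
which separates a time-decaying prefactor from an integrable exponential in $x$. Third, integration gives
\begin{align*}
\bigg\lvert \int \partial_x^m \widetilde{P_i}\, \partial_x^n \widetilde{P_j}\bigg\rvert \leq C e^{-\beta \tau t/4} \int_{\mathbb{R}} e^{-(\beta/4)\lvert x - v_i t\rvert}\,dx \leq C' e^{-\beta \tau t/4},
\end{align*}
which is in fact stronger than the stated bound.

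I do not anticipate any real obstacle: the corollary is a purely routine consequence of Proposition \ref{1prop:decay-mod} together with the velocity separation \eqref{1eq:diff}. The only subtle point is to verify that the constant in the pointwise bound stays uniform in time and in the (original and modulated) translation parameters, but this is exactly what Remark \ref{1rem:unif} guarantees, since the modulation parameters are controlled by $C A e^{-\theta T^{*}}$ and hence remain small and bounded on $[t^{*},T]$.
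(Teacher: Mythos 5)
Your proposal is correct and follows exactly the route the paper intends: the corollary is stated without proof as an immediate consequence of Proposition \ref{1prop:decay-mod}, obtained by the same splitting $\lvert x-v_it\rvert+\lvert x-v_jt\rvert\geq\frac{\tau t}{2}+\frac{1}{2}\lvert x-v_it\rvert$ that underlies Proposition \ref{1prop:cross-product}. Your computation even yields the slightly better rate $e^{-\beta\tau t/4}$ for $t\geq 0$, which of course implies the stated $e^{-\beta\tau t/8}$ there (and the bound is trivial for $t\leq 0$ since the integral is uniformly bounded).
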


\subsection{Study of coercivity}

In \cite{key-1}, the Lyapunov functional that was introduced to study
the orbital stability of a breather was the following conserved-in-time
functional: 
\begin{align}
F[p](t)+2\big(\beta^{2}-\alpha^{2}\big)E[p](t)+\big(\alpha^{2}+\beta^{2}\big)^{2}M[p](t).\label{1eq:lf}
\end{align}
The functional that we will consider here is adapted from the latter.
For $t\in[t^{*},T]$, we set
\begin{align}
\mathcal{H}[p](t) & :=F[p](t)+\sum_{k=1}^{K}\Big(2\big(\beta_{k}^{2}-\alpha_{k}^{2}\big)E_{k}^{b}[p](t)+\big(\alpha_{k}^{2}+\beta_{k}^{2}\big)^{2}M_{k}^{b}[p](t)\Big)\\
&\quad +\sum_{l=1}^{L}\Big(2c_{l}E_{l}^{s}[p](t)+c_{l}^{2}M_{l}^{s}[p](t)\Big).\label{1lfg}
\end{align}

For the simplicity of notations, for $j\in\{1,...,J\}$, $a_{j}$ will
denote $\alpha_{k}$ if $P_{j}$ is the breather $B_{k}$ or $0$
if $P_{j}$ is a soliton, and $b_{j}$ will denote $\beta_{k}$ if
$P_{j}$ is the breather $B_{k}$ or $c_{l}^{1/2}$ if $P_{j}$ is
the soliton $R_{l}$. With these notations, we may write: 
\begin{align}
\mathcal{H}[p](t)=F[p](t)+\sum_{j=1}^{J}\Big(2\big(b_{j}^{2}-a_{j}^{2}\big)E_{j}[p](t)+\big(a_{j}^{2}+b_{j}^{2}\big)^{2}M_{j}[p](t)\Big).\label{1lfgs}
\end{align}

We would like to study locally this functional around the considered sum 
of breathers and solitons. The aim of this section will be to prove
two following propositions: 
\begin{prop}[Expansion of $H^{2}$ conserved quantity]
\label{1lem:exp}There exists $T_{4}^{*}>0$ such that if $T^{*}\geq  T_{4}^{*}$,
for all $t\in[t^{*},T]$, we have that
\begin{align}
\mathcal{H}[p](t) & =\sum_{j=1}^{J}\Big(F\big[\widetilde{P_{j}}\big](t)+2\big(b_{j}^{2}-a_{j}^{2}\big)E\big[\widetilde{P_{j}}\big](t)+\big(a_{j}^{2}+b_{j}^{2}\big)^{2}M\big[\widetilde{P_{j}}\big](t)\Big) \\
 &\quad +H_{2}[\varepsilon](t)+O\big(\lVert\varepsilon(t)\rVert_{H^{2}}^{3}\big)+O\big(e^{-2\theta t}\lVert\varepsilon(t)\rVert_{H^{2}}\big)+O\big(e^{-2\theta t}\big),\label{1dl}
\end{align}
where 
\begin{align}
H_{2}[\varepsilon](t) & :=\frac{1}{2}\int\varepsilon_{xx}^{2}-\frac{5}{2}\int\widetilde{P}^{2}\varepsilon_{x}^{2}+\frac{5}{2}\int\widetilde{P}_{x}^{2}\varepsilon^{2}+5\int\widetilde{P}\widetilde{P}_{xx}\varepsilon^{2}+\frac{15}{4}\int\widetilde{P}^{4}\varepsilon^{2} \\
 & +\sum_{j=1}^{J}\big(b_{j}^{2}-a_{j}^{2}\big)\bigg(\int\varepsilon_{x}^{2}\varphi_{j}-3\int\widetilde{P}^{2}\varepsilon^{2}\varphi_{j}\bigg)+\sum_{j=1}^{J}\big(a_{j}^{2}+b_{j}^{2}\big)^{2}\frac{1}{2}\int\varepsilon^{2}\varphi_{j}.\label{1eq:-4-2}
\end{align}
\end{prop}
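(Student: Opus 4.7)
The plan is to perform a Taylor expansion of $\mathcal{H}[p]$ at $\widetilde{P}$ using $p = \widetilde{P} + \varepsilon$, and to collect contributions at orders zero, one, two, and $\geq 3$ in $\varepsilon$. Because $F$, $E_j$, $M_j$ are polynomial in $p$, each expansion is a finite sum and no remainder estimate in Taylor's formula is needed beyond Sobolev embedding. Throughout, the decomposition $\widetilde{P} = \sum_j \widetilde{P_j}$ is used to separate the diagonal contributions from the interaction cross terms, the latter being uniformly controlled by Corollary \ref{1cor:decay-mod}, giving an exponentially small error $O(e^{-\beta\tau t/8})$; choosing $T_4^*$ so that $e^{-\beta\tau t /8} \leq e^{-2\theta t}$ absorbs all such errors into the claimed $O(e^{-2\theta t})$.

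For the zero-order piece $\mathcal{H}[\widetilde{P}]$, I would expand each $F[\widetilde{P}]$, $E_j[\widetilde{P}]$, $M_j[\widetilde{P}]$ by inserting $\widetilde{P} = \sum_i \widetilde{P_i}$; monomials involving two or more distinct indices $i$ give cross integrals bounded by Corollary \ref{1cor:decay-mod}. On the diagonal $i = j$, the localizing factors $\varphi_j$ are exponentially close to $\mathbf 1$ on the region where $\widetilde{P_j}$ is not exponentially small (Corollary \ref{1cor:unifdecay} and Remark \ref{1rem:If-,rem}), so $E_j[\widetilde{P_j}] = E[\widetilde{P_j}] + O(e^{-\beta\tau t/2})$ and similarly for $M_j$; the unlocalized $F[\widetilde{P}]$ splits as $\sum_j F[\widetilde{P_j}] + O(e^{-\beta\tau t/8})$ for the same reason. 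This produces the main term of the expansion.

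For the linear-in-$\varepsilon$ part, integration by parts recasts it as
\begin{align*}
\int \Bigl( L_F[\widetilde{P}] + \sum_j 2(b_j^2 - a_j^2) L_E[\widetilde{P}] \varphi_j + \sum_j (a_j^2+b_j^2)^2 \widetilde{P}\varphi_j\Bigr)\varepsilon\,dx + R_\varphi(\varepsilon),
\end{align*}
where $L_F$, $L_E$ are the Euler--Lagrange derivatives of $F$ and $E$, and $R_\varphi(\varepsilon)$ collects terms in which a derivative falls on some $\varphi_j$: these carry a factor $1/(\delta t)$ and are supported in the transition zone of $\varphi_j$, where $\widetilde{P}$ is exponentially small by Corollary \ref{1cor:unifdecay}, yielding $R_\varphi(\varepsilon) = O(e^{-\theta t}) \cdot O(\lVert\varepsilon\rVert_{L^2}) = O(e^{-2\theta t})$. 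On the support of $\varphi_j$ I would replace $\widetilde P$ by $\widetilde{P_j}$ up to a $O(e^{-\beta\tau t/4})$ error, and then invoke the critical point property of $\widetilde{P_j}$ for the Lyapunov functional $F + 2(b_j^2 - a_j^2)E + (a_j^2+b_j^2)^2 M$: this is exactly the result of Alejo--Mu\~noz \cite{key-1} when $\widetilde{P_j}$ is a breather, and it follows from the elliptic equation $Q_c'' = cQ_c - Q_c^3$ together with the integrable cascade when $\widetilde{P_j}$ is a soliton (here $a_j = 0$, $b_j^2 = c_l$). The only defect is that modulation has shifted $c_l$ by $c_{0,l}(t)$, generating an $O(\lvert c_{0,l}(t)\rvert) = O(e^{-\theta t})$ discrepancy by Lemma \ref{1lem:mod}; pairing with $\varepsilon$ of size $O(e^{-\theta t})$ gives again $O(e^{-2\theta t})$.

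Finally, the quadratic-in-$\varepsilon$ terms from $F[p]$ give the five unlocalized summands in $H_2[\varepsilon]$ after integration by parts (with $\widetilde{P}$ kept as is since no localization intervenes there), while the quadratic-in-$\varepsilon$ terms of $\sum_j 2(b_j^2-a_j^2) E_j[p]$ and $\sum_j (a_j^2+b_j^2)^2 M_j[p]$ give verbatim the two remaining localized sums defining $H_2[\varepsilon]$. Terms of degree $k \geq 3$ in $\varepsilon$ are estimated by $C\lVert\varepsilon\rVert_{H^2}^k$ via Sobolev embedding and the uniform bound (\ref{1eq:gb}), hence are absorbed in $O(\lVert\varepsilon\rVert_{H^2}^3)$. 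The main obstacle is the linear step: one must simultaneously exploit the $H^2$-variational characterization of both breathers and solitons, track every contribution produced when derivatives hit the space-time cut-offs $\varphi_j$, and keep the modulation-induced error under the threshold $O(e^{-2\theta t})$; the other steps are essentially bookkeeping built on the cross-term estimates of Section 2.1.
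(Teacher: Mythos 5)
Your overall architecture is the same as the paper's: expand $\mathcal{H}[\widetilde P+\varepsilon]$ in powers of $\varepsilon$, identify the zero-order part with $\sum_j\big(F[\widetilde{P_j}]+2(b_j^2-a_j^2)E[\widetilde{P_j}]+(a_j^2+b_j^2)^2M[\widetilde{P_j}]\big)$ via the cross-term estimates, identify the quadratic part with $H_2[\varepsilon]$, absorb degree $\geq 3$ into $O(\lVert\varepsilon\rVert_{H^2}^3)$, and kill the linear part using the fourth-order elliptic equations. The zero-order, quadratic and cubic steps are fine.

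The gap is in your treatment of the linear term for the solitons. After reducing to the diagonal, the residual linear contribution is exactly $\sum_l\big[2c_{0,l}(t)\int(\widetilde{R_l}_{xx}+\widetilde{R_l}^3)\varepsilon-2c_lc_{0,l}(t)\int\widetilde{R_l}\varepsilon-c_{0,l}(t)^2\int\widetilde{R_l}\varepsilon\big]$, and you propose to bound it by $\lvert c_{0,l}(t)\rvert\cdot\lVert\varepsilon\rVert_{L^2}$, i.e. by ``$O(e^{-\theta t})\times O(e^{-\theta t})$''. But Lemma \ref{1lem:mod} only gives $\lvert c_{0,l}(t)\rvert\leq CAe^{-\theta t}$ and $\lVert\varepsilon\rVert_{H^2}\leq CAe^{-\theta t}$, so this produces an error $CAe^{-\theta t}\lVert\varepsilon\rVert_{H^2}\leq CA^2e^{-2\theta t}$. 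This is not of the claimed form $O(e^{-2\theta t}\lVert\varepsilon\rVert_{H^2})+O(e^{-2\theta t})$ with constants independent of $A$, and there is no $1/t$ factor to compensate (unlike in Lemma \ref{1lem:lcq}). Fed into the bootstrap, it yields only $\lVert\varepsilon\rVert_{H^2}^2\leq CA^2e^{-2\theta t}$, which cannot be improved to $(A/2)^2e^{-2\theta t}$ by any choice of $A$ and $T^*$; a Young-inequality rebalancing fails for the same reason. The missing idea is that the defect must vanish \emph{exactly}, and it does: by the elliptic equation $\widetilde{R_l}_{xx}+\widetilde{R_l}^3=(c_l+c_{0,l}(t))\widetilde{R_l}$, every residual term is a multiple of $\int\widetilde{R_l}\varepsilon$, and the orthogonality condition $\int\widetilde{R_l}\varepsilon\sqrt{\varphi_l^s}=0$ imposed by the modulation was chosen precisely to annihilate it. The paper inserts the weight $\sqrt{\varphi_l^s}$ (passing from its $H_1'$ to $H_1''$) at a cost $C\lVert\varepsilon\rVert_{H^2}e^{-2\theta t}$ coming from cross-localization decay, with $C$ independent of $A$, and then gets $H_1''=0$. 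You should invoke these orthogonality conditions at this point rather than a size estimate on $c_{0,l}$.
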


\begin{prop}[Coercivity of $H_{2}$]
\label{1lem:coerc-1}There exists $\mu>0$, $T_{3}^{*}=T_{3}^{*}(A)$
such that, if $T^{*}\geq  T_{3}^{*}$, we have for any $t\in[t^{*},T]$, 
\begin{align}
H_{2}[\varepsilon](t)\geq \mu\lVert\varepsilon(t)\rVert_{H^{2}}^{2}-\frac{1}{\mu}\sum_{k=1}^{K}\bigg(\int\varepsilon\widetilde{B_{k}}\sqrt{\varphi_{k}^{b}}\bigg)^{2}.\label{1coerc}
\end{align}
\end{prop}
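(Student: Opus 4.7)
The approach is to localize the quadratic form $H_2[\varepsilon]$ around each nonlinear object $\widetilde{P_j}$, apply the single-object $H^2$ coercivity results of Alejo-Muñoz \cite{key-1} (for breathers) and their soliton analogue, and then use the modulation orthogonalities \eqref{1eq:orthom} to discard all but one ``bad direction'' per breather, which is exactly the term subtracted in \eqref{1coerc}.

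First, since the construction \eqref{1eq:phij}--\eqref{1eq:phij-1} of the $\varphi_j$ via telescoping yields $\sum_{j=1}^J \varphi_j \equiv 1$ on $\mathbb{R}$, setting $\eta_j := \sqrt{\varphi_j}\,\varepsilon$ gives $\sum_j \eta_j^2 = \varepsilon^2$. I would insert the identity $\sum_j \varphi_j = 1$ into every integrand of $H_2[\varepsilon]$, and for the terms containing $\varepsilon_x^2$ or $\varepsilon_{xx}^2$ move one copy of $\sqrt{\varphi_j}$ inside the derivative at the cost of commutators; the algebraic hypotheses $(\psi')^{4/3}\leq C\psi$, $(\psi')^{4/3}\leq C(1-\psi)$ and $|\psi''|^{3/2}\leq C\psi'$ (which is exactly what these conditions are designed for) imply that $\sqrt{\varphi_j}$ is a $C^2$ function whose $k$-th derivative is bounded by $C(\delta t)^{-k}$ on the support of $\varphi_{1,j}$, so these commutators contribute at most $O(t^{-1}\|\varepsilon\|_{H^2}^2)$. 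Second, on $\operatorname{supp}(\varphi_j)$ every $\widetilde{P_i}$ with $i\neq j$ is of size $e^{-\beta\delta t}$ by Corollary \ref{1cor:unifdecay} combined with Proposition \ref{1prop:decay-mod}, so replacing $\widetilde{P}$ by $\widetilde{P_j}$ in the coefficients of the quadratic form produces only $O(e^{-\beta\delta t/2}\|\varepsilon\|_{H^2}^2)$ errors. After these two reductions,
\[ H_2[\varepsilon] = \sum_{j=1}^J H_2^{(j)}[\eta_j] + O\!\left( (e^{-\theta t}+t^{-1})\|\varepsilon\|_{H^2}^2 \right), \]
where, up to modulation shifts of order $CAe^{-\theta t}$ controlled by Lemma \ref{1lem:mod}, $H_2^{(j)}$ is precisely the Hessian at $P_j$ of the single-object Lyapunov functional used by Alejo-Muñoz in \cite{key-1} for a breather, or its analogue $F+2c_lE+c_l^2M$ for a soliton.

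Next, for each $j$ I would invoke the single-object coercivity. If $P_j = B_k$, the analysis in \cite{key-1} shows that $H_2^{(k)}[\eta_k^b]\geq \mu\|\eta_k^b\|_{H^2}^2$ whenever $\eta_k^b$ is orthogonal to the three-dimensional space spanned by $\widetilde{B_k}, \widetilde{B_k}_1, \widetilde{B_k}_2$; without that full orthogonality, the same spectral argument yields
\[ H_2^{(k)}[\eta_k^b] \geq \mu\|\eta_k^b\|_{H^2}^2 - C\Big[\big(\textstyle\int \eta_k^b\widetilde{B_k}\big)^2+\big(\int \eta_k^b\widetilde{B_k}_1\big)^2+\big(\int \eta_k^b\widetilde{B_k}_2\big)^2\Big]. \]
If $P_j = R_l$, the corresponding $H^2$ coercivity of $F+2c_lE+c_l^2M$ at $Q_{c_l}$ has a two-dimensional kernel spanned by $\partial_x Q_{c_l}$ and $Q_{c_l}$, producing an analogous inequality with only those two directions. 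The modulation orthogonalities \eqref{1eq:orthom} translate, through $\eta_j=\sqrt{\varphi_j}\,\varepsilon$, into $\int \widetilde{B_k}_\alpha\eta_k^b=0$ for $\alpha=1,2$ and $\int\widetilde{R_l}\eta_l^s=\int\partial_x\widetilde{R_l}\eta_l^s=0$. This kills every bad direction for solitons and leaves exactly one per breather, namely $\int\eta_k^b\widetilde{B_k}=\int\varepsilon\widetilde{B_k}\sqrt{\varphi_k^b}$.

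Finally, summing over $j$ and using a symmetric commutator analysis to obtain $\sum_j\|\eta_j\|_{H^2}^2\geq \tfrac{1}{2}\|\varepsilon\|_{H^2}^2 - O(t^{-2})\|\varepsilon\|_{H^2}^2$ for $t$ large, I would absorb all error terms into a fraction of $\mu\|\varepsilon\|_{H^2}^2$ by choosing $T_3^*$ sufficiently large (depending on $A$ through the bounds on the modulation shifts), yielding \eqref{1coerc} with a slightly smaller $\mu$. The principal obstacle is the input single-breather $H^2$ coercivity: the functional $\mathcal{H}$ is linearized at a \emph{time-dependent} critical point, so one must check that the constants and the precise three-dimensional kernel $\{\widetilde{B_k}, \widetilde{B_k}_1, \widetilde{B_k}_2\}$ from \cite{key-1} remain uniform in time and are stable under the $O(Ae^{-\theta t})$ shifts produced by modulation; the secondary technical difficulty is redistributing $\sqrt{\varphi_j}$ inside $L^2$-norms of two derivatives without losing powers of $t$, for which the hypotheses \eqref{1eq:psi} on $\psi$ are tailored.
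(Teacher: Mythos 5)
Your proposal follows essentially the same route as the paper: write $H_2[\varepsilon]=\sum_j \mathcal{Q}_j'[\varepsilon]$, replace $\widetilde{P}$ by $\widetilde{P_j}$ on $\operatorname{supp}\varphi_j$ with exponentially small error, rewrite each localized form as the single-object quadratic form evaluated at $\varepsilon\sqrt{\varphi_j}$ with commutator errors of size $C(\delta t)^{-1}\lVert\varepsilon\rVert_{H^2}^2$ controlled by the hypotheses \eqref{1eq:psi} on $\psi$, apply the single-breather coercivity of \cite{key-1} (two orthogonality conditions, one subtracted projection onto $\widetilde{B_k}$) and its soliton analogue, and sum. One small correction: for the soliton the kernel of $\mathcal{Q}_c^s$ is spanned by $\partial_x Q$ and $\partial_c Q$ (not by $Q$ and $\partial_x Q$); the orthogonality conditions actually furnished by the modulation are to $Q$ and $\partial_x Q$, and passing from the kernel directions to these conditions requires the short additional argument of Section \ref{1sec:53}, which your sketch implicitly assumes. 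You also elide the point, handled in Lemma \ref{1lem:ver_sol}, that the modulated soliton $\widetilde{R_l}$ is a critical point of the functional with parameter $c_l+c_{0,l}(t)$ rather than $c_l$, so one must also bound the difference $\mathcal{Q}^s_{c_l}-\mathcal{Q}^s_{c_l+c_{0,l}(t)}$ using the modulation estimate on $c_{0,l}$; both are minor and do not affect the validity of the argument.
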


The Propositions \ref{1lem:exp} and \ref{1lem:coerc-1} will be used
in the next concluding subsection to prove the Proposition \ref{1prop:bootstrap}.

Firstly, let us prove the Proposition \ref{1lem:exp}. 
\begin{proof}[Proof of Proposition \ref{1lem:exp}]

We would like to compare $\mathcal{H}[\widetilde{P}+\varepsilon](t)$
and $\mathcal{H}[\widetilde{P}](t)$ (recall that $p=\widetilde{P}+\varepsilon$)
by studying the difference asymptotically when $\varepsilon$ is small.
Firstly, let us see how we could simplify the expression of $\mathcal{H}[\widetilde{P}](t)$.

\emph{Step 1:} 
\begin{claim}
\label{1lem:simpl1}If $T^{*}$ is large enough, for all $t\in[t^{*},T]$,
we have that
\begin{align}
\mathcal{H}[\widetilde{P}](t) & =\sum_{j=1}^{J}\Big(F\big[\widetilde{P_{j}}\big](t)+2\big(b_{j}^{2}-a_{j}^{2}\big)E\big[\widetilde{P_{j}}\big](t)+\big(a_{j}^{2}+b_{j}^{2}\big)^{2}M\big[\widetilde{P_{j}}\big](t)\Big)\\
&\quad +O\big(e^{-2\theta t}\big).\label{1claim}
\end{align}
\end{claim}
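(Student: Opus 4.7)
The plan is to expand $\mathcal{H}[\widetilde{P}]$ piece by piece, isolating in each piece the diagonal contribution (one single $\widetilde{P_j}$) from off-diagonal cross terms, and showing that every cross term is $O(e^{-2\theta t})$. The arithmetic that makes this work is that $\theta = \beta\tau/32$, so $2\theta = \beta\tau/16 < \beta\tau/8$, which is precisely the decay rate provided by Corollary \ref{1cor:decay-mod}.

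First I would handle the global piece $F[\widetilde{P}]$ defined in \eqref{1eq:-3}. Substituting $\widetilde{P} = \sum_j \widetilde{P_j}$ and expanding the three integrals polynomially produces, besides the diagonal sum $\sum_j F[\widetilde{P_j}]$, a family of cross integrals of the form $\int \partial_x^a \widetilde{P_{i_1}} \partial_x^b \widetilde{P_{i_2}} \cdots$, where at least two distinct indices appear. By Proposition \ref{1prop:decay-mod}, each factor decays at rate $\beta/2$ away from its own trajectory $x = v_{i_s} t$; a trivial extension of the two-factor estimate in Corollary \ref{1cor:decay-mod} (applied after bounding all but two factors in $L^\infty$, which is uniform in time) bounds each such integral by $Ce^{-\beta\tau t/8}$. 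Hence
\[
F[\widetilde{P}](t) = \sum_{j=1}^{J} F[\widetilde{P_j}](t) + O(e^{-2\theta t}).
\]

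Next I would analyze the local mass $M_j[\widetilde{P}](t) = \tfrac{1}{2}\int \widetilde{P}^2 \varphi_j$. Expanding $\widetilde{P}^2$, the off-diagonal terms $\int \widetilde{P_i} \widetilde{P_k} \varphi_j$ with $i \neq k$ are handled exactly as in the previous step, since $0 \le \varphi_j \le 1$. The diagonal terms $\int \widetilde{P_i}^2 \varphi_j$ with $i \neq j$ are exponentially small as well: $\widetilde{P_i}$ is concentrated at $x \approx v_i t$ with exponential decay, while $\varphi_j$ is supported in $[\sigma_j t - \delta t, \sigma_{j+1} t + \delta t]$ (using \eqref{1eq:phij}--\eqref{1eq:phij-1}), at distance at least $(\tau/2 - \delta) t$ from $v_i t$; Remark \ref{1rem:If-,rem} then gives a bound $Ce^{-\beta\tau t/2}$. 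Finally, for the single surviving term $i = j$, I would write
\[
\int \widetilde{P_j}^2 \varphi_j = \int \widetilde{P_j}^2 - \int \widetilde{P_j}^2 (1 - \varphi_j),
\]
and again Remark \ref{1rem:If-,rem} controls the complement integral by $Ce^{-\beta\tau t/2}$. This yields $M_j[\widetilde{P}] = M[\widetilde{P_j}] + O(e^{-2\theta t})$. The same scheme applies verbatim to $E_j[\widetilde{P}]$: the quadratic $\int \widetilde{P}_x^2 \varphi_j$ is treated identically, and the quartic $\int \widetilde{P}^4 \varphi_j$ produces, upon expansion, only terms with at least two distinct indices (cross-product type, exponentially small) plus the pure diagonal $\int \widetilde{P_j}^4 \varphi_j = \int \widetilde{P_j}^4 + O(e^{-\beta\tau t/2})$.

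Summing the estimates from the three steps, multiplying by the fixed constants $2(b_j^2 - a_j^2)$ and $(a_j^2 + b_j^2)^2$ (which are bounded in terms of the problem data), and recognizing the diagonal contributions as $F[\widetilde{P_j}] + 2(b_j^2 - a_j^2) E[\widetilde{P_j}] + (a_j^2 + b_j^2)^2 M[\widetilde{P_j}]$ gives exactly \eqref{1claim}. No genuine obstacle arises; the only real work is the bookkeeping of the cross terms generated by the cubic $\int \widetilde{P}^2 \widetilde{P}_x^2$ and the sextic $\int \widetilde{P}^6$ appearing in $F$, where one must verify that every monomial in the multinomial expansion contains at least two factors with distinct trajectories $v_i \neq v_{i'}$, so that the separation argument of Corollary \ref{1cor:decay-mod} applies and yields the required rate $\beta\tau/8 \geq 2\theta$.
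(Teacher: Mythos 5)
Your proposal is correct and follows essentially the same route as the paper: both reduce the claim to the cross-product decay of Corollary \ref{1cor:decay-mod} for pairs of distinct objects and to the separation between the trajectory $x=v_i t$ and the region where $\varphi_j$ (for $i\neq j$) or $1-\varphi_j$ (for $i=j$) is not negligible, as in Remark \ref{1rem:If-,rem}, with the same arithmetic $2\theta=\beta\tau/16\leq\beta\tau/8$; the paper merely organizes the global term as $F[\widetilde{P}]=\sum_j F_j[\widetilde{P}]$ via $\sum_j\varphi_j=1$ and compares each localized piece with $F[\widetilde{P_j}]$, rather than expanding the multinomial globally as you do. One small bookkeeping point: the expansion of $\int\widetilde{P}^4\varphi_j$ also contains the single-index terms $\int\widetilde{P_i}^4\varphi_j$ with $i\neq j$, which are not of cross-product type, but these are controlled by exactly the support-separation estimate you already applied to $\int\widetilde{P_i}^2\varphi_j$.
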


\begin{proof}
We prove that, for $t\in[t^{*},T]$,
\begin{align}
\bigg\lvert\mathcal{H}[\widetilde{P}]-\sum_{j=1}^{J}\Big(F\big[\widetilde{P_{j}}\big]+2\big(b_{j}^{2}-a_{j}^{2}\big)E\big[\widetilde{P_{j}}\big]+\big(a_{j}^{2}+b_{j}^{2}\big)^{2}M\big[\widetilde{P_{j}}\big]\Big)\bigg\rvert\leq  Ce^{-2\theta t}.\label{1eq:toprove}
\end{align}

Let us compare $F_{j}[\widetilde{P}]$ and $F[\widetilde{P_{j}}]$:
\begin{align}
F_{j}[\widetilde{P}]=\int\bigg(\frac{1}{2}\widetilde{P}_{xx}^{2}-\frac{5}{2}\widetilde{P}^{2}\widetilde{P}_{x}^{2}+\frac{1}{4}\widetilde{P}^{6}\bigg)\varphi_{j}(t,x)\,dx,\label{1eq:-35}
\end{align}
\begin{align}
F[\widetilde{P_{j}}]=\int\bigg(\frac{1}{2}\widetilde{P_{j}}_{xx}^{2}-\frac{5}{2}\widetilde{P_{j}}^{2}\widetilde{P_{j}}_{x}^{2}+\frac{1}{4}\widetilde{P_{j}}^{6}\bigg)\,dx.\label{1eq:-36}
\end{align}
We compare the corresponding terms of these equalities. Let us start by
the first one: 
\begin{align}
\MoveEqLeft\bigg\lvert\int\Big(\widetilde{P}_{xx}^{2}\varphi_{j}(t,x)-\widetilde{P_{j}}_{xx}^{2}\Big)\bigg\rvert \\
& \leq \int\widetilde{P_{j}}_{xx}^{2}\big\lvert 1-\varphi_{j}(t,x)\big\rvert +\sum_{(r,s)\neq(j,j)}\int\Big\lvert\widetilde{P_{r}}_{xx}\widetilde{P_{s}}_{xx}\Big\rvert\varphi_{j}(t,x) \\
 & \leq  C\int e^{-\frac{\beta}{2}\lvert x-v_{j}t\rvert}e^{\frac{\beta\tau}{32}t}\big\lvert 1-\varphi_{j}(t,x)\big\rvert\,dx\\
 & \quad +C\sum_{i\neq j}\int e^{-\frac{\beta}{2}\lvert x-v_{i}t\rvert}e^{\frac{\beta\tau}{32}t}\varphi_{j}(t,x)\,dx \\
 & \leq  Ce^{\frac{\beta\tau}{32}t}\Bigg[\Bigg(\int_{-\infty}^{\sigma_{j}t+\delta t}+\int_{\sigma_{j+1}t-\delta t}^{+\infty}\Bigg)e^{-\frac{\beta}{2}\lvert x-v_{j}t\rvert}\,dx\\
 & \quad+\sum_{i\neq j}\int_{\sigma_{j}t-\delta t}^{\sigma_{j+1}t+\delta t}e^{-\frac{\beta}{2}\lvert x-v_{i}t\rvert}\,dx\Bigg] 
  \leq  Ce^{-\beta\tau t/16},\label{1eq:-37}
\end{align}
by Proposition \ref{1prop:decay-mod} and Remark \ref{1rem:If-,rem}.
For the other terms of the difference to be bounded, we reason in a similar
way. This completes the proof of the claim. 
\end{proof}
\emph{Step 2:}

Therefore, when we manage to compare $\mathcal{H}[p](t)$ and $\mathcal{H}[\widetilde{P}](t)$,
we are also able to compare $\mathcal{H}[p](t)$ and
\begin{align}
\sum_{j=1}^{J}\Big(F\big[\widetilde{P_{j}}\big](t)+2\big(b_{j}^{2}-a_{j}^{2}\big)E\big[\widetilde{P_{j}}\big](t)+\big(a_{j}^{2}+b_{j}^{2}\big)^{2}M\big[\widetilde{P_{j}}\big](t)\Big).
\end{align}
 
 We compute the Taylor expansion of $\mathcal{H}[p]=\mathcal{H}[\widetilde{P}+\varepsilon]$:
\begin{align}
\mathcal{H}\big[\widetilde{P}+\varepsilon\big] & =\frac{1}{2}\int\big(\widetilde{P}+\varepsilon\big)_{xx}^{2}-\frac{5}{2}\int\big(\widetilde{P}+\varepsilon\big)^{2}\big(\widetilde{P}+\varepsilon\big)_{x}^{2}+\frac{1}{4}\int\big(\widetilde{P}+\varepsilon\big)^{6} \\
 & \quad+\sum_{j=1}^{J}\bigg[\big(b_{j}^{2}-a_{j}^{2}\big)\bigg(\int\big(\widetilde{P}+\varepsilon\big)_{x}^{2}\varphi_{j}-\frac{1}{2}\int\big(\widetilde{P}+\varepsilon\big)^{4}\varphi_{j}\bigg)\bigg]\\
&\quad +\sum_{j=1}^{J}\bigg[\big(a_{j}^{2}+b_{j}^{2}\big)^{2}\frac{1}{2}\int\big(\widetilde{P}+\varepsilon\big)^{2}\varphi_{j}\bigg] \\
&  =\frac{1}{2}\int\widetilde{P}_{xx}^{2}-\frac{5}{2}\int\widetilde{P}^{2}\widetilde{P}_{x}^{2}
  +\frac{1}{4}\int\widetilde{P}^{6}+\int\widetilde{P}_{(4x)}\varepsilon+5\int\widetilde{P}\widetilde{P}_{x}^{2}\varepsilon\\
  & \quad+5\int\widetilde{P}^{2}\widetilde{P}_{xx}\varepsilon+\frac{3}{2}\int\widetilde{P}^{5}\varepsilon 
  +\frac{1}{2}\int\varepsilon_{xx}^{2}-\frac{5}{2}\int\widetilde{P}^{2}\varepsilon_{x}^{2}\\
  &\quad +\frac{5}{2}\int\widetilde{P}_{x}^{2}\varepsilon^{2}+5\int\widetilde{P}\widetilde{P}_{xx}\varepsilon^{2}+\frac{15}{4}\int\widetilde{P}^{4}\varepsilon^{2}+O\big(\lVert\varepsilon(t)\rVert_{H^{2}}^{3}\big) \\
 & \quad+\sum_{j=1}^{J}\big(b_{j}^{2}-a_{j}^{2}\big)\bigg(\int\widetilde{P}_{x}^{2}\varphi_{j}-\frac{1}{2}\int\widetilde{P}^{4}\varphi_{j}-2\int\widetilde{P}_{xx}\varepsilon\varphi_{j} \\
 & \qquad-2\int\widetilde{P}_{x}\varepsilon\varphi_{j,x}-2\int\widetilde{P}^{3}\varepsilon\varphi_{j}+\int\varepsilon_{x}^{2}\varphi_{j}-3\int\widetilde{P}^{2}\varepsilon^{2}\varphi_{j}\bigg) \\
 & \quad+\sum_{j=1}^{J}\big(a_{j}^{2}+b_{j}^{2}\big)^{2}\frac{1}{2}\bigg(\int\widetilde{P}^{2}\varphi_{j}+2\int\widetilde{P}\varepsilon\varphi_{j}+\int\varepsilon^{2}\varphi_{j}\bigg).\label{1eq:-38}
\end{align}

We can observe that the sum (\ref{1eq:-38}) is composed of 0-order terms in $\varepsilon$,
of $1^{st}$-order terms in $\varepsilon$, of $2^{nd}$-order terms
in $\varepsilon$; $3^{rd}$ and larger-order terms in $\varepsilon$
are contained in $O(\lVert\varepsilon(t)\rVert_{H^{2}}^{3})$. The sum
of the 0-order terms is actually $\mathcal{H}[\widetilde{P}]$. The
sum of $2^{nd}$-order terms in $\varepsilon$ is $H_{2}[\varepsilon](t)$.

Let us study more closely the $1^{st}$-order terms: 
\begin{align}
H_{1} & =\int\widetilde{P}_{(4x)}\varepsilon+5\int\widetilde{P}\widetilde{P}_{x}^{2}\varepsilon+5\int\widetilde{P}^{2}\widetilde{P}_{xx}\varepsilon+\frac{3}{2}\int\widetilde{P}^{5}\varepsilon \\
 &\quad +\sum_{j=1}^{J}\big(b_{j}^{2}-a_{j}^{2}\big)\bigg(2\int\widetilde{P}_{x}\varepsilon_{x}\varphi_{j}-2\int\widetilde{P}^{3}\varepsilon\varphi_{j}\bigg)+\sum_{j=1}^{J}\big(a_{j}^{2}+b_{j}^{2}\big)^{2}\int\widetilde{P}\varepsilon\varphi_{j}.\label{1eq:-5}
\end{align}

From \cite{key-1}, we know that a breather $A=A_{\alpha,\beta}$
satisfies for any fixed $t\in\mathbb{R}$, the following nonlinear
equation: 
\begin{align}
A_{(4x)}-2\big(\beta^{2}-\alpha^{2}\big)\big(A_{xx}+A^{3}\big)+\big(\alpha^{2}+\beta^{2}\big)^{2}A+5AA_{x}^{2}+5A^{2}A_{xx}+\frac{3}{2}A^{5}=0.\label{1eq:elipt}
\end{align}
This equation is also satisfied for $A=\widetilde{B}_{k}$ with $\alpha=\alpha_{k}$
and $\beta=\beta_{k}$ for any $k=1,...,K$ (the shape parameters of
a breather are not changed by modulation).

For a soliton $Q=R_{c,\kappa}$, we know from $Q_{xx}=cQ-Q^{3}$ that
$Q$ satisfies for any fixed $t\in\mathbb{R}$, the following nonlinear
equation (see Section \ref{1sec:51} (Appendix)):
\begin{align}
Q_{(4x)}-2c\big(Q_{xx}+Q^{3}\big)+c^{2}Q+5QQ_{x}^{2}+5Q^{2}Q_{xx}+\frac{3}{2}Q^{5}=0.\label{1eq:elipts}
\end{align}
This equation is not exactly satisfied for $Q=\widetilde{R_{l}}$
for any $l=1,...,L$ (the shape parameters of a soliton are changed
by modulation). The exact equation satisfied by $Q=\widetilde{R_{l}}$
is:
\begin{align}
\MoveEqLeft Q_{(4x)}-2c_{l}\big(Q_{xx}+Q^{3}\big)+c_{l}^{2}Q+5QQ_{x}^{2}+5Q^{2}Q_{xx}+\frac{3}{2}Q^{5}\\
& =2c_{0,l}(t)\big(Q_{xx}+Q^{3}\big)-2c_{l}c_{0,l}(t)Q-c_{0,l}(t)^{2}Q.\label{1eq:eliptsm}
\end{align}

We will compare $H_{1}$ and 
\begin{align}
H_{1}' & :=\int\widetilde{P}_{(4x)}\varepsilon+5\sum_{j=1}^{J}\int\widetilde{P_{j}}\widetilde{P_{j}}_{x}^{2}\varepsilon+5\sum_{j=1}^{J}\int\widetilde{P_{j}}_{x}^{2}\widetilde{P_{j}}_{xx}\varepsilon+\frac{3}{2}\sum_{j=1}^{J}\int\widetilde{P_{j}}^{5}\varepsilon \\
 & -2\sum_{j=1}^{J}\big(b_{j}^{2}-a_{j}^{2}\big)\bigg(\int\widetilde{P_{j}}_{xx}\varepsilon+\int\widetilde{P_{j}}^{3}\varepsilon\bigg)+\sum_{j=1}^{J}\big(a_{j}^{2}+b_{j}^{2}\big)^{2}\int\widetilde{P_{j}}\varepsilon.\label{1eq:-6}
\end{align}

Firstly, let us compare $\int\widetilde{P}\widetilde{P}_{x}^{2}\varepsilon$
and $\sum_{j=1}^{J}\int\widetilde{P_{j}}\widetilde{P_{j}}_{x}^{2}\varepsilon$:
\begin{align}
\int\widetilde{P}\widetilde{P}_{x}^{2}\varepsilon & =\int\bigg(\sum_{j=1}^{J}\widetilde{P_{j}}\bigg)\bigg(\sum_{j=1}^{J}\widetilde{P_{j}}_{x}\bigg)^{2}\varepsilon \\
 & =\sum_{j=1}^{J}\int\widetilde{P_{j}}\widetilde{P_{j}}_{x}^{2}\varepsilon+\sum_{h\neq i\textrm{ or }i\neq j}\int\widetilde{P_{h}}\widetilde{P_{i}}_{x}\widetilde{P_{j}}_{x}\varepsilon.\label{1eq:-39}
\end{align}
To succeed, we need to find a bound for a term of the type $\int\widetilde{P_{h}}\widetilde{P_{i}}_{x}\widetilde{P_{j}}_{x}\varepsilon$
where $h\neq i$ or $i\neq j$. We can perform the following upper bounding
(where without loss of generality, we suppose that $i\neq j$): 
\begin{align}
\bigg\lvert \int\widetilde{P_{h}}\widetilde{P_{i}}_{x}\widetilde{P_{j}}_{x}\varepsilon\bigg\rvert & \leq  Ce^{\frac{\beta\tau}{16}t}\int e^{-\frac{\beta}{2}\lvert x-v_{i}t\rvert}e^{-\frac{\beta}{2}\lvert x-v_{j}t\rvert}\lvert\varepsilon\rvert \\
 & \leq  C\lVert\varepsilon\rVert_{L^{\infty}}e^{\frac{\beta\tau}{16}t}\int e^{-\frac{\beta}{2}\lvert x-v_{i}t\rvert}e^{-\frac{\beta}{2}\lvert x-v_{j}t\rvert} \\
 & \leq  C\lVert\varepsilon\rVert_{H^{2}}e^{-\beta\tau t/8},\label{1eq:-40}
\end{align}
by Sobolev embeddings and Proposition \ref{1prop:cross-product}.

The bounding is quite similar for $\int\widetilde{P}^{2}\widetilde{P}_{xx}\varepsilon$
and $\int\widetilde{P}^{5}\varepsilon$. We observe that $-\int\widetilde{P_{j}}_{xx}\varepsilon=\int\widetilde{P_{j}}_{x}\varepsilon_{x}$.
To compare $\int\widetilde{P}_{x}\varepsilon_{x}\varphi_{j}$ and
$\int_{\mathbb{R}}\widetilde{P_{j}}_{x}\varepsilon_{x}$, and for similar
terms, we can use computations that we have already
performed at the beginning of this proof. Therefore, 
\begin{align}
\bigg\lvert\int\widetilde{P}_{x}\varepsilon_{x}\varphi_{j}-\int_{\mathbb{R}}\widetilde{P_{j}}_{x}\varepsilon_{x}\bigg\rvert\leq  C\lVert\varepsilon\rVert_{H^{2}}e^{-\frac{\beta\tau t}{16}}.\label{1eq:-41}
\end{align}
This enables us to bound the difference between $H_{1}$ and $H_{1}'$:
\begin{align}
\big\lvert H_{1}-H_{1}'\big\rvert\leq  C\lVert\varepsilon(t)\rVert_{H^{2}}e^{-\frac{\beta\tau t}{16}}.\label{1eq:-42}
\end{align}
Now, because our objects are not only breathers, $H_{1}'$ is not
equal to $0$. Actually, we have that
\begin{align}
\MoveEqLeft H_{1}'=2\sum_{l=1}^{L}c_{0,l}(t)\bigg(\int\widetilde{R_{l}}_{xx}\varepsilon+\int\widetilde{R_{l}}^{3}\varepsilon\bigg)\\
& -2\sum_{l=1}^{L}c_{l}c_{0,l}(t)\int\widetilde{R_{l}}\varepsilon-\sum_{l=1}^{L}c_{0,l}(t)^{2}\int\widetilde{R_{l}}\varepsilon.\label{1eq:-43}
\end{align}

Now, we introduce:
\begin{align}
\MoveEqLeft H_{1}''=2\sum_{l=1}^{L}c_{0,l}(t)\bigg(\int\widetilde{R_{l}}_{xx}\varepsilon\sqrt{\varphi_{l}^{s}}+\int\widetilde{R_{l}}^{3}\varepsilon\sqrt{\varphi_{l}^{s}}\bigg)\\
& -2\sum_{l=1}^{L}c_{l}c_{0,l}(t)\int\widetilde{R_{l}}\varepsilon\sqrt{\varphi_{l}^{s}}-\sum_{l=1}^{L}c_{0,l}(t)^{2}\int\widetilde{R_{l}}\varepsilon\sqrt{\varphi_{l}^{s}}.\label{1eq:-44}
\end{align}
By reasonning the same way as for $H_1$ and $H_{1}'$, we see that
\begin{align}
\big\lvert H_{1}'-H_{1}''\big\rvert\leq  C\lVert\varepsilon(t)\rVert_{H^{2}}e^{-2\theta t}.\label{1eq:-45}
\end{align}
Because of (\ref{1eq:orthom}) and
because of the elliptic equation satisfied by a soliton, we have that
\begin{align}
H_{1}''=0.\label{1eq:-46}
\end{align}

Thus,
\begin{align}
\lvert H_{1}\rvert=\lvert H_{1}-H_{1}'\rvert+\lvert H_{1}'-H_{1}''\rvert+\lvert H_{1}''\rvert\leq  C\lVert\varepsilon(t)\rVert_{H^{2}}e^{-2\theta t}.\label{1eq:-47}
\end{align}
The proof of Proposition \ref{1lem:exp} is now completed. 
\end{proof}
Now, we would like to study the quadratic terms in $\varepsilon$
of the development of $\mathcal{H}[\widetilde{P}+\varepsilon]$. They
are contained in $H_{2}[\text{\ensuremath{\varepsilon}}](t)$.

Let $A=B_{\alpha,\beta}$ be a breather (we note $A_{1}:=\partial_{x_{1}}A$
and $A_{2}:=\partial_{x_{2}}A$). We define a quadratic form associated
to this breather: 
\begin{align}
\mathcal{Q}_{\alpha,\beta}^{b}[\epsilon] & :=\frac{1}{2}\int\epsilon_{xx}^{2}-\frac{5}{2}\int A^{2}\epsilon_{x}^{2}+\frac{5}{2}\int A_{x}^{2}\epsilon^{2}+5\int AA_{xx}\epsilon^{2}+\frac{15}{4}\int A^{4}\epsilon^{2} \\
 &\quad+\big(\beta^{2}-\alpha^{2}\big)\bigg(\int\epsilon_{x}^{2}-3\int A^{2}\epsilon^{2}\bigg)+\big(\alpha^{2}+\beta^{2}\big)^{2}\frac{1}{2}\int\epsilon^{2}=:\mathcal{Q}_{\alpha,\beta}[\epsilon].\label{1eq:-7}
\end{align}
From \cite{key-1}, we know that the kernel of this quadratic form
is of dimension 2 and is spanned by $\partial_{x_{1}}B_{\alpha,\beta}$
and $\partial_{x_{2}}B_{\alpha,\beta}$, and that this quadratic form
has only one negative eigenvalue that is of multiplicity 1: 
\begin{prop}[Proposition 4.11, \cite{key-3}]
\label{1fact:coerc}There exists $\mu_{\alpha,\beta}^{b}>0$ that
depends only on $\alpha$ and $\beta$ (and does not depend on time), such
that if $\epsilon\in H^{2}(\mathbb{R})$ is such that
\begin{align}
\int A_{1}\epsilon=\int A_{2}\epsilon=0,\label{1eq:-48}
\end{align}
then 
\begin{align}
\mathcal{Q}_{\alpha,\beta}^{b}[\epsilon]\geq \mu_{\alpha,\beta}^{b}\lVert\epsilon\rVert_{H^{2}}^{2}-\frac{1}{\mu_{\alpha,\beta}^{b}}\bigg(\int\epsilon A\bigg)^{2}.\label{1eq:-49}
\end{align}
\end{prop}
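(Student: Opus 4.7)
The plan is to reformulate $\mathcal{Q}^b_{\alpha,\beta}$ as a quadratic form associated to a self-adjoint operator and rely on its spectral structure. Write $\mathcal{Q}^b_{\alpha,\beta}[\epsilon] = \frac{1}{2}\langle L\epsilon, \epsilon\rangle_{L^2}$ where $L$ is self-adjoint on $L^2(\mathbb{R})$ with domain $H^4(\mathbb{R})$:
\begin{equation*}
L\epsilon = \epsilon_{xxxx} + 5(A^2 \epsilon_x)_x + \bigl(5 A_x^2 + 10 A A_{xx} + \tfrac{15}{2} A^4\bigr)\epsilon - 2(\beta^2-\alpha^2)(\epsilon_{xx} + 3A^2\epsilon) + (\alpha^2+\beta^2)^2 \epsilon.
\end{equation*}
Since $A$ and all its derivatives decay exponentially, the potential-type terms are relatively compact perturbations of the constant-coefficient operator $L_\infty = \partial_x^4 - 2(\beta^2-\alpha^2)\partial_x^2 + (\alpha^2+\beta^2)^2$. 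By Weyl's theorem, $\sigma_{\mathrm{ess}}(L) = \sigma(L_\infty) = [m,+\infty)$ with $m = \min\bigl((\alpha^2+\beta^2)^2, 4\alpha^2\beta^2\bigr) > 0$. Hence the spectrum of $L$ below $m$ is made of finitely many eigenvalues of finite multiplicity. Next I would import from the variational analysis of Alejo--Muñoz \cite{key-1} the precise spectral content: $\ker L = \mathrm{Span}(A_1, A_2)$ (corresponding to the two translation symmetries of the breather), and $L$ admits exactly one simple negative eigenvalue $-\lambda_0$ with normalized eigenfunction $Y$; the remainder of the spectrum is positive.

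The main obstacle, which I expect to require the most care, is the non-degeneracy condition $\langle A, Y\rangle \neq 0$. By Lemma \ref{1lem:orthob}, $A \perp \ker L$, so I can write $A = a Y + A^\perp$ with $A^\perp$ lying in the positive spectral subspace, and compute $\langle LA, A\rangle = 2\mathcal{Q}^b[A]$ from the fourth-order elliptic equation \eqref{1eq:elipt} satisfied by $A$: multiplying \eqref{1eq:elipt} by $A$ and integrating, then using the identities $\int A A_{xxxx}=\int A_{xx}^2$ and $\int A^3 A_{xx} = -3\int A^2 A_x^2$, yields a closed-form expression for $\mathcal{Q}^b[A]$. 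The sign must be checked to be strictly negative using the explicit breather profile \eqref{1eq:}; strict negativity, combined with the decomposition $\langle LA, A\rangle = -\lambda_0 a^2 + \langle L A^\perp, A^\perp\rangle \geq -\lambda_0 a^2$, forces $a = \langle A, Y\rangle \neq 0$.

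Granted the non-degeneracy, the coercivity follows by a standard spectral decomposition. For $\epsilon \in H^2$ with $\epsilon \perp A_1, A_2$, write $\epsilon = b Y + \eta$ with $\eta \perp Y$ and $\eta \perp \ker L$ in $L^2$; then
\begin{equation*}
2\mathcal{Q}^b[\epsilon] = -\lambda_0 b^2 + \langle L\eta, \eta\rangle \geq -\lambda_0 b^2 + \nu \|\eta\|_{L^2}^2,
\end{equation*}
where $\nu > 0$ is the gap to the next nonnegative eigenvalue. A Garding-type inequality inherited from the fourth-order principal part, $\|\eta\|_{H^2}^2 \leq C(\langle L\eta,\eta\rangle + \|\eta\|_{L^2}^2)$, upgrades this to an $H^2$ lower bound on $\eta$. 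Finally, $\int \epsilon A = ba + \langle \eta, A\rangle$ gives $b^2 \leq \frac{2}{a^2}(\int\epsilon A)^2 + \frac{2\|A\|_{L^2}^2}{a^2}\|\eta\|_{L^2}^2$; substituting into the lower bound and absorbing the small $\|\eta\|_{L^2}^2$ contribution into the $H^2$ control of $\eta$ yields the claimed estimate, with a constant $\mu^b_{\alpha,\beta}$ depending only on $a$, $\lambda_0$, and $\nu$, i.e.\ only on $\alpha, \beta$.
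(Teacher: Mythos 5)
First, a remark on context: the paper does not prove this proposition at all — it is imported as a black box from the spectral analysis of Alejo and Mu\~noz (the surrounding text explicitly attributes the kernel description and the count of negative eigenvalues to \cite{key-1}). So there is no in-paper argument to compare against; you are attempting to reconstruct the proof of a cited external result. Your general framework (self-adjoint operator $L$, Weyl's theorem to locate the essential spectrum above a positive threshold, importing $\ker L=\Span(A_1,A_2)$ and the single simple negative eigenvalue, then a spectral decomposition) is the right skeleton, and the Gårding upgrade from $L^2$ to $H^2$ at the end is fine. (Minor slip: the bottom of $\sigma(L_\infty)$ is $(\alpha^2+\beta^2)^2$ when $\beta\ge\alpha$ and $4\alpha^2\beta^2$ when $\beta<\alpha$, not always the minimum of the two; this does not affect anything since only positivity is used.)

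However, the final step contains a genuine gap. Knowing $a=\langle A,Y\rangle\neq 0$ is \emph{not} sufficient to deduce the stated coercivity. Following your own decomposition $\epsilon=bY+\eta$, you get
\begin{align}
2\mathcal{Q}^b[\epsilon]\ \ge\ -\lambda_0 b^2+\nu\lVert\eta\rVert_{L^2}^2,\qquad b^2\ \le\ \frac{2}{a^2}\bigg(\int\epsilon A\bigg)^2+\frac{2\lVert A\rVert_{L^2}^2}{a^2}\lVert\eta\rVert_{L^2}^2,
\end{align}
and the resulting coefficient $\nu-2\lambda_0\lVert A\rVert_{L^2}^2/a^2$ in front of $\lVert\eta\rVert_{L^2}^2$ has no reason to be positive; the error term is a fixed constant times $\lVert\eta\rVert_{L^2}^2$, with no smallness parameter available to absorb it. A two-dimensional model shows the implication is genuinely false: for $L=\mathrm{diag}(-1,1)$ and $A=(1,10)$ one has $\langle A,Y\rangle\neq0$, yet $L$ is negative on $\{A\}^{\perp}$. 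The correct sufficient condition (given exactly one negative eigenvalue and orthogonality to the kernel) is the sign condition $\langle L^{-1}A,A\rangle<0$, usually verified by exhibiting a direction $\Phi$ (a derivative of the breather family with respect to a scaling parameter) with $L\Phi$ proportional to $A$ and $\langle\Phi,A\rangle$ of the right sign — this is the actual content of the Alejo--Mu\~noz argument, and it is the step your proof is missing. Separately, your intermediate claim $\mathcal{Q}^b[A]<0$ is asserted but never verified ("the sign must be checked"); a direct computation from \eqref{1eq:elipt} gives $2\mathcal{Q}^b[A]=-4(\beta^2-\alpha^2)\int A^4-20\int A^2A_x^2+6\int A^6$, whose sign is not evident (the analogous quantity for the soliton is in fact positive), so even the non-degeneracy step is not established as written.
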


\begin{rem}
$\mu_{\alpha,\beta}^{b}$ is continuous in $\alpha,\beta$. Note that
translation parameters are implicit in $\mathcal{Q}_{\alpha,\beta}^{b}$.
\end{rem}
Let $Q=R_{c,\kappa}$ be a soliton. We define a quadratic form associated
to this soliton: 
\begin{align}
\mathcal{Q}_{c}^{s}[\epsilon] & :=\frac{1}{2}\int\epsilon_{xx}^{2}-\frac{5}{2}\int Q^{2}\epsilon_{x}^{2}+\frac{5}{2}\int Q_{x}^{2}\epsilon^{2}+5\int QQ_{xx}\epsilon^{2}+\frac{15}{4}\int Q^{4}\epsilon^{2} \\
 &\quad +c\bigg(\int\epsilon_{x}^{2}-3\int Q^{2}\epsilon^{2}\bigg)+c^{2}\frac{1}{2}\int\epsilon^{2}=:\mathcal{Q}_{0,\sqrt{c}}[\epsilon].\label{1eq:-50}
\end{align}
By the same techniques, such as those presented in \cite{key-1},
adapted to the quadratic form of a soliton, we may establish that the
kernel of this quadratic form is of dimension 2, and is spanned
by $\partial_{x}Q$ and $\partial_{c}Q$, and that this quadratic
form does not have any negative eigenvalue (see Section \ref{1coer_sol} (Appendix)).
After that, from Section \ref{1sec:53} (Appendix), we deduce that the coercivity
still works when $\epsilon$ is orthogonal to $Q$ and $\partial_{x}Q$. More
precisely:
\begin{prop}
There exists $\mu_{c}^{s}>0$ that depends only on $c$ (and does not
depend on time), such that if $\epsilon\in H^{2}(\mathbb{R})$ is such that
\begin{align}
\int Q\epsilon=\int Q_{x}\epsilon=0,\label{1eq:-51}
\end{align}
then 
\begin{align}
\mathcal{Q}_{c}^{s}[\epsilon]\geq \mu_{c}^{s}\lVert\epsilon\rVert_{H^{2}}^{2}.\label{1eq:-52}
\end{align}
\end{prop}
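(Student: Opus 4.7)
The proposition reduces to combining two facts that are established elsewhere: the spectral description of $\mathcal{Q}_c^s$ in Section~5.2 of the Appendix, and an abstract switch of orthogonality conditions treated in Section~5.3. The first step is to record the baseline coercivity on the $L^2$-orthogonal complement of the kernel. Granted that $\mathcal{Q}_c^s\ge 0$, that $\ker\mathcal{Q}_c^s=\Span(\partial_x Q,\partial_c Q)$ is two-dimensional, and that the essential spectrum at the $H^2$ level is bounded away from zero (all of which follow from the same Weinstein/Alejo--Muñoz-type arguments as in \cite{key-1}, adapted to the soliton), a standard min--max/Fredholm argument yields some $\tilde\mu>0$ with
\begin{align*}
\int\eta\,\partial_x Q=\int\eta\,\partial_c Q=0\quad\Longrightarrow\quad \mathcal{Q}_c^s[\eta]\ge\tilde\mu\lVert\eta\rVert_{H^2}^2.
\end{align*}

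Given $\epsilon\in H^2$ with $\int Q\epsilon=\int Q_x\epsilon=0$, I would decompose $\epsilon=\eta+a\,\partial_x Q+b\,\partial_c Q$ with $\eta$ $L^2$-orthogonal to the kernel. Since $\partial_x Q,\partial_c Q\in\ker\mathcal{Q}_c^s$, this gives $\mathcal{Q}_c^s[\epsilon]=\mathcal{Q}_c^s[\eta]$ at once. It remains to control $|a|,|b|$ by $\lVert\eta\rVert_{L^2}$. Testing the decomposition against $Q$ and $Q_x$ in $L^2$ and using the hypotheses yields a $2\times 2$ linear system whose matrix is
\begin{align*}
\begin{pmatrix}\int Q\,\partial_x Q & \int Q\,\partial_c Q\\ \int Q_x\,\partial_x Q & \int Q_x\,\partial_c Q\end{pmatrix}=\begin{pmatrix} 0 & 1/\sqrt c\\ \lVert Q_x\rVert_{L^2}^2 & 0\end{pmatrix},
\end{align*}
where the zero entries come from parity ($Q$ and $\partial_c Q$ are even, $Q_x$ is odd), and the value $\int Q\,\partial_c Q=\tfrac12\partial_c\int Q^2=1/\sqrt c$ follows from the explicit formula (\ref{1eq:gr}) which gives $\int Q_c^2=4\sqrt c$. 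This matrix is antidiagonal with nonzero determinant, so the system produces $|a|+|b|\le C\lVert\eta\rVert_{L^2}$. The triangle inequality then gives $\lVert\epsilon\rVert_{H^2}\le\lVert\eta\rVert_{H^2}+|a|\lVert\partial_x Q\rVert_{H^2}+|b|\lVert\partial_c Q\rVert_{H^2}\le C\lVert\eta\rVert_{H^2}$, whence
\begin{align*}
\mathcal{Q}_c^s[\epsilon]=\mathcal{Q}_c^s[\eta]\ge\tilde\mu\lVert\eta\rVert_{H^2}^2\ge\mu_c^s\lVert\epsilon\rVert_{H^2}^2
\end{align*}
for some $\mu_c^s>0$ depending only on $c$.

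The main obstacle does not lie in the present argument but in the deferred first step: showing that $\mathcal{Q}_c^s$ has exactly a two-dimensional kernel spanned by $\partial_x Q,\partial_c Q$ and no negative eigenvalue at the $H^2$ level. This is delicate because $\mathcal{Q}_c^s$ is not simply the square of the usual $H^1$ soliton linearization $L=-\partial_x^2+c-3Q^2$; the coefficients coming from the third conservation law $F$ contribute extra cross terms that one must handle via an explicit factorization or positivity argument in the spirit of \cite{key-1}. Once that spectral picture is granted, the change of orthogonality above is routine linear algebra, made possible by the single explicit computation $\int Q\,\partial_c Q\neq 0$ and the parity cancellations.
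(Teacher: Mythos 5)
Your argument is correct and is essentially the paper's own proof: both rest on the coercivity of $\mathcal{Q}_c^s$ under the kernel orthogonality conditions $(\partial_x Q,\partial_c Q)$ from the preceding spectral analysis, decompose $\epsilon$ along the kernel, use that kernel directions contribute nothing to the quadratic form, and exchange the orthogonality conditions via the non-degeneracy $\int Q\,\partial_c Q\neq 0$ together with a triangle/Cauchy--Schwarz estimate. The only cosmetic difference is that the paper observes that the hypothesis $\int Q_x\epsilon=0$ and parity already force the $\partial_x Q$ coefficient to vanish, so it solves a $1\times1$ system where you solve a $2\times2$ antidiagonal one.
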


\begin{rem}
$\mu_{c}^{s}$ is continuous in $c$. Note that translation and sign
parameters are implicit in the notation $\mathcal{Q}_{c}^{s}$.
\end{rem}

We would like to find a similar minoration for $H_{2}$ (which is
a generalization of $\mathcal{Q}$).

For $j=1,...,J$, let us define for $\epsilon\in H^{2}$, 
\begin{align}
\mathcal{Q}_{j}[\epsilon] & :=\frac{1}{2}\int\epsilon_{xx}^{2}\varphi_{j}-\frac{5}{2}\int\widetilde{P_{j}}^{2}\epsilon_{x}^{2}\varphi_{j}+\frac{5}{2}\int\widetilde{P_{j}}_{x}^{2}\epsilon^{2}\varphi_{j}\\
&\quad +5\int\widetilde{P_{j}}\widetilde{P_{j}}_{xx}\epsilon^{2}\varphi_{j}+\frac{15}{4}\int\widetilde{P_{j}}^{4}\epsilon^{2}\varphi_{j} \\
 &\quad +\big(b_{j}^{2}-a_{j}^{2}\big)\bigg(\int\epsilon_{x}^{2}\varphi_{j}-3\int\widetilde{P_{j}}^{2}\epsilon^{2}\varphi_{j}\bigg)+\big(a_{j}^{2}+b_{j}^{2}\big)^{2}\frac{1}{2}\int\epsilon^{2}\varphi_{j},\label{1eq:-9}
\end{align}
and
\begin{align}
\mathcal{Q}_{j}'[\epsilon] & :=\frac{1}{2}\int\epsilon_{xx}^{2}\varphi_{j}-\frac{5}{2}\int\widetilde{P}^{2}\epsilon_{x}^{2}\varphi_{j}+\frac{5}{2}\int\widetilde{P}_{x}^{2}\epsilon^{2}\varphi_{j}\\
&\quad +5\int\widetilde{P}\widetilde{P}_{xx}\epsilon^{2}\varphi_{j}+\frac{15}{4}\int\widetilde{P}^{4}\epsilon^{2}\varphi_{j} \\
 &\quad +\big(b_{j}^{2}-a_{j}^{2}\big)\bigg(\int\epsilon_{x}^{2}\varphi_{j}-3\int\widetilde{P}^{2}\epsilon^{2}\varphi_{j}\bigg)+\big(a_{j}^{2}+b_{j}^{2}\big)^{2}\frac{1}{2}\int\epsilon^{2}\varphi_{j}.\label{1eq:-53}
\end{align}
We have that
\begin{align}
H_{2}[\varepsilon(t)]=\sum_{j=1}^{J}\mathcal{Q}_{j}'[\varepsilon(t)].\label{1eq:-54}
\end{align}
Notations $\mathcal{Q}_{k}^{b}$, $(\mathcal{Q}_{k}^{b})'$,
$\mathcal{Q}_{l}^{s}$ and $(\mathcal{Q}_{l}^{s})'$ will
also be used.

We note that the support of $\varphi_{j}$ increases with time, so that $\mathcal{Q}_{j}$
is near a $\mathcal{Q}_{\alpha_{k},\beta_{k}}^{b}$ or a $\mathcal{Q}_{c_{l}}^{s}$
when time is large (note that $\mathcal{Q}_{\alpha_{k},\beta_{k}}^{b}$
is the canonical quadratic form associated to the breather $\widetilde{B_{k}}$,
but the canonical quadratic form associated to the soliton $\widetilde{R_{c}}$
is $\mathcal{Q}_{c_{l}+c_{0,l}(t)}^{s}$). However, firstly, let us study
the difference between $\mathcal{Q}{}_{j}$ and $\mathcal{Q}'_{j}$.
Using the computations carried out at the beginning of this part
(those done for the linear part) and Sobolev inequalities, we obtain: 
\begin{align}
\big\lvert\mathcal{Q}_{j}[\epsilon]-\mathcal{Q}_{j}'[\epsilon]\big\rvert\leq  Ce^{-2\theta t}\lVert\epsilon\rVert_{H^{2}(\mathbb{R})}^{2}.\label{1eq:quad-approx}
\end{align}

\begin{lem}
\label{1lem:coerc}There exists $\mu>0$ such that for $\rho>0$, there
exists $T_{3}^{*}$ such that, if $T^{*}\geq  T_{3}^{*}$, for any
$\epsilon\in H^{2}(\mathbb{R})$, for any $t\in[t^{*},T]$,

if 
\begin{align}
\int\widetilde{B_{k}}_{1}(t)\epsilon\sqrt{\varphi_{k}^{b}(t)}=\int\widetilde{B_{k}}_{2}(t)\epsilon\sqrt{\varphi_{k}^{b}(t)}=0,\label{1eq:-55}
\end{align}

then 
\begin{align}
\mathcal{Q}_{k}^{b}[\epsilon]\geq \mu\int\big(\epsilon^{2}+\epsilon_{x}^{2}+\epsilon_{xx}^{2}\big)\varphi_{k}^{b}(t)-\frac{1}{\mu}\bigg(\int\epsilon\widetilde{B_{k}}(t)\sqrt{\varphi_{k}^{b}(t)}\bigg)^{2}-\rho\lVert\epsilon\rVert_{H^{2}}^{2}.\label{1eq:-23}
\end{align}
\end{lem}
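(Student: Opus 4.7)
The strategy is a classical localization argument: we want to reduce the localized coercivity of $\mathcal{Q}_k^b$ to the global coercivity of $\mathcal{Q}_{\alpha_k,\beta_k}^b$ furnished by Proposition \ref{1fact:coerc}, by using $\eta:=\epsilon\sqrt{\varphi_k^b}$ as a test function. The key will be that the carefully chosen conditions on $\psi$ in (\ref{1eq:psi}) make all the commutator terms manageable and of order $1/(\delta t)$, which can be absorbed into $\rho\|\epsilon\|_{H^2}^2$ by taking $T^\ast$ large enough.

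More precisely, I would set $\eta:=\epsilon\sqrt{\varphi_k^b}$ and first observe that the orthogonality hypothesis (\ref{1eq:-55}) transfers directly to $\int \widetilde{B_k}_1 \eta = \int \widetilde{B_k}_2 \eta = 0$. Since $\widetilde{B_k}=B_{\alpha_k,\beta_k}(t,x;\cdot,\cdot)$ is a breather with the same shape parameters $\alpha_k,\beta_k$ (only the translation parameters are modulated, and these are implicit in $\mu_{\alpha_k,\beta_k}^b$), Proposition \ref{1fact:coerc} applies and yields
\begin{align*}
\mathcal{Q}_{\alpha_k,\beta_k}^{b}[\eta]\geq \mu_{\alpha_k,\beta_k}^{b}\,\|\eta\|_{H^{2}}^{2}-\frac{1}{\mu_{\alpha_k,\beta_k}^{b}}\bigg(\int \widetilde{B_k}\,\epsilon\sqrt{\varphi_k^b}\bigg)^{2}.
\end{align*}

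The heart of the argument is then to compare $\mathcal{Q}_{\alpha_k,\beta_k}^{b}[\eta]$ with $\mathcal{Q}_k^b[\epsilon]$ by expanding $\eta_x=\epsilon_x\sqrt{\varphi_k^b}+\epsilon(\sqrt{\varphi_k^b})_x$ and $\eta_{xx}=\epsilon_{xx}\sqrt{\varphi_k^b}+2\epsilon_x(\sqrt{\varphi_k^b})_x+\epsilon(\sqrt{\varphi_k^b})_{xx}$. The discrepancy consists of cross terms where at least one derivative falls on $\sqrt{\varphi_k^b}$. Using the definition (\ref{1eq:phij}), each such derivative contributes a factor $\frac{1}{\delta t}$ times quantities like $\frac{\psi'}{\sqrt{\psi}}$ or $\frac{\psi''}{\sqrt{\psi}}$ evaluated at $(x-\sigma_j t)/(\delta t)$. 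The structural conditions (\ref{1eq:psi}) on $\psi$, namely $(\psi')^{4/3}\le C\psi$ and $|\psi''|^{3/2}\le C\psi'$, imply $\psi'\le C\psi^{3/4}$ and $|\psi''|\le C(\psi')^{2/3}\le C\psi^{1/2}$, so that $|(\sqrt{\varphi_k^b})_x|+|(\sqrt{\varphi_k^b})_{xx}|\le \frac{C}{\delta t}$ uniformly. Combining these pointwise bounds with Cauchy--Schwarz, Young's inequality (for the indefinite cross terms $\epsilon\epsilon_x\sqrt{\varphi_k^b}(\sqrt{\varphi_k^b})_x$), and the boundedness of $\widetilde{B_k}$ and its derivatives, we get
\begin{align*}
\Big|\mathcal{Q}_{\alpha_k,\beta_k}^b[\eta]-\mathcal{Q}_k^b[\epsilon]\Big|+\Big|\,\|\eta\|_{H^2}^2-\int(\epsilon^2+\epsilon_x^2+\epsilon_{xx}^2)\varphi_k^b\,\Big|\le \frac{C}{\delta t}\|\epsilon\|_{H^2}^2,
\end{align*}
which gives
\begin{align*}
\mathcal{Q}_k^b[\epsilon]\ge \mu_{\alpha_k,\beta_k}^b\int(\epsilon^2+\epsilon_x^2+\epsilon_{xx}^2)\varphi_k^b-\frac{1}{\mu_{\alpha_k,\beta_k}^b}\bigg(\int \epsilon\,\widetilde{B_k}\sqrt{\varphi_k^b}\bigg)^{\!2}-\frac{C}{\delta T^\ast}\|\epsilon\|_{H^2}^2.
\end{align*}
Choosing $T_3^\ast=T_3^\ast(A)$ large enough that $C/(\delta T_3^\ast)\le\rho$, and setting $\mu:=\min(\mu_{\alpha_k,\beta_k}^b, 1/\mu_{\alpha_k,\beta_k}^b)/2$ (taken uniformly over $k$), yields the claimed inequality.

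The main technical nuisance will be keeping track of all the commutator terms when expanding $\eta_{xx}^2$ and the quartic/quadratic-with-$\widetilde{B_k}^n$ pieces, since some of them involve products of $\epsilon,\epsilon_x,\epsilon_{xx}$ with derivatives of $\sqrt{\varphi_k^b}$ that are only bounded (not small) pointwise; the gain of $1/(\delta t)$ ultimately comes from the explicit $\frac{1}{\delta t}$ prefactor produced when differentiating $\psi((x-\sigma_j t)/(\delta t))$ in $x$, which is precisely why the cutoff is constructed at the space-time scale $\delta t$ rather than a fixed scale. This is the step where the conditions (\ref{1eq:psi}) on $\psi$ are crucial and must be invoked carefully.
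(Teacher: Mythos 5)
Your proposal is correct and follows essentially the same route as the paper: set $\eta=\epsilon\sqrt{\varphi_k^b}$, note that the orthogonality conditions transfer to $\eta$, apply Proposition \ref{1fact:coerc} to $\mathcal{Q}_{\alpha_k,\beta_k}[\eta]$, and control the commutator terms (coming from derivatives falling on $\sqrt{\varphi_k^b}$) by $\frac{C}{\delta t}\lVert\epsilon\rVert_{H^2}^2$ via the structural conditions (\ref{1eq:psi}), absorbing them into $\rho\lVert\epsilon\rVert_{H^2}^2$ for $T^*$ large. The paper's proof is exactly this, with the expansion of $\int(\epsilon\sqrt{\varphi_j})_{xx}^2$ written out explicitly.
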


\begin{proof}[Proof of Lemma \ref{1lem:coerc}]
The idea is to write $\mathcal{Q}_{k}^{b}[\epsilon]$ as $\mathcal{Q}_{\alpha_{k},\beta_{k}}[\epsilon\sqrt{\varphi_{k}^{b}}]$
plus several error terms. Let $j$ such that $\widetilde{P_{j}}=\widetilde{B_{k}}$.
We will denote $\varphi_{1,j}:=\psi'(\frac{x-\sigma_{j}t}{\delta t})-\psi'(\frac{x-\sigma_{j+1}t}{\delta t})$
and $\varphi_{2,j}:=\psi''(\frac{x-\sigma_{j}t}{\delta t})-\psi''(\frac{x-\sigma_{j+1}t}{\delta t})$,
as defined by \eqref{1eq:phid} and \eqref{1eq:phid-1},
which will be useful to write the derivatives of $\varphi_{j}$. We
recall that they have the same support and bounding properties as
$\varphi_{j}$. We have that
\begin{align}
\int\big(\epsilon\sqrt{\varphi_{j}}\big)_{xx}^{2} & =\int\epsilon_{xx}^{2}\varphi_{j}+\int\frac{\epsilon_{x}^{2}}{(\delta t)^{2}}\frac{\varphi_{1,j}^{2}}{\varphi_{j}}+\frac{1}{4}\int\frac{\epsilon^{2}}{(\delta t)^{4}}\frac{\varphi_{2,j}^{2}}{\varphi_{j}}+\frac{1}{16}\int\frac{\epsilon^{2}}{(\delta t)^{4}}\frac{\varphi_{1,j}^{4}}{\varphi_{j}^{3}}\\
&\quad -\frac{1}{4}\int\frac{\epsilon^{2}}{(\delta t)^{4}}\frac{\varphi_{2,j}\varphi_{1,j}^{2}}{\varphi_{j}^{2}} 
  +2\int\frac{\epsilon_{xx}\epsilon_{x}}{\delta t}\varphi_{1,j}+\int\frac{\epsilon_{xx}\epsilon}{(\delta t)^{2}}\varphi_{2,j}\\
  &\quad -\frac{1}{2}\int\frac{\epsilon_{xx}\epsilon}{(\delta t)^{2}}\frac{\varphi_{1,j}^{2}}{\varphi_{j}}+\int\frac{\epsilon_{x}\epsilon}{(\delta t)^{3}}\frac{\varphi_{1,j}\varphi_{2,j}}{\varphi_{j}}-\frac{1}{2}\int\frac{\epsilon_{x}\epsilon}{(\delta t)^{3}}\frac{\varphi_{1,j}^{3}}{\varphi_{j}^{2}}.\label{1eq:-56}
\end{align}
We observe that, for $T_{3}^{*}$ large enough, and by using the inequalities
that define $\psi$, the error terms can be bounded by $\frac{C}{\delta t}\lVert\epsilon\rVert_{H^{2}}^{2}\leq \frac{\rho}{100}\lVert\epsilon\rVert_{H^{2}}^{2}$.
The computation for the other terms is similar and the same bound
can be used for the error terms.

Because $\epsilon\sqrt{\varphi_{k}^{b}}$ satisfies the orthogonality
conditions,
we can apply Proposition \ref{1fact:coerc}, and obtain that
\begin{align}
\mathcal{Q}_{\alpha_{k},\beta_{k}}\Big[\epsilon\sqrt{\varphi_{k}^{b}}\Big]\geq \mu_{k}^{b}\Big\lVert\epsilon\sqrt{\varphi_{k}^{b}}\Big\rVert_{H^{2}}^{2}-\frac{1}{\mu_{k}^{b}}\bigg(\int\epsilon\sqrt{\varphi_{k}^{b}}\widetilde{B_{k}}\bigg)^{2}.\label{1eq:-57}
\end{align}

To finish, $\lVert\epsilon\sqrt{\varphi_{k}^{b}}\rVert_{H^{2}}^{2}$
is $\int(\epsilon^{2}+\epsilon_{x}^{2}+\epsilon_{xx}^{2})\varphi_{k}^{b}(t)$
plus several error terms as in (\ref{1eq:-56}).
\end{proof}
\begin{lem}
\label{1lem:ver_sol}
There exists $\mu>0$ such that for $\rho>0$, there exists $T_{3}^{*}$
such that if $T^{*}\geq  T_{3}^{*}$, then for any $\epsilon\in H^{2}(\mathbb{R})$,
for any $t\in[t^{*},T]$, we have that

if
\begin{align}
\int\widetilde{R_{l}}(t)\epsilon\sqrt{\varphi_{l}^{s}(t)}=\int\widetilde{R_{l}}_{x}(t)\epsilon\sqrt{\varphi_{l}^{s}(t)}=0,\label{1eq:-58}
\end{align}
then
\begin{align}
\mathcal{Q}_{l}^{s}[\epsilon]\geq \mu\int\Big(\epsilon^{2}+\epsilon_{x}^{2}+\epsilon_{xx}^{2}\Big)\varphi_{l}^{s}(t)-\rho\lVert\epsilon\rVert_{H^{2}}^{2}.\label{1eq:-59}
\end{align}
\end{lem}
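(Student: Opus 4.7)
The plan is to mirror exactly the proof of Lemma \ref{1lem:coerc}, substituting the soliton quadratic form $\mathcal{Q}_{c_l}^s$ for the breather quadratic form $\mathcal{Q}_{\alpha_k,\beta_k}^b$. The only structural difference is that $\mathcal{Q}_c^s$ has no negative eigenvalue (as stated in the preceding discussion and established in the referenced appendix Section \ref{1coer_sol}), so the conclusion will not need the extra correction term $-\frac{1}{\mu}\bigl(\int \epsilon \widetilde{R_l}\sqrt{\varphi_l^s}\bigr)^2$ that appeared in the breather case.

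First I would write, for $j$ such that $\widetilde{P_j}=\widetilde{R_l}$,
\begin{equation*}
\mathcal{Q}_l^s[\epsilon] \;=\; \mathcal{Q}_{c_l}^s\bigl[\epsilon\sqrt{\varphi_l^s}\bigr] + \mathcal{E}(t,\epsilon),
\end{equation*}
where $\mathcal{E}$ collects all the cross terms that appear when the cutoff $\sqrt{\varphi_l^s}$ is moved inside the derivatives. These cross terms are controlled exactly as in the computation (\ref{1eq:-56}) using the inequalities $|\varphi_{1,j}|\leq C\varphi_j^{3/4}$, $|\varphi_{2,j}|^{3/2}\leq C\varphi_{1,j}$, together with the scaling factors $\frac{1}{\delta t}$ and $\frac{1}{(\delta t)^2}$ coming from derivatives of $\psi\bigl(\frac{x-\sigma_j t}{\delta t}\bigr)$. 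For $T^*_3$ large enough, each such error is bounded by $\frac{C}{\delta T^*}\|\epsilon\|_{H^2}^2\le \frac{\rho}{100}\|\epsilon\|_{H^2}^2$. The shape-difference between $\widetilde{R_l}$ (with modulated parameter $c_l + c_{0,l}(t)$) and the canonical $Q_{c_l}$ in $\mathcal{Q}_{c_l}^s$ produces terms bounded by $|c_{0,l}(t)|\|\epsilon\|_{H^2}^2 \le CAe^{-\theta t}\|\epsilon\|_{H^2}^2$ by Lemma \ref{1lem:mod}, which is again absorbed into $\rho\|\epsilon\|_{H^2}^2/100$ for $T^*$ sufficiently large.

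Next, because of the two orthogonality conditions in the hypothesis, the localized profile $\epsilon\sqrt{\varphi_l^s}$ satisfies $\int Q_{c_l}\,\epsilon\sqrt{\varphi_l^s}=\int \partial_x Q_{c_l}\,\epsilon\sqrt{\varphi_l^s}=0$ up to exponentially small errors in time (coming again from $|c_{0,l}(t)|\le CAe^{-\theta t}$ and from the translation parameters). After correcting $\epsilon\sqrt{\varphi_l^s}$ by a vector in the two-dimensional span of $Q_{c_l}$ and $\partial_x Q_{c_l}$ of $H^2$-norm at most $Ce^{-\theta t}\|\epsilon\|_{H^2}$, I can apply the coercivity property of $\mathcal{Q}_{c_l}^s$ (no negative eigenvalue, kernel spanned by $\partial_x Q_{c_l}$ and $\partial_c Q_{c_l}$, which is enough thanks to the reduction in Section \ref{1sec:53}) to get
\begin{equation*}
\mathcal{Q}_{c_l}^s\bigl[\epsilon\sqrt{\varphi_l^s}\bigr] \;\ge\; \mu_l^s\,\bigl\|\epsilon\sqrt{\varphi_l^s}\bigr\|_{H^2}^2 - Ce^{-2\theta t}\|\epsilon\|_{H^2}^2.
\end{equation*}

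Finally I would convert $\|\epsilon\sqrt{\varphi_l^s}\|_{H^2}^2$ back into $\int(\epsilon^2+\epsilon_x^2+\epsilon_{xx}^2)\varphi_l^s$ by the same sort of expansion as in (\ref{1eq:-56}), producing a further error of order $\frac{1}{\delta t}\|\epsilon\|_{H^2}^2$. Collecting all errors and choosing $\mu:=\min_l \mu_l^s/2$ and $T_3^*$ large enough relative to $\rho$, the claim (\ref{1eq:-59}) follows. The only delicate point, and the one requiring real care, is justifying that the coercivity of $\mathcal{Q}_c^s$ on $\{Q_c,\partial_x Q_c\}^\perp$ genuinely holds; this is the content of the appendices \ref{1coer_sol} and \ref{1sec:53}, which adapt Weinstein-type spectral arguments to the $H^2$-level soliton functional, and everything else here is bookkeeping of localization errors exactly as in Lemma \ref{1lem:coerc}.
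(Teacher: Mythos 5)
Your proof is correct and follows the paper's strategy in all essentials: write $\mathcal{Q}_l^s[\epsilon]$ as a soliton quadratic form evaluated at $\epsilon\sqrt{\varphi_l^s}$ plus commutator errors of size $O\big(\tfrac{1}{\delta t}\big)\lVert\epsilon\rVert_{H^2}^2$ controlled exactly as in (\ref{1eq:-56}), then invoke the $H^2$ coercivity of $\mathcal{Q}_c^s$ under orthogonality to $Q$ and $Q_x$ (Section \ref{1sec:53}), and convert $\lVert\epsilon\sqrt{\varphi_l^s}\rVert_{H^2}^2$ back to $\int(\epsilon^2+\epsilon_x^2+\epsilon_{xx}^2)\varphi_l^s$. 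The one place where you diverge from the paper is the treatment of the modulated scaling parameter: the paper keeps the potential $\widetilde{R_l}$ in the quadratic form, so that the hypotheses (\ref{1eq:-58}) give \emph{exact} orthogonality for $\epsilon\sqrt{\varphi_l^s}$, and instead pays for the mismatch by bounding the difference between $\mathcal{Q}_{c_l}^s$ and $\mathcal{Q}_{c_l+c_{0,l}(t)}^s$ (an explicit expression of size $\lvert c_{0,l}(t)\rvert\lVert\epsilon\rVert_{H^2}^2\le CAe^{-\theta t}\lVert\epsilon\rVert_{H^2}^2$); you instead fix the canonical $Q_{c_l}$ and accept that the orthogonality conditions only hold up to $O(e^{-\theta t})\lVert\epsilon\rVert_{H^2}$, which you repair by projecting off the span of $Q_{c_l}$ and $\partial_xQ_{c_l}$ — in effect the almost-orthogonality coercivity of Section \ref{1sec:54}. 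Both routes are valid and cost the same exponentially small error; the paper's choice is slightly more economical here because the exact orthogonality is free, whereas your route has the advantage of reusing a single robust "almost-orthogonal" coercivity statement that the paper anyway needs later for the uniqueness argument.
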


\begin{proof}
As in the previous proof, we write $\mathcal{Q}_{l}^{s}[\epsilon]$
as $\mathcal{\mathcal{Q}}_{c_{l}}^{s}[\epsilon\sqrt{\varphi_{l}^{s}}]$
(with $Q=\widetilde{R_{l}}$) plus several error terms, that are all
bounded by $\rho\lVert\epsilon\rVert_{H^{2}}^{2}$ if $T_{3}^{*}$ is
chosen large enough. However, $\mathcal{\mathcal{Q}}_{c_{l}}^{s}[\epsilon\sqrt{\varphi_{l}^{s}}]$
is not appropriate in order to have coercivity, the appropriate quadratic form is $\mathcal{\mathcal{Q}}_{c_{l}+c_{0,l}(t)}^{s}[\epsilon\sqrt{\varphi_{l}^{s}}]$. This is why, we need to bound the difference
between $\mathcal{\mathcal{Q}}_{c_{l}}^{s}[\epsilon\sqrt{\varphi_{l}^{s}}]$
and $\mathcal{\mathcal{Q}}_{c_{l}+c_{0,l}(t)}^{s}[\epsilon\sqrt{\varphi_{l}^{s}}]$.
This difference is
\begin{align}
c_{0,l}(t)\bigg(\int\epsilon_{x}^{2}\varphi_{j}-3\int\widetilde{R_{l}}^{2}\epsilon^{2}\varphi_{j}\bigg)+c_{l}c_{0,l}(t)\int\epsilon^{2}\varphi_{j}+c_{0,l}(t)^{2}\frac{1}{2}\int\epsilon^{2}\varphi_{j},\label{1eq:-60}
\end{align}
which can, because of the bound for $c_{0,l}(t)$, for $T_{3}^{*}$
large enough (depending on $A$), be bounded by $\rho\lVert\epsilon\rVert_{H^{2}}^{2}$.

Now, $\epsilon\sqrt{\varphi_{l}^{s}}$ satisfies the orthogonality
conditions we need, and as in the previous proof we may apply coercivity.
\end{proof}
\begin{proof}[Proof of Proposition \ref{1lem:coerc-1}]
We will now use the Lemma \ref{1lem:coerc} and its version for solitons (Lemma \ref{1lem:ver_sol})
for $\epsilon=\varepsilon(t)$. From this, we deduce that for $\rho>0$
small enough, we have that
\begin{align}
\sum_{j=1}^{J}\mathcal{Q}_{j}[\varepsilon(t)]\geq \mu\lVert\varepsilon(t)\rVert_{H^{2}}^{2}-\frac{1}{\mu}\sum_{k=1}^{K}\bigg(\int\varepsilon(t)\widetilde{B_{k}}\sqrt{\varphi_{k}^{b}}\bigg)^{2},\label{1eq:-61}
\end{align}
for a suitable constant $\mu>0$. This means that for $T_{3}^{*}$
large enough, by taking, if needed, a smaller constant $\mu$, 
\begin{align}
H_{2}[\varepsilon(t)]\geq \mu\lVert\varepsilon\rVert_{H^{2}}^{2}-\frac{1}{\mu}\sum_{k=1}^{K}\bigg(\int\varepsilon\widetilde{B_{k}}\sqrt{\varphi_{k}^{b}}\bigg)^{2}.\label{1eq:-62}
\end{align}
The proof of Proposition \ref{1lem:coerc-1} is now completed. 
\end{proof}

\subsection{Proof of Proposition \ref{1prop:bootstrap} (Bootstrap)}
\label{1sec:existence}

We recall that $p_{n}$ from Proposition \ref{1prop:bootstrap} is
denoted by $p$ and $T_{n}$ is denoted by $T$ in what follows, in
order to simplify the notations. We do the proof that follows under
the assumption (\ref{1eq:-28}), so that the Propositions proved above are true for $t\in[t^{*},T]$.

The aim of this subsection is to complete the proof of Proposition \ref{1prop:bootstrap}
by using the Propositions \ref{1lem:exp} and \ref{1lem:coerc-1}.

We note that by Lemma \ref{1lem:lcq}, the conservation of $F[p](t)$
and the definition of $\mathcal{H}[p]$, we have for any $t\in[t^{*},T]$, that
\begin{align}
\big\lvert\mathcal{H}[p](T)-\mathcal{H}[p](t)\big\rvert\leq \frac{CA^{2}}{\delta^{2}t}e^{-2\theta t}.\label{1eq:-63}
\end{align}
Thus, for any $t\in[t^{*},T]$, 
\begin{align}
\mathcal{H}[p](t)\leq \mathcal{H}[p](T)+\frac{CA^{2}}{\delta^{2}t}e^{-2\theta t}.\label{1eq:vart}
\end{align}

From Proposition \ref{1lem:exp},
\begin{align}
\MoveEqLeft\bigg\lvert\mathcal{H}\big[\widetilde{P}+\varepsilon\big](t)-H_{2}[\varepsilon](t)\\
&\quad -\sum_{j=1}^{J}\Big(F\big[\widetilde{P_{j}}\big](t)+2\big(b_{j}^{2}-a_{j}^{2}\big)E\big[\widetilde{P_{j}}\big](t)+\big(a_{j}^{2}+b_{j}^{2}\big)^{2}M\big[\widetilde{P_{j}}\big](t)\Big)\bigg\rvert \\
 & \leq  Ce^{-2\theta t}+C\lVert\varepsilon\rVert_{H^{2}}e^{-2\theta t}+C\lVert\varepsilon\rVert_{H^{2}}^{3} 
  \leq  Ce^{-2\theta t}+\frac{\mu}{100}\lVert\varepsilon\rVert_{H^{2}}^{2}.\label{1eq:varl}
\end{align}
In order to obtain the last line, we use the fact that $\lVert\varepsilon(t)\rVert_{H^{2}}\leq  CAe^{-\theta t}$,
and we take $T^{*}\geq  T_{5}^{*}$ for $T_{5}^{*}$ large enough
(depending on $A$) so that $\lVert\varepsilon\rVert_{H^{2}}\leq  C$ and $C\lVert\varepsilon(t)\rVert_{H^{2}}\leq \frac{\mu}{100}$,
and thus $C\lVert\varepsilon(t)\rVert_{H^{2}}^{3}\leq \frac{\mu}{100}\lVert\varepsilon(t)\rVert_{H^{2}}^{2}$.

We remark that if $P_{j}=B_k$ is a breather, then $F[\widetilde{P_{j}}]$,
$E[\widetilde{P_{j}}]$ and $M[\widetilde{P_{j}}]$ are all constants
in time. If $P_{j}=R_{l}$ is a soliton and we denote $q$ the basic
ground state (i.e. the ground state for $c=1$), we have the following:
\begin{align}
M[\widetilde{R_{l}}](t)=\big(c_{l}+c_{0,l}(t)\big)^{1/2}M[q],\label{1eq:msol}
\end{align}
\begin{align}
E[\widetilde{R_{l}}](t)=\big(c_{l}+c_{0,l}(t)\big)^{3/2}E[q],\label{1eq:esol}
\end{align}
\begin{align}
F[\widetilde{R_{l}}](t)=\big(c_{l}+c_{0,l}(t)\big)^{5/2}F[q].\label{1eq:fsol}
\end{align}
Using that, we can simplify $\mathcal{R}_{l}(t):=F[\widetilde{R_{l}}](t)+2c_{l}E[\widetilde{R_{l}}](t)+c_{l}^{2}M[\widetilde{R_{l}}](t)$
as follows:
\begin{align}
\mathcal{R}_{l}(t) & =\big(c_{l}+c_{0,l}(t)\big)^{5/2}F[q]+2c_{l}\big(c_{l}+c_{0,l}(t)\big)^{3/2}E[q]\\
&\quad +c_{l}^{2}\big(c_{l}+c_{0,l}(t)\big)^{1/2}M[q] \\
 & =c_{l}^{5/2}\bigg(1+\frac{c_{0,l}(t)}{c_{l}}\bigg)^{5/2}F[q]+2c_{l}^{5/2}\bigg(1+\frac{c_{0,l}(t)}{c_{l}}\bigg)^{3/2}E[q]\\
 &\quad +c_{l}^{5/2}\bigg(1+\frac{c_{0,l}(t)}{c_{l}}\bigg)^{1/2}M[q].\label{1eq:var}
\end{align}
Note that from Lemma \ref{1lem:mod}, $\lvert c_{0,l}(t)\rvert^{3}\leq  CA^{3}e^{-\theta t}e^{-2\theta t}$.
That is why, if we take $T_{5}^{*}$ eventually larger, $\lvert c_{0,l}(t)\rvert^{3}\leq  Ce^{-2\theta t}$.
For this reason, we will do Taylor expansions of order $2$ of (\ref{1eq:var}):
\begin{align}
\bigg(1+\frac{c_{0,l}(t)}{c_{l}}\bigg)^{5/2}=1+\frac{5}{2}\frac{c_{0,l}(t)}{c_{l}}+\frac{15}{8}\frac{c_{0,l}(t)^{2}}{c_{l}^{2}}+O\big(e^{-2\theta t}\big),\label{1eq:-64}
\end{align}
\begin{align}
\bigg(1+\frac{c_{0,l}(t)}{c_{l}}\bigg)^{3/2}=1+\frac{3}{2}\frac{c_{0,l}(t)}{c_{l}}+\frac{3}{8}\frac{c_{0,l}(t)^{2}}{c_{l}^{2}}+O\big(e^{-2\theta t}\big),\label{1eq:-65}
\end{align}
\begin{align}
\bigg(1+\frac{c_{0,l}(t)}{c_{l}}\bigg)^{1/2}=1+\frac{1}{2}\frac{c_{0,l}(t)}{c_{l}}-\frac{1}{8}\frac{c_{0,l}(t)^{2}}{c_{l}^{2}}+O\big(e^{-2\theta t}\big).\label{1eq:-66}
\end{align}
This allows us to write:
\begin{align}
\mathcal{R}_{l}(t) & =c_{l}^{5/2}\big(F[q]+2E[q]+M[q]\big)+c_{l}^{3/2}c_{0,l}(t)\bigg(\frac{5}{2}F[q]+3E[q]+\frac{1}{2}M[q]\bigg) \\
&\quad +c_{l}^{1/2}c_{0,l}(t)^{2}\bigg(\frac{15}{8}F[q]+\frac{3}{4}E[q]-\frac{1}{8}M[q]\bigg)+O\big(e^{-2\theta t}\big).\label{1eq:dlvar}
\end{align}
Now, $c_{l}^{5/2}(F[q]+2E[q]+M[q])$ is constant in time.
For both other terms, we use that $M[q]=2$, $E[q]=-\frac{2}{3}$
and $F[q]=\frac{2}{5}$, and we see that $\frac{5}{2}F[q]+3E[q]+\frac{1}{2}M[q]=0$
and $\frac{15}{8}F[q]+\frac{3}{4}E[q]-\frac{1}{8}M[q]=0$. This allows
us to write:
\begin{align}
\mathcal{R}_{l}(t)=\frac{16}{15}c_{l}^{5/2}+O\big(e^{-2\theta t}\big).\label{1eq:dlvarsimpl}
\end{align}
From this, we deduce that
\begin{align}
\mathcal{R}_{l}(t)-\mathcal{R}_{l}(T)=O\big(e^{-2\theta t}\big).\label{1eq:solit}
\end{align}

By using that $\mathcal{H}[p](T)=\mathcal{H}[P](T)=\mathcal{H}[\widetilde{P}](T)$,
the equations (\ref{1eq:varl}) and (\ref{1eq:vart}), Claim \ref{1lem:simpl1},
and the fact that for $t\geq  T_{4}^{*}$, $O(\lVert\varepsilon(t)\rVert_{H^{2}}^{3})\leq \frac{\mu}{100}\lVert\varepsilon\rVert_{H^{2}}^{2}$,
we have that
\begin{align}
\MoveEqLeft H_{2}[\varepsilon](t)\\ & \leq \mathcal{H}[p](t)+Ce^{-2\theta t}+\frac{\mu}{100}\lVert\varepsilon(t)\rVert_{H^{2}}^{2}\\
&\quad -\sum_{j=1}^{J}\Big(F\big[\widetilde{P_{j}}\big](t)+2\big(b_{j}^{2}-a_{j}^{2}\big)E\big[\widetilde{P_{j}}\big](t)+\big(a_{j}^{2}+b_{j}^{2}\big)^{2}M\big[\widetilde{P_{j}}\big](t)\Big) \\
 & \leq \mathcal{H}[\widetilde{P}](T)+C\bigg(\frac{A^{2}}{\delta^{2}t}+1\bigg)e^{-2\theta t}+\frac{\mu}{100}\lVert\varepsilon(t)\rVert_{H^{2}}^{2}  \\
 & \quad-\sum_{j=1}^{J}\Big(F\big[\widetilde{P_{j}}\big](t)+2\big(b_{j}^{2}-a_{j}^{2}\big)E\big[\widetilde{P_{j}}\big](t)+\big(a_{j}^{2}+b_{j}^{2}\big)^{2}M\big[\widetilde{P_{j}}\big](t)\Big)\\
 & \leq \mathcal{H}\big[\widetilde{P}\big](T)+C\bigg(\frac{A^{2}}{\delta^{2}t}+1\bigg)e^{-2\theta t}+\frac{\mu}{100}\lVert\varepsilon(t)\rVert_{H^{2}}^{2}+\sum_{l=1}^{L}\big(\mathcal{R}_{l}(T)-\mathcal{R}_{l}(t)\big) \\
 & \quad -\sum_{j=1}^{J}\Big(F\big[\widetilde{P_{j}}\big](T)+2\big(b_{j}^{2}-a_{j}^{2}\big)E\big[\widetilde{P_{j}}\big](T)+\big(a_{j}^{2}+b_{j}^{2}\big)^{2}M\big[\widetilde{P_{j}}\big](T)\Big)\\
 & \leq  C\bigg(\frac{A^{2}}{\delta^{2}t}+1\bigg)e^{-2\theta t}+\frac{\mu}{100}\lVert\varepsilon(t)\rVert_{H^{2}}^{2}.\label{1eq:quad-maj}
\end{align}

From Proposition \ref{1lem:coerc-1}, we deduce (by taking a smaller
constant $\mu$) that
\begin{align}
\mu\lVert\varepsilon\rVert_{H^{2}}^{2}\leq  C\bigg(\frac{A^{2}}{\delta^{2}t}+1\bigg)e^{-2\theta t}+\frac{1}{\mu}\sum_{k=1}^{K}\bigg(\int\varepsilon\widetilde{B_{k}}\sqrt{\varphi_{k}^{b}}\bigg)^{2}.\label{1eq:maj}
\end{align}

We will now need to establish a result close to Lemma \ref{1lem:lcq}.
We set for any $j=1,...,J$: 
\begin{align}
m_{j}(t):=\int\frac{1}{2}p^{2}(t,x)\sqrt{\varphi_{j}(t,x)}\,dx:=m_{j}[p](t).\label{1eq:-67}
\end{align}

\begin{lem}
\label{1lem:lcqsqrt}There exists $C>0$, $T_{6}^{*}=T_{6}^{*}(A)$
such that, if $T^{*}\geq  T_{6}^{*}$, for any $j=1,...,J$, for any
$t\in[t^{*},T]$, 
\begin{align}
\big\lvert m_{j}(T)-m_{j}(t)\big\rvert\leq \frac{C}{\delta^{2}t}A^{2}e^{-2\theta t}.\label{1eq:-68}
\end{align}
\end{lem}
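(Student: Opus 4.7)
The plan is to run the argument of Lemma \ref{1lem:lcq} verbatim, but with the localizing weight $\varphi_j$ replaced by $\sqrt{\varphi_j}$. The first task is therefore to check that $\sqrt{\varphi_j}$ is a legitimate $C^2$ (in fact $C^\infty$) weight with derivatives of the right size. Because $\psi$ equals $(1+y)^4$ near $y=-1$ and $1-(y-1)^4$ near $y=1$, on the left transition region of $\varphi_j$ we have $\sqrt{\varphi_j}=\sqrt{\psi((x-\sigma_j t)/\delta t)}$ behaving like $(1+\cdot)^2$, and on the right transition region $\sqrt{\varphi_j}=\sqrt{1-\psi((x-\sigma_{j+1}t)/\delta t)}$ behaving like $(\cdot-1)^2$; on the plateau in between $\sqrt{\varphi_j}\equiv 1$. (The cases $j=1,J$ are analogous with one transition missing.) In particular $\sqrt{\varphi_j}$ is smooth and its derivatives are supported in the two transition strips, which are contained in $\Omega_j(t)\cup\Omega_{j+1}(t)$.

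Next, applying the general identity recalled in the proof of Lemma \ref{1lem:lcq} to the (time-dependent) weight $g(t,x):=\sqrt{\varphi_j(t,x)}$, I would obtain
\begin{align*}
\frac{d}{dt}\frac12\int p^2 g=\int\bigg(-\frac32 p_x^2+\frac34 p^4\bigg)g_x-\int p_x p\, g_{xx}+\frac12\int p^2 g_t.
\end{align*}
The key pointwise bounds are $|g_x|\le C/(\delta t)$, $|g_{xx}|\le C/(\delta t)^2$, and $|g_t|\le C/(\delta t)$ on the transition strips. They follow from the structural inequalities imposed on $\psi$: writing for instance $g_x=\varphi_{1,j}/(2\delta t\sqrt{\varphi_j})$, on the left strip this is $\psi'/(2\delta t\sqrt{\psi})$, and $(\psi')^{4/3}\le C\psi$ gives $\psi'\le C\psi^{3/4}$, hence $\psi'/\sqrt{\psi}\le C\psi^{1/4}\le C$; the right strip uses $(\psi')^{4/3}\le C(1-\psi)$ symmetrically, and the bound on $g_{xx}$ uses in addition $|\psi''|^{3/2}\le C\psi'$. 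The term $g_t$ carries an extra factor $x/(\delta t^2)$, but on the supports in play $|x/t|$ is bounded by a constant depending only on $\max|\sigma_j|+\delta$, so the same bound $C/(\delta t)$ holds.

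Plugging these bounds back in and using $|p^4|\le \|p\|_{H^1}^2 p^2\le Cp^2$ by Sobolev embedding and Remark \ref{1rem:bound}, I obtain
\begin{align*}
\bigg|\frac{d}{dt}\frac12\int p^2\sqrt{\varphi_j}\bigg|\le \frac{C}{\delta^2 t}\int_{\Omega_j(t)\cup\Omega_{j+1}(t)}\!\!(p^2+p_x^2).
\end{align*}
Splitting $p=P+(p-P)$, the contribution of $P^2+P_x^2$ on $\Omega_j(t)\cup\Omega_{j+1}(t)$ is bounded by $Ce^{-\beta\tau t/2}$ exactly as in \eqref{1eq:P2}--\eqref{1eq:crl-3} (the transition strips lie strictly between consecutive velocity rays), while the bootstrap assumption \eqref{1eq:-28} controls $\|p-P\|_{H^2}^2\le A^2 e^{-2\theta t}$. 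Since $2\theta=\beta\tau/16<\beta\tau/2$, these combine to give $|\frac{d}{dt}m_j(t)|\le \frac{C A^2}{\delta^2 t}e^{-2\theta t}$, and integrating from $t$ to $T$ (as in \eqref{1eq:lcqci}) yields the claimed bound. The only slightly delicate step is the verification of the derivative bounds on $\sqrt{\varphi_j}$, which is exactly what the structural inequalities \eqref{1eq:psi} were designed for; everything else is a routine transcription of the proof of Lemma \ref{1lem:lcq}.
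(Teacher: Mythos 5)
Your proposal is correct and follows essentially the same route as the paper: compute $\frac{d}{dt}\int\frac12 p^2\sqrt{\varphi_j}$, use the structural inequalities \eqref{1eq:psi} to show that the ratios $\varphi_{1,j}/\sqrt{\varphi_j}$, $\varphi_{2,j}/\sqrt{\varphi_j}$ and $\varphi_{1,j}^2/\varphi_j^{3/2}$ are bounded (so the derivative is controlled by $\frac{C}{\delta^2 t}\int_{\Omega_j\cup\Omega_{j+1}}(p^2+p_x^2+p^4)$), and then conclude exactly as in Lemma \ref{1lem:lcq}. Your explicit verification of the pointwise bounds on $\partial_x\sqrt{\varphi_j}$ and $\partial_x^2\sqrt{\varphi_j}$ is precisely the step the paper compresses into "from the inequalities that define $\psi$".
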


\begin{proof}
We compute:
\begin{align}
\MoveEqLeft\frac{d}{dt}\int\frac{1}{2}p^{2}(t,x)\sqrt{\varphi_{j}(t,x)}\,dx\\ & =\frac{1}{2\delta t}\int\bigg(-\frac{3}{2}p_{x}^{2}+\frac{3}{4}p^{4}\bigg)\frac{\varphi_{1,j}}{\sqrt{\varphi_{j}}}-\frac{1}{2(\delta t)^{2}}\int p_{x}p\frac{\varphi_{2,j}}{\sqrt{\varphi_{j}}} \\
 &\quad +\frac{1}{4(\delta t)^{2}}\int p_{x}p\frac{\varphi_{1,j}^{2}}{\varphi_{j}^{3/2}}-\frac{1}{4}\int p^{2}\frac{x}{\delta t^{2}}\frac{\varphi_{1,j}}{\sqrt{\varphi_{j}}}.\label{1eq:-69}
\end{align}

From the inequalities that define $\psi$, we find that
\begin{align}
\bigg\lvert\frac{d}{dt}\int\frac{1}{2}p^{2}(t,x)\sqrt{\varphi_{j}(t,x)}\,dx\bigg\rvert\leq \frac{C}{\delta^{2}t}\int_{\Omega_{j}(t)\cup\Omega_{j+1}(t)}\big(p_{x}^{2}+p^{2}+p^{4}\big).\label{1eq:-70}
\end{align}

From now on, we can follow the proof of Lemma \ref{1lem:lcq}. 
\end{proof}
Now, we observe the following: 
\begin{align}
\int\big(\widetilde{P}+\varepsilon\big)^{2}\sqrt{\varphi_{k}^{b}}=\int\widetilde{B_{k}}^{2}+2\int\widetilde{B_{k}}\varepsilon\sqrt{\varphi_{k}^{b}}+\int\varepsilon^{2}\sqrt{\varphi_{k}^{b}}+\Err,\label{1eq:final}
\end{align}
where $\Err$ stands for the other terms of the sum, which we consider
as error terms, and we will show that they are bounded by
$Ce^{-\theta t}$.

For $i\neq j$ and any $h$ (if $P_{j}=B_{k}$ is a breather), 
\begin{align}
\bigg\lvert\int\widetilde{P_{i}}\widetilde{P_{h}}\sqrt{\varphi_{j}}\bigg\rvert & \leq  C\int_{-\delta t+\sigma_{j}t}^{\delta t+\sigma_{j+1}t}e^{-\frac{\beta}{2}\lvert x-v_{i}t\rvert}\,dx \leq  Ce^{-\theta t},\label{1eq:-71}
\end{align}
and 
\begin{align}
\bigg\lvert\int\widetilde{P_{i}}\varepsilon\sqrt{\varphi_{j}}\bigg\rvert & \leq \sqrt{\bigg(\int\widetilde{P_{i}}^{2}\varphi_{j}\bigg)\bigg(\int\varepsilon^{2}\bigg)} \\
 & \leq  Ce^{-\frac{\theta}{2}t}\lVert\varepsilon\rVert_{H^{2}} \leq  CAe^{-\theta t}e^{-\frac{\theta}{2}t}\leq  Ce^{-\theta t},\label{1eq:-72}
\end{align}
where $T^{*}\geq  T_{7}^{*}$ with $T_{7}^{*}$ being large enough depending
on $A$.
If we use the calculations we have made in the proof of Claim \ref{1lem:simpl1}, we see that
\begin{align}
\bigg\lvert\int\widetilde{P_{j}}^{2}-\int\widetilde{P_{j}}^{2}\sqrt{\varphi_{j}}\bigg\rvert\leq  Ce^{-\theta t}.\label{1eq:-73}
\end{align}
This proves the bound for the error terms. 

Now, we study the variations
of (\ref{1eq:final}).
We know that $\int\widetilde{P_{j}}^{2}=\int\widetilde{B_{k}}^{2}$
has no variations.
We can apply Lemma \ref{1lem:lcqsqrt} for $\int(\widetilde{P}+\varepsilon)^{2}\sqrt{\varphi_{j}}$.
By writing the difference of the equation (\ref{1eq:final}) between
$t$ and $T$, and using that $\varepsilon(T)=0$, we deduce, for
$T^{*}\geq \max(T_{6}^{*},T_{7}^{*})$, that
\begin{align}
\bigg\lvert\int\widetilde{P_{j}}\varepsilon\sqrt{\varphi_{j}}(t)\bigg\rvert & \leq  C\bigg(\frac{A^{2}}{\delta^{2}t}+1\bigg)e^{-\theta t}+\lVert\varepsilon\rVert_{H^{2}}^{2} \\
 & \leq  C\bigg(\frac{A^{2}}{\delta^{2}t}+1\bigg)e^{-\theta t}+\frac{\mu}{100}\lVert\varepsilon(t)\rVert_{H^{2}}.\label{1eq:-74}
\end{align}

Thus, 
\begin{align}
\mu\lVert\varepsilon\rVert_{H^{2}}^{2} & \leq  C\bigg(\frac{A^{2}}{\delta^{2}t}+1\bigg)e^{-2\theta t}+\frac{1}{\mu}\sum_{j=1}^{J}\bigg(\int\varepsilon\widetilde{P_{j}}\sqrt{\varphi_{j}}\bigg)^{2} \\
 & \leq  C\bigg(\frac{A^{4}}{\delta^{4}t}+1\bigg)e^{-2\theta t}+\frac{\mu}{100}\lVert\varepsilon(t)\rVert_{H^{2}}^{2}.\label{1eq:-75}
\end{align}
Therefore, 
\begin{align}
\lVert\varepsilon(t)\rVert_{H^{2}}^{2}\leq  C\bigg(\frac{A^{4}}{\delta^{4}t}+1\bigg)e^{-2\theta t}.\label{1eq:err}
\end{align}

By using (\ref{1eq:err}), the mean-value theorem and Lemma \ref{1lem:mod}, we deduce that
for $t\in[t^{*},T]$, 
\begin{align}
\MoveEqLeft\big\lVert p(t)-P(t)\big\rVert_{H^{2}}  \leq \big\lVert\varepsilon(t)\big\rVert_{H^{2}}+\big\lVert\widetilde{P}(t)-P(t)\big\rVert_{H^{2}} \\
 & \leq  C\Bigg(\sqrt{\frac{A^{4}}{\delta^{4}t}+1}\Bigg)e^{-\theta t}\\
 &\quad+C\Bigg(\sum_{k=1}^{K}\big(\lvert x_{1,k}(t)\rvert+\lvert x_{2,k}(t)\rvert\big)+\sum_{l=1}^{L}\big(\lvert x_{0,l}(t)\rvert+\lvert c_{0,l}(t)\rvert\big)\Bigg) \\
 & \leq  C\Bigg(\sqrt{\frac{A^{4}}{\delta^{4}t}+1}\Bigg)e^{-\theta t}+C\sum_{k=1}^{K}\bigg(\bigg\lvert\int_{t}^{T}x_{1,k}'(s)\,ds\bigg\rvert +\bigg\lvert\int_{t}^{T}x_{2,k}'(s)\,ds\bigg\rvert\bigg) \\
 & +C\sum_{l=1}^{L}\bigg(\bigg\lvert\int_{t}^{T}x_{0,l}'(s)\,ds\bigg\rvert +\bigg\lvert\int_{t}^{T}c_{0,l}'(s)\,ds\bigg\rvert\bigg) \\
 & \leq  C\bigg(\frac{A^{4}}{\delta^{4}t}+1\bigg)e^{-\theta t}+C\bigg(\int_{t}^{T}\big\lVert\varepsilon(s)\big\rVert_{H^{2}}\,ds+\int_{t}^{T}e^{-\theta s}\,ds\bigg) \\
 & \leq  C\bigg(\frac{A^{4}}{\delta^{4}t}+1\bigg)e^{-\theta t}.\label{1eq:exisccl}
\end{align}
We take $A=4C$ (where $C$ is a constant that can be used anywhere
in the proof above) and 
\begin{align}
T^{*}:=\max\big(T_{1}^{*},T_{2}^{*},T_{3}^{*},T_{4}^{*},T_{5}^{*},T_{6}^{*},T_{7}^{*},T_{8}^{*}\big)
\end{align}
(depending on $A$), where $T_{8}^{*}:=T_{8}^{*}(A)$ is such that
for $t\geq  T_{8}^{*}$, $\frac{A^{4}}{\delta^{4}t}\leq 1$. And thus,
for any $t\in[t^{*},T]$, 
\begin{align}
C\bigg(\frac{A^{4}}{\delta^{4}t}+1\bigg)\leq 2C=\frac{A}{2},\label{1eq:-24}
\end{align}
which is exactly what we wanted to prove.

\section{$p$ is a smooth multi-breather}
\label{1sec:smooth}

Our goal here is to prove Proposition \ref{1prop:higher}.

\subsection{Estimates in higher order Sobolev norms}

Firstly, we notice that the proposition is already established for $s=2$.
We note also that if this proposition is proved for an $s\geq 2$ with
a corresponding constant $A_{s}$, then this proposition is also valid
for any $s'\leq  s$ with the same constant $A_{s}$. This means that
$A_{s}$ can possibly increase with $s$ and that this proposition is
already established for $0\leq  s\leq 2$. From now on, we will denote
(as before) $p_{n}$ by $p$, $T_{n}$ by $T$ and $p_{n}-P$ by $v$,
and make sure that the constant $A_{s}$ that we will obtain in the
proof does not depend on $n$ (although it will depend on $s$). For
the constant $\theta$, we will take the usual value: $\theta:=\frac{\beta\tau}{32}$.
For the constant $T^{*}$, we will also take the value
that works for Proposition \ref{1prop:uest}.

We will prove the proposition by induction on $s$ (it is sufficient
to prove the proposition for any integer $s$). Let $s\geq 3$. We will prove the proposition for $s$, assuming that 
the proposition is true for any $0\leq  s'\leq  s-1$.

Let us deduce from the \eqref{1mKdV} equation the equation satisfied by
$v$: 
\begin{align}
v_{t} & =p_{t}-\sum_{j=1}^{J}P_{jt} \\
 & =-\Bigg(p_{xx}+p^{3}-\sum_{j=1}^{J}P_{jxx}-\sum_{j=1}^{J}P_{j}^{3}\Bigg)_{x} \\
 & =-\Bigg(v_{xx}+(v+P)^{3}-\sum_{j=1}^{J}P_{j}^{3}\Bigg)_{x} \\
 & =-\Bigg(v_{xx}+v^{3}+3v^{2}P+3vP^{2}+P^{3}-\sum_{j=1}^{J}P_{j}^{3}\Bigg)_{x}.\label{1eq:-80}
\end{align}

Firstly, we compute $\frac{d}{dt}\int(\partial_{x}^{s}v)^{2}$ by
integration by parts: 
\begin{align}
\frac{d}{dt}\int\big(\partial_{x}^{s}v\big)^{2} & =2\int\big(\partial_{x}^{s}v_{t}\big)\big(\partial_{x}^{s}v\big) \\
 & =-2\int\partial_{x}^{s+1}\bigg(v_{xx}+v^{3}+3v^{2}P+3vP^{2}+P^{3}-\sum_{j=1}^{J}P_{j}^{3}\bigg)\big(\partial_{x}^{s}v\big) \\
 & =2(-1)^{s+1}\int\partial_{x}^{2s+1}\bigg(P^{3}-\sum_{j=1}^{J}P_{j}^{3}\bigg)v-2\int\partial_{x}^{s+1}\big(v^{3}\big)\big(\partial_{x}^{s}v\big) \\
 & -6\int\partial_{x}^{s+1}\big(v^{2}P\big)\big(\partial_{x}^{s}v\big)-6\int\partial_{x}^{s+1}\big(vP^{2}\big)\big(\partial_{x}^{s}v\big),\label{1eq:-81}
\end{align}
because $\int(\partial_{x}^{s+3}v)(\partial_{x}^{s}v)=-\int(\partial_{x}^{s+2}v)(\partial_{x}^{s+1}v)=0$.

We will now bound above each of the terms of the obtained sum. By Sobolev embedding, Proposition \ref{1prop:cross-product} and Proposition \ref{1prop:uest},
\begin{align}
\Bigg\lvert\int\partial_{x}^{2s+1}\Bigg(P^{3}-\sum_{j=1}^{J}P_{j}^{3}\Bigg)v\Bigg\rvert & \leq \lVert v\rVert_{L^{\infty}}\int\bigg\lvert\partial_{x}^{2s+1}\bigg(P^{3}-\sum_{j=1}^{J}P_{j}^{3}\bigg)\bigg\rvert \\
 & \leq  C\lVert v\rVert_{H^{1}}e^{-\beta\tau t/2} \\
 & \leq  CAe^{-\theta t}e^{-\beta\tau t/2} \\
 & \leq  CAe^{-2\theta t}\leq  CA_{s-1}^{2}e^{-2\theta t},\label{1eq:-83}
\end{align}
where $C\geq 0$ is a constant that depends only on $s$.

We observe that 
\begin{align}
\partial_{x}^{s+1}\big(v^{3}\big) & =3\big(\partial_{x}^{s+1}v\big)v^{2}+6(s+1)\big(\partial_{x}^{s}v\big)v_{x}v+Z_{1}\big(v,v_{x},...,\partial_{x}^{s-1}v\big), \\
\partial_{x}^{s+1}\big(v^{2}P\big) & =2\big(\partial_{x}^{s+1}v\big)vP+2(s+1)\big(\partial_{x}^{s}v\big)(vP)_{x}\\
&\quad +Z_{2}\big(v,v_{x},...,\partial_{x}^{s-1}v,P,P_{x},...,\partial_{x}^{s+1}P\big),\label{1eq:-82}
\end{align}
where $Z_{1}$ and $Z_{2}$ are homogeneous polynomials of degree 3 with
constant coefficients.

Now, let us look for a bound for $\int\partial_{x}^{s+1}(v^{3})(\partial_{x}^{s}v)$.
Firstly, by integration by parts, 
\begin{align}
\int\partial_{x}^{s+1}\big(v^{3}\big)\big(\partial_{x}^{s}v\big) & =\frac{3}{2}\int\Big(\big(\partial_{x}^{s}v\big)^{2}\Big)_{x}v^{2}+3(s+1)\int\big(\partial_{x}^{s}v\big)^{2}\big(v^{2}\big)_{x}+\int\big(\partial_{x}^{s}v\big)Z_{1} \\
 & =\frac{6(s+1)-3}{2}\int\big(\partial_{x}^{s}v\big)^{2}\big(v^{2}\big)_{x}+\int\big(\partial_{x}^{s}v\big)Z_{1}.\label{1eq:-84}
\end{align}
Then, we bound above each of the terms of the obtained sum: 
\begin{align}
\bigg\lvert\int(\partial_{x}^{s}v)^{2}(v^{2})_{x}\bigg\rvert & \leq  C\big\lVert v\big\rVert_{L^{\infty}}\big\lVert v_{x}\big\rVert_{L^{\infty}}\int\big(\partial_{x}^{s}v\big)^{2} \\
 & \leq  C\big\lVert v\big\rVert_{H^{2}}^{2}\int\big(\partial_{x}^{s}v\big)^{2} \\
 & \leq  C\big(\lVert p\rVert_{H^{2}}+\lVert P\rVert_{H^{2}}\big)Ae^{-\theta t}\int\big(\partial_{x}^{s}v\big)^{2} \\
 & \leq  CC_{0}Ae^{-\theta t}\int\big(\partial_{x}^{s}v\big)^{2}\leq  CA_{s-1}e^{-\theta t}\int\big(\partial_{x}^{s}v\big)^{2}.\label{1eq:-85}
\end{align}
We have actually shown in the computation above that $\lVert v\rVert_{H^{2}}^{2}$
can be bounded above by $\lVert v\rVert_{H^{2}}$ (with a constant that depends only on problem data), and therefore the degree of
$\lVert v\rVert_{H^{2}}$ can be lowered without harm in the upper bound.
We will use this fact again for the rest of the proof. In fact,
all what it means is that, for several terms, what we have is more than
what we need.

By the Cauchy-Schwarz and Gagliardo-Nirenberg-Sobolev inequalities,
\begin{align}
\bigg\lvert\int\big(\partial_{x}^{s}v\big)Z_{1}\bigg\rvert & \leq  C\int\big\lvert\partial_{x}^{s}v\big\rvert\bigg(\sum_{s'=0}^{s-1}\big\lvert\partial_{x}^{s'}v\big\rvert^{3}\bigg) \\
 & \leq  C\bigg(\int\big\lvert\partial_{x}^{s}v\big\rvert^{2}\bigg)^{1/2}\sum_{s'=0}^{s-1}\bigg(\int\big\lvert\partial_{x}^{s'}v\big\rvert^{6}\bigg)^{1/2} \\
 & \leq  C\bigg(\int\big\lvert\partial_{x}^{s}v\big\rvert^{2}\bigg)^{1/2}\sum_{s'=0}^{s-1}\bigg(\int\big\lvert\partial_{x}^{s'}v\big\rvert^{2}\bigg)\bigg(\int\big\lvert\partial_{x}^{s'+1}v\big\rvert^{2}\bigg)^{1/2} \\
 & \leq  C\sum_{s'=0}^{s-1}\bigg(\int\big\lvert\partial_{x}^{s'}v\big\rvert^{2}\bigg)\bigg(\int\big\lvert\partial_{x}^{s}v\big\rvert^{2}+\int\big\lvert\partial_{x}^{s'+1}v\big\rvert^{2}\bigg) \\
 & \leq  CA_{s-1}^{2}e^{-2\theta t}+CA_{s-1}e^{-\theta t}\int\big\lvert\partial_{x}^{s}v\big\rvert^{2}.\label{1eq:-86}
\end{align}
Similarly, we bound $\int\partial_{x}^{s+1}(v^{2}P)(\partial_{x}^{s}v)$.
By integration by parts, 
\begin{align}
\int\partial_{x}^{s+1}\big(v^{2}P\big)\big(\partial_{x}^{s}v\big) & =\int\Big(\big(\partial_{x}^{s}v\big)^{2}\Big)_{x}vP+2(s+1)\int\big(\partial_{x}^{s}v\big)^{2}\big(vP\big)_{x}+\int\big(\partial_{x}^{s}v\big)Z_{2} \\
 & =(2s+1)\int\big(\partial_{x}^{s}v\big)^{2}\big(vP\big)_{x}+\int\big(\partial_{x}^{s}v\big)Z_{2}.\label{1eq:-87}
\end{align}

We bound above each of the terms of the obtained sum, starting by
\begin{align}
\bigg\lvert\int\big(\partial_{x}^{s}v\big)^{2}\big(vP\big)_{x}\bigg\rvert & \leq  C(\lVert v\rVert_{L^{\infty}}+\lVert v_{x}\rVert_{L^{\infty}})\int\big(\partial_{x}^{s}v\big)^{2} \\
 & \leq  CAe^{-\theta t}\int\big(\partial_{x}^{s}v\big)^{2}.\label{1eq:-88}
\end{align}
The upper bound of $\big\lvert\int(\partial_{x}^{s}v)Z_{2}\big\rvert$ is
similar to (\ref{1eq:-86}) above: 
\begin{align}
\bigg\lvert\int\big(\partial_{x}^{s}v\big)Z_{2}\bigg\rvert\leq  CA_{s-1}^{2}e^{-2\theta t}+CA_{s-1}e^{-\theta t}\int\big\lvert\partial_{x}^{s}v\big\rvert^{2}.\label{1eq:-89}
\end{align}

$\int\partial_{x}^{s+1}(vP^{2})(\partial_{x}^{s}v)$ remains to be bounded.
By integration by parts, 
\begin{align}
\MoveEqLeft\int\partial_{x}^{s+1}\big(vP^{2}\big)\big(\partial_{x}^{s}v\big)  =-\int\partial_{x}^{s+2}\big(vP^{2}\big)\big(\partial_{x}^{s-1}v\big) \\
 & =-\int\big(\partial_{x}^{s+2}v\big)\big(\partial_{x}^{s-1}v\big)P^{2}-(s+2)\int\big(\partial_{x}^{s+1}v\big)\big(\partial_{x}^{s-1}v\big)\big(P^{2}\big)_{x} \\
 & -\frac{(s+2)(s+1)}{2}\int\big(\partial_{x}^{s}v\big)\big(\partial_{x}^{s-1}v\big)\big(P^{2}\big)_{xx}+\int\big(\partial_{x}^{s-1}v\big)Z_{3}^{0}(v,v_{x},...,\partial_{x}^{s-1}v) \\
 & =\frac{1}{2}\int\Big(\big(\partial_{x}^{s}v\big)^{2}\Big)_{x}P^{2}+(s+1)\int\big(\partial_{x}^{s}v\big)^{2}\big(P^{2}\big)_{x} \\
 & -\frac{s(s+1)}{4}\int\Big(\big(\partial_{x}^{s-1}v\big)^{2}\Big)_{x}\big(P^{2}\big)_{xx}+\int\big(\partial_{x}^{s-1}v\big)Z_{3}^{0}(v,v_{x},...,\partial_{x}^{s-1}v) \\
 & =\frac{2s+1}{2}\int\big(\partial_{x}^{s}v\big)^{2}\big(P^{2}\big)_{x}+\int\big(\partial_{x}^{s-1}v\big)Z_{3}(v,v_{x},...,\partial_{x}^{s-1}v),\label{1eq:-90}
\end{align}
where $Z_{3}^{0}$ and $Z_{3}$ are homogeneous polynomials of degree
1 whose coefficients are polynomials in $P$ and its space derivatives.
We have that $\lvert Z_{3}\rvert\leq  C(\sum_{s'=0}^{s-1}\lvert\partial_{x}^{s'}v\rvert)$.
Therefore, 
\begin{align}
\bigg\lvert\int\big(\partial_{x}^{s-1}v\big)Z_{3}\bigg\rvert\leq  CA_{s-1}^{2}e^{-2\theta t}.\label{1eq:-91}
\end{align}

Thus, by taking the sum of all those inequalities, we obtain: 
\begin{align}
\bigg\lvert\frac{d}{dt}\int\big(\partial_{x}^{s}v\big)^{2}+3(2s+1)\int\big(\partial_{x}^{s}v\big)^{2}\big(P^{2}\big)_{x}\bigg\rvert\leq  CA_{s-1}^{2}e^{-2\theta t}+CA_{s-1}e^{-\theta t}\int\big\lvert\partial_{x}^{s}v\big\rvert^{2}.\label{1eq:-92}
\end{align}

Next, we perform similar computations for $\frac{d}{dt}\int(\partial_{x}^{s-1}v)^{2}P^{2}$:
\begin{align}
\frac{d}{dt}\int(\partial_{x}^{s-1}v)^{2}P^{2} & =2\int\big(\partial_{x}^{s-1}v_{t}\big)\big(\partial_{x}^{s-1}v\big)P^{2}+2\int\big(\partial_{x}^{s-1}v\big)^{2}P_{t}P \\
 & =-2\int\partial_{x}^{s}\bigg(v_{xx}+v^{3}+3v^{2}P+3vP^{2}+P^{3}-\sum_{j=1}^{J}P_{j}^{3}\bigg)\big(\partial_{x}^{s-1}v\big)P^{2} \\
 & -2\int\big(\partial_{x}^{s-1}v\big)^{2}\bigg(P_{xx}+\sum_{j=1}^{J}P_{j}^{3}\bigg)_{x}P.\label{1eq:-93}
\end{align}
Let us study each of the obtained terms.

Firstly, 
\begin{align}
-2\int\big(\partial_{x}^{s+2}v\big)\big(\partial_{x}^{s-1}v\big)P^{2} & =2\int\big(\partial_{x}^{s+1}v\big)\big(\partial_{x}^{s}v\big)P^{2}+2\int\big(\partial_{x}^{s+1}v\big)(\partial_{x}^{s-1}v\big)\big(P^{2}\big)_{x} \\
 & =-3\int\big(\partial_{x}^{s}v\big)^{2}\big(P^{2}\big)_{x}-2\int\big(\partial_{x}^{s}v\big)\big(\partial_{x}^{s-1}v\big)\big(P^{2}\big)_{xx} \\
 & =-3\int\big(\partial_{x}^{s}v\big)^{2}\big(P^{2}\big)_{x}+\int\big(\partial_{x}^{s-1}v\big)^{2}\big(P^{2}\big)_{xxx}.\label{1eq:-94}
\end{align}
Indeed, 
\begin{align}
\bigg\lvert\int\big(\partial_{x}^{s-1}v\big)^{2}\big(P^{2}\big)_{xxx}\bigg\rvert\leq  CA^{2}e^{-2\theta t}.\label{1eq:-95}
\end{align}

Secondly, 
\begin{align}
\bigg\lvert\int\partial_{x}^{s}\bigg(P^{3}-\sum_{j=1}^{J}P_{j}^{3}\bigg)\big(\partial_{x}^{s-1}v\big)P^{2}\bigg\rvert\leq  CA_{s-1}^{2}e^{-2\theta t}\label{1eq:-96}
\end{align}
can be obtained similarly to the first part of the proof (starting
by an integration by parts to have $\partial_{x}^{s-2}v$ at the place
of $\partial_{x}^{s-1}v$).

Thirdly, 
\begin{align}
\int\partial_{x}^{s}\big(v^{3}\big)\big(\partial_{x}^{s-1}v\big)P^{2} & =3\int\big(\partial_{x}^{s}v\big)\big(\partial_{x}^{s-1}v\big)v^{2}P^{2}+\int Z_{4}(v,v_{x},...,\partial_{x}^{s-1}v)P^{2} \\
 & =-\frac{3}{2}\int\big(\partial_{x}^{s-1}v\big)^{2}\big(v^{2}P^{2}\big)_{x}+\int Z_{4}P^{2},\label{1eq:-97}
\end{align}
where $Z_{4}$ is a homogeneous polynomial of degree 4 with constant
coefficients. Both terms are easily bounded by $CA_{s-1}^{2}e^{-2\theta t}$.

Fourthly, for $\int\partial_{x}^{s}(v^{2}P)(\partial_{x}^{s-1}v)P^{2}$
and $\int\partial_{x}^{s}(vP^{2})(\partial_{x}^{s-1}v)P^{2}$,
we reason similarly.

Fifthly, 
\begin{align}
\bigg\lvert\int\big(\partial_{x}^{s-1}v\big)^{2}\bigg(P_{xx}+\sum_{j=1}^{J}P_{j}^{3}\bigg)_{x}P\bigg\rvert\leq  CA_{s-1}^{2}e^{-2\theta t}\label{1eq:-98}
\end{align}
is clear.

Therefore, 
\begin{align}
\bigg\lvert\frac{d}{dt}\int\big(\partial_{x}^{s-1}v\big)^{2}P^{2}+3\int\big(\partial_{x}^{s}v\big)^{2}\big(P^{2}\big)_{x}\bigg\rvert\leq  CA_{s-1}^{2}e^{-2\theta t}.\label{1eq:-99}
\end{align}

We set 
\begin{align}
F(t):=\int\big(\partial_{x}^{s}v\big)^{2}-(2s+1)\int\big(\partial_{x}^{s-1}v\big)^{2}P^{2}.\label{1eq:-100}
\end{align}
By putting the both parts of the proof together: 
\begin{align}
\bigg\lvert\frac{d}{dt}F(t)\bigg\rvert\leq  CA_{s-1}^{2}e^{-2\theta t}+CA_{s-1}e^{-\theta t}\int\big\lvert\partial_{x}^{s}v\big\rvert^{2}.\label{1eq:-101}
\end{align}
Because $\big\lvert\int(\partial_{x}^{s-1}v)^{2}P^{2}\big\rvert\leq  CA^{2}e^{-2\theta t}$,
we can write the following upper bound: 
\begin{align}
\int\big(\partial_{x}^{s}v\big)^{2}\leq \lvert F(t)\rvert+CA_{s-1}^{2}e^{-2\theta t}.\label{1eq:-102}
\end{align}
Therefore, we have, for a suitable constant $C>0$ that depends only on $s$,
\begin{align}
\bigg\lvert\frac{d}{dt}F(t)\bigg\rvert\leq  CA_{s-1}^{2}e^{-2\theta t}+CA_{s-1}e^{-\theta t}\lvert F(t)\rvert.\label{1eq:-103}
\end{align}

For $t\in[T^{*},T]$, by integration between $t$ and $T$ (we recall
that $F(T)=0$), 
\begin{align}
\lvert F(t)\rvert & =\lvert F(T)-F(t)\rvert=\bigg\lvert\int_{t}^{T}\frac{d}{dt}F(\sigma)\,d\sigma\bigg\rvert \leq \int_{t}^{T}\bigg\lvert\frac{d}{dt}F(\sigma)\bigg\rvert \,d\sigma \\
 & \leq  CA_{s-1}^{2}\int_{t}^{T}e^{-2\theta\sigma}\,d\sigma+CA_{s-1}\int_{t}^{T}e^{-\theta\sigma}\lvert F(\sigma)\rvert \,d\sigma \\
 & \leq  CA_{s-1}^{2}e^{-2\theta t}+CA_{s-1}\int_{t}^{T}e^{-\theta\sigma}\lvert F(\sigma)\rvert \,d\sigma.\label{1eq:-104}
\end{align}

By Gronwall lemma, for all $t\in[T^{*},T]$, 
\begin{align}
\MoveEqLeft\lvert F(t)\rvert  \leq  CA_{s-1}^{2}e^{-2\theta t}\\
&\quad+CA_{s-1}\int_{t}^{T}e^{-\theta\sigma}CA_{s-1}^{2}e^{-2\theta\sigma}\exp\bigg(\int_{t}^{\sigma}CA_{s-1}e^{-\theta u}du\bigg)\,d\sigma \\
 & \leq  CA_{s-1}^{2}e^{-2\theta t}\\
 &\quad+CA_{s-1}^{3}\exp\bigg(\frac{CA_{s-1}}{\theta}e^{-\theta t}\bigg)\int_{t}^{T}e^{-3\theta\sigma}\exp\bigg(-\frac{CA_{s-1}}{\theta}e^{-\theta\sigma}\bigg)\,d\sigma \\
 & \leq  CA_{s-1}^{2}e^{-2\theta t}+CA_{s-1}^{3}\exp\bigg(\frac{CA_{s-1}}{\theta}\bigg)\int_{t}^{T}e^{-3\theta\sigma}\,d\sigma \\
 & \leq  CA_{s-1}^{2}e^{-2\theta t}+CA_{s-1}^{3}\exp\bigg(\frac{CA_{s-1}}{\theta}\bigg)e^{-3\theta t} \\
 & \leq  CA_{s-1}^{3}\exp\bigg(\frac{CA_{s-1}}{\theta}\bigg)e^{-2\theta t}.\label{1eq:-105}
\end{align}

Therefore, 
\begin{align}
\int\big(\partial_{x}^{s}v\big)^{2}\leq  A_{s}e^{-2\theta t},\label{1eq:-106}
\end{align}
where $A_{s}:=CA_{s-1}^{3}\exp\big(\frac{CA_{s-1}}{\theta}\big)$
and $C$ is a constant large enough that depends only on $s$. 
This conclude the proof of Proposition \ref{1prop:higher}, and so of Theorem \ref{1thm:MAIN}.%

\subsection{Uniformity of constants} \label{1sec:2.5}

We conclude this section with an explanation regarding Remark \ref{1rm:3}.

In the proof above, the constants that we obtain $A,T^{*},\theta$
do depend on $P_{j}(0)$ ($1\leq  j\leq  J$). Actually, we may characterize
this dependence. In fact, they do not depend on the initial positions
of our objects in the case when our objects are initially ordered in
the right order and sufficiently far from each other.
\begin{thm}
Given parameters (\ref{1eq:br}), (\ref{1eq:sol}), (\ref{1eq:vb}) and
(\ref{1eq:vs}) which satisfy (\ref{1eq:diff}), there exists $D>0$
large enough that depends only on $\alpha_{k},\beta_{k},c_{l}$ such
that if
\begin{align}
\forall j\geq 2,\quad x_{j}(0)\geq  x_{j-1}(0)+D,\label{1eq:eloin}
\end{align}
then the following holds. We set $\theta:=\frac{\beta\tau}{32}$,
with $\beta$ and $\tau$ given by (\ref{1eq:beta-tau}) and $p(t)$
the multi-breather associated to $P$ by Proposition \ref{1thm:main}.
There exists $A_{s}\geq 1$ for any $s\geq 2$ that depends only on $\alpha_{k},\beta_{k},c_{l}$
and $D$ such that
\begin{align}
\forall t\geq 0,\quad\lVert p(t)-P(t)\rVert_{H^{s}}\leq  A_{s}e^{-\theta t}.\label{1eq:-107}
\end{align}
\end{thm}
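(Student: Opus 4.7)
The plan is to rerun the construction of Section \ref{1sec:constr} with time-shifted cutoffs tuned to the initial geometry, and verify that every estimate carries through on $[0,T]$ with constants depending only on the shape parameters and $D$. Concretely, I would set $t_0 := D/(2\tau)$ and $\sigma_j^0 := \tfrac{1}{2}(x_{j-1}(0) + x_j(0))$, and replace the cutoffs (\ref{1eq:phij})--(\ref{1eq:phij-1}) by
\begin{equation*}
\tilde\varphi_j(t,x) := \psi\Big(\tfrac{x - \sigma_j^0 - \sigma_j t}{\delta(t+t_0)}\Big) - \psi\Big(\tfrac{x - \sigma_{j+1}^0 - \sigma_{j+1} t}{\delta(t+t_0)}\Big)
\end{equation*}
(with the obvious modifications for $j=1,J$). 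Under hypothesis (\ref{1eq:eloin}) one has $\sigma_{j+1}^0 - \sigma_j^0 \geq D$ and $\sigma_{j+1} - \sigma_j \geq \tau$, so the midpoint $\sigma_j^0 + \sigma_j t$ is at distance at least $(D+\tau t)/2$ from every neighboring object for all $t \geq 0$. Choosing $\delta < \tau/4$ and $D$ large, the transition region of $\tilde\varphi_j$ (of width $\delta(t+t_0)$) stays strictly inside the safe zone, so $\tilde\varphi_j$ plays the role $\varphi_j$ did at large times, except it is non-degenerate at $t=0$.

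The localization estimates of Remark \ref{1rem:If-,rem}, Proposition \ref{1prop:cross-product} and their consequences then acquire factors $e^{-\beta(D+\tau t)/4}$ in place of $e^{-\beta\tau t/2}$, and the almost-conservation estimate of Lemma \ref{1lem:lcq} yields
\begin{equation*}
|M_j(T) - M_j(t)| + |E_j(T) - E_j(t)| \leq \frac{C A^2 e^{-2\theta t}}{\delta^2(t+t_0)}.
\end{equation*}
The crucial point is that $1/(t+t_0) \leq 2\tau/D$ now replaces the $1/T^*$ that kept the error terms small in Section \ref{1sec:existence}. The same substitution propagates through the modulation step: the off-diagonal entries of $DF_t$ in Lemma \ref{1lem:mod} carry factors $e^{-\beta(D+\tau t)/4}$, while the diagonal entries $B^i_{k,k}$ and $R^i_{l,l}$ are bounded below by positive constants depending only on $\alpha_k, \beta_k, c_l$, provided $\sqrt{\tilde\varphi_k^b}$ or $\sqrt{\tilde\varphi_l^s}$ equals $1$ on a sufficiently large neighborhood of the corresponding object --- which holds uniformly for $t \geq 0$ as soon as $D$ is large. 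The $\epsilon \mapsto \epsilon\sqrt{\tilde\varphi_k^b}$ trick in the proof of Lemma \ref{1lem:coerc} goes through with error terms controlled by $1/(\delta(t+t_0)) \leq 2\tau/(\delta D)$.

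With these ingredients, the bootstrap of Proposition \ref{1prop:bootstrap} is run on $[0,T]$ with $t^* \in [0,T]$: every clause in the original argument of the form ``$T^*$ large enough depending on $A$'' is now replaced by ``$D$ large enough depending on shape parameters,'' since the relevant smallness factors $A e^{-\theta T^*}$ and $1/T^*$ become $A e^{-\beta D/8}$ and $1/t_0 = 2\tau/D$. One can keep $A = 4C$ with a constant $C$ depending only on $\alpha_k, \beta_k, c_l$ and $D$, getting $\|p_n(t) - P(t)\|_{H^2} \leq A e^{-\theta t}$ on $[0, T_n]$; passing to the limit as in Proposition \ref{1thm:main} yields the estimate on $[0,+\infty)$, and the induction of Section \ref{1sec:smooth} produces the higher Sobolev bounds with $A_s$ depending only on $\alpha_k, \beta_k, c_l$, $D$ and $s$. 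The main obstacle I anticipate is the book-keeping in the modulation step to confirm that every constant entering $DF_t$ and its inverse is genuinely independent of $x_{1,k}^0, x_{2,k}^0, x_{0,l}^0$; this requires exploiting the translation invariance of each individual term modulo the $\sqrt{\tilde\varphi}$ weights, so that only the \emph{relative} positions --- hence $D$ --- enter the bounds.
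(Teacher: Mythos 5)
Your first step---re-centering the cutoffs at the initial midpoints $\tfrac{1}{2}(x_{j-1}(0)+x_j(0))$, rewriting the decay of $P_j$ as $Ce^{-\beta\lvert x-v_jt-x_j(0)\rvert}$, and checking that all constants then depend only on the shape parameters and $D$---is exactly the first half of the paper's argument, and that part of your plan is sound. The gap is in the second half, where you run the bootstrap down to $t^*=0$. You assert that every smallness factor of the form $Ae^{-\theta T^*}$ ``becomes $Ae^{-\beta D/8}$,'' but this conflates two different quantities. The separation $D$ controls the \emph{cross-interaction} terms (integrals of products of tails of distinct $P_i$, $P_j$), and for those your gain $e^{-\beta(D+\tau t)/4}$ is correct. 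It does not, however, control the size of $\varepsilon=p_n-\widetilde P$ itself at the left endpoint of the bootstrap interval: the bootstrap hypothesis (\ref{1eq:assumpb}) at $t=0$ only gives $\lVert p_n(0)-P(0)\rVert_{H^2}\leq A$ with $A=4C\geq 1$, and no choice of $D$ improves this. Consequently the two places in Section \ref{1sec:constr} that genuinely require $\lVert\varepsilon(t)\rVert_{H^2}$ to be small---the implicit function theorem in the modulation Lemma \ref{1lem:mod} (which needs $p(t)$ within a fixed neighbourhood of $P(t)$ determined by the invertibility of $DF_t$) and the absorption $C\lVert\varepsilon\rVert_{H^2}^3\leq\frac{\mu}{100}\lVert\varepsilon\rVert_{H^2}^2$ in the expansion of $\mathcal H$---cannot be justified near $t=0$ in your scheme. (There is also a secondary circularity: you need ``$D$ large depending on $A$'' while $A$ depends on $D$, which would at least require checking that $C(D)$ stays bounded as $D\to\infty$.)

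The paper closes this gap not by pushing the bootstrap to $t=0$ but by a time translation. It first proves the intermediate statement: for any $D_0>0$, if (\ref{1eq:eloin}) holds with $D_0$ then the estimate holds for $t\geq T^*(D_0)$ with $A_s$ and $T^*$ depending only on the shape parameters and $D_0$. Then, setting $\Lambda:=v_J-v_1$ and $D:=D_0+\Lambda\,T^*(D_0)$, the hypothesis (\ref{1eq:eloin}) with $D$ at $t=0$ implies (\ref{1eq:eloin}) with $D_0$ at $t=-T^*(D_0)$ (going backward in time the ordered objects approach each other at relative speed at most $\Lambda$). Applying the intermediate result to the configuration with time origin shifted to $-T^*(D_0)$ yields the estimate for all $t\geq 0$ in the original frame. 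If you want to keep a direct bootstrap on $[0,T_n]$, you would need to strengthen the ansatz to something like $\lVert p_n(t)-P(t)\rVert_{H^2}\leq Ae^{-\theta(t+t_0)}$ so that the bound at $t=0$ is $Ae^{-\theta t_0}$ and genuinely small for $D$ large; as written, your argument does not supply the smallness it uses.
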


Firstly, we will prove that for any $D>0$, if (\ref{1eq:eloin}) is satisfied,
then the constants $A_{s}$ and $T^{*}$ do only depend on $\alpha_{k},\beta_{k},c_{l}$
and $D$. Finally, we will prove that if $D>0$ is large enough
with respect to the given parameters, then we can take $T^{*}=0$.

To establish the validity of this theorem, it is enough to read again
the whole article and to make sure that on any step of the proof,
there is no dependence on initial positions of our objects when our
objects are initially far from each other for the constant $C$. This will allow to claim the same
for the constants $A$ and $T^{*}$ (but, these constants may depend on $D$).
This works, but we should change a bit the way we write our
results.

For Proposition \ref{1prop:decay}, we should write:
\begin{align}
\lvert\partial_{x}^{n}\partial_{t}^{m}P_{j}(t,x)\rvert\leq  Ce^{-\beta\lvert x-v_{j}t-x_{j}(0)\rvert}.\label{1eq:-108}
\end{align}
Therefore, in Proposition \ref{1prop:cross-product}, we have nothing to change, but the constant
$C$ do depend on $D$. This will also be the case in the following
propositions and lemmas of this proof.

We should replace $\sigma_{j}t$ for the definition of $\varphi_{j}$ in \eqref{1eq:phij} and \eqref{1eq:phij-1}
by $\sigma_{j}t+\frac{x_{j-1}(0)+x_{j}(0)}{2}$ to take into account of
initial positions. More precisely, we will have for any $j=2,...,J-1$,
\begin{align}
\varphi_{j}(t,x):=\psi\Bigg(\frac{x-\sigma_{j}t-\frac{x_{j-1}(0)+x_{j}(0)}{2}}{\delta t}\Bigg)-\psi\Bigg(\frac{x-\sigma_{j+1}t-\frac{x_{j}(0)+x_{j+1}(0)}{2}}{\delta t}\Bigg),\label{1eq:-109}
\end{align}
and similarly for other definitions.

After having done the modulation with $C$ and $T^{*}$ depending
on $D$, for Proposition \ref{1prop:decay-mod}, we should write:
\begin{align}
\lvert\partial_{x}^{n}\widetilde{P_{j}}(t,x)\rvert\leq  Ce^{-\frac{\beta}{2}\lvert x-v_{j}t-x_{j}(0)\rvert}e^{\frac{\beta\tau}{32}t}.\label{1eq:-110}
\end{align}

Therefore, with these adaptations, the same proof works to prove that
$A_{s}$ and $T^{*}$ do depend only on $\alpha_{k},\beta_{k},c_{l}$
and $D$.

Now, given $\alpha_{k},\beta_{k},c_{l}$, we choose $D_{0}>0$ in
an arbitrary maner. Therefore, we get $A_{s}(D_{0})$ and $T^{*}(D_{0})$
associated to $D_{0}$. Let $\Lambda:=v_{J}-v_{1}$ the maximal difference
between two velocities. We set $D:=D_{0}+\Lambda\cdot T^{*}(D_{0})$.
Therefore, if we suppose (\ref{1eq:eloin}) in $t=0$ for $D$, then we have
(\ref{1eq:eloin}) in $t=-T^{*}(D_{0})$ for $D_{0}$. Therefore, by appliying
the intermediate result for $D_{0}$, we obtain the desired conclusion
with $D$ and $A_{s}$ that depend on $D_0$.

\section{Uniqueness}
\label{1sec:uniq}

$p$ is the multi-breather constructed in the existence part. The goal here is to prove that if a solution $u$ converges to $p$ when $t \rightarrow +\infty$ (in some sense), then $u=p$ (under well chosen assumptions).

We prove here two propositions. For both of them, we assume that the velocities of all our objects are distinct (this was also an assumption for the existence). The first proposition does not make more assumptions on velocities of our objects, but it is a partial uniqueness result as we restrict ourselves to the class of super polynomial convergence to the multi-breather. The second proposition assumes that the velocities of all our objects are positive (this is a new assumption and it is needed because this proof uses monotonicity arguments). 

\subsection{A solution converging super polynomialy to a multi-breather is this multi-breather}
\label{1sec:super-pol}
The goal of this subsection is to prove Proposition \ref{1lem:polyn}.
\begin{rem}
Note that in Proposition \ref{1lem:polyn}, we don't make any assumptions on the
sign of $v_{1}$ or $v_{2}$. This uniqueness proposition has the
same degree of generality as Theorem \ref{1thm:MAIN}.
\end{rem}

\begin{proof}[Proof of Proposition \ref{1lem:polyn}]
Let $p(t)$ be the multi-breather associated to $P$ by Theorem \ref{1thm:MAIN}. Recall that for any $s$,
\begin{align}
\lVert p(t)-P(t)\rVert _{H^{s}}=O(e^{-\theta t}),\label{1eq:00}
\end{align}
for a suitable $\theta>0$.

Let $N>2$ to be chosen later. We take $u(t)$ an $H^{2}$ solution of \eqref{1mKdV} such
that there exists $C_{0}>0$ such that for $t$ large enough,
\begin{align}
\lVert u(t)-P(t)\rVert _{H^{2}}\leq\frac{C_{0}}{t^{N}}.
\end{align}
From that, we may deduce that for $t$ large enough (namely, $t\geq2C_{0}$
along with the previous condition),
\begin{align}
\lVert u(t)-P(t)\rVert _{H^{2}}\leq\frac{1}{2}\frac{1}{t^{N-1}}.\label{1eq:01}
\end{align}

Our goal is to find a condition on $N$ that do not depend on $u$,
such that the condition (\ref{1eq:01}) on $u$ for $t$ large enough
implies that $u\equiv p$.

Because of (\ref{1eq:00}), the condition (\ref{1eq:01}) for $t$ large
enough is equivalent to: for $t$ large enough,
\begin{align}
\lVert u(t)-p(t)\rVert _{H^{2}}\leq\frac{1}{t^{N-1}}.
\end{align}
We denote $z(t):=u(t)-p(t)$. Our goal is to find $N$ large enough
that do not depend on $z$, for which we will be able to prove that
$z\equiv0$, given
\begin{align}
\lVert z(t)\rVert _{H^{2}}\leq\frac{1}{t^{N-1}},\label{1eq:hyp}
\end{align}
for $t$ large enough.
Because $z$ is a difference of two solutions of \eqref{1mKdV}, we may derive
the following equation for $z$:
\begin{align}
z_{t}+\big(z_{xx}+(z+p)^{3}-p^{3}\big)_{x}=0.\label{1eq:z}
\end{align}

We divide our proof in several steps.

\emph{Step 1.} Modulation on $z$.

For $j=1,...,J$, if $P_{j}=B_{k}$ is a breather, we denote 
\begin{align}
K_{j}:=
\begin{pmatrix}
\partial_{x_{1}}B_{k}\\
\partial_{x_{2}}B_{k}
\end{pmatrix}
,
\end{align}
and if $P_{j}=R_{l}$ is a soliton, we denote:
\begin{align}
K_{j}=\partial_{x}R_{l}.
\end{align}
We may derive the following equation for $K_{j}$:
\begin{align}
(K_{j})_{t}+\big((K_{j})_{xx}+3P_{j}^{2}K_{j}\big)_{x}=0.
\end{align}

For $j=1,...,J$, if $P_{j}=B_{k}$ is a breather, let $c_{j}(t)\in\mathbb{R}^{2}$
defined for $t$ large enough and if $P_{j}=R_{l}$ is a soliton, let $c_{j}(t)\in\mathbb{R}$
defined for $t$ large enough such that for 
\begin{align}
\widetilde{z}(t):=z(t)+\sum_{j=1}^{J}c_{j}(t)K_{j}(t),\label{1eq:def}
\end{align}
where $c_{j}K_{j}$ is either a product of two numbers of $\mathbb{R}$
or a scalar product of two vectors of $\mathbb{R}^{2}$, the following
condition is satisfied: for any $j=1,...,J$, for $t$ large enough,
\begin{align}
\int\widetilde{z}(t)K_{j}(t)\sqrt{\varphi_{j}(t)}=0,\label{1eq:02}
\end{align}
where $\varphi_{j}$ is defined in Section \ref{1sec:2.2} (in this proof,
it is OK to take $\delta=1$).
It is possible to do so in a unique way, because the Gram matrix associated
to $K_{j}(t)\sqrt{\varphi_{j}(t)}$, $1\leq j\leq J$, is invertible;
which is the case because $K_{j}(t)\sqrt{\varphi_{j}(t)}$, $1\leq j\leq J$,
are linearly independent. This is why $c_{j}(t)$, $1\leq j\leq J$,
are defined in a unique way. For the same reason, $c_{j}(t)$ is obtained
linearly from $\int K_{k}(t)z(t)\sqrt{\varphi_{k}(t)}$, $1\leq k\leq J$,
with coefficients that depend only on $K_{k}$, $1\leq k\leq J$.
This is why, from Cauchy-Schwarz, we may deduce the following lemma.
\begin{lem}
\label{1lem:3}For any $j=1,...,J$, for $t$ large enough, there exists
$C>0$ that do not depend on $z$, such that
\begin{align}
\lvert c_{j}(t)\rvert\leq C\lVert z(t)\rVert_{L^{2}},
\end{align}
\begin{align}
\lVert\widetilde{z}(t)\rVert_{H^{2}}\leq C\lVert z(t)\rVert_{H^{2}}.
\end{align}
\end{lem}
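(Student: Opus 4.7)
The plan is to view the defining conditions \eqref{1eq:02} as a linear system for the coefficients $c_j(t)$ and to show that the associated matrix is uniformly invertible for $t$ large, with the diagonal dominating exponentially.

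First, I would substitute $\widetilde z = z + \sum_{k} c_k K_k$ into \eqref{1eq:02} and obtain the linear system
\begin{align*}
\sum_{k=1}^{J} M_{jk}(t)\, c_k(t) \;=\; -\int z(t)\,K_j(t)\sqrt{\varphi_j(t)}, \qquad j=1,\dots,J,
\end{align*}
where $M_{jk}(t) := \int K_k(t)\, K_j(t)\sqrt{\varphi_j(t)}$ (reading each entry as a scalar, a row vector, a column vector, or a $2\times2$ block according to whether $P_j, P_k$ are solitons or breathers, exactly as in the matrix \eqref{1matr}). The total size of this system is $2K+L$, matching the number of scalar unknowns in the $c_j$'s.

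Next, I would analyse $M(t)$ block by block. For $k=j$, the diagonal block is (up to localization by $\sqrt{\varphi_j}$) the Gram matrix of the functions $\partial_{x_1}B_k,\partial_{x_2}B_k$ (breather case) or the $1\times 1$ matrix $\int(\partial_x R_l)^2\sqrt{\varphi_l^s}$ (soliton case). Since $\varphi_j(t,x)\to 1$ on the support of $K_j$ as $t\to+\infty$ (by Remark \ref{1rem:If-,rem}), the diagonal block converges to the true Gram matrix of the $K_j$ components, which is positive definite: for breathers this is a direct consequence of the linear independence of $\partial_{x_1}B_{\alpha_k,\beta_k}$ and $\partial_{x_2}B_{\alpha_k,\beta_k}$ together with the periodicity used in \eqref{1Delta}; for solitons it is simply $\int(\partial_x Q_{c_l})^2>0$. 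The off-diagonal blocks $M_{jk}$ with $j\ne k$ are bounded by $Ce^{-\beta\tau t/2}$ by Proposition \ref{1prop:cross-product} combined with Remark \ref{1rem:If-,rem}, since $K_k$ is localized near $v_k t$ whereas $\sqrt{\varphi_j}$ is supported near $v_j t$. Therefore, for $t$ large enough, $M(t)$ is invertible with $\lVert M(t)^{-1}\rVert$ bounded by a constant depending only on the shape parameters.

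From this uniform invertibility, Cramer's rule (or simply $c = -M^{-1}b$) together with Cauchy--Schwarz gives
\begin{align*}
\lvert c_j(t)\rvert \;\le\; C\max_{k}\bigg\lvert\int z(t)\,K_k(t)\sqrt{\varphi_k(t)}\bigg\rvert \;\le\; C\max_k \lVert K_k\rVert_{L^2}\,\lVert z(t)\rVert_{L^2} \;\le\; C\lVert z(t)\rVert_{L^2},
\end{align*}
which is the first inequality. The second then follows from the triangle inequality applied to \eqref{1eq:def} and from the uniform-in-$t$ bound $\lVert K_j(t)\rVert_{H^2}\le C$ (the $K_j$'s being smooth, periodic or stationary profiles with uniformly controlled Sobolev norms):
\begin{align*}
\lVert\widetilde z(t)\rVert_{H^2} \;\le\; \lVert z(t)\rVert_{H^2} + \sum_{j=1}^{J}\lvert c_j(t)\rvert\,\lVert K_j(t)\rVert_{H^2} \;\le\; C\lVert z(t)\rVert_{H^2}.
\end{align*}
The only genuinely technical step is the uniform invertibility of $M(t)$, but it follows by the exact same mechanism already used in the modulation argument of Lemma \ref{1lem:mod} (dominant block-diagonal structure with exponentially small off-diagonal corrections), so no new idea is needed.
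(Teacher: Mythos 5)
Your proposal is correct and follows essentially the same route as the paper: the orthogonality conditions are read as a linear system $\sum_k c_k \int K_k K_j\sqrt{\varphi_j} = -\int z K_j\sqrt{\varphi_j}$, the matrix is inverted with a uniform bound, and the two estimates follow from Cauchy--Schwarz and the triangle inequality. If anything, your justification of uniform invertibility (diagonal blocks converging to fixed positive definite Gram matrices plus exponentially small off-diagonal blocks, as in Lemma \ref{1lem:mod}) is more explicit than the paper's appeal to linear independence of the $K_j\sqrt{\varphi_j}$, which by itself only gives pointwise-in-$t$ invertibility.
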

The Gram matrix is $C^{1}$ in time and invertible. This
is why, its inverse is $C^{1}$ in time. Because $\int K_{j}z\sqrt{\varphi_{j}}$
are $C^{1}$ in time, we deduce by multiplication that $c_{j}(t)$
are $C^{1}$ in time.

By differentiating in time the linear relation that defines $c_{j}(t)$,
we see that $c_{j}'(t)$ is obtained linearly from $\int K_{k}(t)z(t)\sqrt{\varphi_{k}(t)}$,
$1\leq k\leq J$,  and from $\frac{d}{dt}\int K_{k}(t)z(t)\sqrt{\varphi_{k}(t)}$,
$1\leq k\leq J$, with coefficients that depend on $K_{k}$, $1\leq k\leq J$ (and their derivatives).
Because it is easy to see that $\frac{d}{dt}\int K_{k}(t)z(t)\sqrt{\varphi_{k}(t)}$
may still be bounded by $C\lVert z(t)\rVert_{L^{2}}$, we deduce that
for any $j=1,...,J$, for $t$ large enough, there exists $C>0$ that
do not depend on $z$, such that
\begin{align}
\lvert c_{j}'(t)\rvert\leq C\lVert z(t)\rVert_{L^{2}}.\label{1eq:03}
\end{align}

We may derive the following equation for $\widetilde{z}$:
\begin{align}
\widetilde{z}_{t}+\big(\widetilde{z}_{xx}+3\widetilde{z}p^{2}\big)_{x}=-\big(3z^{2}p+z^{3}\big)_{x}+\sum_{k=1}^{J}c_{k}'(t)K_{k}-3\sum_{k=1}^{J}c_{k}(t)\big((P_{k}^{2}-p^{2})K_{k}\big)_{x}.\label{1eq:04}
\end{align}

\emph{Step 2.} A bound for $\lvert c_{j}'(t)\rvert$. 

The goal here
is to improve (\ref{1eq:03}).
\begin{lem}
\label{1lem:4}For any $j=1,...,J$, for $t$ large enough, there exists
$C>0$ and $\theta>0$ that do not depend on $z$, such that
\begin{align}
\lvert c_{j}'(t)\rvert\leq C\lVert\widetilde{z}(t)\rVert_{H^{2}}+Ce^{-\theta t}\lVert z(t)\rVert_{H^{2}}+C\lVert z(t)\rVert_{H^{2}}^{2}.
\end{align}
\end{lem}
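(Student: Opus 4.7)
The plan is to differentiate the orthogonality condition \eqref{1eq:02} in time and view the resulting identity as a linear system whose unknowns are the $c_k'(t)$. Writing $0 = \frac{d}{dt}\int \widetilde{z}K_j\sqrt{\varphi_j}$ and substituting \eqref{1eq:04} for $\widetilde{z}_t$ together with $(K_j)_t = -\big((K_j)_{xx} + 3P_j^2 K_j\big)_x$, the $\sum_k c_k'(t)K_k$ piece of $\widetilde{z}_t$ produces the principal contribution $\sum_k c_k'(t) \int K_k K_j \sqrt{\varphi_j}$. The ``Gram matrix'' $\mathcal{G}_{kj}(t) := \int K_k K_j \sqrt{\varphi_j}$ (with the natural $2\times 2$ block structure whenever $P_j$ is a breather) is invertible with uniformly bounded inverse: its diagonal entries are bounded below since the bulk of $K_j$ sits in $\mathrm{supp}\,\varphi_j$ up to exponentially small error, while the off-diagonal entries decay exponentially by Proposition \ref{1prop:cross-product}. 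Thus it suffices to bound all the remaining terms by $C\|\widetilde{z}\|_{H^2} + Ce^{-\theta t}\|z\|_{H^2} + C\|z\|_{H^2}^2$.

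For the linear contributions, the dispersive pairing becomes, after three integrations by parts, $\int \widetilde{z}\big[(K_j\sqrt{\varphi_j})_{xxx} - (K_j)_{xxx}\sqrt{\varphi_j}\big]$: a sum of terms in which at least one derivative lands on $\sqrt{\varphi_j}$, hence supported in $|x-\sigma_j t|\leq \delta t$ (resp.\ at $\sigma_{j+1}t$). On this set $|x-v_j t|\geq \tau t/4$, so $|K_j|\leq Ce^{-\beta\tau t/4}$ by Proposition \ref{1prop:decay}, producing an $O(e^{-\theta t}\|\widetilde{z}\|_{L^2})$ bound; the same geometry handles $\int \widetilde{z}K_j(\sqrt{\varphi_j})_t$. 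The paired potential-like terms give, after one IBP, $3\int(p^2-P_j^2)\widetilde{z}(K_j)_x\sqrt{\varphi_j} - 3\int(P_j^2)_x \widetilde{z}K_j\sqrt{\varphi_j}$ (plus a $(\sqrt{\varphi_j})_x$ term handled as above). Writing $p - P_j = (p-P) + \sum_{m\ne j}P_m$, both summands have $L^\infty$-norm on $\mathrm{supp}\,\varphi_j$ bounded by $Ce^{-\theta t}$ (by \eqref{1eq:00} and the decay of $P_m$ for $m\ne j$), so the first integral is $O(e^{-\theta t}\|\widetilde{z}\|_{L^2})$; the second is directly $O(\|\widetilde{z}\|_{L^2})\leq C\|\widetilde{z}\|_{H^2}$, and is the source of the $C\|\widetilde{z}\|_{H^2}$ term in the bound.

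Finally, the cubic residue $-(3z^2p + z^3)_x$ contributes, after one IBP, $\int(3z^2 p + z^3)(K_j\sqrt{\varphi_j})_x$, bounded by $C\|z\|_{H^2}^2$ via Sobolev embedding and the $H^2$-boundedness of $p$. The correction sum $-3\sum_k c_k \int ((P_k^2-p^2)K_k)_x K_j\sqrt{\varphi_j}$, after IBP, uses $\|p^2 - P_k^2\|_{L^\infty(\mathrm{supp}\,K_k)} \leq Ce^{-\theta t}$ (same decomposition argument, applied around $P_k$ this time) together with $|c_k|\leq C\|z\|_{L^2}$ from Lemma \ref{1lem:3} to produce an $O(e^{-\theta t}\|z\|_{H^2})$ term. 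The main obstacle will be the bookkeeping: arranging the IBPs so that the dispersive and potential-like pieces of $\widetilde{z}_t$ and $(K_j)_t$ line up, generating exactly the cancellations that leave only remainders of the three sizes appearing in the statement.
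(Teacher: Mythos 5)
Your proposal is correct and follows essentially the same route as the paper: differentiate the orthogonality relation \eqref{1eq:02}, substitute \eqref{1eq:04} and the equation for $(K_j)_t$, bound every term by one of the three allowed sizes (using Lemma \ref{1lem:3}, the exponential smallness of $(P_k^2-p^2)K_k$, and Sobolev embedding for the cubic residue), and resolve the resulting linear system via the near-block-diagonal Gram matrix $\int K_kK_j\sqrt{\varphi_j}$ (the paper absorbs the off-diagonal contributions using the a priori bound \eqref{1eq:03} rather than literally inverting the full matrix, but this is cosmetic). The only remark worth making is that the cancellations you hunt for between the dispersive and potential pieces of $\widetilde{z}_t$ and $(K_j)_t$ are unnecessary here: the target bound already tolerates a full $C\lVert\widetilde{z}\rVert_{H^{2}}$ with no decay, so each of those linear terms can simply be estimated crudely, as the paper does (this also sidesteps the dependence of your geometric localization argument on choosing $\delta\leq\tau/4$).
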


\begin{proof}
We may differentiate (\ref{1eq:02}):
\begin{align}
0 & =\frac{d}{dt}\int\widetilde{z}K_{j}\sqrt{\varphi_{j}}\\
 & =\int\widetilde{z}_{t}K_{j}\sqrt{\varphi_{j}}+\int\widetilde{z}(K_{j})_{t}\sqrt{\varphi_{j}}+\int\widetilde{z}K_{j}(\sqrt{\varphi_{j}})_{t}\\
 & =-\int\big(\widetilde{z}_{xx}+3\widetilde{z}p^{2}\big)_{x}K_{j}\sqrt{\varphi_{j}}-\int\big(3z^{2}p+z^{3}\big)_{x}K_{j}\sqrt{\varphi_{j}}\\
 & +\sum_{k=1}^{J}\int\big(c_{k}'(t)\cdot K_{k}\big)K_{j}\sqrt{\varphi_{j}}-3\sum_{k=1}^{J}c_{k}(t)\int\Big(c_{k}(t)\cdot\big((P_{k}^{2}-p^{2})K_{k}\big)_{x}\Big)K_{j}\sqrt{\varphi_{j}}\\
 & -\int\widetilde{z}\big((K_{j})_{xx}+3K_{j}P_{j}^{2}\big)_{x}\sqrt{\varphi_{j}}+\int\widetilde{z}K_{j}(\sqrt{\varphi_{j}})_{t}.
\end{align}

We know that $(\sqrt{\varphi_{j}})_{x}$ and $(\sqrt{\varphi_{j}})_{t}$
are bounded (from inequalities established in Section \ref{1sec:2.2}). This is why, for any $t$ large enough,
\begin{align}
 \bigg\lvert\int\widetilde{z}K_{j}(\sqrt{\varphi_{j}})_{t}\bigg\rvert \leq C\lVert\widetilde{z}(t)\rVert_{H^{2}}. 
\end{align}
For the same
reason, after eventually doing an integration by parts, for any $t$ large enough,
\begin{align}
\bigg\lvert\int\big(\widetilde{z}_{xx}+3\widetilde{z}p^{2}\big)_{x}K_{j}\sqrt{\varphi_{j}}\bigg\rvert +  \bigg\lvert\int\widetilde{z}\big((K_{j})_{xx}+3K_{j}P_{j}^{2}\big)_{x}\sqrt{\varphi_{j}}\bigg\rvert \leq C\lVert\widetilde{z}(t)\rVert_{H^{2}}.
\end{align}
$\int(3z^{2}p+z^{3})_{x}K_{j}\sqrt{\varphi_{j}}$
is clearly bounded by $C\lVert z(t)\rVert_{H^{2}}^{2}$. Finally, we
see that $(P_{k}^{2}-p^{2})K_{k}$ is exponentially bounded in time
(in Sobolev or $L^{\infty}$ norm), and using Lemma \ref{1lem:3},
we deduce that 
\begin{align}
\int\Big(c_{k}(t)\cdot\big((P_{k}^{2}-p^{2})K_{k}\big)_{x}\Big)K_{j}\sqrt{\varphi_{j}}\leq Ce^{-\theta t}\lVert z(t)\rVert_{H^{2}},
\end{align}
for a suitable
$\theta>0$ that do not depend on $z$.
This is why, we deduce that for any $j=1,...,J$, for $t$ large enough,
there exists $C>0$ and $\theta>0$ that do not depend on $z$, such
that
\begin{align}
\Bigg\lvert\sum_{k=1}^{J}\int\big(c_{k}'(t)\cdot K_{k}\big)K_{j}\sqrt{\varphi_{j}}\Bigg\rvert\leq C\lVert\widetilde{z}(t)\rVert_{H^{2}}+Ce^{-\theta t}\lVert z(t)\rVert_{H^{2}}+C\lVert z(t)\rVert_{H^{2}}^{2}.
\end{align}

We recall that for any $(e_{1},e_{2})\in(\mathbb{R})^{2}\:\text{or}\:(\mathbb{R}^{2})^{2},e_{3}\in\mathbb{R}\:\text{or}\:\mathbb{R}^{2}$,
we have the following equality between two elements of $\mathbb{R}$
or $\mathbb{R}^{2}$ (where vectors are denoted as a colon)
\begin{align}
\big(e_{1}\cdot e_{2}\big)e_{3}=\Big(e_{1}^{T}\big(e_{2}e_{3}^{T}\big)\Big)^{T},
\end{align}
where $^{T}$ denotes the transpose.

First of all, because $\int K_{k}K_{j}^{T}\sqrt{\varphi_{j}}$ converges
exponentially to $\int K_{k}K_{j}^{T}$, for $k\neq j$, $\int K_{k}K_{j}^{T}$
is exponentially decreasing, and from (\ref{1eq:03}), we may write
that for any $j=1,...,J$, for $t$ large enough, there exists $C>0$
and $\theta>0$ that do not depend on $z$, such that
\begin{align}
\Bigg\lvert\bigg(c_{j}'(t)^{T}\int K_{j}K_{j}^{T}\bigg)^{T}\Bigg\rvert\leq C\lVert\widetilde{z}(t)\rVert_{H^{2}}+Ce^{-\theta t}\lVert z(t)\rVert_{H^{2}}+C\lVert z(t)\rVert_{H^{2}}^{2}.
\end{align}
Now, in the case when $K_{j}\in\mathbb{R}^{2}$, using the fact that its components
are linearly independent and Cauchy-Schwarz inequality, we deduce
the desired lemma.
\end{proof}
\emph{Step 3.} Coercivity.

We define the following functional quadratic in $\widetilde{z}$:
\begin{align}
H(t) & =\frac{1}{2}\int\widetilde{z}_{xx}^{2}-\frac{5}{2}\int p^{2}\widetilde{z}_{x}^{2}+\frac{5}{2}\int p_{x}^{2}\widetilde{z}^{2}+5\int pp_{xx}\widetilde{z}^{2}+\frac{15}{4}\int p^{4}\widetilde{z}^{2}\\
 & \quad+\sum_{j=1}^{J}\big(b_{j}^{2}-a_{j}^{2}\big)\bigg(\int\widetilde{z}_{x}^{2}\varphi_{j}-3\int p^{2}\widetilde{z}^{2}\varphi_{j}\bigg)+\sum_{j=1}^{J}\big(a_{j}^{2}+b_{j}^{2}\big)^{2}\frac{1}{2}\int\widetilde{z}^{2}\varphi_{j}.
\end{align}
We will prove the following lemma:

\begin{lem}
There exists $C>0$ that do not depend on $z$, such that for $t$
large enough,
\begin{align}
\lVert\widetilde{z}(t)\rVert_{H^{2}}^{2}\leq CH(t)+C\sum_{j=1}^{J}\bigg(\int\widetilde{z}P_{j}\bigg)^{2}.
\end{align}
\end{lem}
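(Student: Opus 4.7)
The plan is to mimic the proof of Proposition \ref{1lem:coerc-1}, with two adaptations: the reference profile in $H(t)$ is the multi-breather $p$ rather than the modulated sum $\widetilde{P}$, and the orthogonality conditions \eqref{1eq:02} involve the unmodulated kernel elements $K_j$ built from $P_j$ rather than from $\widetilde{P_j}$. First I would replace $p$ by $P$ throughout the coefficients of $H(t)$: since Proposition \ref{1thm:main} and Proposition \ref{1prop:higher} give $\lVert p(t) - P(t)\rVert_{H^s} = O(e^{-\theta t})$ in every Sobolev norm, Sobolev embedding yields $|H(t) - H_P(t)| \leq Ce^{-\theta t}\lVert\widetilde{z}\rVert_{H^2}^2$, where $H_P(t)$ denotes the same functional with $p$ replaced by $P$. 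Expanding $P = \sum_j P_j$ and discarding cross products $\int P_i P_j \widetilde{z}^2$ with $i \neq j$ via Proposition \ref{1prop:cross-product}, I would write $H_P(t) = \sum_{j=1}^J \mathcal{Q}_j[\widetilde{z}](t)$ modulo a further error of size $Ce^{-\theta t}\lVert\widetilde{z}\rVert_{H^2}^2$, where each $\mathcal{Q}_j$ is the localized quadratic form built from $P_j$ and $\varphi_j$ exactly as in Section \ref{1sec:constr}.

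For each $j$, a localization argument as in the proof of Lemma \ref{1lem:coerc} reduces the coercivity of $\mathcal{Q}_j$ to coercivity of the ambient soliton or breather quadratic form applied to $\widetilde{z}\sqrt{\varphi_j}$. When $P_j = B_k$ is a breather, the two conditions $\int \widetilde{z}(\partial_{x_1}B_k)\sqrt{\varphi_k^b} = \int \widetilde{z}(\partial_{x_2}B_k)\sqrt{\varphi_k^b} = 0$ extracted from \eqref{1eq:02} are exactly those required by Proposition \ref{1fact:coerc}, so
\[
\mathcal{Q}_k^b[\widetilde{z}] \geq \mu\int(\widetilde{z}^2 + \widetilde{z}_x^2 + \widetilde{z}_{xx}^2)\varphi_k^b - \frac{1}{\mu}\bigg(\int \widetilde{z}\, B_k \sqrt{\varphi_k^b}\bigg)^2 - \rho\lVert\widetilde{z}\rVert_{H^2}^2.
\]
When $P_j = R_l$ is a soliton, only $\int \widetilde{z}(\partial_x R_l)\sqrt{\varphi_l^s} = 0$ is available, which is one condition fewer than required by Lemma \ref{1lem:ver_sol}. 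This is the main technical obstacle, which I would overcome by observing that the kernel of the soliton quadratic form is spanned by $\partial_x R_l$ and $\partial_c R_l$, and that $\int R_l\,\partial_c R_l \neq 0$ while $\int R_l\,\partial_x R_l = 0$: decomposing $\widetilde{z}\sqrt{\varphi_l^s} = a\,\partial_c R_l + \widetilde{z}'$ with $\widetilde{z}'$ orthogonal to both kernel directions lets me bound $|a|$ by a constant times $|\int \widetilde{z}\, R_l\sqrt{\varphi_l^s}|$ plus $\lVert\widetilde{z}'\rVert$. A direct computation then yields the analogous bound with an extra term $\frac{1}{\mu}\bigl(\int \widetilde{z}\, R_l\sqrt{\varphi_l^s}\bigr)^2$ on the right-hand side.

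Summing the per-$j$ estimates, using $\sum_j \varphi_j = 1$, choosing $\rho$ small and $t$ large enough to absorb both the $\rho\lVert\widetilde{z}\rVert_{H^2}^2$ contributions and the $O(e^{-\theta t})\lVert\widetilde{z}\rVert_{H^2}^2$ corrections from the first paragraph into the leading coercivity constant, I would obtain
\[
H(t) \geq \frac{\mu}{2}\lVert\widetilde{z}(t)\rVert_{H^2}^2 - C\sum_{j=1}^J\bigg(\int \widetilde{z}\, P_j \sqrt{\varphi_j}\bigg)^2.
\]
Finally, to replace $\sqrt{\varphi_j}$ by $1$ in the remainder terms, I would use that $P_j(t)$ is exponentially localized near $x = v_j t + x_j(0)$ while $\varphi_j(t) \equiv 1$ there, so that $\bigl|\int \widetilde{z}\, P_j(1-\sqrt{\varphi_j})\bigr| \leq Ce^{-\theta t}\lVert\widetilde{z}\rVert_{H^2}$; squaring and absorbing again for $t$ large yields the stated inequality. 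Apart from the soliton kernel issue flagged above, every step is a near-verbatim transcription of material already developed in Section \ref{1sec:constr}.
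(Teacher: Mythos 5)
Your proposal is correct and follows essentially the same route as the paper: localize $H(t)$ into the forms $\mathcal{Q}_j'$, pass from $p$ to $P_j$ and then to $\mathcal{Q}_j[\widetilde{z}\sqrt{\varphi_j}]$ with $O(e^{-\theta t})$ and $O(1/t)$ errors, apply the ambient coercivity under the orthogonality conditions \eqref{1eq:02}, and finally drop $\sqrt{\varphi_j}$ from the remainder terms using the exponential localization of $P_j$. The soliton issue you flag is handled in the paper by the unified statement $\mathcal{Q}_{j}[\varepsilon]\geq\mu_{j}\lVert\varepsilon\rVert_{H^{2}}^{2}-\mu_{j}^{-1}\big(\int\varepsilon P_{j}\big)^{2}$ under $\int K_{j}\varepsilon=0$, whose justification is precisely your projection onto $\partial_{c}R_{l}$ (cf.\ Section \ref{1sec:53}).
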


\begin{proof}
We denote $\mathcal{Q}_{j}$ the quadratic form associated to $P_{j}$.
We remind that
\begin{align}
\mathcal{Q}_{j}[\varepsilon] & :=\frac{1}{2}\int\varepsilon_{xx}^{2}-\frac{5}{2}\int P_{j}^{2}\varepsilon_{x}^{2}+\frac{5}{2}\int(P_{j})_{x}^{2}\varepsilon^{2}+5\int P_{j}(P_{j})_{xx}\varepsilon^{2}\\
 & \quad+\frac{15}{4}\int P_{j}^{4}\varepsilon^{2}+\big(b_{j}^{2}-a_{j}^{2}\big)\bigg(\int\varepsilon_{x}^{2}-3\int P_{j}^{2}\varepsilon^{2}\bigg)+\big(a_{j}^{2}+b_{j}^{2}\big)^{2}\frac{1}{2}\int\varepsilon^{2}.
\end{align}

In any case, we have that for any $j=1,...,J$, there exists $\mu_{j}>0$,
such that if $\varepsilon\in H^{2}$ satisfies $\int K_{j}\varepsilon=0$,
then we have
\begin{align}
\mathcal{Q}_{j}[\varepsilon]\geq\mu_{j}\lVert\varepsilon\rVert_{H^{2}}^{2}-\frac{1}{\mu_{j}}\bigg(\int\varepsilon P_{j}\bigg)^{2}.
\end{align}
Here, we apply this coercivity result with $\varepsilon=\widetilde{z}\sqrt{\varphi_{j}}$
for which the orthogonality conditions \eqref{1eq:02} are satisfied. Thus,
\begin{align}
\lVert\widetilde{z}\sqrt{\varphi_{j}}\rVert_{H^{2}}^{2}\leq C\mathcal{Q}_{j}[\widetilde{z}\sqrt{\varphi_{j}}]+C\bigg(\int\widetilde{z}P_{j}\sqrt{\varphi_{j}}\bigg)^{2}.
\end{align}

We denote:
\begin{align}
\mathcal{Q}_{j}'[\varepsilon] & :=\frac{1}{2}\int\varepsilon_{xx}^{2}\varphi_{j}-\frac{5}{2}\int p^{2}\varepsilon_{x}^{2}\varphi_{j}+\frac{5}{2}\int p_{x}^{2}\varepsilon^{2}\varphi_{j}+5\int pp_{xx}\varepsilon^{2}\varphi_{j}\\
 & \quad+\frac{15}{4}\int p^{4}\varepsilon^{2}\varphi_{j}+\big(b_{j}^{2}-a_{j}^{2}\big)\bigg(\int\varepsilon_{x}^{2}\varphi_{j}-3\int p^{2}\varepsilon^{2}\varphi_{j}\bigg)+\big(a_{j}^{2}+b_{j}^{2}\big)^{2}\frac{1}{2}\int\varepsilon^{2}\varphi_{j},
\end{align}
and we observe that 
\begin{align}
H(t)=\sum_{j=1}^{J}\mathcal{Q}_{j}'[\widetilde{z}(t)].
\end{align}
In $\mathcal{Q}_{j}'[\widetilde{z}(t)]$, we may replace $p$ by $P_{j}$
with an error bounded by $Ce^{-\theta t}\lVert\widetilde{z}(t)\rVert_{H^{2}}^{2}$,
because of (\ref{1eq:00}) mainly. After that, the expression obtained
may be replaced by $\mathcal{Q}_{j}[\widetilde{z}(t)\sqrt{\varphi_{j}(t)}]$
with an error bounded by $\frac{C}{t}\lVert\widetilde{z}(t)\rVert_{H^{2}}^{2}$
(cf. calculations done in the proof of Lemma \ref{1lem:coerc}). For the same reason,
$\lVert\widetilde{z}\sqrt{\varphi_{j}}\rVert_{H^{2}}^{2}$ may be replaced
by $\int(\widetilde{z}^{2}+\widetilde{z}_{x}^{2}+\widetilde{z}_{xx}^{2})\varphi_{j}$
with an error bounded by $\frac{C}{t}\lVert\widetilde{z}(t)\rVert_{H^{2}}^{2}$.
Therefore, because of
\begin{align}
\lVert \widetilde{z}\rVert _{H^{2}}^{2}=\sum_{j=1}^{J}\int\big(\widetilde{z}^{2}+\widetilde{z}_{x}^{2}+\widetilde{z}_{xx}^{2}\big)\varphi_{j},
\end{align}
the fact that $P_{j}\sqrt{\varphi_{j}}$ converges exponentially
to $P_{j}$, and the fact that $\frac{C}{t}$ may be as small as we
want if we take $t$ large enough, we deduce the desired lemma.
\end{proof}

\emph{Step 4.} Modification of $H$ for the sake of simplification.

We define:
\begin{align}
\widetilde{H}(t) & :=\int\bigg[\frac{1}{2}\widetilde{z}_{xx}^{2}-\frac{5}{2}\big((\widetilde{z}+p)^{2}(\widetilde{z}+p)_{x}^{2}-p^{2}p_{x}^{2}-2\widetilde{z}pp_{x}^{2}-2\widetilde{z}_{x}p^{2}p_{x}\big)\\
& \qquad+\frac{1}{4}\big((\widetilde{z}+p)^{6}-p^{6}-6\widetilde{z}p^{5}\big)\bigg]+\frac{1}{2}\sum_{j=1}^{J}\big(a_{j}^{2}+b_{j}^{2}\big)^{2}\int\widetilde{z}^{2}\varphi_{j}\\
 & \quad+2\sum_{j=1}^{J}\big(b_{j}^{2}-a_{j}^{2}\big)\int\bigg[\frac{1}{2}\widetilde{z}_{x}^{2}-\frac{1}{4}\big((\widetilde{z}+p)^{4}-p^{4}-4\widetilde{z}p^{3}\big)\bigg]\varphi_{j}.
\end{align}
We observe that the difference between $H$ and $\widetilde{H}$ is
bounded by $O\big(\lVert\widetilde{z}(t)\rVert_{H^{2}}^{3}\big)$.
We can thus claim:
\begin{lem}
\label{1lem:6}There exists $C>0$ that do not depend on $z$, such
that for $t$ large enough,
\begin{align}
\lVert\widetilde{z}(t)\rVert_{H^{2}}^{2}\leq C\widetilde{H}(t)+C\sum_{j=1}^{J}\bigg(\int\widetilde{z}P_{j}\bigg)^{2}.
\end{align}
\end{lem}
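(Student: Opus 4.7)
The plan is to bootstrap from the previous lemma, whose conclusion involved $H(t)$, to the analogous bound with $\widetilde{H}(t)$, by exploiting that the difference $H - \widetilde H$ consists entirely of cubic and higher order terms in $\widetilde z$, together with the fact that $\|\widetilde z(t)\|_{H^2}$ is small for $t$ large. More precisely, the previous lemma provides a constant $C>0$ independent of $z$ such that, for $t$ large enough,
\begin{align}
\lVert\widetilde{z}(t)\rVert_{H^{2}}^{2}\leq C H(t)+C\sum_{j=1}^{J}\bigg(\int\widetilde{z}(t)P_{j}(t)\bigg)^{2}. \label{1eq:sketch-prev}
\end{align}

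Next, I would verify the observation stated just before the lemma, namely
\begin{align}
\big\lvert H(t)-\widetilde{H}(t)\big\rvert \leq C\lVert\widetilde{z}(t)\rVert_{H^{2}}^{3}.
\end{align}
This amounts to expanding the nonlinear quantities $(\widetilde z + p)^2(\widetilde z + p)_x^2$, $(\widetilde z + p)^6$, and $(\widetilde z + p)^4$ that appear in $\widetilde H$, subtracting the $0$-th and $1$-st order terms in $\widetilde z$ that are indicated, and checking that the quadratic part (after integrating by parts the mixed term $\int \widetilde z p\, \widetilde z_x p_x$ to convert it into a combination of $\int \widetilde z^2 p_x^2$ and $\int \widetilde z^2 p p_{xx}$) exactly reproduces $H(t)$. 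All remaining contributions are cubic or higher in $\widetilde z$, and can be controlled in $L^1$ by $\lVert \widetilde z\rVert_{H^2}^3$ via Sobolev embedding $H^2 \hookrightarrow L^\infty$ and the uniform bounds on $p$, $p_x$, $p_{xx}$ coming from \eqref{1eq:00} together with Proposition~\ref{1prop:decay}.

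Combining the two inequalities yields
\begin{align}
\lVert\widetilde{z}(t)\rVert_{H^{2}}^{2}\leq C\widetilde{H}(t)+C\lVert\widetilde{z}(t)\rVert_{H^{2}}^{3}+C\sum_{j=1}^{J}\bigg(\int\widetilde{z}(t)P_{j}(t)\bigg)^{2}.
\end{align}
By Lemma \ref{1lem:3} and the standing hypothesis \eqref{1eq:hyp}, one has $\lVert\widetilde z(t)\rVert_{H^2}\le C\lVert z(t)\rVert_{H^2}\le C/t^{N-1}\to 0$ as $t\to+\infty$. Hence, for $t$ large enough (depending only on $C$, not on $z$), $C\lVert\widetilde{z}(t)\rVert_{H^{2}}\le 1/2$, so that $C\lVert\widetilde z(t)\rVert_{H^2}^3\le \tfrac12\lVert\widetilde z(t)\rVert_{H^2}^2$, and this term can be absorbed into the left-hand side, giving the claim with a constant $2C$.

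The only non-routine point is the verification that the quadratic parts of $H$ and $\widetilde H$ indeed coincide; the integration by parts that converts $\int \widetilde z p\,\widetilde z_x p_x$ into terms quadratic in $\widetilde z$ with $p$-coefficients is where one must be careful. Once this algebraic identity is checked, the rest is a standard absorption argument for $t$ large.
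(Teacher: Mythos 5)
Your proposal is correct and follows exactly the route the paper takes: the paper simply observes that $H-\widetilde H$ is $O\big(\lVert\widetilde z(t)\rVert_{H^2}^3\big)$ (the quadratic parts agreeing after the integration by parts on $\int \widetilde z\,\widetilde z_x\, p\, p_x$ that you identify) and then deduces the lemma from the preceding coercivity bound by absorbing the cubic remainder, which is legitimate since $\lVert\widetilde z(t)\rVert_{H^2}\le C\lVert z(t)\rVert_{H^2}\to 0$. You have merely supplied the details the paper leaves implicit.
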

\emph{Step 5.} A bound for $\frac{d\widetilde{H}}{dt}$.
\begin{lem}
\label{1lem:7}There exists $C>0$ and $\theta>0$ that do not depend
on $z$, such that for $t$ large enough,
\begin{align}
\bigg\lvert\frac{d\widetilde{H}}{dt}\bigg\rvert\leq\frac{C}{t}\lVert\widetilde{z}(t)\rVert_{H^{2}}^{2}+Ce^{-\theta t}\lVert\widetilde{z}(t)\rVert_{H^{2}}\lVert z(t)\rVert_{H^{2}}+C\lVert\widetilde{z}(t)\rVert_{H^{2}}\lVert z(t)\rVert_{H^{2}}^{2}.
\end{align}
\end{lem}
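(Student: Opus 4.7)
The functional $\widetilde{H}$ is built precisely as the second-order-and-higher Taylor remainder at $p$ of the Lyapunov functional $\mathcal{H}$ from Section \ref{1sec:constr}; that is,
$$
\widetilde{H}(t) \;=\; \mathcal{H}[p+\widetilde{z}](t) \;-\; \mathcal{H}[p](t) \;-\; D\mathcal{H}[p](t)\cdot \widetilde{z}(t),
$$
as can be read off by matching terms with the expansion in Proposition \ref{1lem:exp}. Accordingly $d\widetilde{H}/dt$ splits into three pieces, and the strategy is that the leading linearized-mKdV contribution cancels algebraically, leaving only localization errors from $(\varphi_j)_t$ (which carry the factor $1/t$ inherited from the scaling $(x-\sigma_j t)/(\delta t)$), together with error terms coming from the fact that $\widetilde{z}$ does not exactly satisfy the linearized equation.

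\textbf{Computation.} Since $p$ is an exact $H^{2}$ solution of \eqref{1mKdV} and is uniformly bounded in every $H^{s}$, a direct application of Lemma \ref{1lem:lcq} gives $|\frac{d}{dt}\mathcal{H}[p]|=O(e^{-2\theta t}/t)$, which is absorbed into the claimed bound. For $\mathcal{H}[p+\widetilde{z}]$, one uses \eqref{1eq:04} together with $p_{t}=-(p_{xx}+p^{3})_{x}$ to verify that $p+\widetilde{z}=u+\sum_{j}c_{j}K_{j}$ satisfies \eqref{1mKdV} up to a remainder
$$
\mathcal{E} \;=\; \big(3p(\widetilde{z}^{\,2}-z^{2})+\widetilde{z}^{\,3}-z^{3}\big)_{x} + \sum_{k}c_{k}'(t)K_{k} - 3\sum_{k}c_{k}(t)\big((P_{k}^{2}-p^{2})K_{k}\big)_{x}.
$$
Reproducing the steps of Lemma \ref{1lem:lcq} for $\mathcal{H}[p+\widetilde{z}]$ yields the usual $\tfrac{C}{t}\|\widetilde{z}\|_{H^{2}}^{2}$ localization error, plus a pairing of $\mathcal{E}$ against quantities linear in $p+\widetilde{z}$ and its derivatives. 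Finally, $\frac{d}{dt}(D\mathcal{H}[p]\cdot\widetilde{z})$ is linear in $\widetilde{z}$ with $p$-dependent coefficients; using the orthogonality conditions \eqref{1eq:02}, the elliptic equations \eqref{1eq:elipt} and \eqref{1eq:elipts} satisfied by each $P_{j}$, and the bound $\|p-P\|_{H^{s}}=O(e^{-\theta t})$ coming from Theorem \ref{1thm:MAIN}, all these linear-in-$\widetilde{z}$ terms collapse to contributions of size $Ce^{-\theta t}\|\widetilde{z}\|_{H^{2}}\|z\|_{H^{2}}$.

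\textbf{Bounding the error terms.} The nonlinear piece of $\mathcal{E}$ expands, via $\widetilde{z}-z=\sum c_{j}K_{j}$ and $|c_{j}|\le C\|z\|_{L^{2}}$ (Lemma \ref{1lem:3}), to a bound $C\|\widetilde{z}\|_{H^{2}}\|z\|_{H^{2}}^{2}$ after Sobolev embedding. The modulation term $\sum_{k}c_{k}'K_{k}$ is paired with a $\widetilde{z}$-linear quantity, and Lemma \ref{1lem:4} bounds $|c_{k}'|$ by $C\|\widetilde{z}\|_{H^{2}}+Ce^{-\theta t}\|z\|_{H^{2}}+C\|z\|_{H^{2}}^{2}$, giving exactly the three terms on the right-hand side of the claim (the first is absorbed into $\frac{C}{t}\|\widetilde{z}\|_{H^{2}}^{2}$ for $t$ sufficiently large). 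The piece $c_{k}(t)\big((P_{k}^{2}-p^{2})K_{k}\big)_{x}$ is of size $e^{-\theta t}$ in every $H^{s}$ (since $P_{k}-p$ decomposes as $(P_{k}-P)+(P-p)$ and both pieces decay exponentially), so combined with $|c_{k}|\le C\|z\|_{L^{2}}$ it produces $Ce^{-\theta t}\|\widetilde{z}\|_{H^{2}}\|z\|_{H^{2}}$. The \textbf{main obstacle} is precisely the algebraic bookkeeping that ensures the $p$-dependent linear coefficients in $D\mathcal{H}[p]\cdot\widetilde{z}$ collapse, up to $O(e^{-\theta t})$, to zero by the elliptic equations for the $P_{j}$; once this cancellation is carefully laid out, the remaining estimates are routine consequences of the previous lemmas and Sobolev embedding.
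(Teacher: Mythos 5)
Your identification of $\widetilde{H}$ as the second-order Taylor remainder of the Lyapunov functional at $p$, and your classification of most error terms, match the paper's strategy. But there are two places where the write-up, as it stands, does not deliver the stated bound. First, the standalone contribution $|\frac{d}{dt}\mathcal{H}[p]|=O(e^{-2\theta t}/t)$ cannot be ``absorbed into the claimed bound'': the right-hand side of the lemma vanishes identically when $z=\widetilde{z}=0$, while $e^{-2\theta t}/t$ does not, and an extra source term of this kind would wreck the contradiction argument of Step~8 (one could then only conclude $A=0$ if the source were itself controlled by $A$). This piece must instead cancel \emph{exactly} against the zeroth-order (in $\widetilde{z}$) part of $\frac{d}{dt}\mathcal{H}[p+\widetilde{z}]$ --- which it does, since $\widetilde{H}$ is at least quadratic in $\widetilde{z}$ --- so you need to organize the computation so that no term of order zero or one in $\widetilde{z}$ is ever estimated in isolation.

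The more serious gap is your treatment of the modulation term $\sum_k c_k'(t)K_k$. Pairing it with a $\widetilde{z}$-linear quantity and invoking Lemma \ref{1lem:4} produces a term $C\lVert\widetilde{z}\rVert_{H^2}^2$ \emph{without} the factor $1/t$, and your parenthetical claim that this ``is absorbed into $\frac{C}{t}\lVert\widetilde{z}\rVert_{H^2}^2$ for $t$ sufficiently large'' is backwards: $C\lVert\widetilde{z}\rVert_{H^2}^2\geq \frac{C}{t}\lVert\widetilde{z}\rVert_{H^2}^2$ for $t\geq 1$. The $1/t$ is not cosmetic: in Step~8 the integral $\int_t^{+\infty}\frac{1}{s}\lVert\widetilde{z}(s)\rVert_{H^2}\,ds$ gains the small factor $\frac{1}{N-1}$ which drives the contradiction $N-2\leq 2C$; replacing $\frac{1}{s}$ by $1$ loses a full power of $t$ and the bootstrap no longer closes. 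The missing idea is a cancellation: after integration by parts and after replacing $p$ by $P_j$ (at cost $Ce^{-\theta t}\lVert\widetilde{z}\rVert_{H^2}\lVert z\rVert_{H^2}$, using $\lvert c_j'\rvert\leq C\lVert z\rVert_{L^2}$), the $\widetilde{z}$-linear quantity multiplying $c_j'(t)$ is exactly
\begin{align}
\int\Big[(K_{j})_{xxxx}+10(K_{j})_{x}P_{j}(P_{j})_{x}+5K_{j}(P_{j})_{x}^{2}+10K_{j}P_{j}(P_{j})_{xx}+5(K_{j})_{xx}P_{j}^{2}+\tfrac{15}{2}K_{j}P_{j}^{4}\qquad\\
-2(b_{j}^{2}-a_{j}^{2})(K_{j})_{xx}-6(b_{j}^{2}-a_{j}^{2})K_{j}P_{j}^{2}+(a_{j}^{2}+b_{j}^{2})^{2}K_{j}\Big]\widetilde{z}\,\varphi_{j},
\end{align}
which vanishes identically because the bracket is the linearization of the elliptic equation \eqref{1eq:ell} in the direction $K_j$. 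Only the commutator errors survive (terms carrying $(\varphi_j)_x=O(1/t)$, such as $c_j'\int(K_j)_{xx}\widetilde{z}_x(\varphi_j)_x$), and it is to \emph{these} that Lemma \ref{1lem:4} should be applied, yielding $\frac{C}{t}\lVert\widetilde{z}\rVert_{H^2}^2+Ce^{-\theta t}\lVert\widetilde{z}\rVert_{H^2}\lVert z\rVert_{H^2}+C\lVert\widetilde{z}\rVert_{H^2}\lVert z\rVert_{H^2}^2$ as claimed. Without this step the lemma, and hence the uniqueness argument, does not follow.
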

\begin{proof}
We develop the expression of $\widetilde{H}(t)$, we differentiate
each term obtained and we use (\ref{1eq:04}), the fact that $p$ is
a solution of \eqref{1mKdV} and the fact that $(\varphi_{j})_{t}=-\frac{x}{t}(\varphi_{j})_{x}$,
where $\frac{x}{t}$ is bounded independently from $z$ because of
the compact support of $\varphi_{j}$. We obtain several sorts of
terms after doing several integrations by parts and several obvious simplifications.

Several terms are clearly bounded by one of the bounds of the lemma,
because in these terms, the cumulated degree of $z$ and $\widetilde{z}$ is larger than
$2$. As an example, we show how to deal with $\int z_{xxx}z\widetilde{z}_{xx}p$.
We use the fact that $z=\widetilde{z}-\sum_{j=1}^{J}c_{j}K_{j}$, and we obtain the following:
\begin{align}
\MoveEqLeft\int z_{xxx}z\widetilde{z}_{xx}p  =\int\widetilde{z}_{xxx}\widetilde{z}\widetilde{z}_{xx}p-\int\widetilde{z}_{xxx}\bigg(\sum_{j=1}^{J}c_{j}K_{j}\bigg)\widetilde{z}_{xx}p\\
 & -\int\bigg(\sum_{j=1}^{J}c_{j}(K_{j})_{xxx}\bigg)\widetilde{z}\widetilde{z}_{xx}p+\int\bigg(\sum_{j=1}^{J}c_{j}(K_{j})_{xxx}\bigg)\bigg(\sum_{j=1}^{J}c_{j}K_{j}\bigg)\widetilde{z}_{xx}p.
\end{align}
It is easy to see that any of these terms is bounded as we want in
the lemma (several of them are bounded by $\frac{C}{t}\lVert\widetilde{z}(t)\rVert_{H^{2}}^{2}$,
the last one is bounded by $C\lVert\widetilde{z}(t)\rVert_{H^{2}}\lVert z(t)\rVert_{H^{2}}^{2}$),
because of Lemma \ref{1lem:3} and of (\ref{1eq:hyp}).

Other terms contain $\widetilde{z}$ quadratically and contain $(\varphi_{j})_{x}$.
And, $(\varphi_{j})_{x}$ is bounded by $\frac{C}{t}$.
This is why, such terms are bounded by $\frac{C}{t}\lVert\widetilde{z}(t)\rVert_{H^{2}}^{2}$.

Several other terms can be, by doing suitable integrations by parts transformed
in one of the two following expressions:
\begin{align}
6\sum_{j=1}^{J}\int\widetilde{z}\widetilde{z}_{x}p\Big[p_{xxxx}-2(b_{j}^{2}-a_{j}^{2})(p_{xx}+p^{3})+(a_{j}^{2}+b_{j}^{2})^{2}p\\
+5pp_{x}^{2}+5p^{2}p_{xx}+\frac{3}{2}p^{5}\Big]\varphi_{j},
\end{align}
\begin{align}
3\sum_{j=1}^{J}\int\widetilde{z}^{2}p_{x}\Big[p_{xxxx}-2(b_{j}^{2}-a_{j}^{2})(p_{xx}+p^{3})+(a_{j}^{2}+b_{j}^{2})^{2}p\\
+5pp_{x}^{2}+5p^{2}p_{xx}+\frac{3}{2}p^{5}\Big]\varphi_{j}.
\end{align}
To deal with these two expressions, we use the elliptic equation satisfied
by $P_{j}$:
\begin{align}
(P_{j})_{xxxx}-2(b_{j}^{2}-a_{j}^{2})\big((P_{j})_{xx}+P_{j}^{3}\big)+(a_{j}^{2}+b_{j}^{2})^{2}P_{j}\\
+5P_{j}(P_{j})_{x}^{2}+5P_{j}^{2}(P_{j})_{xx}+\frac{3}{2}P_{j}^{5} & =0,\label{1eq:ell}
\end{align}
 and the fact that 
\begin{align}
\big[p_{xxxx}-2(b_{j}^{2}-a_{j}^{2})(p_{xx}+p^{3})+(a_{j}^{2}+b_{j}^{2})^{2}p+5pp_{x}^{2}+5p^{2}p_{xx}+\frac{3}{2}p^{5}\big]\varphi_{j}
\end{align} 
converges exponentially to 
\begin{align}
(P_{j})_{xxxx}-2(b_{j}^{2}-a_{j}^{2})\big((P_{j})_{xx}+P_{j}^{3}\big)+(a_{j}^{2}+b_{j}^{2})^{2}P_{j}\\
+5P_{j}(P_{j})_{x}^{2}+5P_{j}^{2}(P_{j})_{xx}+\frac{3}{2}P_{j}^{5},
\end{align}
which is a direct consequence of (\ref{1eq:00}). This is why, such terms are bounded by $\frac{C}{t}\lVert\widetilde{z}(t)\rVert_{H^{2}}^{2}$.

Other terms contain $(P_{j}^{2}-p^{2})K_{j}$,
which is bounded exponentially, with $c_{j}$ bounded by $\lVert z\rVert_{H^{2}}$.
Those terms are obviously bounded by $Ce^{-\theta t}\lVert\widetilde{z}(t)\rVert_{H^{2}}\lVert z(t)\rVert_{H^{2}}$.

Other terms contain $K_{k}$ (or a derivative) and $\varphi_{j}$
with $j\neq k$. In this case, this product gives an exponential decreasing,
and such a term is bounded by $Ce^{-\theta t}\lVert\widetilde{z}(t)\rVert_{H^{2}}\lVert z(t)\rVert_{H^{2}}$,
using (\ref{1eq:03}).

Therefore, we are left with the following terms:
\begin{align}
\sum_{j=1}^{J}c_{j}'(t)\int\Big[(K_{j})_{xx}\widetilde{z}_{xx}-10K_{j}\widetilde{z}_{x}pp_{x}-5K_{j}\widetilde{z}p_{x}^{2}\\
-10(K_{j})_{x}\widetilde{z}pp_{x}-5(K_{j})_{x}\widetilde{z}_{x}p^{2}+\frac{15}{4}K_{j}\widetilde{z}p^{4}\\+
2(b_{j}^{2}-a_{j}^{2})(K_{j})_{x}\widetilde{z}_{x}-6(b_{j}^{2}-a_{j}^{2})K_{j}\widetilde{z}p^{2}+(a_{j}^{2}+b_{j}^{2})^{2}K_{j}\widetilde{z}\Big]\varphi_{j}.
\end{align}
We may replace $p$ by $P_{j}$ in the preceeding expression with
an error bounded by 
\begin{align}
Ce^{-\theta t}\lVert\widetilde{z}(t)\rVert_{H^{2}}\lVert z(t)\rVert_{H^{2}},
\end{align}
because of (\ref{1eq:03}) and (\ref{1eq:00}). This is acceptable,
knowing the result we want to prove.
By integration by parts, we obtain several terms of the form $c_{j}'(t)\int(K_{j})_{xx}\widetilde{z}_{x}(\varphi_{j})_{x}$,
which are bounded by $\frac{C}{t}\lvert c_{j}'(t)\rvert\lVert\widetilde{z}(t)\rVert_{H^{2}}$.
Now, from Lemma \ref{1lem:4}, we deduce that they are bounded by \begin{align}
\frac{C}{t}\lVert\widetilde{z}(t)\rVert_{H^{2}}^{2}+Ce^{-\theta t}\lVert\widetilde{z}(t)\rVert_{H^{2}}\lVert z(t)\rVert_{H^{2}}+C\lVert\widetilde{z}(t)\rVert_{H^{2}}\lVert z(t)\rVert_{H^{2}}^{2},
\end{align} 
which is exactly the bound that we want. And, we are left with the following
terms:
\begin{align}
\sum_{j=1}^{J}c_{j}'(t)\int\Big[(K_{j})_{xxxx}+10(K_{j})_{x}P_{j}(P_{j})_{x}+5K_{j}(P_{j})_{x}^{2}\\
+10K_{j}P_{j}(P_{j})_{xx}+5(K_{j})_{xx}P_{j}^{2}+\frac{15}{2}K_{j}P_{j}^{4}\\
-2(b_{j}^{2}-a_{j}^{2})(K_{j})_{xx}-6(b_{j}^{2}-a_{j}^{2})K_{j}P_{j}^{2}+(a_{j}^{2}+b_{j}^{2})^{2}K_{j}\Big]\widetilde{z}\varphi_{j}.
\end{align}

The last expression equals zero, because of the elliptic equation
satisfied by $K_{j}$, which we may derive by differentiating (\ref{1eq:ell}).
\end{proof}
\emph{Step 6.} A bound for $\frac{d}{dt}\int\widetilde{z}P_{j}$.
\begin{lem}
There exists $C>0$ and $\theta>0$ that do not depend on $z$, such
that for $t$ large enough, for any $j=1,...,J$,
\begin{align}
\bigg\lvert\frac{d}{dt}\int\widetilde{z}P_{j}\bigg\rvert\leq Ce^{-\theta t}\lVert z(t)\rVert_{H^{2}}+C\lVert z(t)\rVert_{H^{2}}^{2}.
\end{align}
\end{lem}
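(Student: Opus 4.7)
The plan is to differentiate $\int \widetilde{z} P_j$ in time by substituting the equation \eqref{1eq:04} satisfied by $\widetilde{z}$ together with the mKdV equation $(P_j)_t = -((P_j)_{xx} + P_j^3)_x$ satisfied by $P_j$. After integrating by parts to move all $x$-derivatives onto $P_j$, the two linear contributions should combine as follows: the pieces $\int \widetilde{z}_{xx}(P_j)_x + \int \widetilde{z}_x (P_j)_{xx} = \int (\widetilde{z}_x (P_j)_x)_x$ form a total derivative and vanish, while the cubic-in-field pieces collapse to $3\int \widetilde{z}\, p^2 (P_j)_x - 3\int \widetilde{z}\, P_j^2 (P_j)_x = 3\int \widetilde{z}(p^2 - P_j^2)(P_j)_x$. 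To estimate this residual, I write $p - P_j = (p - P) + \sum_{i \neq j} P_i$: the first summand is exponentially small in $H^2$ by Theorem \ref{1thm:MAIN}, and the second, once multiplied by $(P_j)_x$, is exponentially small in $L^2$ by Proposition \ref{1prop:cross-product} since the objects have distinct velocities. After pairing with $\widetilde{z}$ via Cauchy--Schwarz, this linear residual is controlled by $Ce^{-\theta t}\lVert z \rVert_{H^2}$.

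For the nonlinear piece coming from the $-(3z^2 p + z^3)_x$ term in \eqref{1eq:04}, one integration by parts yields $\int(3z^2 p + z^3)(P_j)_x$, which is bounded by $C\lVert z\rVert_{L^\infty}^2 \lVert (P_j)_x \rVert_{L^1} \leq C\lVert z\rVert_{H^2}^2$ (the cubic piece is absorbed using that $\lVert z\rVert_{H^2}$ is small for $t$ large enough). For the modulation contribution $\sum_k c_k'(t) \int K_k P_j$, the diagonal term $k = j$ vanishes identically: when $P_j = B_k$, it is exactly the statement of Lemma \ref{1lem:orthob}, and when $P_j = R_l$, we have $\int R_l \partial_x R_l = \frac{1}{2}\int \partial_x(R_l^2) = 0$. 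The off-diagonal terms $k \neq j$ give $\int K_k P_j$ of size $e^{-\theta t}$ by Proposition \ref{1prop:cross-product}, and combined with the bound $\lvert c_k'(t)\rvert \leq C \lVert z(t)\rVert_{H^2}$ extracted from Lemma \ref{1lem:4} (the remaining terms there are already of the desired form), produce a contribution of size $Ce^{-\theta t}\lVert z \rVert_{H^2}$. Finally, the last contribution $-3\sum_k c_k(t)\int ((P_k^2 - p^2) K_k)_x P_j$ is handled by one integration by parts and by the bound $\lvert c_k(t)\rvert \leq C\lVert z\rVert_{H^2}$ from Lemma \ref{1lem:3}, using that $(P_k^2 - p^2) K_k (P_j)_x$ is exponentially small in $L^1$ (the factor $p^2 - P_k^2$ being controlled by $\lVert p - P\rVert_{H^2}$ plus cross-terms between objects of distinct velocities).

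The delicate step will be verifying the algebraic cancellation in the linear part, which is what reduces the problem to a single manageable residual $3\int \widetilde{z}(p^2 - P_j^2)(P_j)_x$; once this is secured, every remaining piece is routinely estimated by Cauchy--Schwarz, Sobolev embedding, and the exponential-decay results (Propositions \ref{1prop:decay} and \ref{1prop:cross-product}) together with the control on $c_k, c_k', \widetilde{z}$ obtained in the previous steps. Collecting all contributions yields $\bigl\lvert \tfrac{d}{dt}\int \widetilde{z} P_j \bigr\rvert \leq Ce^{-\theta t}\lVert z(t)\rVert_{H^2} + C\lVert z(t)\rVert_{H^2}^2$, which is the desired lemma.
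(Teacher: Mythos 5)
Your proposal is correct and follows essentially the same route as the paper: both reduce the linear part to the residual $\int \widetilde z\,(p^{2}-P_{j}^{2})(P_{j})_{x}$ (exponentially small by \eqref{1eq:00} and Proposition \ref{1prop:cross-product}), bound the nonlinear term by $C\lVert z\rVert_{H^{2}}^{2}$, and kill the modulation terms using $\int K_{j}P_{j}=0$ together with cross-product decay and the bounds on $c_{k},c_{k}'$. The only difference is organizational: the paper first splits $\int\widetilde z P_{j}=\int zP_{j}+\sum_{k}c_{k}\int K_{k}P_{j}$ and differentiates the pieces separately, whereas you substitute the equation \eqref{1eq:04} for $\widetilde z$ wholesale; the two computations are algebraically identical.
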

\begin{proof}
We observe that
\begin{align}
\int\widetilde{z}P_{j}=\int zP_{j}+\sum_{k=1}^{J}c_{k}(t)\int K_{k}P_{j}.
\end{align}

Firstly, for $k=j$,
\begin{align}
\int K_{j}P_{j}=0,
\end{align}
and for $k\neq j$,
\begin{align}
\frac{d}{dt}\bigg[c_{k}(t)\int K_{k}P_{j}\bigg]=c_{k}'(t)\int K_{k}P_{j}+c_{k}(t)\int(K_{k})_{t}P_{j}+c_{k}(t)\int K_{k}(P_{j})_{t},
\end{align}
and it is obvious, from Lemma \ref{1lem:3} and (\ref{1eq:03}), that
the latter is bounded by $Ce^{-\theta t}\lVert z(t)\rVert_{H^{2}}$.

It is left to bound $\frac{d}{dt}\int zP_{j}$. We use (\ref{1eq:z})
and we obtain:
\begin{align}
\frac{d}{dt}\int zP_{j}=-\int\big(z_{xx}+(z+p)^{3}-p^{3}\big)_{x}P_{j}-\int z\big((P_{j})_{xx}+P_{j}^{3}\big)_{x}.
\end{align}

Several terms are immediately boundable by $C\lVert z(t)\rVert_{H^{2}}^{2}$,
we kill several others by integration by parts and we are left with
\begin{align}
\int z(p^{2}-P_{j}^{2})(P_{j})_{x},
\end{align}
which is obviously bounded by $Ce^{-\theta t}\lVert z(t)\rVert_{H^{2}}$,
because of (\ref{1eq:00}).
\end{proof}
By differentiation of a square, we obtain that
\begin{lem}
\label{1lem:9}There exists $C>0$ and $\theta>0$ that do not depend
on $z$, such that for $t$ large enough, for any $j=1,...,J$,
\begin{align}
\Bigg\lvert\frac{d}{dt}\bigg(\int\widetilde{z}P_{j}\bigg)^{2}\Bigg\rvert\leq Ce^{-\theta t}\lVert\widetilde{z}(t)\rVert_{H^{2}}\lVert z(t)\rVert_{H^{2}}+C\lVert\widetilde{z}(t)\rVert_{H^{2}}\lVert z(t)\rVert_{H^{2}}^{2}.
\end{align}
\end{lem}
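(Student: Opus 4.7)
The plan is to apply the chain rule to a square and then invoke the previous lemma together with Cauchy-Schwarz. Writing $f_j(t) := \int \widetilde z(t) P_j(t)$, we have
\begin{align*}
\frac{d}{dt} f_j(t)^2 = 2 f_j(t) \cdot \frac{d}{dt} f_j(t),
\end{align*}
so it suffices to bound $|f_j(t)|$ in terms of $\lVert \widetilde z(t)\rVert_{H^2}$ and to use the preceding lemma for $\frac{d}{dt} f_j(t)$.

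For the first factor, Cauchy-Schwarz gives
\begin{align*}
|f_j(t)| = \bigg| \int \widetilde z(t) P_j(t) \bigg| \leq \lVert \widetilde z(t) \rVert_{L^2} \lVert P_j(t) \rVert_{L^2} \leq C \lVert \widetilde z(t) \rVert_{H^2},
\end{align*}
since $\lVert P_j(t) \rVert_{L^2}$ is independent of $t$ (being either the mass of a breather or of a soliton, which is conserved), and in particular depends only on the problem data, not on $z$. For the second factor, the lemma immediately preceding the statement gives
\begin{align*}
\bigg| \frac{d}{dt} f_j(t) \bigg| \leq C e^{-\theta t} \lVert z(t) \rVert_{H^2} + C \lVert z(t) \rVert_{H^2}^2,
\end{align*}
for some constants $C, \theta > 0$ independent of $z$, valid for $t$ large enough.

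Multiplying these two estimates and absorbing the factor of $2$ into $C$ yields
\begin{align*}
\bigg| \frac{d}{dt} f_j(t)^2 \bigg| \leq C e^{-\theta t} \lVert \widetilde z(t) \rVert_{H^2} \lVert z(t) \rVert_{H^2} + C \lVert \widetilde z(t) \rVert_{H^2} \lVert z(t) \rVert_{H^2}^2,
\end{align*}
which is exactly the claim. There is no real obstacle here: the argument is a one-line consequence of the preceding lemma, since all the analytical work (bounding the time derivative of $\int \widetilde z P_j$ using the equation \eqref{1eq:z} satisfied by $z$, the modulation estimates, and the exponential separation between the $P_k$'s) has already been carried out in the previous step.
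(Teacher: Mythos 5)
Your proof is correct and follows exactly the paper's route: the paper derives this lemma from the preceding one simply "by differentiation of a square," which is precisely the chain-rule-plus-Cauchy--Schwarz argument you spell out. The bound $\lvert\int\widetilde{z}P_{j}\rvert\leq C\lVert\widetilde{z}\rVert_{H^{2}}$ with $C$ independent of $z$ (since $\lVert P_j\rVert_{L^2}$ is conserved) is the only ingredient needed beyond the previous lemma, and you supply it.
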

\emph{Step 7.} A bound for $\lVert z(t)\rVert_{H^{2}}$ in function
of $\widetilde{z}(t)$.

Because we have chosen $N>2$ and because of (\ref{1eq:hyp}), we may
claim that for $t$ large enough, the integral
\begin{equation*}
\int_{t}^{+\infty}\lVert z(s)\rVert_{H^{2}}\,ds
\end{equation*}
is finite.

Because of Lemma \ref{1lem:3} and (\ref{1eq:hyp}), we deduce that
\begin{align}
c_{j}(t)\rightarrow_{t\rightarrow+\infty}0.
\end{align}

Knowing this, from Lemma \ref{1lem:4}, we deduce by integration that
\begin{align}
\lvert c_{j}(t)\rvert & \leq\int_{t}^{+\infty}\lvert c_{j}'(s)\rvert \,ds\\
 & \leq C\int_{t}^{+\infty}\lVert\widetilde{z}(s)\rVert_{H^{2}}\,ds+C\int_{t}^{+\infty}e^{-\theta s}\lVert z(s)\rVert_{H^{2}}\,ds+\int_{t}^{+\infty}\lVert z(s)\rVert_{H^{2}}^{2}\,ds.
\end{align}

Knowing this and using (\ref{1eq:def}), we may deduce that
\begin{align}
\lVert z(t)\rVert_{H^{2}} & \leq C\lVert\widetilde{z}(t)\rVert_{H^{2}}+C\int_{t}^{+\infty}\lVert\widetilde{z}(s)\rVert_{H^{2}}\,ds+C\int_{t}^{+\infty}e^{-\theta s}\lVert z(s)\rVert_{H^{2}}\,ds\\
&\quad+\int_{t}^{+\infty}\lVert z(s)\rVert_{H^{2}}^{2}\,ds\\
 & \leq C\lVert\widetilde{z}(t)\rVert_{H^{2}}+C\int_{t}^{+\infty}\lVert\widetilde{z}(s)\rVert_{H^{2}}\,ds+C\sup_{s\geq t}\lVert z(s)\rVert_{H^{2}}e^{-\theta t}\\
 &\quad+C\sup_{s\geq t}\lVert z(s)\rVert_{H^{2}}\int_{t}^{+\infty}\lVert z(s)\rVert_{H^{2}}\,ds,
\end{align}
which implies, because 
\begin{align}
\int_{t}^{+\infty}\lVert\widetilde{z}(s)\rVert_{H^{2}}\,ds,\quad\sup_{s\geq t}\lVert z(s)\rVert_{H^{2}}e^{-\theta t},\quad\sup_{s\geq t}\lVert z(s)\rVert_{H^{2}}\int_{t}^{+\infty}\lVert z(s)\rVert_{H^{2}}\,ds
\end{align}
are decreasing in time, that
\begin{align}
\sup_{s\geq t}\lVert z(s)\rVert_{H^{2}} & \leq C\sup_{s\geq t}\lVert\widetilde{z}(s)\rVert_{H^{2}}+C\int_{t}^{+\infty}\lVert\widetilde{z}(s)\rVert_{H^{2}}\,ds+C\sup_{s\geq t}\lVert z(s)\rVert_{H^{2}}e^{-\theta t}\\
& +C\sup_{s\geq t}\lVert z(s)\rVert_{H^{2}}\int_{t}^{+\infty}\lVert z(s)\rVert_{H^{2}}\,ds,
\end{align}
and because $e^{-\theta t}$ and $\int_{t}^{+\infty}\lVert z(s)\rVert_{H^{2}}\,ds$
may be as small as we want for $t$ large enough (dependent on $z$),
we may deduce that
\begin{lem}
\label{1lem:10}There exists $C>0$ that do not depend on $z$, such
that for $t$ large enough,
\begin{align}
\lVert z(t)\rVert_{H^{2}}\leq\sup_{s\geq t}\lVert z(s)\rVert_{H^{2}}\leq C\sup_{s\geq t}\lVert\widetilde{z}(s)\rVert_{H^{2}}+C\int_{t}^{+\infty}\lVert\widetilde{z}(s)\rVert_{H^{2}}\,ds.
\end{align}
\end{lem}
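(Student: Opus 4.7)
The plan is to integrate Lemma \ref{1lem:4} backwards in time from $+\infty$, plug the resulting bound on $\lvert c_j(t)\rvert$ into the defining relation (\ref{1eq:def}), then take $\sup_{s\geq t}$ of both sides, and finally absorb a small factor of $\sup_{s\geq t}\lVert z(s)\rVert_{H^2}$ from the right-hand side into the left. The hypothesis $N>2$ --- super-polynomial decay --- is exactly what makes that absorption possible.

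First, from Lemma \ref{1lem:3} and hypothesis (\ref{1eq:hyp}) one has $\lvert c_j(t)\rvert \leq C\lVert z(t)\rVert_{L^2}\to 0$ as $t\to+\infty$, so integrating Lemma \ref{1lem:4} from $t$ to $+\infty$ gives
\begin{equation*}
\lvert c_j(t)\rvert \leq C\int_t^{+\infty}\lVert\widetilde{z}(s)\rVert_{H^2}\,ds + C\int_t^{+\infty} e^{-\theta s}\lVert z(s)\rVert_{H^2}\,ds + C\int_t^{+\infty}\lVert z(s)\rVert_{H^2}^2\,ds,
\end{equation*}
all three integrals being finite because $N>2$ makes $\lVert z(s)\rVert_{H^2}$ itself (let alone its square) integrable on $[t,+\infty)$.

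Next, from (\ref{1eq:def}) and the uniform boundedness of the $K_j$ in $H^2$, I write $\lVert z(t)\rVert_{H^2} \leq \lVert\widetilde{z}(t)\rVert_{H^2} + C\sum_j\lvert c_j(t)\rvert$, substitute the previous bound, estimate crudely $\int_t^{+\infty}e^{-\theta s}\lVert z(s)\rVert_{H^2}\,ds \leq (1/\theta)e^{-\theta t}\sup_{s\geq t}\lVert z(s)\rVert_{H^2}$ and $\int_t^{+\infty}\lVert z(s)\rVert_{H^2}^2\,ds \leq \sup_{s\geq t}\lVert z(s)\rVert_{H^2}\cdot\int_t^{+\infty}\lVert z(s)\rVert_{H^2}\,ds$, and take the sup over $s\geq t$ (the tail integrals being monotone in their lower endpoint). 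This yields
\begin{equation*}
\sup_{s\geq t}\lVert z(s)\rVert_{H^2} \leq C\sup_{s\geq t}\lVert\widetilde{z}(s)\rVert_{H^2} + C\int_t^{+\infty}\lVert\widetilde{z}(s)\rVert_{H^2}\,ds + C\,\sup_{s\geq t}\lVert z(s)\rVert_{H^2}\,\eta(t),
\end{equation*}
where $\eta(t):=e^{-\theta t}+\int_t^{+\infty}\lVert z(s)\rVert_{H^2}\,ds$.

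The main obstacle --- which is really the whole point of the super-polynomial hypothesis --- is ensuring that $\eta(t)$ can be made smaller than any fixed positive constant, and in particular smaller than $1/(2C)$. For $N>2$ the tail bound $\int_t^{+\infty}\lVert z(s)\rVert_{H^2}\,ds = O\big(t^{-(N-2)}\big)\to 0$ holds, and combined with the exponential decay of $e^{-\theta t}$ this forces $\eta(t)\to 0$ as $t\to+\infty$. Taking $t$ large enough that $C\eta(t)\leq 1/2$ and subtracting $\tfrac{1}{2}\sup_{s\geq t}\lVert z(s)\rVert_{H^2}$ from both sides yields the second inequality of the lemma; the first inequality $\lVert z(t)\rVert_{H^2}\leq\sup_{s\geq t}\lVert z(s)\rVert_{H^2}$ is immediate.
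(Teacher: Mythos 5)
Your proposal is correct and follows essentially the same route as the paper: integrate the bound on $c_j'$ from Lemma \ref{1lem:4} using $c_j(t)\to 0$, insert into (\ref{1eq:def}), pass to the supremum using monotonicity of the tail quantities, and absorb the small factor coming from $e^{-\theta t}$ and $\int_t^{+\infty}\lVert z(s)\rVert_{H^2}\,ds$, whose smallness is exactly what the hypothesis $N>2$ guarantees. No gaps.
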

\emph{Step 8.} Conclusion.

By integration, from Lemmas \ref{1lem:6}, \ref{1lem:7} and \ref{1lem:9},
for $t$ large enough (depending on $z$), with constants $C$ and
$\theta$ that do not depend on $z$,
\begin{align}
\lVert\widetilde{z}(t)\rVert_{H^{2}}^{2} & \leq C\int_{t}^{+\infty}\frac{1}{s}\lVert\widetilde{z}(s)\rVert_{H^{2}}^{2}\,ds+C\int_{t}^{+\infty}e^{-\theta s}\lVert\widetilde{z}(s)\rVert_{H^{2}}\lVert z(s)\rVert_{H^{2}}\,ds\\
&\quad+C\int_{t}^{+\infty}\lVert\widetilde{z}(s)\rVert_{H^{2}}\lVert z(s)\rVert_{H^{2}}^{2}\,ds  \\
 & \leq C\sup_{s\geq t}\lVert\widetilde{z}(s)\rVert_{H^{2}}\int_{t}^{+\infty}\Big(\frac{1}{s}\lVert\widetilde{z}(s)\rVert_{H^{2}}+e^{-\theta s}\lVert z(s)\rVert_{H^{2}}+\lVert z(s)\rVert_{H^{2}}^{2}\Big)\,ds.
\end{align}
Because the right-hand side of the inequality above
is decreasing in time, we deduce after taking the supremum of the
previous inequality and after simplification, that for $t$ large
enough,
\begin{align}
\sup_{s\geq t}\lVert\widetilde{z}(s)\rVert_{H^{2}} & \leq C\int_{t}^{+\infty}\frac{1}{s}\lVert\widetilde{z}(s)\rVert_{H^{2}}\,ds+C\int_{t}^{+\infty}e^{-\theta s}\lVert z(s)\rVert_{H^{2}}\,ds\\
&\quad+C\int_{t}^{+\infty}\lVert z(s)\rVert_{H^{2}}^{2}\,ds\\
 & \leq C\int_{t}^{+\infty}\frac{1}{s}\lVert\widetilde{z}(s)\rVert_{H^{2}}\,ds+C\sup_{s\geq t}\lVert z(s)\rVert_{H^{2}}e^{-\theta t}\\
 &\quad+C\sup_{s\geq t}\lVert z(s)\rVert_{H^{2}}\int_{t}^{+\infty}\lVert z(s)\rVert_{H^{2}}\,ds.
\end{align}
And using (\ref{1eq:hyp}), the fact that $N-1>1$ and the fact that
$e^{-\theta t}$ is decreasing faster than $\frac{1}{t^{N-2}}$, we
deduce that for $t$ large enough,
\begin{align}
\sup_{s\geq t}\lVert\widetilde{z}(s)\rVert_{H^{2}}\leq C\int_{t}^{+\infty}\frac{1}{s}\lVert\widetilde{z}(s)\rVert_{H^{2}}\,ds+C\frac{1}{t^{N-2}}\sup_{s\geq t}\lVert z(s)\rVert_{H^{2}}.
\end{align}
And using Lemma \ref{1lem:10}, we deduce that
\begin{align}
\sup_{s\geq t}\lVert\widetilde{z}(s)\rVert_{H^{2}} &\leq C\int_{t}^{+\infty}\frac{1}{s}\lVert\widetilde{z}(s)\rVert_{H^{2}}\,ds+C\frac{1}{t^{N-2}}\sup_{s\geq t}\lVert\widetilde{z}(s)\rVert_{H^{2}}\\
&\quad+C\frac{1}{t^{N-2}}\int_{t}^{+\infty}\lVert\widetilde{z}(s)\rVert_{H^{2}}\,ds.
\end{align}
And because $\frac{1}{t^{N-2}}$ can be as small as we want for $t$
large enough, we deduce that for $t$ large enough and for a constant
$C>0$ that do not depend on $z$ or on $N$,
\begin{align}
\lVert\widetilde{z}(t)\rVert_{H^{2}}\leq\sup_{s\geq t}\lVert\widetilde{z}(s)\rVert_{H^{2}}\leq C\int_{t}^{+\infty}\frac{1}{s}\lVert\widetilde{z}(s)\rVert_{H^{2}}\,ds+C\frac{1}{t^{N-2}}\int_{t}^{+\infty}\lVert\widetilde{z}(s)\rVert_{H^{2}}\,ds.\label{1eq:fin}
\end{align}

Let us pick $T>0$ large enough such that for $t\geq T$, the inequality
(\ref{1eq:fin}) works (i.e. $T$ is large enough so that every part of the preceeding proof works).
From (\ref{1eq:def}) and Lemma \ref{1lem:3}, we know that for $t\geq T$
(by taking $T$ larger if needed),
\begin{align}
\lVert\widetilde{z}(t)\rVert_{H^{2}}\leq\frac{C}{t^{N-1}}.\label{1eq:hyp-tild}
\end{align}
This is why, the following quantity is well defined:
\begin{align}
A:=\sup_{t\geq T}\{t^{N-1}\lVert\widetilde{z}(t)\rVert_{H^{2}}\},\label{1eq:sup}
\end{align}
which means that for $t\geq T$,
\begin{align}
\lVert\widetilde{z}(t)\rVert_{H^{2}}\leq\frac{A}{t^{N-1}}.\label{1eq:conc}
\end{align}

Now, using (\ref{1eq:hyp-tild}) and (\ref{1eq:conc}), we deduce from
(\ref{1eq:fin}) that for $t\geq T$, with $C>0$ that do not depend
on $z$, on $N$ or on $A$,
\begin{align}
\lVert\widetilde{z}(t)\rVert_{H^{2}}\leq\frac{CA}{N-1}\frac{1}{t^{N-1}}+\frac{CA}{N-2}\frac{1}{t^{2N-4}}\leq\frac{CA}{N-2}\frac{1}{t^{N-1}},\label{1eq:concl}
\end{align}
if we assume that $N>3$.
Now, from (\ref{1eq:sup}), we deduce that there exists $T^{*}>T$
such that
\begin{align}
(T^{*})^{N-1}\lVert\widetilde{z}(T^{*})\rVert_{H^{2}}\geq\frac{A}{2}.
\end{align}
This is why, by evaluating (\ref{1eq:concl}) in $t=T^{*}$, we find
that
\begin{align}
\frac{A}{2(T^{*})^{N-1}}\leq\frac{CA}{N-2}\frac{1}{(T^{*})^{N-1}},
\end{align}
which, if we assume that $A>0$, after simplification yields:
\begin{align}
N-2\leq2C.
\end{align}

This means that if we assume that $N>2C+2$ and $N>3$, the assumption
$A>0$ leads to a contradiction. Therefore, $A=0$ under that assumption
on $N$, which implies $\lVert\widetilde{z}(t)\rVert_{H^{2}}=0$, and
from Lemma \ref{1lem:10}, this implies that $z\equiv0$. This means
that the condition that we have established for $N$, namely
\begin{align}
N>\max(2C+2,3),
\end{align}
do not depend on $z$ and allows us to deduce that under (\ref{1eq:hyp}),
we may establish that $z\equiv0$. The Proposition \ref{1lem:polyn} is now proved.
\end{proof}

\subsection{A solution converging to a multi-breather converges exponentially
to this multi-breather, if the velocities are positive}
\label{1sec:uniq_mon}

\begin{prop} \label{1prop:conv_exp}
Let $u(t)$ be an $H^{2}$ solution of \eqref{1mKdV} on $[T,+\infty)$,
for $T\in\mathbb{R}$. We assume that 
\begin{align}
\lVert u(t)-p(t)\rVert_{H^{2}}\rightarrow_{t\rightarrow+\infty}0,\label{1eq:-114}
\end{align}
where $p$ is the multi-breather constructed in Section \ref{1sec:constr}.
If 
\begin{align}
v_{1}>0, 
\end{align}
then there exists $\varpi>0$, $T_{0}\geq  T$ and $C>0$
such that for any $t\geq  T_{0}$,
\begin{align}
\lVert u(t)-p(t)\rVert_{H^{2}}\leq  Ce^{-\varpi t}.\label{1eq:-115}
\end{align}
\end{prop}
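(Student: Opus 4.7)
The strategy is to reverse the time direction of the existence argument: instead of imposing a condition at a large finite $T_n$ and propagating backward as in Section~\ref{1sec:constr}, I would use the hypothesis $\lVert u(t) - p(t)\rVert_{H^2} \to 0$ as a boundary condition ``at $t = +\infty$'' and propagate forward from a large $T_0$. The positivity assumption $v_1 > 0$ is what allows replacing the shrinking-window partition~\eqref{1eq:phij}--\eqref{1eq:phij-1} (which yielded only almost-conservation with a $1/t$ loss in Lemma~\ref{1lem:lcq}) by a fixed-width moving cutoff $\varphi_j^{\mathrm{mon}}(t,x) := \psi\bigl(\frac{x - \sigma_j t}{K}\bigr) - \psi\bigl(\frac{x - \sigma_{j+1} t}{K}\bigr)$ with $K$ large but fixed. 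The principal term in $\frac{d}{dt} M_j^{\mathrm{mon}}[u]$ then carries the definite sign of $\sigma_j > 0$, giving genuine monotonicity
\begin{align*}
M_j^{\mathrm{mon}}[u](t) - M_j^{\mathrm{mon}}[u](s) \leq C e^{-\sigma t}, \qquad T_0 \leq t \leq s,
\end{align*}
(and similarly for $E_j^{\mathrm{mon}}$ and $F_j^{\mathrm{mon}}$), the standard Kato virial estimate.

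I would first modulate $u$ around $P$ exactly as in Lemma~\ref{1lem:mod} (the implicit function argument is identical, only the forward-in-time setting is different), obtaining $C^1$ parameters and a remainder $w(t) := u(t) - \widetilde P(t)$ satisfying the orthogonality conditions~\eqref{1eq:orthom} and, thanks to the hypothesis and Proposition~\ref{1thm:main}, $\lVert w(t)\rVert_{H^2} \to 0$. Then, following the paper's outline, I would proceed by induction on $j$ from $j = J$ down to $j = 1$, establishing at step $j$ an exponential bound on the part of $\lVert w\rVert_{H^2}$ localized to $\{x \geq \sigma_j t\}$. At each step, I would introduce a Lyapunov functional adapted only to the single object $\widetilde{P_j}$,
\begin{align*}
\mathcal{H}_j[u](t) := F^{\mathrm{mon}}_{\geq j}[u](t) + 2(b_j^2 - a_j^2) E^{\mathrm{mon}}_{\geq j}[u](t) + (a_j^2 + b_j^2)^2 M^{\mathrm{mon}}_{\geq j}[u](t),
\end{align*}
with localization against $\sum_{i \geq j} \varphi_i^{\mathrm{mon}}$. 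Expanding $\mathcal{H}_j[\widetilde P + w]$ as in Proposition~\ref{1lem:exp}: the zeroth-order term is constant in time up to $O(e^{-\sigma t})$ (for breathers exactly, for solitons via the Taylor cancellation~\eqref{1eq:dlvar}--\eqref{1eq:dlvarsimpl}); the $w$-linear terms vanish using the elliptic equations~\eqref{1eq:elipt}--\eqref{1eq:eliptsm} together with the modulation orthogonalities; the quadratic part is coercive modulo the explicit kernel direction by Propositions~\ref{1lem:coerc-1}, \ref{1lem:coerc} and~\ref{1lem:ver_sol}, and that kernel direction is controlled by a localized mass monotonicity in the spirit of~\eqref{1eq:final}--\eqref{1eq:-74}. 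Combining the monotonicity of $\mathcal{H}_j$ with the coercivity and letting $s \to +\infty$ (where $w(s) \to 0$) yields the exponential bound on $\lVert w(t)\sqrt{\varphi_j^{\mathrm{mon}}(t)}\rVert_{H^2}$, which together with the induction hypothesis extends the decay to the slab $\{x \geq \sigma_j t\}$.

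The main obstacle, already flagged by the outline, is that in the mixed breather/soliton setting one cannot build a single functional making every $\widetilde{P_j}$ a simultaneous critical point, since the coefficients $2(b_j^2 - a_j^2)$ and $(a_j^2 + b_j^2)^2$ are object-specific. This forces the inductive structure and places a premium on the choice of orthogonality: the modulation conditions must exactly kill the linear-in-$w$ contribution of $\widetilde{P_j}$ in the expansion of $\mathcal{H}_j$, while the unwanted linear cross contributions of the other $\widetilde{P_i}$'s must be shown to be absorbable via Corollary~\ref{1cor:decay-mod} (rate $e^{-\beta \tau t / 8}$), which ultimately determines the exponent $\varpi$ in~\eqref{1eq:-115}. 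A secondary subtlety is the soliton modulation residual: the drift $c_l + c_{0,l}(t)$ generates $O(c_{0,l}^3)$ corrections in the expansion of $\mathcal{H}_j[\widetilde P]$ which, unlike for breathers, do not vanish identically; these are absorbed into the quadratic coercivity provided $T_0$ is large enough, but the chain of absorptions through the successive inductive steps must be arranged so that the base rate $\varpi$ is not degraded at any stage.
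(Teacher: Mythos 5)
Your overall architecture --- monotonicity of localized conservation laws, an induction treating one object at a time with a Lyapunov functional $F+2(b^2-a^2)E+(a^2+b^2)^2M$ adapted to that object, coercivity restored by modulation, and the boundary condition read off at $t=+\infty$ --- is indeed the paper's. But the orientation of your localization is reversed, and this breaks the key inequality. With $\psi$ increasing (as in \eqref{1eq:psi-1}), your weight $\sum_{i\ge j}\varphi_i^{\mathrm{mon}}=\psi\bigl(\frac{x-\sigma_j t}{K}\bigr)$ localizes to the \emph{right} of $x=\sigma_j t$; for $\sigma_j>0$ the Kato identity gives $\frac{d}{dt}\int u^2\psi\bigl(\frac{x-\sigma_j t}{K}\bigr)\le Ce^{-ct}$ (mass ahead of a rightward-moving line can only decrease, up to exponential errors), i.e.\ $M^{\mathrm{mon}}_{\ge j}(s)-M^{\mathrm{mon}}_{\ge j}(t)\le Ce^{-ct}$ for $s\ge t$ --- the \emph{reverse} of the inequality you wrote. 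Sending $s\to+\infty$ then yields only a lower bound $M^{\mathrm{mon}}_{\ge j}(t)\ge\sum_{i\ge j}M[P_i]-Ce^{-ct}$, whereas the argument needs an \emph{upper} bound on the localized Lyapunov functional at finite time, since that is what dominates the coercive quadratic form $H_j[w]$; with your orientation the comparison with the limit reduces to the vacuous statement $H_j[w]\ge -Ce^{-ct}$. This is why the paper localizes with $\Phi_j=\Psi(x-m_jt)$, $\Psi$ decreasing from $1$ to $0$, so that $\Phi_j\approx\mathbf{1}_{x<m_jt}$ covers the \emph{slowest} $j-1$ objects: these left-localized quantities are almost \emph{increasing}, and comparison with their limits gives exactly $\mathcal H_j(t)\le\mathcal H_j(+\infty)+Ce^{-2\varpi t}$ (Lemma \ref{1lem:monoto}). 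The induction must therefore accumulate objects from the slowest upward ($j=2$ to $J+1$), not from the fastest downward; as written, your scheme produces inequalities pointing the wrong way at every step.

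Two further points. First, for the soliton scaling parameter you invoke the second-order Taylor cancellation \eqref{1eq:dlvar}--\eqref{1eq:dlvarsimpl} from the existence proof, but that is not the mechanism needed here: the scaling modulation $y_1(t)$ enters the expansion of $\mathcal H_j$ at first order with no sign, and the paper must pin it down by a separate argument (Step~8) exploiting the mass and energy monotonicities \emph{individually} at first order, together with the fact that $M[q]=2$ and $E[q]=-\frac23$ have opposite signs, to conclude $\lvert y_1(t)\rvert\le Ce^{-2\varpi t}$ before the quadratic estimate closes. Second, the coercivity cannot be quoted directly from Propositions \ref{1lem:coerc-1}, \ref{1lem:coerc} and \ref{1lem:ver_sol}: the modulation in the uniqueness proof imposes \emph{unweighted} orthogonality on $w$, while coercivity is applied to $w\sqrt{\Phi_j}$, so one needs the almost-orthogonality variants of Section \ref{1sec:54} together with a verification that $\int w\sqrt{\Phi_j}\,K$ is genuinely small --- a step your sketch omits.
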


Note that in the formulation of the Proposition above, we may replace $p$ by $P$ without changing its content (it is a consequence from \eqref{1eq:-12-1-1}).

\begin{proof}
We set $v(t):=u(t)-P(t)$, such that $\lVert v(t)\rVert_{H^{2}}\rightarrow_{t\rightarrow+\infty}0$.

We denote: 
\begin{align}
\Psi(x):=\frac{2}{\pi}\arctan\big(\exp(-\sqrt{\sigma}x/2)\big),\label{1eq:-117}
\end{align}
where $\sigma>0$ is small enough (with precise conditions that will
be mentioned throughout the proof). By direct calculations, 
\begin{align}
\Psi'(x)=\frac{-\sqrt{\sigma}}{2\pi\cosh(\sqrt{\sigma}x/2)}.\label{1eq:-118}
\end{align}
Thus, 
\begin{align}
\lvert\Psi'(x)\rvert\leq  C\exp(-\sqrt{\sigma}\lvert x\rvert/2).\label{1eq:-119}
\end{align}
We have the following properties: $\lim_{+\infty}\Psi=0$, $\lim_{-\infty}\Psi=1$,
for all $x\in\mathbb{R}$ $\Psi(-x)=1-\Psi(x)$, $\Psi'(x)<0$, $\lvert\Psi''(x)\rvert\leq \frac{\sqrt{\sigma}}{2}\lvert\Psi'(x)\rvert$,
$\lvert\Psi'''(x)\rvert\leq \frac{\sqrt{\sigma}}{2}\lvert\Psi''(x)\rvert$,
$\lvert\Psi'(x)\rvert\leq \frac{\sqrt{\sigma}}{2}\Psi$ and $\lvert\Psi'(x)\rvert\leq \frac{\sqrt{\sigma}}{2}(1-\Psi)$.

For $j=2,...,J$, let $m_{j}$ be such that
\begin{align}
m_j=\frac{v_{j-1}+v_j}{2}.\label{1eq:-120}
\end{align}
Let us denote $\tau_{0}>0$ the minimal distance between a $v_{j}$
and a $m_{j}$.

From this, we define for $j=2,...,J$, 
\begin{align}
\Phi_{j}(t,x):=\Psi(x-m_{j}t).\label{1eq:-121}
\end{align}
We may extend this definition to $j=1$ and $j=J+1$ in the following
way: $\Phi_{1}:=0$ and $\Phi_{J+1}:=1$.
Thus, the function that allows us to study properties around each
object $P_j$ (for $j=1,...,J$) is $\chi_{j}:=\Phi_{j+1}-\Phi_{j}$.

The goal is to prove that, for $t$ large enough,
\begin{align}
\lVert v(t)\rVert_{H^{2}}\leq  Ce^{-\varpi t},\label{1eq:-122}
\end{align}
where $\varpi>0$ is a constant to be deduced from the constants of
the problem. Proposition \ref{1prop:conv_exp} follows from this, because of Theorem \ref{1thm:MAIN}.

Let $\varpi>0$ to be deduced from the constants of the problem with
respect to the needs of the following proof.

We will prove \eqref{1eq:-122} by induction. We will prove, for $j=2,...,J+1$,
that $\int(v^{2}+v_{x}^{2}+v_{xx}^{2})\Phi_{j}\leq  Ce^{-2\varpi t}$
for $t$ large enough, knowing that $\int(v^{2}+v_{x}^{2}+v_{xx}^{2})\Phi_{j-1}\leq  Ce^{-2\varpi t}$
for $t$ large enough (note that this assumption is empty when $j=2$).
This implies the desired inequality.
(Note that it is OK if $\varpi$ becomes smaller after a step of this induction, as long as it stays positive.)

Let us write the $j$-th step of our reasoning by induction (where
$j\in\{2,...,J+1\}$). Thus, $j$ is fixed in the rest of the proof.
We assume that
\begin{align}
\int\big(v^{2}+v_{x}^{2}+v_{xx}^{2}\big)\Phi_{j-1}\leq  Ce^{-2\varpi t}.\label{1eq:-123}
\end{align}
We divide our proof in several steps.

\emph{Step 1.} Almost-conservation of localized conservation laws.

We define quantities that are similar to quantities defined in Section \ref{1sec:2.2}. We note that we localize around the first $j-1$ objects, not only around the $(j-1)$-th object. Notations defined in Section \ref{1sec:2.2} should not
be considered in the following proof and should be replaced by notations
we define here:
\begin{align}
M_{j}(t):=\frac{1}{2}\int u^{2}(t)\Phi_{j}(t),\label{1eq:locm}
\end{align}
\begin{align}
E_{j}(t):=\int\Big[\frac{1}{2}u_{x}^{2}-\frac{1}{4}u^{4}\Big]\Phi_{j}(t),\label{1eq:loce}
\end{align}
\begin{align}
F_{j}(t):=\int\Big[\frac{1}{2}u_{xx}^{2}-\frac{5}{2}u^{2}u_{x}^{2}+\frac{1}{4}u^{6}\Big]\Phi_{j}(t).\label{1eq:locf}
\end{align}
\begin{lem}
\label{1lem:monoto}Let $\omega_{2},\omega_{6}>0$, as small as desired. There exists $T_{1}\geq  T$ and $C>0$ such that for $t\geq  T_{1}$,
\begin{align}
\sum_{i=1}^{j-1}M[P_{i}]-M_{j}(t)\geq -Ce^{-2\varpi t},\label{1eq:mm}
\end{align}
\begin{align}
\sum_{i=1}^{j-1}\big(E[P_{i}]+\omega_{2}M[P_{i}]\big)-\big(E_{j}(t)+\omega_{2}M_{j}(t)\big)\geq -Ce^{-2\varpi t},\label{1eq:me-1}
\end{align}
\begin{align}
\sum_{i=1}^{j-1}\big(F[P_{i}]+\omega_{6}M[P_{i}]\big)-\big(F_{j}(t)+\omega_{6}M_{j}(t)\big)\geq -Ce^{-2\varpi t}.\label{1eq:mf}
\end{align}
\end{lem}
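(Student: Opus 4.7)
The plan is to prove the three inequalities by a Kato-type monotonicity argument in the spirit of Martel \cite{key-2}. The assumption $v_1 > 0$ is essential here: it ensures that every mean velocity $m_j = (v_{j-1}+v_j)/2$ is strictly positive, which makes the leading terms of $\frac{d}{dt} M_j, \frac{d}{dt} E_j, \frac{d}{dt} F_j$ carry a favorable sign. The key geometric fact is that $|\Phi_j'(t,x)| \leq C\exp(-\sqrt{\sigma}|x-m_jt|/2)$ is concentrated near $x = m_jt$, which lies at distance $\geq \tau_0 t$ from every center $v_it$; on a ball of radius $\tau_0 t/2$ around $m_jt$, $P$ and all its derivatives are therefore pointwise exponentially small, and combined with the hypothesis $\lVert u-p\rVert_{H^2} \to 0$ (hence $\lVert u-P\rVert_{H^2}\to 0$ by Theorem \ref{1thm:MAIN}) and Sobolev embedding, $\lVert u\rVert_{L^\infty}$ itself is $o(1)$ on that ball as $t \to +\infty$, while off the ball $|\Phi_j'|$ is exponentially small.

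I will first compute the time derivatives of $M_j, E_j, F_j$ using the mKdV equation (after a regularization, since a priori we only have $u \in H^2$) together with the transport identity $(\Phi_j)_t = -m_j \Phi_j'$. For the mass one finds
\begin{align*}
\frac{d}{dt} M_j(t) = \int \Bigl(\tfrac{3}{2} u_x^2 + \tfrac{m_j}{2} u^2 - \tfrac{3}{4} u^4\Bigr) |\Phi_j'| - \int u u_x \Phi_j'',
\end{align*}
and analogous but longer identities for $\frac{d}{dt} E_j$ and $\frac{d}{dt} F_j$, whose principal positive terms are $\bigl(\tfrac{1}{2}(u_{xx}+u^3)^2 + u_{xx}^2\bigr) |\Phi_j'|$ and a quadratic form in $u_{xxx}$ weighted by $|\Phi_j'|$, respectively. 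Terms involving $\Phi_j^{(k)}$ for $k \geq 2$ are absorbed using $|\Phi_j^{(k)}| \leq (\sqrt{\sigma}/2)^{k-1} |\Phi_j'|$ and taking $\sigma$ small, while the super-linear terms in $u$ (like $u^4$, $u^2 u_x^2$, $u^6$ and their variants) are dominated by the positive quadratic part after extracting a factor $\lVert u\rVert_{L^\infty(\mathrm{supp}\,|\Phi_j'|)}^2$, which is small for $t$ large. The outcome will be
\begin{align*}
\frac{d}{dt} M_j \geq -Ce^{-2\varpi t}, \qquad \frac{d}{dt} E_j \geq -Ce^{-2\varpi t}, \qquad \frac{d}{dt} F_j \geq -Ce^{-2\varpi t},
\end{align*}
for some $\varpi > 0$ depending on $\sigma, \beta, \tau_0$; adding $\omega_2, \omega_6 > 0$ (arbitrarily small) times $\frac{d}{dt} M_j$ preserves the form of these bounds.

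Integrating each inequality on $[t, T']$ and sending $T' \to +\infty$ gives $\lim_{T'\to+\infty} G(T') - G(t) \geq -Ce^{-2\varpi t}$ for each of the three functionals $G$. The limits are then identified through the decomposition $u = P + (u-P)$, the convergence $\lVert u-P\rVert_{H^2} \to 0$, and the geometric observation that on the essential support of each $P_i$ the weight $\Phi_j$ is close to $\mathbf{1}_{v_i < m_j}$: this reduces $\int P^2 \Phi_j$ to $2\sum_{i \leq j-1} M[P_i]$, with cross products $\int P_i P_k \Phi_j$ ($i \neq k$) controlled by Proposition \ref{1prop:cross-product}, and analogously for the $E$ and $F$ limits.

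The main technical obstacle I expect is the $F_j$ estimate: its derivative involves a large number of nonlinear terms coming from the $u^2 u_x^2$ and $u^6$ parts of $F$ (with structure up to $u^3 u_{xx} u_{xxx}$ or $u^5 u_{xxx}$ after integrations by parts), and showing that each is absorbed by the leading $u_{xxx}^2 |\Phi_j'|$ term requires a careful parallel to the classical KdV computation in \cite{key-2}, specifically adapted to the cubic mKdV nonlinearity, and relies on combining the pointwise exponential smallness of $P$ on $\mathrm{supp}\,|\Phi_j'|$ with the $o(1)$ Sobolev control on $\lVert v\rVert_{L^\infty}$.
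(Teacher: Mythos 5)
Your overall architecture is the same as the paper's: compute $\frac{d}{dt}M_j$, $\frac{d}{dt}E_j$, $\frac{d}{dt}F_j$ via the Kato-type identities, exploit $m_j\geq\sigma>0$ (this is where $v_1>0$ enters) so that the transport term has a favorable sign, split space into the region between the solitary waves (where $u=P+v$ is small in $L^\infty$ by Corollary \ref{1cor:unifdecay} and $\lVert v\rVert_{H^2}\to0$) and its complement (where $\lvert\Phi_{jx}\rvert\leq Ce^{-\sqrt{\sigma}\tau_0 t/2}$), integrate in time, and identify the limit using $\Phi_j\approx\mathbf{1}_{v_i<m_j}$ on the support of each $P_i$. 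One cosmetic point: $\mathrm{supp}\,\lvert\Phi_j'\rvert=\mathbb{R}$, so ``extracting $\lVert u\rVert_{L^\infty(\mathrm{supp}\,\lvert\Phi_j'\rvert)}^2$'' must be read through the two-region splitting you describe; that is a slip of notation, not a gap.

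There is, however, a genuine gap in the displayed intermediate claim
\begin{align*}
\frac{d}{dt}E_j\geq -Ce^{-2\varpi t},\qquad \frac{d}{dt}F_j\geq -Ce^{-2\varpi t}.
\end{align*}
Only the mass inequality holds in isolation, because $\frac{d}{dt}M_j$ contains the positive term $\frac{m_j}{2}\int u^2\lvert\Phi_{jx}\rvert$ which absorbs its own quartic remainder $-\frac{3}{4}\int u^4\lvert\Phi_{jx}\rvert$ after writing $u^4\leq\lVert u\rVert_{L^\infty(\text{good region})}^2u^2$. For the energy, the analogous remainder is $-\frac{m_j}{2}\int u^4\lvert\Phi_{jx}\rvert\geq-\omega_1\int u^2\lvert\Phi_{jx}\rvert$ with $\omega_1$ small, but $\frac{d}{dt}E_j$ contains \emph{no} positive $u^2\lvert\Phi_{jx}\rvert$ term to absorb it (its positive terms involve only $u_{xx}^2$, $u_x^2$ and $(u_{xx}+u^3)^2$, none of which controls $u^2$ or $u^4$ pointwise), and $\int u^2\lvert\Phi_{jx}\rvert=O(1)$ while $\lVert u-P\rVert_{H^2}\to0$ carries no rate at this stage of the induction; so the leftover is only $o(1)$, not $O(e^{-2\varpi t})$. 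The same issue occurs for $F_j$ with residual $-\omega_4u_x^2-\omega_5u^2$ terms. This is precisely why the lemma is stated for the \emph{corrected} functionals: one must differentiate $E_j+\omega_2M_j$ and $F_j+\omega_6M_j$, so that the positive $\omega_2\frac{\sigma}{4}u^2\lvert\Phi_{jx}\rvert$ and $\omega_6(\frac{3}{2}u_x^2+\frac{\sigma}{4}u^2)\lvert\Phi_{jx}\rvert$ contributions from the mass derivative absorb those residuals (choosing $\omega_1\ll\omega_2\sigma$, etc.). Your remark that adding $\omega_2,\omega_6$ times $\frac{d}{dt}M_j$ merely ``preserves the form of these bounds'' inverts the logic: the corrections are not harmless additions but the mechanism that makes the energy and $F$ monotonicity estimates close.
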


\begin{proof}
We will use the results of the computations made at the bottom of
 page 1115 and at the bottom of page 1116 of \cite{key-2},
as well as in Section \ref{1sec:55} (Appendix) to claim the three following facts:
\begin{align}
\frac{d}{dt}\frac{1}{2}\int u^{2}f & =\int\Big(-\frac{3}{2}u_{x}^{2}+\frac{3}{4}u^{4}\Big)f'+\frac{1}{2}\int u^{2}f''', \\
\frac{d}{dt}\int\Big[\frac{1}{2}u_{x}^{2}-\frac{1}{4}u^{4}\Big]f & =\int\Big[-\frac{1}{2}\big(u_{xx}+u^{3}\big)^{2}-u_{xx}^{2}+3u_{x}^{2}u^{2}\Big]f'+\frac{1}{2}\int u_{x}^{2}f''',\label{1eq:-124}
\end{align}
\begin{align}
\MoveEqLeft\frac{d}{dt}\int\Big(\frac{1}{2}u_{xx}^{2}-\frac{5}{2}u^{2}u_{x}^{2}+\frac{1}{4}u^{6}\Big)f \\
 & =\int\Big(-\frac{3}{2}u_{xxx}^{2}+9u_{xx}^{2}u^{2}+15u_{x}^{2}uu_{xx}+\frac{9}{16}u^{8}+\frac{1}{4}u_{x}^{4}+\frac{3}{2}u_{xx}u^{5}\\
 & \qquad-\frac{45}{4}u^{4}u_{x}^{2}\Big)f' +5\int u^{2}u_{x}u_{xx}f''+\frac{1}{2}\int u_{xx}^{2}f'''.\label{1eq:-125}
\end{align}
where $f$ is a $C^{3}$ function that does not depend on time.

\emph{For the mass:}

If $j\leq  J$,

\begin{align}
2\frac{d}{dt}M_{j}(t)=-\int\Big(3u_{x}^{2}+m_{j}u^{2}-\frac{3}{2}u^{4}\Big)\Phi_{jx}(t)+\int u^{2}\Phi_{jxxx}(t).\label{1eq:-126}
\end{align}
We recall that 
\begin{align}
\lvert\Phi_{jxx}\rvert\leq \frac{\sqrt{\sigma}}{2}\lvert\Phi_{jx}\rvert,\quad\lvert\Phi_{jxxx}\rvert\leq \frac{\sigma}{4}\lvert\Phi_{jx}\rvert,\quad\Phi_{jx}\leq 0,\label{1eq:-127}
\end{align}
where we can choose $\sigma$ as small as desired. For this proof,
we would like to ask for $\sigma$:
\begin{align}
0<\sigma\leq  m_{2}\leq  m_{j}.\label{1eq:-128}
\end{align}
Thus,
\begin{align}
2\frac{d}{dt}M_{j}(t)\geq \int\Big(3u_{x}^{2}+\frac{3\sigma}{4}u^{2}-\frac{3}{2}u^{4}\Big)\lvert\Phi_{jx}(t)\rvert .\label{1eq:-129}
\end{align}

By Corollary \ref{1cor:unifdecay}, for $r>0$, if $t,x$ satisfy $v_{j-1}t+r<x<v_{j}t-r$,
then
\begin{align}
\lvert u(t,x)\rvert & \leq \lvert P(t,x)\rvert+\lVert v(t)\rVert_{L^{\infty}} \\
 & \leq  Ce^{-\beta r}+C\lVert v(t)\rVert_{H^{2}},\label{1eq:-130}
\end{align}
the same could be said for $u_{x}$.

We can thus deduce that for $r$ large enough and for $T_{1}$
large enough, for $x\in(v_{j-1}t+r,v_{j}t-r)$, we can obtain that
$\lvert u\rvert$ is bounded by any fixed constant, that can be taken
as small as desired. Here, we will use the latter to bound $\frac{3}{2}u^{2}$
by $\frac{\sigma}{4}$.

For $t\geq  T_{1}$ and $x\leq  v_{j-1}t+r$ or $x\geq  v_{j}t-r$, we
have $\lvert x-m_{j}t\rvert\leq \tau_{0}t-r$, and therefore for such $t,x$:
\begin{align}
\lvert\Phi_{jx}(t,x)\rvert & \leq  C\exp\big(-\sqrt{\sigma}\lvert x-m_{j}t\rvert/2\big) \\
 & \leq  C\exp\big(-\sqrt{\sigma}\tau_{0}t/2\big)\exp\big(\sqrt{\sigma}r/2\big).\label{1eq:-131}
\end{align}

Because $\int u^{4}$ is bounded by a constant for any time and $\exp(\sqrt{\sigma}r/2)$
is a fixed constant ($r$ is already chosen), we have, for $t\geq  T_{1}$,
\begin{align}
\frac{d}{dt}M_{j}(t)\geq \int\Big(\frac{3}{2}u_{x}^{2}+\frac{\sigma}{4}u^{2}\Big)\lvert\Phi_{jx}(t)\rvert-Ce^{-2\varpi t}\geq -Ce^{-2\varpi t},\label{1eq:-132}
\end{align}
where $\varpi$ is chosen as a suitable function of $\sigma$ and
$\tau_{0}$.

By integration, we deduce that for any $t_{1}\geq  t$, with a constant
$C>0$ that does not depend on $t_{1}$, we have:
\begin{align}
M_{j}(t_{1})-M_{j}(t)\geq -Ce^{-2\varpi t}.\label{1eq:diff_finie}
\end{align}
We note that this conclusion is immediate when $j=J+1$, because we have
exactly the conserved quantity.

We have that
\begin{align}
\MoveEqLeft\bigg\lvert\sum_{i=1}^{j-1}M[P_{i}]-M_{j}(t_{1})\bigg\rvert\\
 & \leq \bigg\lvert\sum_{i=1}^{j-1}\frac{1}{2}\int P_{i}^{2}-\frac{1}{2}\int P^{2}\Phi_{j}(t_{1})\bigg\rvert +\frac{1}{2}\bigg\lvert\int P^{2}\Phi_{j}(t_{1})-\int u^{2}\Phi_{j}(t_{1})\bigg\rvert \\
 & \leq  Ce^{-\kappa(\beta,\sigma,\tau_{0})t_{1}}+\frac{1}{2}\int\lvert P^{2}-u^{2}\rvert\Phi_{j}(t_{1}) \\
 & \leq  Ce^{-\kappa(\beta,\sigma,\tau_{0})t_{1}}+C\int\lvert P^{2}-u^{2}\rvert\rightarrow_{t_{1}\rightarrow+\infty}0.\label{1eq:-133}
\end{align}

This means that when we take the limit of (\ref{1eq:diff_finie}) when
$t_{1}\rightarrow+\infty$, we obtain, for $t\geq  T_{1}$,
\begin{align}
\sum_{i=1}^{j-1}M[P_{i}]-M_{j}(t)\geq -Ce^{-2\varpi t},\label{1eq:-134}
\end{align}
which is exactly what we wished to prove.

\emph{For the energy:}

If $j\leq  J$, 
\begin{align}
2\frac{d}{dt}E_{j}(t) & =\int\Big[-\big(u_{xx}+u^{3}\big)^{2}-2u_{xx}^{2}+6u_{x}^{2}u^{2}\Big]\Phi_{jx}(t)\\
&\quad-m_{j}\int\Big(u_{x}^{2}-\frac{1}{2}u^{4}\Big)\Phi_{jx}(t)+\frac{1}{2}\int u_{x}^{2}\Phi_{jxxx}(t) \\
 & \geq \int\Big[\big(u_{xx}+u^{3}\big)^{2}+2u_{xx}^{2}-6u_{x}^{2}u^{2}+\frac{3\sigma}{4}u_{x}^{2}-\frac{m_{j}}{2}u^{4}\Big]\lvert\Phi_{jx}(t)\rvert.\label{1eq:-135}
\end{align}

We can do the same reasoning as for the mass to bound above $\frac{m_{j}}{2}u^{2}$
by $\omega_{1}$, a constant that we can choose as small as desired,
and to bound above $6u^{2}$ by $\frac{\sigma}{4}$. We obtain that
if $T_{1}$ is large enough (dependently on the chosen constant $\omega_{1}$),
\begin{align}
2\frac{d}{dt}E_{j}(t)\geq \int\Big[\big(u_{xx}+u^{3}\big)^{2}+2u_{xx}^{2}+\frac{\sigma}{2}u_{x}^{2}-\omega_{1}u^{2}\Big]\lvert\Phi_{jx}(t)\rvert-Ce^{-2\varpi t}.\label{1eq:-136}
\end{align}

By using what we have performed for the mass, we have that if we take
$\omega_{1}$ small enough with respect to $\frac{\omega_{2}\sigma}{2}$,
\begin{align}
\frac{d}{dt}\big(E_{j}+\omega_{2}M_{j}\big)(t)\geq -Ce^{-2\varpi t}.\label{1eq:-137}
\end{align}
Then, by integration and similarly as for the mass, we obtain
the desired conclusion that is true for any $j$.

\emph{For \mbox{$F$}:}

If $j\leq  J$,
\begin{align}
2\frac{d}{dt}F_{j}(t) & =\int\Big(-3u_{xxx}^{2}+18u_{xx}^{2}u^{2}+30u_{x}^{2}uu_{xx}+\frac{9}{8}u^{8}+\frac{1}{2}u_{x}^{4}+3u_{xx}u^{5}\\
&\qquad-\frac{45}{2}u^{4}u_{x}^{2}\Big)\Phi_{jx}(t) 
  -m_{j}\int\Big(u_{xx}^{2}-5u^{2}u_{x}^{2}+\frac{1}{2}u^{6}\Big)\Phi_{jx}(t)\\
  &\quad+10\int u^{2}u_{xx}u_{x}\Phi_{jxx}(t)+\int u_{xx}^{2}\Phi_{jxxx}(t) \\
 & \geq \int\Big(3u_{xxx}^{2}+\frac{45}{2}u^{4}u_{x}^{2}-18u_{xx}^{2}u^{2}-15u_{x}^{2}u^{2}-15u_{x}^{2}u_{xx}^{2}-\frac{9}{8}u^{8}\\
 &\qquad-\frac{1}{2}u_{x}^{4}-\frac{3}{2}u_{xx}^{2}u^{4}-\frac{3}{2}u^{6}\Big)\lvert\Phi_{jx}(t)\rvert \\
 & +\int\Big(\sigma u_{xx}^{2}+\frac{\sigma}{2}u^{6}-5m_{j}u^{2}u_{x}^{2}\Big)\lvert\Phi_{jx}(t)\rvert-5\int u^{2}u_{x}^{2}\lvert\Phi_{jxx}(t)\rvert \\ & -5\int u^{2}u_{xx}^{2}\lvert\Phi_{jxx}(t)\rvert-\int u_{xx}^{2}\lvert\Phi_{jxxx}(t)\rvert.\label{1eq:-138}
\end{align}

By the same reasoning as for the energy and the mass,
if we set $\omega_{3},\omega_{4},\omega_{5}>0$ constants that we
can take as small as desired, and if $T_{1}$ is large enough dependently
on these constants, for $t\geq  T_{1}$, we have that
\begin{align}
2\frac{d}{dt}F_{j}(t) &\geq \int\Big(3u_{xxx}^{2}+\frac{45}{2}u^{4}u_{x}^{2}+\frac{3\sigma}{4}u_{xx}^{2}+\frac{\sigma}{2}u^{6}-\omega_{3}u_{xx}^{2}-\omega_{4}u_{x}^{2} \\
&\qquad-\omega_{5}u^{2}\Big)\lvert\Phi_{jx}(t)\rvert-Ce^{-2\varpi t}.\label{1eq:-139}
\end{align}

By using what we have carried out for the mass, we have that if we
take $\omega_{3},\omega_{4},\omega_{5}$ small enough (with respect to $\omega_6$),
\begin{align}
\frac{d}{dt}\big(F_{j}+\omega_{6}M_{j}\big)(t)\geq -Ce^{-2\varpi t}.\label{1eq:-140}
\end{align}
Then, by integration and similarly as before, we obtain that the desired
conclusion true for any $j$.
\end{proof}
\begin{rem}
\label{1rem:j=00003DJ+1}If $j=J+1$, we have that
\begin{align}
\sum_{i=1}^{J}M[P_{i}]-M_{J+1}(t)=0,\label{1eq:-141}
\end{align}
\begin{align}
\sum_{i=1}^{J}E[P_{i}]-E_{J+1}(t)=0,\label{1eq:-142}
\end{align}
\begin{align}
\sum_{i=1}^{J}F[P_{i}]-F_{J+1}(t)=0.\label{1eq:-143}
\end{align}
\end{rem}

\emph{Step 2.} Modulation. 

Notations that were defined in Section
\ref{1sec:2.3} should not be taken into consideration in the following proof and should be replaced by
notations we define here.
\begin{lem}
\label{2lem:mod_uniq}
There exists $C>0$, $T_{2}\geq  T$, such that there exist unique
$C^{1}$ functions $y_{1},y_{2}:[T_{2},+\infty)\rightarrow\mathbb{R}$
such that if we set:
\begin{align}
w(t,x):=u-\widetilde{P},\label{1eq:-144}
\end{align}
where
\begin{align}
\widetilde{P}(t,x):=\sum_{i=1}^{J}\widetilde{P_{i}}(t,x),\label{1eq:-145}
\end{align}
for $i\neq j-1$,
\begin{align}
\widetilde{P_{i}}(t,x):=P_{i}(t,x),\label{1eq:-146}
\end{align}
and either,
\begin{align}
\widetilde{P_{j-1}}(t,x):=\kappa_lQ_{c_l+y_1(t)}(x-x_{0,l}^0+y_2(t)-c_lt),\quad\text{ if $P_{j-1}=R_l$ is a soliton},\label{1eq:-147}
\end{align}
or,
\begin{align}
\widetilde{P_{j-1}}(t,x):=B_{\alpha_k,\beta_k}(t,x;x_{1,k}+y_{1}(t),x_{2,k}+y_{2}(t)),\quad\text{ if $P_{j-1}=B_k$ is a breather},\label{1eq:-148}
\end{align}

then, $w(t)$ satisfies, for any $t\in[T_{2},+\infty)$, either,
\begin{align}
\int\widetilde{P_{j-1}}_{1}(t)w(t)=\int\widetilde{P_{j-1}}_{2}(t)w(t)=0,\quad\text{if $P_{j-1}$ is a breather},\label{1eq:-150}
\end{align}
or,
\begin{align}
\int\widetilde{P_{j-1}}(t)w(t)=\int\widetilde{P_{j-1}}_{x}(t)w(t)=0,\quad\text{if $P_{j-1}$ is a soliton},\label{1eq:-151}
\end{align}
where in the case when $P_{j-1}$ is a breather we denote:
\begin{align}
\widetilde{P_{j-1}}_{1}(t,x):=\partial_{x_{1}}\widetilde{P_{j-1}},\quad\widetilde{P_{j-1}}_{2}(t,x):=\partial_{x_{2}}\widetilde{P_{j-1}}.\label{1eq:-149}
\end{align}

Moreover, for any $t\in[T_{2},+\infty)$,
\begin{align}
\lVert w(t)\rVert_{H^{2}}+\lvert y_{1}(t)\rvert+\lvert y_{2}(t)\rvert\leq  C\lVert v(t)\rVert_{H^{2}},\label{1eq:-152}
\end{align}
and, if $\varpi$ is small enough, 
\begin{align}
\lvert y_{1}'(t)\rvert +\lvert y_{2}'(t)\rvert \leq  C\bigg(\int w(t)^{2}\Phi_{j}\bigg)^{1/2}+Ce^{-\varpi t}.\label{1eq:last}
\end{align}
\end{lem}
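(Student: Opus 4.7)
The strategy is a quantitative implicit function theorem argument, in the spirit of Lemma \ref{1lem:mod}, but with two key changes: only the $(j-1)$-th profile is modulated, and no $\sqrt{\varphi}$ weight appears in the orthogonality conditions. Define a $C^1$ map
\begin{equation*}
F_t \colon L^2(\mathbb{R}) \times \mathbb{R}^2 \longrightarrow \mathbb{R}^2
\end{equation*}
whose components are the left-hand sides of \eqref{1eq:-150} or \eqref{1eq:-151}, with the parameters of $\widetilde{P_{j-1}}$ perturbed by $(y_1, y_2)$ and $w = u - \widetilde{P}$. At the base point $(P(t), 0, 0)$ one has $F_t = 0$, and the partial differential in $(y_1, y_2)$ is, up to exponentially small contributions from the other $\widetilde{P_i}$, the $(j-1)$-th diagonal block of the matrix \eqref{1matr}, which has a strictly positive, time-independent determinant by the computations already carried out in the proof of Lemma \ref{1lem:mod}. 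Hence it is uniformly invertible for $T_2$ large enough; since $\lVert v(t)\rVert_{H^2} \to 0$, the uniform implicit function theorem produces unique $C^1$ functions $y_1(t), y_2(t)$, and the mean value theorem applied to the explicit parameter dependence of $\widetilde{P_{j-1}}$ yields \eqref{1eq:-152}.

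For the derivative bound \eqref{1eq:last}, I would write the PDE satisfied by $w$, mirroring \eqref{1eq:-22}. Since $u$ and every $\widetilde{P_i}$ with $i \ne j-1$ solve \eqref{1mKdV}, only the modulation of $\widetilde{P_{j-1}}$ contributes a forcing term, giving schematically
\begin{equation*}
w_t + \bigl(w_{xx} + 3 w \widetilde{P}^2\bigr)_x = - y_1'(t)\, \widetilde{P_{j-1}}_1 - y_2'(t)\, \widetilde{P_{j-1}}_2 - \bigl(3 w^2 \widetilde{P} + w^3\bigr)_x - \mathcal{N},
\end{equation*}
where $\mathcal{N}$ collects the cross-interaction terms coming from expanding $\widetilde{P}^3 - \sum_i \widetilde{P_i}^3$ (plus, when $P_{j-1}$ is a soliton, an additional scaling contribution proportional to $y_1'$, as in the $c_{0,l}'$ term of \eqref{1eq:-22}, which can be absorbed into the left-hand side). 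Differentiating each orthogonality condition in time and substituting this equation for $w_t$ produces, after integration by parts, a $2 \times 2$ linear system for $(y_1', y_2')$ whose coefficient matrix is the same uniformly invertible Gram-type matrix, and whose right-hand side is a sum of integrals containing either $w$ (linearly), $w^2$, or exponentially small cross products.

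The only delicate step is localizing the right-hand side so that the full $\lVert w \rVert_{L^2}$ is replaced by $(\int w^2 \Phi_j)^{1/2}$, which is the main obstacle. Every surviving linear-in-$w$ term is of the form $\int w\,\Theta$, where $\Theta$ is smooth and localized near the trajectory $x = v_{j-1} t$, being built from $\widetilde{P_{j-1}}$ and its derivatives. On $\{x \le m_j t\}$ one has $\Phi_j(t,x) \ge \Psi(0) = 1/2$; on $\{x \ge m_j t\}$, the exponential decay of $\widetilde{P_{j-1}}$ over a gap of size at least $\tau_0 t$ produces a factor of order $e^{-\beta \tau_0 t/2}$. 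Therefore $\Theta^2 / \Phi_j$ is bounded by a fixed integrable exponential plus an $e^{-\varpi t}$ remainder, and Cauchy--Schwarz gives
\begin{equation*}
\bigg\lvert\int w\, \Theta \bigg\rvert \leq C \bigg(\int w^2 \Phi_j\bigg)^{1/2} + C e^{-\varpi t}.
\end{equation*}
Genuinely cross terms, in which $\widetilde{P_{j-1}}$ is paired against another $\widetilde{P_i}$, are bounded by $C e^{-\varpi t}$ via Proposition \ref{1prop:cross-product} combined with the uniform $H^2$ bound on $w$; the quadratic-in-$w$ terms are absorbed using \eqref{1eq:-152} and the smallness of $\lVert v(t) \rVert_{H^2}$. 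Inverting the Gram matrix and picking $\varpi$ smaller than $\min(\beta \tau_0, \sqrt{\sigma} \tau_0)/4$ then yields \eqref{1eq:last}.
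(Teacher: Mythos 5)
Your proposal is correct and follows essentially the same route as the paper: a quantitative (uniform) implicit function theorem applied to the map whose components are the two unweighted orthogonality conditions, with the invertibility of the $2\times 2$ Jacobian block established exactly as in Lemma \ref{1lem:mod}, followed by differentiating the orthogonality relations in time, substituting the modulation equation for $w_t$, and inverting the resulting Gram-type system. Your treatment of the localization step — splitting at $x=m_jt$, using $\Phi_j\geq 1/2$ on the left and the $e^{-\beta\tau_0 t/2}$ gap decay of $\widetilde{P_{j-1}}$ on the right (with $\sqrt{\sigma}$ small relative to $\beta$) to replace $\lVert w\rVert_{L^2}$ by $(\int w^2\Phi_j)^{1/2}+Ce^{-\varpi t}$ — is in fact spelled out in more detail than in the paper, which only asserts this bound.
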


\begin{proof}
The proof that has to be performed is similar to the proof of Lemma
\ref{1lem:mod}, which is a consequence of a quantitative version of the implicit function theorem. See \cite[Section 2.2]{CH} for a precise statement.
The proof of (\ref{1eq:last}) is also similar: as in the proof of
Lemma \ref{1lem:mod}, we take the time derivative of $\int\widetilde{P_{j-1}}_{1}(t)w(t)=\int\widetilde{P_{j-1}}_{2}(t)w(t)=0$. To be complete, let us perform this proof.

For $t\in [T_2,+\infty)$, let
\begin{align}
F_t:L^2(\mathbb{R})\times\mathbb{R}^2\to\mathbb{R}^2
\end{align}
be such that if $P_{j-1}=B_k$ is a breather,
\begin{align}
(U,y_1,y_2)  \longmapsto & \bigg(\int\partial_{x_1}B_{\alpha_k,\beta_k}(t,x;x_{1,k}^0+y_1,x_{2,k}^0+y_2)\epsilon\,dx,   \\
&  \int\partial_{x_2}B_{\alpha_k,\beta_k}(t,x;x_{1,k}^0+y_1,x_{2,k}^0+y_2)\epsilon\,dx \bigg),
\end{align}
where
\begin{align}
\epsilon:=U-P+P_{j-1}-B_{\alpha_k,\beta_k}(t,x;x_{1,k}^0+y_1,x_{2,k}^0+y_2),
\end{align}
and if $P_{j-1}=R_l$ is a soliton,
\begin{align}
(U,y_1,y_2)\longmapsto & \bigg(\int\kappa_lQ_{c_l+y_1}(x-x_{0,l}^0+y_2-c_lt)\epsilon\,dx,\\
&\int\partial_x\kappa_lQ_{c_l+y_1}(x-x_{0,l}^0+y_2-c_lt)\epsilon\,dx\bigg),
\end{align}
where
\begin{align}
\epsilon:=U-P+P_{j-1}-\kappa_lQ_{c_l+y_1}(x-x_{0,l}^0+y_2-c_lt).
\end{align}

We observe that $F_t$ is a $C^1$ function and that $F_t(P(t),0,0)=0$. Now, let us consider the matrix which gives the differential of $F_t$ (with respect to $y_1,y_2$) in $(P(t),0,0)$.

In the case when $P_{j-1}=B_k$ is a breather, this matrix is:
\begin{align}
DF_t=\begin{pmatrix}
-\int (\partial_{x_1}B_k)^2\,dx & -\int\partial_{x_1}B_k\partial_{x_2}B_k\,dx \\
-\int\partial_{x_1}B_k\partial_{x_2}B_k\,dx & -\int (\partial_{x_2}B_k)^2\,dx
\end{pmatrix}
,
\end{align}
whose determinant is:
\begin{align}
\det(DF_t)=\int (\partial_{x_1}B_k)^2\,dx\int (\partial_{x_2}B_k)^2\,dx-\bigg(\int\partial_{x_1}B_k\partial_{x_2}B_k\,dx\bigg)^2.
\end{align}
By Cauchy-Schwarz inequality and the fact that $\partial_{x_1}B_k$ and $\partial_{x_2}B_k$ are linearly independent as functions of the $x$ variable, for any time $t$ fixed, we see that $\det(DF_t)$ is positive. Since each member of its expression is periodic in time, then $\det(DF_t)$ is bounded below by a positive constant independent on time and translation parameters of $B_k$.

In the case when $P_{j-1}=R_l$ is a soliton, let us recall, denoting $y_{0,l}:=x-x_{0,l}^0+y_2-c_lt$, that
\begin{align}
\partial_{y_1}Q_{c_l+y_1}(y_{0,l})=\frac{1}{2c_l}\big(Q_{c_l+y_1}(y_{0,l})+y_{0,l}\partial_xQ_{c_l+y_1}(y_{0,l})\big).
\end{align}
Thus, denoting $Q_{c_l}(x-x_{0,l}^0-c_lt)$ by $Q_{c_l}$ and $x-x_{0,l}^0-c_lt$ by $y_{0,l}^0$,
\begin{align}
DF_t=\begin{pmatrix}
-\frac{1}{2c_l}\int Q_{c_l}\big(Q_{c_l}+y_{0,l}^0\partial_xQ_{c_l}\big)\,dx & -\int Q_{c_l}\partial_xQ_{c_l}\,dx \\
-\frac{1}{2c_l}\int \partial_xQ_{c_l}\big(Q_{c_l}+y_{0,l}^0\partial_xQ_{c_l}\big)\,dx & -\int \big(\partial_xQ_{c_l}\big)^2\,dx
\end{pmatrix},
\end{align}
whose determinant is:
\begin{align}
\det(DF_t) & =\frac{1}{2c_l}\int Q_{c_l}\big(Q_{c_l}+y_{0,l}^0\partial_xQ_{c_l}\big)\,dx\int \big(\partial_xQ_{c_l}\big)^2\,dx,
\end{align}
because $\int Q_{c_l}\partial_xQ_{c_l}\,dx=0$. And, from the computations made to obtain \eqref{1prod}, we have that
\begin{align}
\det(DF_t)=\frac{1}{4}c_l\int q^2\int q_x^2,
\end{align}
where $q$ denotes the soliton with $c=1$, i.e. $q=Q_1$.

This means that $\det(DF_t)$ is bounded below by a positive constant independent on time and translation parameters of $R_l$.
Thus, in any case, $DF_t$ is invertible.

Now, we may use the implicit function theorem. If $U$ is close enough to $P(t)$, then there exists $(y_1,y_2)$ such that $F_t(U,y_1,y_2)=0$, where $(y_1,y_2)$ depends in a regular $C^1$ way on $U$. It is possible to show that the ``close enough'' in the previous sentence does not depend on $t$; for this, it is required to use a uniform implicit function theorem. This means that for $T_2$ large enough, $\lVert v(t)\rVert_{H^2}$ is small enough for $t\in [T_2,+\infty)$, thus for $t\geq T_2$, $u(t)$ is close enough to $P(t)$ in order to apply the implicit function theorem. Therefore, we have for $t\in[T_2,+\infty)$, the existence of $y_1(t)$ and $y_2(t)$. It is possible to show that these functions are $C^{1}$ in time.
Basically, this comes from the fact that they are $C^{1}$ in $u(t)$
and that $u(t)$ has a similar regularity in time (see \cite{key-7}
for more details).

Now, we prove the inequalities \eqref{1eq:-152} and \eqref{1eq:last}. We can take the differential of the implicit functions with respect to $u(t)$ for $t\in[T_2,+\infty)$. For this, we differentiate the following equation with respect to $u(t)$:
\begin{align}
F_t(u(t),y_1(u(t)),y_2(u(t)))=0.
\end{align}

We know that the matrix that gives the differential of $F_t$ (with respect to $y_1,y_2$) in 
\begin{equation*}
(u(t),y_1(u(t)),y_2(u(t)))
\end{equation*}
is invertible and that its inverse is bounded in time. The differential of $F_t$ with respect to the first variable is also bounded (from its expression, $F_t$ being linear in $U$). Thus, by the mean-value theorem (given $(y_1,y_2)(P(t))=(0,0)$):
\begin{align}
\lvert y_1(u(t)) \rvert +\lvert y_2(u(t))\rvert \leq C\lVert u(t)-P(t) \rVert \leq C\lVert v(t)\rVert_{H^2}.
\end{align}

%Now, let us prove \eqref{1eq:-153} in the case when $P_{j-1}=R_l$ is a soliton. For this, let us recall that, using the notation given by \eqref{1eq:gr}:
%\begin{align}
%R_l(t,x)=\kappa_lQ_{c_l}(x-x_{0,k}^0-c_lt),
%\end{align}
%and
%\begin{align}
%R_{c_l+y_1,\kappa_l}(t,x;x_{0,l}^0+y_2)=\kappa_lQ_{c_l+y_1}(x-x_{0,l}^0-y_2-y_1t-c_lt).
%\end{align}

%Seeing this, we define a function $G_t$ that corresponds to $F_t$ after a change of variables ($y_3$ corresponds to $y_1$, but $y_0$ corresponds to the actual gap in the position of the soliton, namely $y_2+y_1t$). For $t\in[T_2,+\infty)$, let
%\begin{align}
%G_t:L^2(\mathbb{R})\times\mathbb{R}^2\to\mathbb{R}^2
%\end{align}
%such that
%\begin{align}
%(U,y_3,y_0)\longmapsto & \left(\int\kappa_lQ_{c_l+y_3}(x-x_{0,k}^0-y_0-c_lt)\left(U-P+P_{j-1}-\kappa_lQ_{c_l+y_3}(x-x_{0,k}^0-y_0-c_lt)\right)\,dx,\right.\\
%& \left.\int\partial_x\kappa_lQ_{c_l+y_3}(x-x_{0,k}^0-y_0-c_lt)\left(U-P+P_{j-1}-\kappa_lQ_{c_l+y_3}(x-x_{0,k}^0-y_0-c_lt)\right)\,dx\right).
%\end{align}
By applying the mean-value theorem (inequality) for $Q_{c_l}$ or $B_{\alpha_k,\beta_k}$ with respect to $y_1$ and $y_2$, we deduce that
\begin{align}
\lVert P_{j-1}(t)-\widetilde{P_{j-1}}(t)\rVert_{H^2}\leq C(\lvert y_1(t)\rvert +\lvert y_2(t)\rvert).
\end{align}

Finally, by triangular inequality,
\begin{align}
\lVert w(t)\rVert_{H^2} & \leq \lVert u(t)-P(t)\rVert_{H^2}+\lVert P(t)-\widetilde{P}(t)\rVert_{H^2}\\
& \leq \lVert u(t)-P(t)\rVert_{H^2}+C\big(\lvert y_1(t)\rvert +\lvert y_2(t)\rvert\big)\\
& \leq C\lVert v(t)\rVert_{H^2}.
\end{align}
This completes the proof of \eqref{1eq:-152}.

For \eqref{1eq:last}, we will take time derivatives of the equations \eqref{1eq:-150} and \eqref{1eq:-151}. Firstly, we may write the PDE verified by $w$:
\begin{align}
\partial_tw &=-w_{xxx}-\Bigg[w\bigg(w^2+3w\sum_{i=1}^J\widetilde{P_i}+3\sum_{i,m=1}^J\widetilde{P_i}\widetilde{P_m}\bigg)\Bigg]_x\\
& \quad-\sum_{h\neq i\ \text{or}\ i\neq m}\big(\widetilde{P_h}\widetilde{P_i}\widetilde{P_m}\big)_x-E,
\end{align}
where, if $P_{j-1}=B_k$ is a breather,
\begin{align}
E:=y_1'(t)\widetilde{B_k}_1+y_2'(t)\widetilde{B_k}_2,
\end{align}
and if $P_{j-1}=R_l$ is a soliton, denoting $y_{0,l}(t):=x-x_{0,l}^0+y_2(t)-c_lt$,
\begin{align}
E:=\frac{y_1'(t)}{2\big(c_l+y_1(t)\big)}\big(\widetilde{R_l}+y_{0,l}(t)\widetilde{R_l}_x\big)+y_2'(t)\widetilde{R_l}_x.
\end{align}

If $P_{j-1}=B_k$, we start by taking the time derivative of $\int\widetilde{B_k}_1w=0$ and perform some integrations by parts to obtain:
\begin{align}
& -\int \big(\widetilde{B_k}^3\big)_{1x}w+y_1'(t)\int \widetilde{B_k}_{11}w+y_2'(t)\int\widetilde{B_k}_{12}w\\
& +\int\widetilde{B_k}_{1x}w\bigg(w^2+3w\sum_{i=1}^J\widetilde{P_i}+3\sum_{h,i=1}^J\widetilde{P_h}\widetilde{P_i}\bigg)-\int\widetilde{B_k}_1\sum_{h\neq i\ \text{or}\ g\neq h}\big(\widetilde{P_h}\widetilde{P_i}\widetilde{P_g}\big)_x\\
& =y_1'(t)\int \widetilde{B_k}_1^2+y_2'(t)\int\widetilde{B_k}_1\widetilde{B_k}_2,
\end{align}
then, we take the time derivative of $\int\widetilde{B_k}_2w=0$:
\begin{align}
& -\int \big(\widetilde{B_k}^3\big)_{2x}w+y_1'(t)\int \widetilde{B_k}_{12}w+y_2'(t)\int\widetilde{B_k}_{22}w\\
& +\int\widetilde{B_k}_{2x}w\bigg(w^2+3w\sum_{i=1}^J\widetilde{P_i}+3\sum_{h,i=1}^J\widetilde{P_h}\widetilde{P_i}\bigg)-\int\widetilde{B_k}_2\sum_{h\neq i\ \text{or}\ g\neq h}\big(\widetilde{P_h}\widetilde{P_i}\widetilde{P_g}\big)_x\\
& =y_1'(t)\int \widetilde{B_k}_1\widetilde{B_k}_2+y_2'(t)\int \widetilde{B_k}_2^2.
\end{align}

If $P_{j-1}=R_l$, we start by taking the time derivative of $\int\widetilde{R_l}w=0$ and perform some integrations by parts to obtain:
\begin{align}
& -\int \big(\widetilde{R_l}^3\big)_xw+\frac{y_1'(t)}{2c_l}\int\big(\widetilde{R_l}+y_{0,l}(t)\widetilde{R_l}_x\big)w+y_2'(t)\int\widetilde{R_l}_xw\\
& +\int\widetilde{R_l}_xw\bigg(w^2+3w\sum_{i=1}^J\widetilde{P_i}+3\sum_{h,i=1}^J\widetilde{P_h}\widetilde{P_i}\bigg)-\int\widetilde{R_l}\sum_{h\neq i\ \text{or}\ g\neq h}\big(\widetilde{P_h}\widetilde{P_i}\widetilde{P_g}\big)_x\\
& =\frac{y_1'(t)}{2\big(c_l+y_1(t)\big)}\int\widetilde{R_l}\big(\widetilde{R_l}+y_{0,l}(t)\widetilde{R_l}_x\big)+y_2'(t)\int\widetilde{R_l}\widetilde{R_l}_x,
\end{align}
then, we take the time derivative of $\int\widetilde{R_l}_xw=0$:
\begin{align}
& -\int \big(\widetilde{R_l}^3\big)_{xx}w+\frac{y_1'(t)}{2c_l}\int\big(\widetilde{R_l}_x+y_{0,l}(t)\widetilde{R_l}_{xx}\big)w+y_2'(t)\int\widetilde{R_l}_{xx}w\\
& +\int\widetilde{R_l}_{xx}w\bigg(w^2+3w\sum_{i=1}^J\widetilde{P_i}+3\sum_{h,i=1}^J\widetilde{P_h}\widetilde{P_i}\bigg)-\int\widetilde{R_l}_x\sum_{h\neq i\ \text{or}\ g\neq h}\big(\widetilde{P_h}\widetilde{P_i}\widetilde{P_g}\big)_x\\
& =\frac{y_1'(t)}{2\big(c_l+y_1(t)\big)}\int\widetilde{R_l}_x\big(\widetilde{R_l}+y_{0,l}(t)\widetilde{R_l}_x\big)+y_2'(t)\int \big(\widetilde{R_l}_x\big)^2.
\end{align}

As a consequence of \eqref{1eq:-152}, we see that $\lvert y_1(t)\rvert+\lvert y_2(t)\rvert$ tends to $0$ when $t\to +\infty$. This is why, we may use Proposition \ref{1prop:decay-mod} and Corollary \ref{1cor:decay-mod} here, if $T_2$ is large enough. So, several terms of the four equalities above are obviously bounded by $( w(t)^2\Phi_j)^{1/2}$ or $e^{-\varpi t}$ for $\varpi>0$, a constant chosen small enough. Using these bounds, and after several linear combinations, we obtain \eqref{1eq:last}.
\end{proof}
\emph{Step 3.} Quadratic approximations of localized conservation
laws.
\begin{lem}
\label{1lem:quad}Let $\omega>0$ as small as we want. There exists
$C>0,T_{3}\geq  T$ such that the following holds for $t\geq  T_{3}$:
\begin{align}
\bigg\lvert M_{j}(t)-\sum_{i=1}^{j-1}M\big[\widetilde{P_{i}}\big]-\sum_{i=1}^{j-1}\int\widetilde{P_{i}}w-\frac{1}{2}\int w^{2}\Phi_{j}\bigg\rvert\leq  Ce^{-2\varpi t},\label{1eq:-155}
\end{align}
\begin{align}
\MoveEqLeft\bigg\lvert E_{j}(t)-\sum_{i=1}^{j-1}E\big[\widetilde{P_{i}}\big]-\sum_{i=1}^{j-1}\int\Big[\widetilde{P_{i}}_{x}w_{x}-\widetilde{P_{i}}^{3}w\Big]\\
& -\int\Big[\frac{1}{2}w_{x}^{2}-\frac{3}{2}\widetilde{P}^{2}w^{2}\Big]\Phi_{j}\bigg\rvert \leq  Ce^{-2\varpi t}+\omega\int w^{2}\Phi_{j},\label{1eq:-156}
\end{align}
\begin{align}
\MoveEqLeft\bigg\lvert F_{j}(t)-\sum_{i=1}^{j-1}F\big[\widetilde{P_{i}}\big]-\sum_{i=1}^{j-1}\int\Big[\widetilde{P_{i}}_{xx}w_{xx}-5\widetilde{P_{i}}\widetilde{P_{i}}_{x}^{2}w-5\widetilde{P_{i}}^{2}\widetilde{P_{i}}_{x}w_{x}+\frac{3}{2}\widetilde{P_{i}}^{5}w\Big] \\
& \quad-\int\Big[\frac{1}{2}w_{xx}^{2}
-\frac{5}{2}w^{2}\widetilde{P}_{x}^{2}-10\widetilde{P}w\widetilde{P}_{x}w_{x}-\frac{5}{2}\widetilde{P}^{2}w_{x}^{2}+\frac{15}{4}\widetilde{P}^{4}w^{2}\Big]\Phi_{j}(t)\bigg\rvert\\
& \leq  Ce^{-2\varpi t}+\omega\int\big(w^{2}+w_{x}^{2}\big)\Phi_{j}.\label{1eq:-157}
\end{align}
\end{lem}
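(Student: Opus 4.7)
The plan is to substitute $u=\widetilde{P}+w$ into each of $M_j$, $E_j$, $F_j$ and to organize the resulting polynomial expansion by degree in $w$: the constant-in-$w$ part should recover $\sum_{i=1}^{j-1}$ of the corresponding conserved quantity of $\widetilde{P_i}$ up to $O(e^{-2\varpi t})$, the linear-in-$w$ part should recover the displayed linear functional in $w$ (also up to $O(e^{-2\varpi t})$), the quadratic-in-$w$ part will be kept exactly as stated with the $\Phi_j$ factor in place, and all cubic and higher-order terms in $w$ will be absorbed either into the exponential remainder or into the $\omega$-term by Sobolev embedding. The constant $\varpi$ is chosen at the end to be smaller than half of every exponential rate that will appear (in particular smaller than $\beta\tau/16$ and $\sqrt{\sigma}\tau_0/4$).

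For the constant-in-$w$ part, I would expand each monomial $\widetilde{P}^n=(\sum_i \widetilde{P_i})^n$ (and analogously for mixed expressions like $\widetilde{P}^2\widetilde{P}_x^2$ appearing in $F_j$) into its diagonal piece $\sum_i\widetilde{P_i}^n$ plus off-diagonal cross terms; the latter are $O(e^{-\beta\tau t/8})$ by Corollary~\ref{1cor:decay-mod}. To handle each surviving diagonal integral $\int \widetilde{P_i}^n\Phi_j$, I would split the line into $\{|x-v_it|\leq \lambda t\}$ and its complement, with $\lambda<m_j-v_i$: on the complement $\widetilde{P_i}$ is pointwise $O(e^{-\beta\lambda t/2})$, while on the complementary piece of the dichotomy either $1-\Phi_j$ (if $i\leq j-1$) or $\Phi_j$ (if $i\geq j$) is pointwise $O(e^{-\sqrt{\sigma}(m_j-v_i-\lambda)t/2})$. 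This gives $\int\widetilde{P_i}^n\Phi_j=\int\widetilde{P_i}^n+O(e^{-ct})$ for $i\leq j-1$ and $\int\widetilde{P_i}^n\Phi_j=O(e^{-ct})$ for $i\geq j$, as required.

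The linear-in-$w$ part is handled by the same mechanism: expand $\widetilde{P}^m w$, $\widetilde{P}^m\widetilde{P}_x w_x$, $\widetilde{P}_{xx}w_{xx}$ etc.\ into diagonals plus cross terms; the cross terms, as well as the $(1-\Phi_j)$- and $\Phi_j$-"wrong-side" pieces, are paired with $w$ and its derivatives via Cauchy--Schwarz, producing $\lVert w\rVert_{H^2}$ times an exponentially small factor. Since $\lVert w\rVert_{H^2}\leq C\lVert v\rVert_{H^2}$ is a priori bounded by Lemma~\ref{2lem:mod_uniq}, this is $O(e^{-2\varpi t})$. Removing the $\Phi_j$ factor from the surviving $i\leq j-1$ diagonal pieces uses the same dichotomy and reproduces exactly the $\sum_{i=1}^{j-1}$ linear expressions in the statement. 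The quadratic-in-$w$ part falls out of the expansion with its $\Phi_j$ factor kept, matching the stated formulas verbatim; any attempt to remove $\Phi_j$ at the quadratic level would cost a term bounded only by $\lVert w\rVert_{H^2}^2$, which is not controllable at this stage.

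Cubic and higher-order contributions in $w$ (which arise only for $E_j$ and $F_j$) are absorbed by Sobolev: typical estimates are $\bigl|\int\widetilde{P}w^3\Phi_j\bigr|\leq C\lVert w\rVert_{L^\infty}\int w^2\Phi_j$, $\bigl|\int w^4\Phi_j\bigr|\leq \lVert w\rVert_{L^\infty}^2\int w^2\Phi_j$, $\bigl|\int \widetilde{P}ww_x^2\Phi_j\bigr|\leq C\lVert w\rVert_{L^\infty}\int w_x^2\Phi_j$, and similar for $\int w^2 w_x^2\Phi_j$ and the higher terms coming from $u^6$. Because $\lVert w(t)\rVert_{L^\infty}\leq C\lVert v(t)\rVert_{H^2}\to 0$, for $T_3$ large enough all these prefactors are $\leq\omega$, so the cubic/quartic terms are absorbed into $\omega\int(w^2+w_x^2)\Phi_j$. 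The main obstacle is purely bookkeeping: the $F_j$ expansion involves up to sextic terms in $u$ with up to two derivatives, so one must carefully enumerate every monomial in the expanded $(\widetilde{P}+w)^a\bigl((\widetilde{P}+w)_x\bigr)^b$ and verify that it falls into exactly one of the three categories (already-counted diagonal of lower degree, cross-product/localization remainder absorbed into $e^{-2\varpi t}$, or $w$-cubic remainder absorbed by Sobolev into $\omega\int(w^2+w_x^2)\Phi_j$).
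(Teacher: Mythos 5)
Your proposal is correct and follows essentially the same route as the paper: expand $u=\widetilde{P}+w$ in each localized functional, identify the constant, linear and quadratic parts in $w$, reduce the constant and linear parts to their diagonal $i\leq j-1$ pieces via the cross-product and localization estimates (Corollary \ref{1cor:decay-mod} and the decay of $1-\Phi_j$ resp.\ $\Phi_j$ on the relevant side of $m_jt$), and absorb the cubic and higher terms into $\omega\int(w^{2}+w_{x}^{2})\Phi_{j}$ using $\lVert w(t)\rVert_{H^{2}}\to 0$. The paper's own proof is just a terse version of this same computation, so no further comparison is needed.
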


\begin{proof}
\emph{For the mass:}

We compute:
\begin{align}
M_{j}(t) & =\frac{1}{2}\int\big(\widetilde{P}+w\big)^{2}\Phi_{j} \\
 & =\frac{1}{2}\int\widetilde{P}^{2}\Phi_{j}+\int\widetilde{P}w\Phi_{j}+\frac{1}{2}\int w^{2}\Phi_{j}.\label{1eq:-158}
\end{align}
As in Step 1, we can show that $\frac{1}{2}\int\widetilde{P}^{2}\Phi_{j}$
converges exponentially (we choose $\varpi$ with respect to this
exponential convergence) to $\sum_{i=1}^{j-1}M[\widetilde{P_{i}}]$.
Similarly, the difference between $\int\widetilde{P}w\Phi_{j}$ and
$\sum_{i=1}^{j-1}\widetilde{P_{i}}w$ converges exponentially to $0$
(the velocity of a soliton is not modified a lot by modulation, this
is why it works in any cases).

For $E$ and $F$, we perform similar basic computations with the only
difference that there will also be terms of degree 3 or more in $w$.
We know that $\lVert w(t)\rVert_{H^{2}}\rightarrow_{t\rightarrow+\infty}0$,
this is the reason why for $t$ large enough, such terms are boundable
by $\omega\int w^{2}\Phi_{j}$ or $\omega\int w_{x}^{2}\Phi_{j}$.
\end{proof}
\emph{Step 4.} Approximation of the Lyapunov functional. 

By analogy
with the existence part, we introduce the following Lyapunov functional:
\begin{align}
\mathcal{H}_{j}(t):=F_{j}(t)+2\big(b_{j-1}^{2}-a_{j-1}^{2}\big)E_{j}(t)+\big(a_{j-1}^{2}+b_{j-1}^{2}\big)^{2}M_{j}(t).\label{1eq:-159}
\end{align}
We will use the previous steps to approximate $\mathcal{H}_{j}(t)$.
\begin{lem}
\label{1lem:dl}There exists $T_{4}\geq  T$ such that the following
holds for $t\geq  T_{4}$:
\begin{align}
\mathcal{H}_{j}(t) & =\sum_{i=1}^{j-1}F[\widetilde{P_{i}}]+2\big(b_{j-1}^{2}-a_{j-1}^{2}\big)\sum_{i=1}^{j-1}E[\widetilde{P_{i}}]+\big(a_{j-1}^{2}+b_{j-1}^{2}\big)^{2}\sum_{i=1}^{j-1}M[\widetilde{P_{i}}] \\
 & +H_{j}(t)+O(e^{-2\varpi t})+o\bigg(\int\big(w^{2}+w_{x}^{2}\big)\Phi_{j}\bigg),\label{1eq:-160}
\end{align}
where 
\begin{align}
H_{j}(t): & =\int\Big[\frac{1}{2}w_{xx}^{2}-\frac{5}{2}w_{x}^{2}\widetilde{P_{j-1}}^{2}+\frac{5}{2}w^{2}\widetilde{P_{j-1}}_{x}^{2}+5w^{2}\widetilde{P_{j-1}}\widetilde{P_{j-1}}_{xx}\\
& \qquad+\frac{15}{4}w^{2}\widetilde{P_{j-1}}^{4}\Big]\Phi_{j}(t) 
  +\big(b_{j-1}^{2}-a_{j-1}^{2}\big)\int\Big[w_{x}^{2}-3w^{2}\widetilde{P_{j-1}}^{2}\Big]\Phi_{j}(t)\\
  &\quad+\frac{1}{2}\big(a_{j-1}^{2}+b_{j-1}^{2}\big)^{2}\int w^{2}\Phi_{j}(t).\label{1eq:-161}
\end{align}
\end{lem}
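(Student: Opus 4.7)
The plan is to prove this Lemma by substituting the expansions of $M_j$, $E_j$, $F_j$ from Lemma \ref{1lem:quad} into the definition \eqref{1eq:-159} of $\mathcal{H}_j$ and then simplifying the resulting expression, arguing separately for 0-order, linear, and quadratic terms in $w$. This mirrors the structure of Proposition \ref{1lem:exp} from the existence part, but adapted to the inductive/localized framework at hand.

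First, the 0-order terms in $w$ immediately yield $\sum_{i=1}^{j-1}\big(F[\widetilde{P_i}] + 2(b_{j-1}^2-a_{j-1}^2)E[\widetilde{P_i}] + (a_{j-1}^2+b_{j-1}^2)^2 M[\widetilde{P_i}]\big)$. The quadratic terms in $w$ from Lemma \ref{1lem:quad} involve $\widetilde{P}$ (and its derivatives) integrated against $w^2$, $w_x^2$, or $w_{xx}^2$ against $\Phi_j$. To reduce these to $H_j(t)$, we must replace $\widetilde{P}$ by $\widetilde{P_{j-1}}$. The error breaks into three pieces: cross terms $\widetilde{P_i}\widetilde{P_m}$ with $i\neq m$, which are exponentially small by Corollary \ref{1cor:decay-mod}; diagonal terms with $i\geq j$, for which $\widetilde{P_i}$ is concentrated at $v_i t > m_j t$ where $\Phi_j$ decays exponentially; and diagonal terms with $i<j-1$, for which $\widetilde{P_i}$ is concentrated where $\Phi_{j-1}\approx 1$, so $w^2 \widetilde{P_i}^2 \leq C \widetilde{P_i}^2 \,w^2\Phi_{j-1}$ plus exponentially small error. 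By the induction hypothesis \eqref{1eq:-123} together with \eqref{1eq:-152}, $\int w^2 \Phi_{j-1} = O(e^{-2\varpi t})$, so these are absorbed in $O(e^{-2\varpi t})$. After these reductions, the quadratic part is exactly $H_j(t)$, up to $o\big(\int(w^2+w_x^2)\Phi_j\big)$ coming from the $\omega$ terms in Lemma \ref{1lem:quad}.

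The main work lies in the linear terms in $w$, which collect into
\[
L \ :=\ \sum_{i=1}^{j-1} \int \widetilde{P_i}\, \mathcal{L}_{a_{j-1},b_{j-1}}[\widetilde{P_i}]\cdot w,
\]
where $\mathcal{L}_{a,b}[Q] := Q_{(4x)} - 2(b^2-a^2)(Q_{xx}+Q^3) + (a^2+b^2)^2 Q + 5QQ_x^2 + 5Q^2 Q_{xx} + \tfrac{3}{2}Q^5$ is obtained after integration by parts (as in Proposition \ref{1lem:exp}). For $i=j-1$: if $P_{j-1}$ is a breather, $\widetilde{P_{j-1}}$ satisfies $\mathcal{L}_{a_{j-1},b_{j-1}}[\widetilde{P_{j-1}}]=0$ by \eqref{1eq:elipt}, so this term vanishes; if $P_{j-1}$ is a soliton, \eqref{1eq:eliptsm} shows $\mathcal{L}_{0,\sqrt{c_l}}[\widetilde{R_l}]$ equals a combination of $y_1(t)(\widetilde{R_l}_{xx}+\widetilde{R_l}^3)$, $y_1(t)\widetilde{R_l}$, $y_1(t)^2\widetilde{R_l}$, each of which, paired with $w$ under the orthogonality conditions \eqref{1eq:-151}, is $O(\lvert y_1(t)\rvert\cdot \text{exp-small cross-terms})$, hence $O(e^{-2\varpi t})$ using \eqref{1eq:-152}. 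For $i<j-1$, the shape parameters don't match, so $\mathcal{L}_{a_{j-1},b_{j-1}}[\widetilde{P_i}]$ is not zero but is still of the same exponential localization near $v_it$; pairing with $w$ by Cauchy-Schwarz and using that $\widetilde{P_i}$ lives where $\Phi_{j-1}\approx 1$, we bound it by $C\big(\int w^2\Phi_{j-1}\big)^{1/2}\leq Ce^{-\varpi t}$, hence $O(e^{-2\varpi t})$ after squaring/combining.

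The main obstacle, as in the analogous existence statement, is the linear term for $i=j-1$ in the soliton case: one needs the orthogonality conditions \eqref{1eq:-151} chosen precisely so that the residual terms in \eqref{1eq:eliptsm} pair to zero with $w$, and to verify that the modulation corrections $y_1(t), y_2(t)$ (bounded by $\lVert v(t)\rVert_{H^2}$) combine with the exponentially decaying cross-integrals to give overall $O(e^{-2\varpi t})$ rather than something of order $\lVert v\rVert_{H^2}\lVert w\rVert_{H^2}$. Once this is in place, summing the three contributions gives the claimed expansion.
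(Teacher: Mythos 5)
Your proposal follows essentially the same route as the paper: expand $\mathcal{H}_j$ via Lemma \ref{1lem:quad}, identify the quadratic part with $H_j(t)$ after discarding cross terms and off-diagonal contributions using Corollary \ref{1cor:decay-mod}, the decay of $\Phi_j$ to the right of $m_jt$, and the induction hypothesis \eqref{1eq:-123}, and kill the linear part by integration by parts, the elliptic equations, and the orthogonality conditions of Lemma \ref{2lem:mod_uniq} (which, as you correctly note, make the soliton residual from \eqref{1eq:eliptsm} vanish since $-\widetilde{P_{j-1}}_{xx}-\widetilde{P_{j-1}}^3$ is proportional to $\widetilde{P_{j-1}}$). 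The one imprecise step is your claim that the $i\le j-2$ linear terms are $O(e^{-2\varpi t})$ ``after squaring/combining'': Cauchy--Schwarz against the induction hypothesis only yields $O(e^{-\varpi t})$ and a linear contribution is not squared anywhere, but this is harmless because the argument explicitly tolerates $\varpi$ shrinking at each step of the induction, and the paper's own proof is no more precise at this point.
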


\begin{proof}
This lemma is obtained from the summation of the facts established in
the previous lemma. We get rid of the linear terms in the following
way, by integrations by parts:
\begin{align}
\MoveEqLeft\sum_{i=1}^{j-1}\int\Big(\widetilde{P_{i}}_{xx}w_{xx}-5\widetilde{P_{i}}\widetilde{P_{i}}_{x}^{2}w-5\widetilde{P_{i}}^{2}\widetilde{P_{i}}_{x}w_{x}+\frac{3}{2}\widetilde{P_{i}}^{5}w\Big) \\
 & +2\big(b_{j-1}^{2}-a_{j-1}^{2}\big)\sum_{i=1}^{j-1}\int\Big(\widetilde{P_{i}}_{x}w_{x}-\widetilde{P_{i}}^{3}w\Big)+\big(a_{j-1}^{2}+b_{j-1}^{2}\big)^{2}\sum_{i=1}^{j-1}\widetilde{P_{i}}w \\
 & =\sum_{i=1}^{j-1}\int\Big(\widetilde{P_{i}}_{xxxx}+5\widetilde{P_{i}}\widetilde{P_{i}}_{x}^{2}+5\int\widetilde{P_{i}}^{2}\widetilde{P_{i}}_{xx}+\frac{3}{2}\widetilde{P_{i}}^{5}\Big)w \\
 & +2\big(b_{j-1}^{2}-a_{j-1}^{2}\big)\sum_{i=1}^{j-1}\int\Big(-\widetilde{P_{i}}_{xx}-\widetilde{P_{i}}^{3}\Big)w+\big(a_{j-1}^{2}+b_{j-1}^{2}\big)^{2}\sum_{i=1}^{j-1}\int\widetilde{P_{i}}w.\label{1eq:-162}
\end{align}

If we consider that this sum goes from $i=1$ to $j-2$, we see that
for $1\leq  i\leq  j-2$, this sum is exponentially bounded by induction
assumption (we use that for $i\leq  j-2$, a polynomial in $\widetilde{P_{i}}$
and its derivatives is bounded by $C\Phi_{j-1}$ and that $w=v+(P_{j-1}-\widetilde{P_{j-1}})$).
It is left to consider the sum of the terms with $i=j-1$.

For $i=j-1$, we have nearly the elliptic equation satisfied by $\widetilde{P_{j-1}}$.
It is actually exactly this equation in the case when $\widetilde{P_{j-1}}$
is a breather. When $\widetilde{P_{j-1}}$ is a soliton, its shape
parameter is modified by modulation. This is why, in this case, the
sum of the terms with $i=j-1$ is equal to 
\begin{align}
2y_{1}(t)\int\Big(-\widetilde{P_{j-1}}_{xx}-\widetilde{P_{j-1}}^{3}\Big)w+2b_{j-1}^{2}y_{1}(t)\int\widetilde{P_{j-1}}w+y_{1}(t)^{2}\int\widetilde{P_{j-1}}w,\label{1eq:-163}
\end{align}
which vanishes because of the orthogonality condition from the modulation (Lemma \ref{2lem:mod_uniq})
and the elliptic equation satisfied by a soliton \eqref{1eq:-4}.

$H_{j}$ is obtained as the sum of the quadratic parts of the previous
lemma on which we have performed some integrations by parts, and some simplifications based on
the fact that for $i\geq  j$, $\widetilde{P_{i}}\Phi_{j}(t)$ is
exponentially decreasing, and the fact that for $i\leq  j-2$, $\int\widetilde{P_{i}}w^{2}$
is exponentially decreasing by the induction assumption \eqref{1eq:-123}. Therefore, $H_{j}$
corresponds to the sum of the quadratic parts of previous lemma to
which we have to add $5\int w^{2}\widetilde{P}\widetilde{P}_{x}\Phi_{jx}$,
which is bounded exponentially.
\end{proof}
\emph{Step 5.} Bound from above for $H_{j}(t)$. 

Because $v_1>0$, we have
that $b_{j-1}^{2}-a_{j-1}^{2}\geq 0$. By taking $\omega_{2}$ and $\omega_{6}$
small enough (with respect to $(a_{j-1}^{2}+b_{j-1}^{2})^{2}$),
we obtain, by summation of the facts of Lemma \ref{1lem:monoto}, the
following inequality:
\begin{align}
\mathcal{H}_{j}(t)-\sum_{i=1}^{j-1}F[P_{i}]-2\big(b_{j-1}^{2}-a_{j-1}^{2}\big)\sum_{i=1}^{j-1}E[P_{i}]\\
-\big(a_{j-1}^{2}+b_{j-1}^{2}\big)^{2}\sum_{i=1}^{j-1}M[P_{i}] &\leq  Ce^{-2\varpi t}.\label{1eq:-164}
\end{align}

From Lemma \ref{1lem:dl}, for $t\geq  T_{3}$,
\begin{align}
H_{j}(t) & \leq  F[P_{j-1}]-F[\widetilde{P_{j-1}}]+2\big(b_{j-1}^{2}-a_{j-1}^{2}\big)\Big(E[P_{j-1}]-E[\widetilde{P_{j-1}}]\Big) \\
 & +\big(a_{j-1}^{2}+b_{j-1}^{2}\big)^{2}\Big(M[P_{j-1}]-M[\widetilde{P_{j-1}}]\Big)+Ce^{-2\varpi t}+\omega\int\big(w^{2}+w_{x}^{2}\big)\Phi_{j}.\label{1eq:-165}
\end{align}

In the case if $P_{j-1}$ is a breather, we obtain immediately that
\begin{align}
H_{j}(t)\leq  Ce^{-2\varpi t}+\omega\int\big(w^{2}+w_{x}^{2}\big)\Phi_{j}.\label{1eq:-166}
\end{align}

The case when $P_{j-1}$ is a soliton needs more inspection. As in
the existence part, we have the following relations:
\begin{align}
M[\widetilde{P_{j-1}}](t)=\big(b_{j-1}^{2}+y_{1}(t)\big)^{1/2}M[q],\label{1eq:-167}
\end{align}
\begin{align}
E[\widetilde{P_{j-1}}](t)=\big(b_{j-1}^{2}+y_{1}(t)\big)^{3/2}E[q],\label{1eq:-168}
\end{align}
\begin{align}
F[\widetilde{P_{j-1}}](t)=\big(b_{j-1}^{2}+y_{1}(t)\big)^{5/2}F[q].\label{1eq:-169}
\end{align}
We set $\mathcal{R}_{j-1}(t):=F[\widetilde{P_{j-1}}](t)+2b_{j-1}^{2}E[\widetilde{P_{j-1}}](t)+b_{j-1}^{4}M[\widetilde{P_{j-1}}](t)$,
and  we simplify it as follows:
\begin{align}
\mathcal{R}_{j-1}(t)&=b_{j-1}^{5}\bigg(1+\frac{y_{1}(t)}{b_{j-1}^{2}}\bigg)^{5/2}F[q]+2b_{j-1}^{5}\bigg(1+\frac{y_{1}(t)}{b_{j-1}^{2}}\bigg)^{3/2}E[q]\\
&\quad+b_{j-1}^{5}\bigg(1+\frac{y_{1}(t)}{b_{j-1}^{2}}\bigg)^{1/2}M[q].\label{1eq:-170}
\end{align}
After making a Taylor expansion as in Section \ref{1sec:existence},
\begin{align}
\mathcal{R}_{j-1}(t)-F[P_{j-1}]-2b_{j-1}^{2}E[P_{j-1}]-b_{j-1}^{4}M[P_{j-1}]=O(y_{1}(t)^{3}).\label{1eq:-171}
\end{align}
Therefore, if $T_{4}$ is large enough, $\lVert v(t)\rVert_{H^{2}}$ can
be as small as we want, and for $t\geq  T_{4}$, if $P_{j-1}$ a soliton,
we may write:
\begin{align}
H_{j}(t)\leq  Ce^{-2\varpi t}+\omega\int\big(w^{2}+w_{x}^{2}\big)\Phi_{j}+\omega y_{1}(t)^{2}.\label{1eq:-172}
\end{align}

\emph{Step 6.} Coercivity. 

$H_{j}$ can be seen as the
quadratic form associated to $\widetilde{P_{j-1}}$ and evaluated
in $w\sqrt{\Phi_{j}}$, modulo several terms that can be bounded by $C\sqrt{\sigma}\int(w^{2}+w_{x}^{2}+w_{xx}^{2})\Phi_{j}$
(because these terms depend on derivatives of $\Phi_{j}$). Let us
prove that we can apply Section \ref{1sec:54} (Appendix)
for $w\sqrt{\Phi_{j}}$. 

More precisely, we need to prove that for $\nu>0$ small enough (from Section \ref{1sec:54}),
\begin{align}
\bigg\lvert \int  w\sqrt{\Phi_{j}} \widetilde{P_{j-1}}_1 \bigg\rvert + \bigg\lvert \int w\sqrt{\Phi_{j}} \widetilde{P_{j-1}}_2 \bigg\rvert \leq \nu \lVert w\sqrt{\Phi_{j}} \rVert_{H^2},
\end{align}
if $P_{j-1}$ is a breather or that 
\begin{align}
\bigg\lvert \int  w\sqrt{\Phi_{j}} \widetilde{P_{j-1}} \bigg\rvert + \bigg\lvert \int w\sqrt{\Phi_{j}} \widetilde{P_{j-1}}_x \bigg\rvert \leq \nu \lVert w\sqrt{\Phi_{j}} \rVert_{H^2} ,
\end{align}
 if $P_{j-1}$ is a soliton. In any case, the proof is the same and let us write $K$ at the place of $\widetilde{P_{j-1}}_1$, $\widetilde{P_{j-1}}_2$, $\widetilde{P_{j-1}}$ or $\widetilde{P_{j-1}}_x$. This means that we want to bound $\int w\sqrt{\Phi_{j}} K $.

From \eqref{1eq:-150}, \eqref{1eq:-151}, we see that it is enough to bound $\int w(1-\sqrt{\Phi_{j}}) K $ by $\nu \lVert w\sqrt{\Phi_{j}} \rVert_{H^2} $. The reasonning that follows works  for $j \leq J$, for $j=J+1$ the result is immediate because $\Phi_{J+1}=1$. $\Phi_j$ is a translate of $\Psi$, and, using the fact that when $v \rightarrow 0$, $\sqrt{1+v}=1+O(v)$,
\begin{align}
1-\sqrt{\Psi} & = 1 -  \sqrt{1+\Psi -1}  = 1- \sqrt{1-\Psi(-x)} = O(\Psi(-x)),
\end{align}
which means that $1-\sqrt{\Phi_j} \leq C \min(1,\exp(\sqrt{\sigma}(x-m_j t)/2))$. We may deduce now that
\begin{align}
\bigg\lvert \int w(1-\sqrt{\Phi_{j}})K \bigg\rvert & = \bigg\lvert \int w\sqrt{\Phi_{j}} \frac{1-\sqrt{\Phi_j}}{\sqrt{\Phi_j}} K \bigg\rvert  \\
& \leq \bigg\lVert \frac{1-\sqrt{\Phi_j}}{\sqrt{\Phi_j}} K \bigg\rVert_{L^2} \lVert w\sqrt{\Phi_{j}} \rVert_{L^2}  \\
& \leq C e^{\sqrt{\sigma}(m_j - v_{j-1})t} \lVert w\sqrt{\Phi_{j}} \rVert_{L^2},
\end{align}
if $\frac{\sqrt{\sigma}}{4} < \frac{\beta}{2} $. And so, if $t$ is large enough, we get the bound we want. 

Thus, there exists $\mu>0$ such that for
$t\geq  T_{5}$ (where $T_{5}$ is large enough and depends on $\sigma$),
\begin{align}
\mu\lVert w\sqrt{\Phi_{j}}\rVert_{H^{2}}^{2} & \leq  H_{j}(t)+C\sqrt{\sigma}\int\big(w^{2}+w_{x}^{2}+w_{xx}^{2}\big)\Phi_{j}+\frac{1}{\mu}\bigg(\int\widetilde{P_{j-1}}w\sqrt{\Phi_{j}}\bigg)^{2} \\
 & \leq  Ce^{-2\varpi t}+\omega\int\big(w^{2}+w_{x}^{2}\big)\Phi_{j}+C\sqrt{\sigma}\int\big(w^{2}+w_{x}^{2}+w_{xx}^{2}\big)\Phi_{j} \\
 & +\omega y_{1}(t)^{2}+\frac{1}{\mu}\bigg(\int\widetilde{P_{j-1}}w\sqrt{\Phi_{j}}\bigg)^{2},\label{1eq:-173}
\end{align}
where the term $\frac{1}{\mu}\big(\int\widetilde{P_{j-1}}w\sqrt{\Phi_{j}}\big)^{2}$
is present only if $\widetilde{P_{j-1}}$ is a breather and the term
$\omega y_{1}(t)^{2}$ is present only if $\widetilde{P_{j-1}}$ is
a soliton.

For $\sigma$ small enough and $\omega$ small enough, we deduce that
\begin{align}
\int\big(w^{2}+w_{x}^{2}+w_{xx}^{2}\big)\Phi_{j}\leq  Ce^{-2\varpi t}+\omega y_{1}(t)^{2}+C\bigg(\int\widetilde{P_{j-1}}w\sqrt{\Phi_{j}}\bigg)^{2}.\label{1eq:step6}
\end{align}

We set $T_{0}:=\max(T_{1},T_{2},T_{3},T_{4},T_{5})$.

\emph{Step 7.} Bound for $\big\lvert\int\widetilde{P_{j-1}}w\sqrt{\Phi_{j}}\big\rvert$
(to do in the case if $\widetilde{P_{j-1}}$ is a breather). 

We would
like to prove that $\int\widetilde{P_{j-1}}w\sqrt{\Phi_{j}}$ is exponentially
decreasing. To do so, we would like to get rid of $\sqrt{\Phi_{j}}$. 
It is clear that $\int\widetilde{P_{j-1}}w(1-\sqrt{\Phi_{j}})$
is exponentially decreasing. Thus, it is enough to prove
that $\int\widetilde{P_{j-1}}w$ is exponentially decreasing.

If $i\leq  j-2$, we know that $\int\widetilde{P_{i}}w$ is exponentially
decreasing by the induction assumption \eqref{1eq:-123}. Thus, it is enough
to prove that $\sum_{i=1}^{j-1}\int\widetilde{P_{i}}w$ is exponentially
decreasing.

From the mass approximation of Lemma \ref{1lem:quad} and Lemma \ref{1lem:monoto},
we observe that, for $t\geq  T_{0}$,
\begin{align}
\sum_{i=1}^{j-1}\int\widetilde{P_{i}}w & =O(e^{-2\varpi t})+M_{j}(t)-\sum_{i=1}^{j-1}M[P_{i}]-\frac{1}{2}\int w^{2}\Phi_{j} \\
 & \leq  Ce^{-2\varpi t}-\frac{1}{2}\int w^{2}\Phi_{j} \leq  Ce^{-2\varpi t}.\label{1eq:-174}
\end{align}

Now, we use the fact that the sum of the linear parts of our localized
conservation laws is exponentially decreasing, which we have established
in the proof of Lemma \ref{1lem:dl}. Therefore, the linear terms of $F_{j}+2(b_{j-1}^{2}-a_{j-1}^{2})E_{j}$
are equal to $O(e^{-2\varpi t})-(a_{j-1}^{2}+b_{j-1}^{2})^{2}\sum_{i=1}^{j-1}\int\widetilde{P_{i}}w$.

Now, from the energy and $F$ approximation of Lemma \ref{1lem:quad}
and Lemma \ref{1lem:monoto}, and from \eqref{1eq:-171}, we observe that (we recall that $b_{j-1}^{2}-a_{j-1}^{2}\geq 0$),
for $t\geq  T_{0}$,
\begin{align}
\MoveEqLeft-\big(a_{j-1}^{2}+b_{j-1}^{2}\big)^{2}\sum_{i=1}^{j-1}\int\widetilde{P_{i}}w  =O(e^{-2\varpi t})+o\bigg(\int\big(w^{2}+w_{x}^{2}\big)\Phi_{j}\bigg)+F_{j}(t) \\
 & \quad+2\big(b_{j-1}^{2}-a_{j-1}^{2}\big)E_{j}(t)-\sum_{i=1}^{j-1}F[P_{i}]-2\big(b_{j-1}^{2}-a_{j-1}^{2}\big)\sum_{i=1}^{j-1}E[P_{i}] \\
 & \quad-\int\Big[\frac{1}{2}w_{xx}^{2}-\frac{5}{2}w^{2}\widetilde{P}_{x}^{2}-10\widetilde{P}w\widetilde{P}_{x}w_{x}-\frac{5}{2}\widetilde{P}^{2}w_{x}^{2}+\frac{15}{4}\widetilde{P}^{4}w^{2}\Big]\Phi_{j} \\
 & \quad-2\big(b_{j-1}^{2}-a_{j-1}^{2}\big)\int\Big[\frac{1}{2}w_{x}^{2}-\frac{3}{2}\widetilde{P}^{2}w^{2}\Big]\Phi_{j}+o(y_1(t)^2) \\
 & =O(e^{-2\varpi t})+o\bigg(\int\big(w^{2}+w_{x}^{2}\big)\Phi_{j}\bigg) \\
 & \quad+F_{j}(t)+\omega_{6}M_{j}(t)-\sum_{i=1}^{j-1}F[P_{i}]-\omega_{6}\sum_{i=1}^{j-1}M[P_{i}] \\
 & \quad+2\big(b_{j-1}^{2}-a_{j-1}^{2}\big)\Bigg[E_{j}(t)+\omega_{2}M_{j}(t)-\sum_{i=1}^{j-1}E[P_{i}]-\omega_{2}\sum_{i=1}^{j-1}M[P_{i}]\Bigg] \\
 & \quad+\Big(\omega_{6}+2\omega_{2}\big(b_{j-1}^{2}-a_{j-1}^{2}\big)\Big)\Bigg(\sum_{i=1}^{j-1}M[P_{i}]-M_{j}(t)\Bigg) \\
 & \quad-\int\Big[\frac{1}{2}w_{xx}^{2}-\frac{5}{2}w^{2}\widetilde{P}_{x}^{2}-10\widetilde{P}w\widetilde{P}_{x}w_{x}-\frac{5}{2}\widetilde{P}^{2}w_{x}^{2}+\frac{15}{4}\widetilde{P}^{4}w^{2}\Big]\Phi_{j} \\
 & \quad-2\big(b_{j-1}^{2}-a_{j-1}^{2}\big)\int\Big[\frac{1}{2}w_{x}^{2}-\frac{3}{2}\widetilde{P}^{2}w^{2}\Big]\Phi_{j}+o(y_1(t)^2) \\
 & \leq  Ce^{-2\varpi t}+C\int\big(w^{2}+w_{x}^{2}\big)\Phi_{j}+o(y_1(t)^2) \\
 & \quad-\Big(\omega_{6}+2\omega_{2}\big(b_{j-1}^{2}-a_{j-1}^{2}\big)\Big)\Bigg(\sum_{i=1}^{j-1}\int\widetilde{P_{i}}w+\frac{1}{2}\int w^{2}\Phi_{j}\Bigg),\label{1eq:-175}
\end{align}
where the term $o(y_1(t)^2)$ is present only if $P_{j-1}$ is a soliton. And therefore, for $\omega_{2}$ and $\omega_{6}$ small enough,
\begin{align}
-\sum_{i=1}^{j-1}\int\widetilde{P_{i}}w\leq  Ce^{-2\varpi t}+C\int\big(w^{2}+w_{x}^{2}\big)\Phi_{j}+o(y_1(t)^2).\label{1eq:-176}
\end{align}

Thus, we deduce the following bound:
\begin{align}
\bigg\lvert\int\widetilde{P_{j-1}}w\sqrt{\Phi_{j}}\bigg\rvert\leq  Ce^{-2\varpi t}+C\int\big(w^{2}+w_{x}^{2}\big)\Phi_{j}+o(y_1(t)^2).\label{1eq:-177}
\end{align}
Because $\lVert w(t)\rVert_{H^{2}}\rightarrow_{t\rightarrow+\infty}0$,
we deduce that
\begin{align}
\bigg(\int\widetilde{P_{j-1}}w\sqrt{\Phi_{j}}\bigg)^{2}=o(e^{-2\varpi t})+o\bigg(\int\big(w^{2}+w_{x}^{2}\big)\Phi_{j}\bigg)+o(y_1(t)^2).\label{1eq:step7}
\end{align}

\emph{Step 8.} Conclusion. 

From (\ref{1eq:step6}) and (\ref{1eq:step7}),
we deduce for $t\geq  T_{0}$, that
\begin{align}
\int\big(w^{2}+w_{x}^{2}+w_{xx}^{2}\big)\Phi_{j}=O(e^{-2\varpi t})+o\big(y_{1}(t)^{2}\big)+o\bigg(\int\big(w^{2}+w_{x}^{2}\big)\Phi_{j}\bigg).\label{1eq:-178}
\end{align}
This means that if we take $T_{0}$ large enough, we have:
\begin{align}
\int\big(w^{2}+w_{x}^{2}+w_{xx}^{2}\big)\Phi_{j}=o\big(y_{1}(t)^{2}\big)+O(e^{-2\varpi t}),\label{1eq:quad}
\end{align}
where the term $o(y_{1}(t)^{2})$ is present only if $P_{j-1}$
is a soliton.

Before finishing the proof, we need to find a better bound for $y_{1}(t)$
than just a convergence to $0$ given by the modulation (in the case
when $P_{j-1}$ is a soliton). For this, we study $M_{j}(t)$:
\begin{align}
M_{j}(t) & =\frac{1}{2}\int u^{2}(t)\Phi_{j}(t) =\frac{1}{2}\int\big(\widetilde{P}(t)+w(t)\big)^{2}\Phi_{j}(t) \\
 & =\frac{1}{2}\int\widetilde{P}(t)^{2}\Phi_{j}(t)+\int\widetilde{P}(t)w(t)\Phi_{j}(t)+\frac{1}{2}\int w(t)^{2}\Phi_{j}(t) \\
 & =\frac{1}{2}\sum_{i=1}^{j-1}\int\widetilde{P_{i}}(t)^{2}+\sum_{i=1}^{j-1}\int\widetilde{P_{i}}(t)w(t)+O\big(e^{-2\varpi t}\big)+\frac{1}{2}\int w(t)^{2}\Phi_{j}(t) \\
 & =\frac{1}{2}\int\widetilde{P_{j-1}}(t)^{2}+\int\widetilde{P_{j-1}}(t)w(t)+O\big(e^{-2\varpi t}\big)\\
 &\quad+\frac{1}{2}\int w(t)^{2}\Phi_{j}(t)+\frac{1}{2}\sum_{i=1}^{j-2}\int P_{i}(t)^{2},\label{1eq:-179}
\end{align}
by the induction assumption \eqref{1eq:-123}, then
\begin{align}
M_{j}(t)=\frac{1}{2}\int\widetilde{P_{j-1}}(t)^{2}+O\big(e^{-2\varpi t}\big)+\frac{1}{2}\int w(t)^{2}\Phi_{j}(t)+\frac{1}{2}\sum_{i=1}^{j-2}\int P_{i}(t)^{2},\label{1eq:-180}
\end{align}
by the orthogonality condition from the modulation (Lemma \ref{2lem:mod_uniq}). Therefore,
\begin{align}
M_{j}(t)&=\big(b_{j-1}^{2}+y_{1}(t)\big)^{1/2}M[q]+O\big(e^{-2\varpi t}\big)+\frac{1}{2}\int w(t)^{2}\Phi_{j}(t)\\
&\quad+\frac{1}{2}\sum_{i=1}^{j-2}\int P_{i}(t)^{2}.\label{1eq:-181}
\end{align}

Now, if we take $t_{1}\geq  t$, we obtain from (\ref{1eq:quad}) that
\begin{align}
M_{j}(t_{1})-M_{j}(t)&=\Big[\big(b_{j-1}^{2}+y_{1}(t_{1})\big)^{1/2}-\big(b_{j-1}^{2}+y_{1}(t)\big)^{1/2}\Big]M[q]\\
&\quad+O\big(e^{-2\varpi t}\big)+o\big(y_{1}(t)^{2}\big)+o\big(y_{1}(t_{1})^{2}\big).\label{1eq:inter}
\end{align}

By doing a Taylor expansion of order 1, as in the existence part,
we obtain:
\begin{align}
\big(b_{j-1}^{2}+y_{1}(t_{1})\big)^{1/2}=b_{j-1}\bigg(1+\frac{1}{2}\frac{y_{1}(t_{1})}{b_{j-1}^{2}}+O\big(y_{1}(t_{1})^{2}\big)\bigg).\label{1eq:-182}
\end{align}
Therefore,
\begin{align}
\MoveEqLeft\big(b_{j-1}^{2}+y_{1}(t_{1})\big)^{1/2}-\big(b_{j-1}^{2}+y_{1}(t)\big)^{1/2}\\
&=\frac{1}{2b_{j-1}}\big(y_{1}(t_{1})-y_{1}(t)\big)+O\big(y_{1}(t_{1})^{2}\big)+O\big(y_{1}(t)^{2}\big).\label{1eq:-183}
\end{align}
Now, we recall that when $t_{1}\rightarrow+\infty$, we have $y_{1}(t_{1})\rightarrow0$.
Therefore, by taking the limit of the previous formula when $t_{1}\rightarrow+\infty$,
we obtain:
\begin{align}
b_{j-1}-\big(b_{j-1}^{2}+y_{1}(t)\big)^{1/2}=-\frac{y_{1}(t)}{2b_{j-1}}+O\big(y_{1}(t)^{2}\big).\label{1eq:-184}
\end{align}
Therefore, from (\ref{1eq:inter}), with $t_{1}\rightarrow+\infty$,
\begin{align}
\sum_{i=1}^{j-1}M[P_{i}]-M_{j}(t)=-\frac{y_{1}(t)}{2b_{j-1}}M[q]+O\big(e^{-2\varpi t}\big)+O\big(y_{1}(t)^{2}\big).\label{1eq:mass}
\end{align}

The second step is to study $E_{j}(t)$ (we do the same reasonning
as for $M_{j}$):
\begin{align}
E_{j}(t) & =\int\Big[\frac{1}{2}u_{x}^{2}-\frac{1}{4}u^{4}\Big]\Phi_{j}(t) \\
 & =\int\Big[\frac{1}{2}\widetilde{P}_{x}^{2}-\frac{1}{4}\widetilde{P}^{4}\Big]\Phi_{j}(t)+\int\Big[\widetilde{P}_{x}w_{x}-\widetilde{P}^{3}w\Big]\Phi_{j}(t)+O\bigg(\int w^{2}\Phi_{j}(t)\bigg),\label{1eq:-185}
\end{align}
and after simplications by $\Phi_{j}$ due to exponential convergences,
induction assumption \eqref{1eq:-123} and orthogonality conditions (Lemma \ref{2lem:mod_uniq}),
\begin{align}
E_{j}(t) & =E[\widetilde{P_{j-1}}(t)]+\sum_{i=1}^{j-2}E[P_{i}]+O\big(e^{-2\varpi t}\big)+O\bigg(\int w^{2}\Phi_{j}(t)\bigg) \\
 & =\big(b_{j-1}^{2}+y_{1}(t)\big)^{3/2}E[q]+\sum_{i=1}^{j-2}E[P_{i}]+O\big(e^{-2\varpi t}\big)+O\bigg(\int w^{2}\Phi_{j}(t)\bigg) \\
 & =\big(b_{j-1}^{2}+y_{1}(t)\big)^{3/2}E[q]+\sum_{i=1}^{j-2}E[P_{i}]+O\big(e^{-2\varpi t}\big)+o\big(y_{1}(t)^{2}\big),\label{1eq:-186}
\end{align}
by (\ref{1eq:quad}). And then, by taking the difference for $t_{1}\geq  t$,
\begin{align}
E_{j}(t_{1})-E_{j}(t)&=\Big[\big(b_{j-1}^{2}+y_{1}(t_{1})\big)^{3/2}-\big(b_{j-1}^{2}+y_{1}(t)\big)^{3/2}\Big]E[q]\\
&\quad+O\big(e^{-2\varpi t}\big)+o\big(y_{1}(t_{1})^{2}\big)+o\big(y_{1}(t)^{2}\big).\label{1eq:-187}
\end{align}

By taking a Taylor expansion of order 1, we obtain:
\begin{align}
\big(b_{j-1}^{2}+y_{1}(t_{1})\big)^{3/2}=b_{j-1}^{3}\Big(1+\frac{3}{2}\frac{y_{1}(t_{1})}{b_{j-1}^{2}}+O\big(y_{1}(t_{1})^{2}\big)\Big).\label{1eq:-188}
\end{align}
Therefore, after taking $t_{1}\rightarrow+\infty$, we obtain:
\begin{align}
\sum_{i=1}^{j-1}E[P_{i}]-E_{j}(t)=-\frac{3}{2}b_{j-1}y_{1}(t)E[q]+O\big(e^{-2\varpi t}\big)+O\big(y_{1}(t)^{2}\big).\label{1eq:energ}
\end{align}

This is why, from (\ref{1eq:mass}), (\ref{1eq:energ}) and Lemma \ref{1lem:monoto},
we obtain:
\begin{align}
-\frac{y_{1}(t)}{2b_{j-1}}M[q]+O\big(e^{-2\varpi t}\big)+O\big(y_{1}(t)^{2}\big)\geq -Ce^{-2\varpi t},\label{1eq:-189}
\end{align}
and
\begin{align}
-\frac{3}{2}b_{j-1}y_{1}(t)E[q]+O\big(e^{-2\varpi t}\big)+O\big(y_{1}(t)^{2}\big)\geq -Ce^{-2\varpi t}.\label{1eq:-190}
\end{align}

Because $M[q]=2$ and $E[q]=-\frac{2}{3}$, we rewrite both previous
inequalities (\ref{1eq:-189}) and (\ref{1eq:-190}) in the following way (and we pass $O(e^{-2\varpi t})$
on the other side of each inequality):
\begin{align}
-\frac{y_{1}(t)}{b_{j-1}}+O\big(y_{1}(t)^{2}\big)\geq -Ce^{-2\varpi t},\label{1eq:-191}
\end{align}
and
\begin{align}
b_{j-1}y_{1}(t)+O\big(y_{1}(t)^{2}\big)\geq -Ce^{-2\varpi t}.\label{1eq:-192}
\end{align}

Because $y_{1}(t)\rightarrow+\infty$, by taking $T_{0}$ larger if
needed, $O(y_{1}(t)^{2})$ can be bounded above by any positive
constant multiplied by $\lvert y_{1}(t)\rvert$, so by taking this constant
small enough (by taking $T_{0}$ large enough) and combining both
previous inequalities (\ref{1eq:-191}) and (\ref{1eq:-192}), we obtain:
\begin{align}
\lvert y_{1}(t)\rvert\leq  Ce^{-2\varpi t}.\label{1eq:-193}
\end{align}
Therefore, we have obtained a better bound for $y_{1}(t)$ in the case
when $P_{j-1}$ is a soliton. Therefore, we may conclude that in any
case, for $t\geq  T_{0}$, for $T_{0}$ large enough, 
\begin{align}
\int\big(w^{2}+w_{x}^{2}+w_{xx}^{2}\big)\Phi_{j}(t)=O\big(e^{-2\varpi t}\big).\label{1eq:-194}
\end{align}

Then, we deduce from (\ref{1eq:last}) that
\begin{align}
\lvert y_{1}'(t)\rvert+\lvert y_{2}'(t)\rvert=O(e^{-\varpi t}).\label{1eq:-195}
\end{align}
Because $\lvert y_{1}(t)\rvert+\lvert y_{2}(t)\rvert\rightarrow_{t\rightarrow+\infty}0$,
we obtain by integration:
\begin{align}
\lvert y_{1}(t)\rvert+\lvert y_{2}(t)\rvert=O(e^{-\varpi t}).\label{1eq:-197}
\end{align}
And, so, by the mean-value theorem,
\begin{align}
\big\lVert \widetilde{P_{j-1}}-P_{j-1}\big\rVert _{H^{2}}\leq  C\big(\lvert y_{1}(t)\rvert+\lvert y_{2}(t)\rvert\big)\leq  Ce^{-\varpi t}.\label{1eq:-199}
\end{align}

From $v=w+\widetilde{P_{j-1}}-P_{j-1}$, we deduce:
\begin{align}
\MoveEqLeft\int\big(v^{2}+v_{x}^{2}+v_{xx}^{2}\big)\Phi_{j}  \leq  C\int\big(w^{2}+w_{x}^{2}+w_{xx}^{2}\big)\Phi_{j} \\
 & \quad+C\int\Big[\big(\widetilde{P_{j-1}}-P_{j-1}\big)^{2}+\big(\widetilde{P_{j-1}}-P_{j-1}\big)_{x}^{2}+\big(\widetilde{P_{j-1}}-P_{j-1}\big)_{xx}^{2}\Big]\Phi_{j} \\
 & \leq  Ce^{-2\varpi t},\label{1eq:-201}
\end{align}
and this finishes the induction.
\end{proof}

\subsection{Proof of Theorem \ref{1thm:uniqueness}}
\label{1sec:uniq_ccl}
\begin{proof}[Proof of Theorem \ref{1thm:uniqueness}]
We suppose that $v_1 >0$. Let $p$ be the associated multi-breather given by Theorem \ref{1thm:MAIN}. Let $u$ be a solution of \eqref{1mKdV} such that
\begin{align}
\lVert u(t)-p(t) \rVert_{H^2} \rightarrow_{t \rightarrow +\infty} 0.
\end{align}

From Proposition \ref{1prop:conv_exp}, we deduce that there exists a constant $C>0$ and a constant $\varpi >0$ such that for $t$ large enough
\begin{align}
\lVert u(t)-p(t) \rVert_{H^2} \leq C e^{-\varpi t}.
\end{align}

This implies that $u$ satisfies the assumptions of Proposition \ref{1lem:polyn}. Thus, $u=p$ and Theorem \ref{1thm:uniqueness} is proved.

\end{proof}

\section{Appendix}

The first two subsections of the Appendix show that a soliton has
similar properties as a ``limit breather'' of parameter $\alpha=0$.
Firstly, the corresponding elliptic equation is satisfied by a soliton.
Secondly, the corresponding quadratic form is coercive for a soliton,
and we see that its kernel is spanned by $\partial_{x}Q$ and $\partial_{c}Q$.
In the third subsection, we prove that it is possible for $\epsilon$
to be orthogonal to $Q$ and $\partial_{x}Q$ (instead of $\partial_{x}Q$
and $\partial_{c}Q$) in order to satisfy a coercivity for the quadratic
form. We will use this fact for the proof of the existence, as well
as for the first part of the proof of the uniqueness. In the fourth
subsection, we prove that we can have coercivity for quadratic forms
when the orthogonality condition is not exactly satisfied. We will
use this result for the proof of the uniqueness. The last subsection
is about computations for the third conservation law. It will be useful
for the monotonicity property for localized $F$ that we need in the
proof of the uniqueness.

\subsection{Elliptic equation satisfied by a soliton }
\label{1sec:51}
\begin{lem}
A soliton $Q=R_{c,\kappa}$ satisfies for any time $t\in\mathbb{R}$,
the following nonlinear elliptic equation: 
\begin{align}
Q_{(4x)}-2c(Q_{xx}+Q^{3})+c^{2}Q+5QQ_{x}^{2}+5Q^{2}Q_{xx}+\frac{3}{2}Q^{5}=0.\label{1eq:ellip}
\end{align}
\end{lem}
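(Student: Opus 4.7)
The plan is to reduce the fourth-order equation \eqref{1eq:ellip} to the standard second-order ODE $Q_{xx}=cQ-Q^{3}$ satisfied by $Q=R_{c,\kappa}$ (which was already noted around \eqref{1eq:-4}), together with its well-known first integral, and then verify the identity by direct substitution.

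First I would differentiate $Q_{xx}=cQ-Q^{3}$ twice in the $x$-variable to express $Q_{(4x)}$ in terms of $Q$ and $Q_{x}$ only. One derivative yields $Q_{xxx}=cQ_{x}-3Q^{2}Q_{x}$, and a second derivative combined with another substitution of the equation gives the closed form
\begin{align*}
Q_{(4x)}=c^{2}Q-4cQ^{3}-6QQ_{x}^{2}+3Q^{5}.
\end{align*}
Plugging this expression, together with $Q_{xx}=cQ-Q^{3}$ and $Q^{2}Q_{xx}=cQ^{3}-Q^{5}$, into the left-hand side of \eqref{1eq:ellip} and collecting terms, the $Q$-coefficient cancels and one is left with $cQ^{3}-QQ_{x}^{2}-\tfrac12 Q^{5}$.

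To eliminate this remainder I would invoke the first integral of the ODE for $Q$: multiplying $Q_{xx}=cQ-Q^{3}$ by $Q_{x}$, integrating in $x$, and using that $Q$ and its derivatives vanish at infinity (or appealing directly to the explicit formula \eqref{1eq:gr}) gives $Q_{x}^{2}=cQ^{2}-\tfrac12 Q^{4}$, hence $QQ_{x}^{2}=cQ^{3}-\tfrac12 Q^{5}$. Substituting this into the remainder shows $cQ^{3}-QQ_{x}^{2}-\tfrac12 Q^{5}=0$, which proves \eqref{1eq:ellip}. There is no real obstacle here, only careful bookkeeping of the coefficients; the only conceptual input is recognising that the cubic remainder that survives after substituting $Q_{(4x)}$ and $Q^{2}Q_{xx}$ is exactly what the first integral of the second-order equation kills.
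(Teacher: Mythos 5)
Your proposal is correct and follows essentially the same route as the paper: substitute derivatives of $Q_{xx}=cQ-Q^{3}$ to lower the order, then use the first integral $Q_{x}^{2}=cQ^{2}-\tfrac12 Q^{4}$ to kill the remaining cubic/quintic terms. Your explicit intermediate formula $Q_{(4x)}=c^{2}Q-4cQ^{3}-6QQ_{x}^{2}+3Q^{5}$ and the resulting remainder $cQ^{3}-QQ_{x}^{2}-\tfrac12 Q^{5}$ both check out.
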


\begin{proof}
In order to derive this equation, we will use the equation that defines
a soliton (and that is satisfied by $Q$ at any time): 
\begin{align}
Q_{xx}=cQ-Q^{3}.\label{1eq:-271}
\end{align}
We will also need the following equation: 
\begin{align}
Q_{x}^{2}=cQ^{2}-\frac{1}{2}Q^{4},\label{1eq:-272}
\end{align}
that can be derived by taking the space derivative of $Q_{x}^{2}-cQ^{2}+\frac{1}{2}Q^{4}$,
and by showing that this derivative is zero. From this, we deduce
that $Q_{x}^{2}-cQ^{2}+\frac{1}{2}Q^{4}$ is constant, and by taking
its limit when $x\rightarrow\pm\infty$, we see that this constant
is zero. More precisely, the derivative of $Q_{x}^{2}-cQ^{2}+\frac{1}{2}Q^{4}$
is: 
\begin{align}
2Q_{x}Q_{xx}-2cQQ_{x}+2Q^{3}Q_{x}=2Q_{x}(Q_{xx}-cQ+Q^{3})=0.\label{1eq:-273}
\end{align}

From now on, the derivation of (\ref{1eq:ellip}) is straightforward.
It is sufficient to take space derivatives of $Q_{xx}=cQ-Q^{3}$ and
to inject them into the right hand side of the equation (\ref{1eq:ellip}),
which we want to prove to be equal to zero. By doing this, we make
the maximal order of a derivative of $Q$ present in the right hand
side equation lower. At the end, we have only, zero and first order
derivatives. To have only a polynomial in $Q$, we have to use $Q_{x}^{2}=cQ^{2}-\frac{1}{2}Q^{4}$,
and the calculations show that this polynomial is zero. 
\end{proof}

\subsection{Study of coercivity of the quadratic form associated to a soliton}
\label{1coer_sol}

In this article, we adapt the argument for the breathers in \cite{key-1}
to the soliton case. We consider: 
\begin{align}
\mathcal{Q}_{c}^{s}[\epsilon] & :=\frac{1}{2}\int\epsilon_{xx}^{2}-\frac{5}{2}\int Q^{2}\epsilon_{x}^{2}+\frac{5}{2}\int Q_{x}^{2}\epsilon^{2}+5\int QQ_{xx}\epsilon^{2}+\frac{15}{4}\int Q^{4}\epsilon^{2} \\
 & +c\bigg(\int\epsilon_{x}^{2}-3\int Q^{2}\epsilon^{2}\bigg)+c^{2}\frac{1}{2}\int\epsilon^{2}=:\mathcal{Q}_{0,\sqrt{c}}[\epsilon].\label{1eq:-274}
\end{align}

Firstly, we prove, by simple calculations, as in the previous section,
that $Q_{x}$ and $Q+xQ_{x}$ are in the kernel of this quadratic
form. It is easy to see, by asymptotic study that these two functions
are linearly independent.

The self-adjoint linear operator associated to this quadratic form
is: 
\begin{align}
\mathcal{L}_{c}^{s}[\epsilon] & :=\epsilon_{(4x)}-2c\epsilon_{xx}+c^{2}\epsilon+5Q^{2}\epsilon_{xx}+10QQ_{x}\epsilon_{x} \\
 & +\Big(5Q_{x}^{2}+10QQ_{xx}+\frac{15}{2}Q^{4}-6cQ^{2}\Big)\epsilon,\label{1eq:-275}
\end{align}
so that $\mathcal{Q}_{c}^{s}[\epsilon]=\int\epsilon\mathcal{L}_{c}^{s}[\epsilon]$.
$\mathcal{L}_{c}^{s}$ is a compact perturbation of the constant coefficients
operator: 
\begin{align}
\mathcal{M}[\epsilon]:=\epsilon_{(4x)}-2c\epsilon_{xx}+c^{2}\epsilon.\label{1eq:-276}
\end{align}
A direct analysis involving ODE shows that the null space of $\mathcal{M}$
is spawned by four linearly independent functions: 
\begin{align}
e^{\pm\sqrt{c}x},\quad xe^{\pm\sqrt{c}x}.\label{1eq:-277}
\end{align}
Among these four functions, there are only two $L^{2}$-integrable
ones in the semi-infinite line $[0,+\infty)$. Therefore, the null
space of $\mathcal{L}_{c}^{s}\vert_{H^{4}(\mathbb{R})}$ is spanned
by at most two $L^{2}$-functions.
Therefore, 
\begin{align}
\ker(\mathcal{L}_{c}^{s})=\Span(\partial_{x}Q,Q+x\partial_{x}Q).\label{1eq:-278}
\end{align}

\begin{lem}
The operator $\mathcal{L}_{c}^{s}$ does not have any negative eigenvalue. 
\end{lem}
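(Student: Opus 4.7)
The plan is to follow the strategy of Alejo--Muñoz \cite{key-1} for the breather case, adapted to the (cleaner) soliton setting. First observe that $\mathcal{L}_c^s$ is a compactly supported perturbation of the constant-coefficient operator $\mathcal{M}=\partial_x^4-2c\partial_x^2+c^2$, whose Fourier symbol $(\xi^2+c)^2\ge c^2$ shows $\sigma_{\mathrm{ess}}(\mathcal{L}_c^s)=[c^2,+\infty)$. Thus any putative negative eigenvalue would be isolated with finite multiplicity, and the task reduces to an algebraic/spectral one: rule these eigenvalues out.

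The core of the argument is a sum-of-squares decomposition for $\mathcal{Q}_c^s$. Using the elliptic equation $Q_{xx}=cQ-Q^3$ together with its first integral $Q_x^2=cQ^2-\tfrac12 Q^4$ (both established in Section~\ref{1sec:51}), a direct (but tedious) operator computation yields the identity
\[
\mathcal{L}_c^s=L_+^2-\partial_x(Q^2\partial_x)+(3cQ^2-5Q^4),
\]
where $L_+:=-\partial_x^2+c-3Q^2$ is the usual Schrödinger operator linearising $E+cM$ at $Q$. Pairing with $\epsilon$ and integrating by parts gives, for every $\epsilon\in H^2(\mathbb{R})$,
\[
\mathcal{Q}_c^s[\epsilon]=\|L_+\epsilon\|_{L^2}^2+\int Q^2\epsilon_x^2\,dx+\int(3cQ^2-5Q^4)\epsilon^2\,dx.
\]
The first two terms are manifestly non-negative; the third is sign-indefinite (it is negative near the origin and positive in the tails), and constitutes the real obstruction.

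To close the estimate, I would exploit the sharp spectral information available on $L_+$: it has exactly one simple negative eigenvalue with a positive, exponentially decaying eigenfunction $\phi_0$, a one-dimensional kernel generated by $Q_x$, and essential spectrum $[c,+\infty)$. Decomposing $\epsilon$ along this basis and using that $\phi_0$ is concentrated precisely where $3cQ^2-5Q^4<0$, one obtains a quantitative lower bound on $\|L_+\epsilon\|^2$ that dominates the negative contribution of $\int(3cQ^2-5Q^4)\epsilon^2$, modulo the projection of $\epsilon$ onto the kernel of $\mathcal{L}_c^s$. Combined with the already established inclusion $\mathrm{Span}(\partial_x Q,\partial_c Q)\subset\ker\mathcal{L}_c^s$, this forces $\mathcal{Q}_c^s[\epsilon]\ge 0$ for all $\epsilon\in H^2$.

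The main obstacle will be making the spectral lower bound on $\|L_+\epsilon\|^2$ precise enough to absorb $\int(3cQ^2-5Q^4)_-\epsilon^2$; this is the step where the soliton-specific algebra must be exploited. Should the direct spectral route prove delicate, an alternative is the variational path: one verifies that $Q_c$ minimizes $F$ on the constraint manifold $\{M=M[Q_c],\ E=E[Q_c]\}$ with Lagrange multipliers exactly $2c$ and $c^2$, so that the constrained Hessian (which coincides with $\mathcal{L}_c^s$ on the tangent directions) is non-negative by minimality; combining with the known two-dimensional kernel then yields $\mathcal{L}_c^s\ge 0$ on the whole of $H^2$, completing the proof.
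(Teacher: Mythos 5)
Your algebraic identity is correct: using $Q_{xx}=cQ-Q^3$ and $Q_x^2=cQ^2-\tfrac12Q^4$ one indeed checks that $\mathcal{L}_c^s=L_+^2-\partial_x(Q^2\partial_x)+(3cQ^2-5Q^4)$ with $L_+=-\partial_x^2+c-3Q^2$, so $\mathcal{Q}_c^s[\epsilon]=\lVert L_+\epsilon\rVert_{L^2}^2+\int Q^2\epsilon_x^2+\int(3cQ^2-5Q^4)\epsilon^2$. But this is where your argument stops being a proof. The entire content of the lemma is precisely the absorption of the negative part of $3cQ^2-5Q^4$ (which equals $-7c^2\cdot 2/\cosh^2$ at the origin, so it is far from small), and the sentence ``one obtains a quantitative lower bound on $\lVert L_+\epsilon\rVert^2$ that dominates the negative contribution'' is an assertion of the conclusion, not an argument. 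Note in particular that $L_+Q=-2Q^3\neq 0$ and $Q$ is not orthogonal to the negative eigenfunction of $L_+$, so there is no cheap reason why $\lVert L_+\epsilon\rVert^2$ should control $\int(5Q^4-3cQ^2)_+\epsilon^2$ for every $\epsilon\in H^2$; some genuinely quantitative input is needed and is missing. The fallback variational route has its own gaps: the claim that $Q_c$ minimizes $F$ under the constraints $M=M[Q_c]$, $E=E[Q_c]$ is itself unproved (and at least as hard as the lemma); moreover these two constraints are degenerate at $Q_c$, since $E'[Q_c]=-cM'[Q_c]$, so the tangent space is only of codimension one and minimality would at best bound the number of negative eigenvalues by one, not zero. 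The final step ``combining with the known two-dimensional kernel then yields $\mathcal{L}_c^s\ge0$ on all of $H^2$'' is not a valid inference: non-negativity on a proper subspace plus a two-dimensional kernel does not exclude a negative eigenvalue.

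For comparison, the paper's proof is short and of a completely different nature: it invokes an oscillation-theory result for fourth-order self-adjoint operators (the references \cite{key-32,key-33,key-24} in the text), according to which the number of negative eigenvalues of $\mathcal{L}_c^s$ equals $\sum_{x\in\mathbb{R}}\dim\ker W[Q_x,\,Q+xQ_x](x)$, where $W$ is the Wronskian matrix built from the two explicit kernel elements $Q_x$ and $Q+xQ_x$. The determinant computes to $2Q_x^2-QQ_{xx}=2cQ^2-Q^4-Q(cQ-Q^3)=cQ^2>0$, so the Wronskian never degenerates and there are no negative eigenvalues. If you want to keep your sum-of-squares decomposition, you would need to supply the missing spectral estimate explicitly (for instance by a further exact factorization or by a numerically assisted Sturm count); as written, the key step is a gap.
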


\begin{proof}
$\mathcal{L}_{c}^{s}$ has 
\begin{align}
\sum_{x\in\mathbb{R}}\dim\ker W[Q_{x},Q+xQ_{x}](t,x)\label{1eq:-279}
\end{align}
negative eigenvalues, counting multiplicity, where $W$ is the Wronskian
matrix: 
\begin{align}
W[Q_{x},Q+xQ_{x}](t,x):=
\begin{bmatrix}
Q_{x} & Q+xQ_{x}\\
Q_{xx} & (Q+xQ_{x})_{x}
\end{bmatrix}
.\label{1eq:-280}
\end{align}
For this result, see \cite{key-32}, where the finite interval case
was considered. As shown in several articles \cite{key-33,key-24},
the extension to the real line is direct.

Thus, it is sufficient to see that $\det W[Q_{x},Q+xQ_{x}](t,x)$
is never zero. For this, let us simply calculate this determinant:
\begin{align}
Q_{x}(2Q_{x}+xQ_{xx})-(Q+xQ_{x})Q_{xx} & =2Q_{x}^{2}-QQ_{xx} \\
 & =2cQ^{2}-Q^{4}-Q(cQ-Q^{3}) \\
 & =cQ^{2}>0.\label{1eq:-281}
\end{align}
\end{proof}

\subsection{Coercivity of the quadratic form associated to a soliton}
\label{1sec:53}

For $Q=R_{c,\kappa}$, let
\begin{align}
\mathcal{Q}_{c}^{s}[\epsilon]: & =\frac{1}{2}\int\epsilon_{xx}^{2}-\frac{5}{2}\int Q^{2}\epsilon_{x}^{2}+\frac{5}{2}\int Q_{x}^{2}\epsilon^{2}+5\int QQ_{xx}\epsilon^{2}+\frac{15}{4}\int Q^{4}\epsilon^{2} \\
 & +c\bigg(\int\epsilon_{x}^{2}-3\int Q^{2}\epsilon^{2}\bigg)+c^{2}\frac{1}{2}\int\epsilon^{2}.\label{1eq:-282}
\end{align}

\begin{lem}
There exists $\mu_{c}>0$ such that for any $\epsilon\in H^{2}$ such
that $\int\epsilon Q=\int\epsilon Q_{x}=0$, we have that
\begin{align}
\mathcal{Q}_{c}^{s}[\epsilon]\geq \mu_{c}\lVert\epsilon\rVert_{H^{2}}^{2}.\label{1eq:-283}
\end{align}
\end{lem}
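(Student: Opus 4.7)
The strategy is to combine the spectral information about $\mathcal{L}_c^s$ gathered in Section \ref{1coer_sol} with a linear-algebra argument that exchanges the orthogonality conditions $\{Q_x, Q + xQ_x\}$ (kernel of $\mathcal{L}_c^s$) for the conditions $\{Q, Q_x\}$ that appear in the statement.

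First, I would upgrade the results of Section \ref{1coer_sol} to a coercivity estimate on $\ker(\mathcal{L}_c^s)^\perp$: there exists $\nu_c > 0$ such that, for every $\eta \in H^2(\mathbb{R})$ with $\int \eta Q_x = \int \eta (Q + xQ_x) = 0$, one has $\mathcal{Q}_c^s[\eta] \geq \nu_c \lVert \eta \rVert_{H^2}^2$. This is standard: the operator $\mathcal{L}_c^s$ is a relatively compact perturbation of the constant-coefficient, non-negative operator $\mathcal{M}$ whose symbol $(\xi^2 + c)^2$ is bounded below by $c^2 > 0$ and controls $\lVert \cdot \rVert_{H^2}^2$; hence $\sigma_{\mathrm{ess}}(\mathcal{L}_c^s) \subset [c^2, +\infty)$, the spectrum below $c^2$ is discrete, and Section \ref{1coer_sol} gives no negative eigenvalues and a two-dimensional kernel, spanned by $Q_x$ and $Q + xQ_x$. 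Combined with the $H^2$-coercivity of $\mathcal{M}$, this yields the desired coercivity on the $L^2$-orthogonal complement of the kernel (the standard contradiction/compactness argument).

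Next, let $\epsilon \in H^2(\mathbb{R})$ with $\int \epsilon Q = \int \epsilon Q_x = 0$. Decompose
\begin{equation*}
\epsilon = \eta + \alpha Q_x + \beta (Q + xQ_x),
\end{equation*}
where $\eta$ is $L^2$-orthogonal to both $Q_x$ and $Q + xQ_x$. Testing against $Q_x$ and using that $\int Q Q_x = 0$ and $\int xQ_x^2 = 0$ (since $Q$ is even and hence $xQ_x^2$ is odd), the condition $\int \epsilon Q_x = 0$ gives $\alpha \int Q_x^2 = 0$, hence $\alpha = 0$. Testing against $Q$ and using $\int xQQ_x = -\tfrac{1}{2} \int Q^2$ (integration by parts), the condition $\int \epsilon Q = 0$ gives
\begin{equation*}
\tfrac{1}{2} \beta \int Q^2 + \int \eta Q = 0,
\end{equation*}
so $|\beta| \leq C \lVert \eta \rVert_{L^2}$. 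Since $Q$ and $Q_x$ (and hence $xQ_x$) decay exponentially, $Q + xQ_x \in H^2$, and therefore $\lVert \epsilon \rVert_{H^2}^2 \leq 2 \lVert \eta \rVert_{H^2}^2 + 2\beta^2 \lVert Q + xQ_x \rVert_{H^2}^2 \leq C' \lVert \eta \rVert_{H^2}^2$.

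Finally, since $\alpha Q_x + \beta(Q + xQ_x)$ lies in the kernel of $\mathcal{L}_c^s$, one has $\mathcal{Q}_c^s[\epsilon] = \mathcal{Q}_c^s[\eta]$. Applying the coercivity of the first step,
\begin{equation*}
\mathcal{Q}_c^s[\epsilon] = \mathcal{Q}_c^s[\eta] \geq \nu_c \lVert \eta \rVert_{H^2}^2 \geq \frac{\nu_c}{C'} \lVert \epsilon \rVert_{H^2}^2,
\end{equation*}
which gives the claim with $\mu_c := \nu_c / C'$. The only mildly delicate point is the upgrade in the first step from ``non-negative with 2D kernel'' to $H^2$-coercivity modulo the kernel; the exchange of orthogonality conditions is then a short linear-algebra computation whose success hinges on the parity of $Q$ forcing $\alpha = 0$.
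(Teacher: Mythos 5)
Your proposal is correct and follows essentially the same route as the paper: the paper also takes the coercivity modulo $\ker(\mathcal{L}_c^s)=\Span(Q_x,\,Q+xQ_x)$ from the preceding appendix section, decomposes $\epsilon$ as a kernel component plus an orthogonal remainder (writing the kernel direction as $\partial_c Q$, which is proportional to $Q+xQ_x$), solves for the coefficient via $\int\epsilon Q=0$ to get $|\beta|\leq C\lVert\eta\rVert_{L^2}$, and concludes from $\mathcal{Q}_c^s[\epsilon]=\mathcal{Q}_c^s[\eta]$ and $\lVert\epsilon\rVert_{H^2}\leq C'\lVert\eta\rVert_{H^2}$. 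The only cosmetic difference is that you carry the $Q_x$ component explicitly and kill it by parity, whereas the paper omits it from the outset since $\int\epsilon Q_x=0$ already.
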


\begin{proof}
From Section \ref{1coer_sol}, we know that if $\int\epsilon\partial_{x}Q=\int\epsilon\partial_{c}Q=0$,
then, for a constant $\nu_{c}>0$, we have that
\begin{align}
\mathcal{Q}_{c}^{s}[\epsilon]\geq \nu_{c}\lVert\epsilon\rVert_{H^{2}}^{2}.\label{1eq:-284}
\end{align}

Let $\epsilon\in H^{2}$ be such that $\int\epsilon Q=\int\epsilon\partial_{x}Q=0$.
There exists $a\in\mathbb{R}$ and $\epsilon_{\bot}\in\Span(\partial_{x}Q,\partial_{c}Q)^{\bot}$
such that 
\begin{align}
\epsilon=a\partial_{c}Q+\epsilon_{\bot}.\label{1eq:-285}
\end{align}

From $\int\epsilon Q=0$, we have that 
\begin{align}
a\int\partial_{c}Q\cdot Q+\int\epsilon_{\bot}Q=0,\label{1eq:-286}
\end{align}
thus,
\begin{align}
\frac{a}{2}\int Q^{2}+\int\epsilon_{\bot}Q=0,\label{1eq:-287}
\end{align}
which allows us to derive:
\begin{align}
a=-2\frac{\int\epsilon_{\bot}Q}{\int Q^{2}}.\label{1eq:-288}
\end{align}

Because $\partial_{c}Q$ is in the kernel of $\mathcal{Q}_{c}^{s}$,
we have that
\begin{align}
\mathcal{Q}_{c}^{s}[\epsilon]=\mathcal{Q}_{c}^{s}[\epsilon_{\bot}]\geq \nu_{c}\lVert\epsilon_{\bot}\rVert_{H^{2}}^{2}.\label{1eq:-289}
\end{align}

Now, from 
\begin{align}
\epsilon=-2\frac{\int\epsilon_{\bot}Q}{\int Q^{2}}\partial_{c}Q+\epsilon_{\bot},\label{1eq:-290}
\end{align}
we have by triangular and Cauchy-Schwarz inequalities that 
\begin{align}
\lVert\epsilon\rVert_{H^{2}} & \leq \lVert\epsilon_{\bot}\rVert_{H^{2}}+2\frac{\big\lvert\int\epsilon_{\bot}Q\big\rvert}{\lVert Q\rVert_{L^{2}}^{2}}\lVert\partial_{c}Q\rVert_{H^{2}} \\
 & \leq \lVert\epsilon_{\bot}\rVert_{H^{2}}+2\frac{\lVert\partial_{c}Q\rVert_{H^{2}}}{\lVert Q\rVert_{L^{2}}}\lVert\epsilon_{\bot}\rVert_{L^{2}} \\
 & \leq \Big(1+2\frac{\lVert\partial_{c}Q\rVert_{H^{2}}}{\lVert Q\rVert_{L^{2}}}\Big)\lVert\epsilon_{\bot}\rVert_{H^{2}}.\label{1eq:-291}
\end{align}

Therefore, we may derive a constant $\mu_{c}$ (independent on $\epsilon$)
such that 
\begin{align}
\mathcal{Q}_{c}^{s}[\epsilon]\geq \mu_{c}\lVert\epsilon\rVert_{H^{2}}^{2}.\label{1eq:-292}
\end{align}
\end{proof}

\subsection{Coercivity with almost orthogonality conditions (to be used for the
uniqueness)}
\label{1sec:54}

For $B:=B_{\alpha,\beta}$ or any of its translations, we define the
canonical quadratic form associated to $B$: 
\begin{align}
\mathcal{Q}_{\alpha,\beta}^{b}[\epsilon]: & =\frac{1}{2}\int\epsilon_{xx}^{2}-\frac{5}{2}\int B^{2}\epsilon_{x}^{2}+\frac{5}{2}\int B_{x}^{2}\epsilon^{2}+5\int BB_{xx}\epsilon^{2}+\frac{15}{4}\int B^{4}\epsilon^{2} \\
 & +\big(\beta^{2}-\alpha^{2}\big)\bigg(\int\epsilon_{x}^{2}-3\int B^{2}\epsilon^{2}\bigg)+\big(\alpha^{2}+\beta^{2}\big)^{2}\frac{1}{2}\int\epsilon^{2},\label{1eq:-293}
\end{align}
and we know that $\partial_{x_{1}}B$ and $\partial_{x_{2}}B$ span
the kernel of $\mathcal{Q}_{\alpha,\beta}^{b}$. More precisely, there
exists $\mu_{\alpha,\beta}^{b}>0$ such that if $\epsilon$ is orthogonal
to $\partial_{x_{1}}B$ and $\partial_{x_{2}}B$, we have that
\begin{align}
\mathcal{Q}_{\alpha,\beta}^{b}[\epsilon]\geq \mu_{\alpha,\beta}^{b}\lVert\epsilon\rVert_{H^{2}}^{2}-\frac{1}{\mu_{\alpha,\beta}^{b}}\bigg(\int\epsilon B\bigg)^{2}.\label{1eq:-294}
\end{align}

We would like to prove the following lemma (adapted from the Appendix
A of \cite{key-4}): 
\begin{lem}
\label{1lem:prep}There exists $\nu:=\nu_{\alpha,\beta}^{b}>0$ such
that, for $\epsilon\in H^{2}(\mathbb{R})$, if 
\begin{align}
\bigg\lvert\int(\partial_{x_{1}}B_{\alpha,\beta})\epsilon\bigg\rvert +\bigg\lvert\int(\partial_{x_{2}}B_{\alpha,\beta})\epsilon\bigg\rvert <\nu\lVert\epsilon\rVert_{H^{2}},\label{1eq:-295}
\end{align}
then 
\begin{align}
\mathcal{Q}_{\alpha,\beta}^{b}[\epsilon]\geq \frac{\mu_{\alpha,\beta}^{b}}{4}\lVert\epsilon\rVert_{H^{2}}^{2}-\frac{4}{\mu_{\alpha,\beta}^{b}}\bigg(\int\epsilon B_{\alpha,\beta}\bigg)^{2},\label{1eq:-296}
\end{align}
where $B_{\alpha,\beta}$ denotes the breather of parameters $\alpha$ and $\beta$
or any of its translations (in space or in time). 
\end{lem}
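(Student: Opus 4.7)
The plan is a standard orthogonal decomposition argument: I will split $\epsilon$ into a part lying in $\ker \mathcal{Q}^b_{\alpha,\beta}$ and a part orthogonal to that kernel, apply the already-known coercivity estimate to the orthogonal component, and check that the parallel component is small enough (thanks to the quasi-orthogonality hypothesis) that it can be absorbed.

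Concretely, I would first decompose
\begin{align*}
\epsilon = \epsilon_\perp + a_1 \partial_{x_1}B_{\alpha,\beta} + a_2 \partial_{x_2}B_{\alpha,\beta},
\end{align*}
where $\epsilon_\perp$ is $L^2$-orthogonal to both $\partial_{x_1}B_{\alpha,\beta}$ and $\partial_{x_2}B_{\alpha,\beta}$. The scalars $a_1,a_2$ are obtained by inverting the $2\times 2$ Gram matrix of $(\partial_{x_1}B_{\alpha,\beta},\partial_{x_2}B_{\alpha,\beta})$, which is invertible (periodic in time and non-degenerate at each fixed time by linear independence, as already used in Section \ref{1sec:2.3}). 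Combining the hypothesis \eqref{1eq:-295} with this inversion yields $|a_1|+|a_2|\leq C\nu \lVert \epsilon \rVert_{H^2}$, for a constant $C$ depending only on $\alpha,\beta$. Choosing $\nu$ so that $C\nu \cdot (\lVert \partial_{x_1}B_{\alpha,\beta}\rVert_{H^2}+\lVert \partial_{x_2}B_{\alpha,\beta}\rVert_{H^2})\leq \tfrac{1}{2}$, the triangle inequality then gives $\lVert \epsilon_\perp\rVert_{H^2}\geq \tfrac{1}{2}\lVert\epsilon\rVert_{H^2}$.

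Next I would exploit the fact that $\partial_{x_1}B_{\alpha,\beta}$ and $\partial_{x_2}B_{\alpha,\beta}$ are in $\ker \mathcal{Q}^b_{\alpha,\beta}$. Since the associated self-adjoint operator annihilates these functions, the cross terms in the bilinear expansion vanish and $\mathcal{Q}^b_{\alpha,\beta}[\epsilon]=\mathcal{Q}^b_{\alpha,\beta}[\epsilon_\perp]$. On the other hand, Lemma \ref{1lem:orthob} gives $\int B_{\alpha,\beta}\partial_{x_i}B_{\alpha,\beta}=0$ for $i=1,2$, so $\int \epsilon_\perp B_{\alpha,\beta}=\int \epsilon B_{\alpha,\beta}$. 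Since $\epsilon_\perp$ satisfies the exact orthogonality conditions \eqref{1eq:-48}, Proposition \ref{1fact:coerc} yields
\begin{align*}
\mathcal{Q}^b_{\alpha,\beta}[\epsilon]
= \mathcal{Q}^b_{\alpha,\beta}[\epsilon_\perp]
\geq \mu^b_{\alpha,\beta}\lVert \epsilon_\perp\rVert_{H^2}^2 - \frac{1}{\mu^b_{\alpha,\beta}}\bigg(\int \epsilon B_{\alpha,\beta}\bigg)^2
\geq \frac{\mu^b_{\alpha,\beta}}{4}\lVert \epsilon\rVert_{H^2}^2 - \frac{4}{\mu^b_{\alpha,\beta}}\bigg(\int \epsilon B_{\alpha,\beta}\bigg)^2,
\end{align*}
after using $\lVert\epsilon_\perp\rVert_{H^2}\geq \tfrac{1}{2}\lVert\epsilon\rVert_{H^2}$ in the first term.

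I do not expect any serious obstacle: the only point requiring attention is to verify that the quantitative thresholds (the size of $\nu$, the size of $C\nu$ in the triangle inequality) depend only on $\alpha,\beta$ and not on the particular translation representative of $B_{\alpha,\beta}$, which follows from the translation-invariance of the $L^2$ and $H^2$ norms and of the Gram matrix entries $\int (\partial_{x_i}B_{\alpha,\beta})(\partial_{x_j}B_{\alpha,\beta})$.
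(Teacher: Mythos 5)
Your proof is correct and follows essentially the same route as the paper: the same $L^{2}$-orthogonal decomposition of $\epsilon$ along $\Span(\partial_{x_1}B_{\alpha,\beta},\partial_{x_2}B_{\alpha,\beta})$, the same inversion of the Gram matrix to get $\lvert a_1\rvert+\lvert a_2\rvert\le C\nu\lVert\epsilon\rVert_{H^2}$ and hence $\lVert\epsilon_\perp\rVert_{H^2}\ge\tfrac12\lVert\epsilon\rVert_{H^2}$, the same use of Lemma \ref{1lem:orthob} to replace $\int\epsilon_\perp B$ by $\int\epsilon B$, and the same application of Proposition \ref{1fact:coerc} to $\epsilon_\perp$. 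The only (harmless) difference is that you cancel the cross terms exactly by noting $\partial_{x_i}B_{\alpha,\beta}\in\ker\mathcal{L}^b_{\alpha,\beta}$, whereas the paper bounds them by $C\nu\lVert\epsilon\rVert_{H^2}^2$ via Cauchy--Schwarz and absorbs them by shrinking $\nu$ --- the paper's variant being the one that carries over verbatim to the soliton analogues, where the decomposition directions are not all in the kernel.
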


\begin{proof}
Take $\nu>0$ (we will find a condition on $\nu$ later in the proof)
and take $\epsilon$ satisfying the assumption of the lemma. Then
(denoting $B=B_{\alpha,\beta}$) ,
\begin{align}
\epsilon=\epsilon_{1}+aB_{1}+bB_{2}=\epsilon_{1}+\epsilon_{2},\label{1eq:A1A2}
\end{align}
where $\int\epsilon_{1}B_{1}=\int\epsilon_{1}B_{2}=\int\epsilon_{1}\epsilon_{2}=0$.

By performing a $L^{2}$-scalar product of\textbf{ }(\ref{1eq:A1A2}) with
$B_{1}$ and $B_{2}$, we obtain, by assumption, that
\begin{align}
\bigg\lvert a\int B_{1}^{2}+b\int B_{1}B_{2}\bigg\rvert\leq \nu\lVert\epsilon\rVert_{H^{2}},\label{1eq:-297}
\end{align}
\begin{align}
\bigg\lvert a\int B_{1}B_{2}+b\int B_{2}^{2}\bigg\rvert\leq \nu\lVert\epsilon\rVert_{H^{2}}.\label{1eq:-298}
\end{align}
Therefore, by making linear combinations of these two inequalities, using
triangular and Cauchy-Schwarz inequalities, we obtain that
\begin{align}
\lvert a\rvert+\lvert b\rvert\leq  C\nu\lVert\epsilon\rVert_{H^{2}}.\label{1eq:-299}
\end{align}

We can take space derivatives of (\ref{1eq:A1A2}). And thus, we obtain,
for $\nu$ small enough, that
\begin{align}
\frac{1}{2}\lVert\epsilon\rVert_{H^{2}}^{2}\leq \lVert\epsilon_{1}\rVert_{H^{2}}^{2}\leq 2\lVert\epsilon\rVert_{H^{2}}^{2}.\label{1eq:-300}
\end{align}

Because of $\int BB_{1}=\int BB_{2}=0$, 
\begin{align}
\int\epsilon B=\int\epsilon_{1}B.\label{1eq:-301}
\end{align}
By bilinearity, 
\begin{align}
\mathcal{Q}_{\alpha,\beta}^{b}[\epsilon] & =\mathcal{Q}_{\alpha,\beta}^{b}[\epsilon_{1}]+\mathcal{Q}_{\alpha,\beta}^{b}[\epsilon_{2}]+\int\epsilon_{1,xx}\epsilon_{2,xx}-5\int B^{2}\epsilon_{1,x}\epsilon_{2,x}\\
&\quad+5\int B_{x}^{2}\epsilon_{1}\epsilon_{2}+10\int BB_{xx}\epsilon_{1}\epsilon_{2} +\frac{15}{2}\int B^{4}\epsilon_{1}\epsilon_{2}\\
 & \quad+\big(\beta^{2}-\alpha^{2}\big)\bigg(2\int\epsilon_{1,x}\epsilon_{2,x}-6\int B^{2}\epsilon_{1}\epsilon_{2}\bigg)+\big(\alpha^{2}+\beta^{2}\big)^{2}\int\epsilon_{1}\epsilon_{2}\label{1eq:-302}
\end{align}

We know from the coercivity of $\mathcal{Q}_{\alpha,\beta}^{b}$ that
\begin{align}
\mathcal{Q}_{\alpha,\beta}^{b}[\epsilon_{1}] & \geq \mu_{\alpha,\beta}^{b}\lVert\epsilon_{1}\rVert_{H^{2}}^{2}-\frac{1}{\mu_{\alpha,\beta}^{b}}\bigg(\int\epsilon_{1}B\bigg)^{2} \\
 & \geq \frac{\mu_{\alpha,\beta}^{b}}{2}\lVert\epsilon\rVert_{H^{2}}^{2}-\frac{2}{\mu_{\alpha,\beta}^{b}}\bigg(\int\epsilon B\bigg)^{2}.\label{1eq:-303}
\end{align}

Also, if we denote by $\mathcal{L}_{\alpha,\beta}^{b}$ the self-adjoint
operator associated to the quadratic form $\mathcal{Q}_{\alpha,\beta}^{b}$,
\begin{align}
\mathcal{Q}_{\alpha,\beta}^{b}[\epsilon_{2}] & =a^{2}\mathcal{Q}_{\alpha,\beta}^{b}[B_{1}]+b^{2}\mathcal{Q}_{\alpha,\beta}^{b}[B_{2}]+2ab\int\mathcal{L}_{\alpha,\beta}^{b}[B_{1}]B_{2} \leq  C\nu^{2}\lVert\epsilon\rVert_{H^{2}}^{2}.\label{1eq:-304}
\end{align}
Actually, in this case, $\mathcal{Q}_{\alpha,\beta}^{b}[\epsilon_{2}]=0$,
because $\epsilon_{2}$ is in the kernel of $\mathcal{Q}_{\alpha,\beta}^{b}$
(but, when we adapt this proof for solitons, we can only write the
bound).

Now, we recall that $\int\epsilon_{1}\epsilon_{2}=0$, and study
the other terms by using Cauchy-Schwarz: 
\begin{align}
\MoveEqLeft\bigg\lvert\int\epsilon_{1,xx}\epsilon_{2,xx}-5\int B^{2}\epsilon_{1,x}\epsilon_{2,x}+5\int B_{x}^{2}\epsilon_{1}\epsilon_{2}+10\int BB_{xx}\epsilon_{1}\epsilon_{2} \\
&\quad+\frac{15}{2}\int B^{4}\epsilon_{1}\epsilon_{2}+\big(\beta^{2}-\alpha^{2}\big)\bigg(2\int\epsilon_{1,x}\epsilon_{2,x}-6\int B^{2}\epsilon_{1}\epsilon_{2}\bigg)\bigg\rvert\\
 & \leq  C\big(\lvert a\rvert+\lvert b\rvert\big)\lVert\epsilon_{1}\rVert_{H^{2}}\leq  C\nu\lVert\epsilon\rVert_{H^{2}(\mathbb{R})}^{2}.\label{1eq:-305}
\end{align}
We observe that if we take $\nu$ small enough, the claim of the
lemma is proved. 
\end{proof}
We prove in the same way that we have a similar lemma for solitons: 
\begin{lem}
\label{1lem:prepsol}There exists $\nu:=\nu_{c}^{s}>0$, such that,
for $\epsilon\in H^{2}(\mathbb{R})$, if 
\begin{align}
\bigg\lvert\int(\partial_{c}R_{c,\kappa})\epsilon\bigg\rvert +\bigg\lvert\int(\partial_{x}R_{c,\kappa})\epsilon\bigg\rvert\leq \nu\lVert\epsilon\rVert_{H^{2}},\label{1eq:-306}
\end{align}
then 
\begin{align}
\mathcal{Q}_{c}^{s}[\epsilon]\geq \frac{\mu_{c}^{s}}{4}\lVert\epsilon\rVert_{H^{2}}^{2},\label{1eq:-307}
\end{align}
where $R_{c,\kappa}$ denotes the soliton of parameter $c$ and sign
$\kappa$ or any of its translations. 
\end{lem}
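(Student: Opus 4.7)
The plan is to mimic the proof of Lemma \ref{1lem:prep}, with one helpful simplification: the self-adjoint operator $\mathcal{L}_c^s$ associated to $\mathcal{Q}_c^s$ has no negative eigenvalue (Section \ref{1coer_sol}), so no residual $(\int \epsilon R_{c,\kappa})^2$ term will appear in the final bound. The starting ingredient is the coercivity under the kernel orthogonality: since $\ker \mathcal{L}_c^s = \mathrm{Span}(\partial_x R_{c,\kappa}, \partial_c R_{c,\kappa})$ — the identity $Q + x Q_x = 2c\, \partial_c Q$, a direct consequence of the scaling $Q_c(x) = \sqrt{c}\, q(\sqrt{c}x)$, matches the spanning set given in Section \ref{1coer_sol} — and $\mathcal{L}_c^s \geq 0$, there exists a constant $\mu > 0$ (depending only on $c$, bounded below by a positive multiple of $\mu_c^s$, possibly after adjusting constants) such that any $\tilde\epsilon \in H^2(\mathbb R)$ with $\int \tilde\epsilon\, \partial_c R_{c,\kappa} = \int \tilde\epsilon\, \partial_x R_{c,\kappa} = 0$ satisfies $\mathcal{Q}_c^s[\tilde\epsilon] \geq \mu \lVert \tilde\epsilon \rVert_{H^2}^2$.

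Next I would decompose $\epsilon = \epsilon_1 + \epsilon_2$ with $\epsilon_2 := a\, \partial_c R_{c,\kappa} + b\, \partial_x R_{c,\kappa}$ and $\epsilon_1 \perp_{L^2} \mathrm{Span}(\partial_c R_{c,\kappa}, \partial_x R_{c,\kappa})$. Taking $L^2$-scalar products of this decomposition with $\partial_c R_{c,\kappa}$ and $\partial_x R_{c,\kappa}$ yields a $2 \times 2$ linear system for $(a,b)$ whose Gram matrix
\[
G := \begin{pmatrix} \int (\partial_c R_{c,\kappa})^2 & \int \partial_c R_{c,\kappa}\, \partial_x R_{c,\kappa} \\ \int \partial_c R_{c,\kappa}\, \partial_x R_{c,\kappa} & \int (\partial_x R_{c,\kappa})^2 \end{pmatrix}
\]
is in fact diagonal: using $Q_c(x) = \sqrt{c}\, q(\sqrt{c}x)$, the off-diagonal integrand becomes odd (after the change of variable $y = \sqrt{c} x$, it splits into $q q'$ and $y(q')^2$, both of which integrate to zero). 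The diagonal entries are strictly positive constants depending only on $c$. Inverting $G$ and applying the hypothesis \eqref{1eq:-306} gives $\lvert a \rvert + \lvert b \rvert \leq C \nu \lVert \epsilon \rVert_{H^2}$, hence $\lVert \epsilon_2 \rVert_{H^2} \leq C \nu \lVert \epsilon \rVert_{H^2}$, and consequently $\lVert \epsilon_1 \rVert_{H^2}^2 \geq \tfrac{1}{2} \lVert \epsilon \rVert_{H^2}^2$ for $\nu$ small enough.

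Finally, expanding by bilinearity,
\[
\mathcal{Q}_c^s[\epsilon] = \mathcal{Q}_c^s[\epsilon_1] + \mathcal{Q}_c^s[\epsilon_2] + 2 \int \epsilon_2\, \mathcal{L}_c^s \epsilon_1,
\]
and using $\epsilon_2 \in \ker \mathcal{L}_c^s$, both $\mathcal{Q}_c^s[\epsilon_2] = 0$ and, by self-adjointness, the cross term $\int \epsilon_2\, \mathcal{L}_c^s \epsilon_1 = \int \epsilon_1\, \mathcal{L}_c^s \epsilon_2 = 0$. Applying coercivity to $\epsilon_1$ then gives $\mathcal{Q}_c^s[\epsilon] = \mathcal{Q}_c^s[\epsilon_1] \geq \mu \lVert \epsilon_1 \rVert_{H^2}^2 \geq \tfrac{\mu}{2} \lVert \epsilon \rVert_{H^2}^2$, which (after renaming / minimizing constants to write the bound in the form $\mu_c^s / 4$) yields the conclusion. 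I do not anticipate a real obstacle: the argument is a direct adaptation of Lemma \ref{1lem:prep}, made cleaner by the self-adjointness (the cross term vanishes exactly rather than being bounded) and by the absence of a negative eigenvalue in $\mathcal{L}_c^s$. The one small verification required — the vanishing of $\int \partial_c R_{c,\kappa}\, \partial_x R_{c,\kappa}$ — is immediate from the oddness of the integrand under the soliton scaling.
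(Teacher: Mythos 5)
Your proposal is correct and follows essentially the same route as the paper, which simply adapts the proof of Lemma \ref{1lem:prep} (decompose $\epsilon$ against the kernel $\Span(\partial_x R_{c,\kappa},\partial_c R_{c,\kappa})$, bound the coefficients by $C\nu\lVert\epsilon\rVert_{H^2}$ via the invertible Gram matrix, and apply coercivity to the orthogonal part). Your observations that the cross term vanishes exactly by self-adjointness (rather than being merely bounded by $C\nu\lVert\epsilon\rVert_{H^2}^2$) and that the Gram matrix is diagonal by parity are minor, valid simplifications rather than a different argument.
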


And even,
\begin{lem}
There exists $\nu:=\nu_{c}^{s}>0$, such that, for $\epsilon\in H^{2}(\mathbb{R})$,
if
\begin{align}
\bigg\lvert\int R_{c,\kappa}\epsilon\bigg\rvert +\bigg\lvert\int(\partial_{x}R_{c,\kappa})\epsilon\bigg\rvert\leq \nu\lVert\epsilon\rVert_{H^{2}},\label{1eq:-308}
\end{align}
then
\begin{align}
\mathcal{Q}_{c}^{s}[\epsilon]\geq \frac{\mu_{c}^{s}}{4}\lVert\epsilon\rVert_{H^{2}}^{2},\label{1eq:-309}
\end{align}
where $R_{c,\kappa}$ denotes the soliton of parameter $c$ and sign
$\kappa$ or any of its translations.
\end{lem}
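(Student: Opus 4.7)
The plan is to adapt the proof of Lemma~\ref{1lem:prep} (and its soliton analogue Lemma~\ref{1lem:prepsol}) to the orthogonality pair $(R_{c,\kappa}, \partial_x R_{c,\kappa})$, using the coercivity established in Section~\ref{1sec:53}: whenever $\int \epsilon R = \int \epsilon R_x = 0$, one has $\mathcal{Q}_{c}^{s}[\epsilon] \geq \mu_c^s \lVert\epsilon\rVert_{H^2}^2$.

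First I would decompose
\begin{align*}
\epsilon = \epsilon_1 + \epsilon_2, \qquad \epsilon_2 := aR + b R_x,
\end{align*}
where $a,b \in \mathbb{R}$ are chosen so that $\int \epsilon_1 R = \int \epsilon_1 R_x = 0$. Taking the $L^2$ scalar product of this identity with $R$ and $R_x$ yields a $2\times 2$ linear system in $(a,b)$ whose matrix is the Gram matrix of $(R,R_x)$; since $\int R R_x = 0$ and $\int R^2, \int R_x^2$ are positive constants depending only on $c$, this matrix is diagonal and invertible. Combined with the almost-orthogonality hypothesis $|\int R \epsilon| + |\int R_x \epsilon| \leq \nu \lVert\epsilon\rVert_{H^2}$, the system gives $|a| + |b| \leq C \nu \lVert\epsilon\rVert_{H^2}$ for some $C = C(c)$. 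Differentiating the decomposition once and twice and using the smoothness and boundedness of $R$, one obtains $\lVert \epsilon_2 \rVert_{H^2} \leq C\nu \lVert\epsilon\rVert_{H^2}$, hence, if $\nu$ is small enough,
\begin{align*}
\tfrac{1}{2}\lVert\epsilon\rVert_{H^2}^2 \leq \lVert\epsilon_1\rVert_{H^2}^2 \leq 2 \lVert\epsilon\rVert_{H^2}^2.
\end{align*}

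Next I would expand by bilinearity,
\begin{align*}
\mathcal{Q}_{c}^{s}[\epsilon] = \mathcal{Q}_{c}^{s}[\epsilon_1] + \mathcal{Q}_{c}^{s}[\epsilon_2] + 2 \mathcal{B}_{c}^{s}(\epsilon_1, \epsilon_2),
\end{align*}
where $\mathcal{B}_c^s$ denotes the bilinear form polarizing $\mathcal{Q}_c^s$. Since $\epsilon_1$ is $L^2$-orthogonal to both $R$ and $R_x$, the coercivity of Section~\ref{1sec:53} gives $\mathcal{Q}_{c}^{s}[\epsilon_1] \geq \mu_c^s \lVert\epsilon_1\rVert_{H^2}^2 \geq \tfrac{\mu_c^s}{2} \lVert\epsilon\rVert_{H^2}^2$. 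The remaining two terms are controlled by Cauchy--Schwarz together with the bound on $(a,b)$: explicitly, since $R$ and $R_x$ are smooth and decaying, one has $|\mathcal{Q}_{c}^{s}[\epsilon_2]| \leq C(|a|^2+|b|^2) \leq C\nu^2 \lVert\epsilon\rVert_{H^2}^2$ and similarly $|\mathcal{B}_{c}^{s}(\epsilon_1,\epsilon_2)| \leq C(|a|+|b|)\lVert\epsilon_1\rVert_{H^2} \leq C\nu \lVert\epsilon\rVert_{H^2}^2$.

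The only delicate point, and the main obstacle, is that unlike in the breather case where both directions $B_1, B_2$ lie in the kernel, here $R_x$ is in $\ker \mathcal{Q}_c^s$ but $R$ is \emph{not} (the kernel is spanned by $R_x$ and $\partial_c R$). So $\mathcal{Q}_{c}^{s}[\epsilon_2]$ is genuinely nonzero; however it is quadratic in $(a,b)$, hence $O(\nu^2 \lVert\epsilon\rVert_{H^2}^2)$, which is plenty to be absorbed. Choosing $\nu = \nu_c^s > 0$ small enough, depending only on $c$, so that the combined error is bounded by $\tfrac{\mu_c^s}{4} \lVert\epsilon\rVert_{H^2}^2$, yields $\mathcal{Q}_{c}^{s}[\epsilon] \geq \tfrac{\mu_c^s}{4} \lVert\epsilon\rVert_{H^2}^2$, as desired.
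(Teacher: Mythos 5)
Your proposal is correct and follows exactly the strategy the paper intends: it mirrors the proof of Lemma \ref{1lem:prep}, replacing the kernel directions $(\partial_{x_1}B,\partial_{x_2}B)$ by the pair $(R,R_x)$, invoking the clean coercivity of Section \ref{1sec:53} for the orthogonal part $\epsilon_1$, and absorbing the contribution of $\epsilon_2=aR+bR_x$ (which is only $O(\nu^2\lVert\epsilon\rVert_{H^2}^2)$ even though $R\notin\ker\mathcal{Q}_c^s$) together with the cross terms. You also correctly note the one genuine difference from the breather case, namely that $\mathcal{Q}_c^s[\epsilon_2]$ does not vanish but is quadratic in $(a,b)$, which is precisely the remark the paper makes parenthetically in the proof of Lemma \ref{1lem:prep}.
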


\subsection{Computations for the third localized integral (to be used for the
uniqueness)}
\label{1sec:55}
\begin{lem}
Let $f:\mathbb{R}\rightarrow\mathbb{R}$ be a $C^{3}$ function that
do not depend on time and $u$ a solution of \eqref{1mKdV}. Then,
\begin{align}
\MoveEqLeft\frac{d}{dt}\int\Big(\frac{1}{2}u_{xx}^{2}-\frac{5}{2}u^{2}u_{x}^{2}+\frac{1}{4}u^{6}\Big)f \\
 & =\int\Big(-\frac{3}{2}u_{xxx}^{2}+9u_{xx}^{2}u^{2}+15u_{x}^{2}uu_{xx}+\frac{9}{16}u^{8}+\frac{1}{4}u_{x}^{4}+\frac{3}{2}u_{xx}u^{5} \\
 & \qquad-\frac{45}{4}u^{4}u_{x}^{2}\Big)f'+5\int u^{2}u_{x}u_{xx}f''+\frac{1}{2}\int u_{xx}^{2}f'''.\label{1eq:-310}
\end{align}
\end{lem}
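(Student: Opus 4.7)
The statement is a purely algebraic identity obtained by differentiating under the integral sign and then shuffling derivatives via integration by parts, using only the equation \eqref{1mKdV}. The plan is therefore a systematic bookkeeping computation: differentiate each of the three pieces $\tfrac{1}{2}u_{xx}^{2}f$, $-\tfrac{5}{2}u^{2}u_{x}^{2}f$, $\tfrac{1}{4}u^{6}f$ in time, substitute $u_{t}=-u_{xxx}-3u^{2}u_{x}$, integrate by parts until every time derivative has been eliminated, and collect like terms. As a sanity check, note that when $f\equiv 1$ all contributions involving $f',f'',f'''$ disappear and the remaining bulk integral must vanish identically, which is precisely the $H^{2}$-level conservation law $\frac{d}{dt}F[u]=0$ stated in \eqref{1eq:-3}; any error in signs or coefficients would break this cancellation and is thus easy to detect.

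The first term is the simplest: $\frac{d}{dt}\int\tfrac{1}{2}u_{xx}^{2}f=\int u_{xx}(u_t)_{xx}f$, and after replacing $u_t$ and integrating by parts three times (in $x$) one routinely picks up the $-\tfrac{3}{2}u_{xxx}^{2}f'$ contribution, the cubic-nonlinearity contributions of the form $u_{xx}^{2}u^{2}f'$, $u_{x}^{2}uu_{xx}f'$, etc., and the boundary-type terms $\int u_{xx}^{2}f'''$ and $\int u^{2}u_{x}u_{xx}f''$. The second term $-\tfrac{5}{2}\frac{d}{dt}\int u^{2}u_{x}^{2}f$ produces $-5uu_{t}u_{x}^{2}f-5u^{2}u_{x}u_{xt}f$; substituting $u_{t}$ yields polynomial expressions in $u,u_{x},u_{xx},u_{xxx}$ multiplied by $f$ alone (no $f'$, $f''$, $f'''$ at this stage), and a few integrations by parts in $x$ transfer all derivatives onto $f$ where needed. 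The third term $\frac{1}{4}\frac{d}{dt}\int u^{6}f=\frac{3}{2}\int u^{5}u_{t}f$ gives $-\tfrac{3}{2}\int u^{5}u_{xxx}f-\tfrac{9}{2}\int u^{7}u_{x}f$; the first is rewritten via three integrations by parts so that it contributes $\tfrac{3}{2}\int u_{xx}u^{5}f'$ (plus lower-derivative pieces), and the second, being $-\tfrac{9}{2}\int u^{7}u_{x}f=\tfrac{9}{16}\int(u^{8})_{x}\cdot(-f)$ wait, $-\tfrac{9}{2}\int u^{7}u_{x}f=-\tfrac{9}{16}\int (u^{8})_{x}f\cdot\tfrac{8}{8}$; integrating by parts gives $\tfrac{9}{16}\int u^{8}f'$.

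The hard part is not any single manipulation but the sheer volume of terms and the need to keep track of signs and numerical coefficients when several rearrangements involving $u^{2}u_{x}u_{xx}$, $u^{4}u_{x}^{2}$, $u_{x}^{2}uu_{xx}$, $u_{xx}^{2}u^{2}$, $u_{xxx}^{2}$, $u^{8}$ and $u_{x}^{4}$ pile up. I would organize the calculation by grouping contributions according to the highest-order spatial derivative of $u$ appearing and by the order of derivative falling on $f$, then verify the coefficients by collapsing with $f\equiv 1$. No extra idea or structural step is needed beyond this; the lemma is a deterministic computation, and the only real work is to execute it carefully and present the final grouping in the form stated.
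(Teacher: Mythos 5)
Your strategy is exactly the paper's: differentiate under the integral, substitute $u_{t}=-(u_{xx}+u^{3})_{x}$, and integrate by parts repeatedly until every term carries an $f'$, $f''$ or $f'''$. The individual fragments you do work out (the $\tfrac{3}{2}\int u^{5}u_{t}f$ expansion and the $\tfrac{9}{16}\int u^{8}f'$ term) are correct. But the entire mathematical content of this lemma \emph{is} the list of coefficients $-\tfrac{3}{2}$, $9$, $15$, $\tfrac{9}{16}$, $\tfrac{1}{4}$, $\tfrac{3}{2}$, $-\tfrac{45}{4}$, $5$, $\tfrac{1}{2}$, and your proposal derives essentially none of them; it asserts that a careful bookkeeping computation would produce them. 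The paper's proof is roughly a dozen displayed lines of successive integrations by parts, and that chain is the proof. An outline that defers the whole chain is not a proof of a statement whose only claim is the outcome of that chain.

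Moreover, the error-detection mechanism you lean on is weaker than you claim. Setting $f\equiv 1$ makes \emph{every} term on the right-hand side vanish identically, regardless of what coefficients stand in front of the $f'$, $f''$, $f'''$ integrands; so consistency with $\frac{d}{dt}F[u]=0$ cannot certify a single one of the stated coefficients. The $f\equiv 1$ check is only informative at the \emph{intermediate} stage, where it confirms that the residual terms multiplied by $f$ itself (with no derivative on $f$) must cancel among themselves --- a useful cross-check on the bulk cancellation, but silent on the boundary coefficients that constitute the statement. To close the gap you would need to actually execute the computation, grouping terms as you propose, and exhibit the cancellations; until then the lemma is unproved.
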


\begin{proof}
We perform by doing integrations by parts when needed and basic calculations:

\begin{align}
\MoveEqLeft\frac{d}{dt}\int\Big(\frac{1}{2}u_{xx}^{2}-\frac{5}{2}u^{2}u_{x}^{2}+\frac{1}{4}u^{6}\Big)f\\
&=\int u_{txx}u_{xx}f-5\int u_{t}uu_{x}^{2}f-5\int u^{2}u_{tx}u_{x}f+\frac{3}{2}\int u_{t}u^{5}f \\
 & =-\int\big(u_{xx}+u^{3}\big)_{xxx}u_{xx}f+5\int\big(u_{xx}+u^{3}\big)_{x}uu_{x}^{2}f \\ & +5\int u^{2}\big(u_{xx}+u^{3}\big)_{xx}u_{x}f-\frac{3}{2}\int\big(u_{xx}+u^{3}\big)_{x}u^{5}f \\
 & =\int\big(u_{xx}+u^{3}\big)_{xx}u_{xxx}f+\int\big(u_{xx}+u^{3}\big)_{xx}u_{xx}f'+5\int\big(u_{xx}+u^{3}\big)_{x}uu_{x}^{2}f \\
 & \quad+5\int u^{2}\big(u_{xx}+u^{3}\big)_{xx}u_{x}f-\frac{3}{2}\int\big(u_{xx}+u^{3}\big)_{x}u^{5}f \\
 & =-\frac{1}{2}\int u_{xxx}^{2}f'+\int\big(u^{3}\big)_{xx}u_{xxx}f+\int\big(u_{xx}+u^{3}\big)_{xx}u_{xx}f'+5\int u_{xxx}uu_{x}^{2}f+5\int\big(u^{3}\big)_{x}uu_{x}^{2}f \\
 & \quad+5\int u^{2}u_{xxxx}u_{x}f+5\int u^{2}\big(u^{3}\big)_{xx}u_{x}f-\frac{3}{2}\int u_{xxx}u^{5}f-\frac{3}{2}\int\big(u^{3}\big)_{x}u^{5}f \\
 & =-\frac{1}{2}\int u_{xxx}^{2}f'+\int\big(u_{xx}+u^{3}\big)_{xx}u_{xx}f'+\int\big(3u_{xx}u^{2}+6u_{x}^{2}u\big)u_{xxx}f+5\int u_{xxx}uu_{x}^{2}f \\
 & \quad+15\int u_{x}^{3}u^{3}f+5\int u^{2}u_{xxxx}u_{x}f+5\int u^{2}\big(3u_{xx}u^{2}+6u_{x}^{2}u\big)u_{x}f-\frac{3}{2}\int u_{xxx}u^{5}f-\frac{9}{2}\int u_{x}u^{7}f \\
 & =-\frac{1}{2}\int u_{xxx}^{2}f'+\int\big(u_{xx}+u^{3}\big)_{xx}u_{xx}f'+3\int u^{2}u_{xx}u_{xxx}f+5\int u^{2}u_{xxxx}u_{x}f \\
 & \quad+11\int uu_{x}^{2}u_{xxx}f+45\int u^{3}u_{x}^{3}f+15\int u^{4}u_{x}u_{xx}f-\frac{3}{2}\int u_{xxx}u^{5}f+\frac{9}{16}\int u^{8}f' \\
 & =-\frac{1}{2}\int u_{xxx}^{2}f'+\int\big(u_{xx}+u^{3}\big)_{xx}u_{xx}f'+\frac{9}{16}\int u^{8}f'-2\int u^{2}u_{xx}u_{xxx}f \\
 & \quad+\int uu_{x}^{2}u_{xxx}f-5\int u^{2}u_{x}u_{xxx}f'+45\int u^{3}u_{x}^{3}f+15\int u^{4}u_{x}u_{xx}f-\frac{3}{2}\int u^{5}u_{xxx}f \\
 & =-\frac{1}{2}\int u_{xxx}^{2}f'+\int\big(u_{xx}+u^{3}\big)_{xx}u_{xx}f'+\frac{9}{16}\int u^{8}f'-5\int u^{2}u_{x}u_{xxx}f'-\int u^{2}\big(u_{xx}^{2}\big)_{x}f \\
 & \quad+\int uu_{x}^{2}u_{xxx}f+45\int u^{3}u_{x}^{3}f+15\int u^{4}u_{x}u_{xx}f-\frac{3}{2}\int u^{5}u_{xxx}f \\
 & =-\frac{1}{2}\int u_{xxx}^{2}f'+\int\big(u_{xx}+u^{3}\big)_{xx}u_{xx}f'+\frac{9}{16}\int u^{8}f'-5\int u^{2}u_{x}u_{xxx}f' \\ & +\int u^{2}u_{xx}^{2}f'+2\int uu_{x}u_{xx}^{2}f
  -\int u_{x}^{3}u_{xx}f-2\int uu_{x}u_{xx}^{2}f \\ & \quad-\int uu_{x}^{2}u_{xx}f'+45\int u^{3}u_{x}^{3}f+15\int u^{4}u_{x}u_{xx}f-\frac{3}{2}\int u^{5}u_{xxx}f \\
 & =-\frac{1}{2}\int u_{xxx}^{2}f'+\int\big(u_{xx}+u^{3}\big)_{xx}u_{xx}f'+\frac{9}{16}\int u^{8}f'-5\int u^{2}u_{x}u_{xxx}f'+\int u^{2}u_{xx}^{2}f' \\
 & \quad-\int uu_{x}^{2}u_{xx}f'-\frac{1}{4}\int\big(u_{x}^{4}\big)_{x}f+45\int u^{3}u_{x}^{3}f+\frac{45}{4}\int u^{4}\big(u_{x}^{2}\big)_{x}f+\frac{3}{2}\int u^{5}u_{xx}f' \\
 & =-\frac{1}{2}\int u_{xxx}^{2}f'+\int\big(u_{xx}+u^{3}\big)_{xx}u_{xx}f'+\frac{9}{16}\int u^{8}f'-5\int u^{2}u_{x}u_{xxx}f'+\int u^{2}u_{xx}^{2}f' \\
 & \quad-\int uu_{x}^{2}u_{xx}f'+\frac{1}{4}\int u_{x}^{4}f'+\frac{3}{2}\int u^{5}u_{xx}f'+45\int u^{3}u_{x}^{3}f-45\int u^{3}u_{x}^{3}f-\frac{45}{4}\int u^{4}u_{x}^{2}f' \\
 & =-\frac{3}{2}\int u_{xxx}^{2}f'-\int u_{xxx}u_{xx}f''+4\int u_{xx}^{2}u^{2}f'+5\int u_{x}^{2}uu_{xx}f'-5\int u^{2}u_{x}u_{xxx}f' \\
 & \quad+\frac{9}{16}\int u^{8}f'+\frac{1}{4}\int u_{x}^{4}f'+\frac{3}{2}\int u^{5}u_{xx}f'-\frac{45}{4}\int u^{4}u_{x}^{2}f' \\
 & =-\frac{3}{2}\int u_{xxx}^{2}f'+9\int u_{xx}^{2}u^{2}f'+15\int u_{x}^{2}uu_{xx}f'+\frac{9}{16}\int u^{8}f'+\frac{1}{4}\int u_{x}^{4}f' \\
 & \quad+\frac{3}{2}\int u^{5}u_{xx}f'-\frac{45}{4}\int u^{4}u_{x}^{2}f'-\int u_{xxx}u_{xx}f''+5\int u^{2}u_{x}u_{xx}f'' \\
 & =\int\big(-\frac{3}{2}u_{xxx}^{2}+9u_{xx}^{2}u^{2}+15u_{x}^{2}uu_{xx}+\frac{9}{16}u^{8}+\frac{1}{4}u_{x}^{4}+\frac{3}{2}u_{xx}u^{5}-\frac{45}{4}u^{4}u_{x}^{2}\big)f' \\
 & \quad+5\int u^{2}u_{x}u_{xx}f''+\frac{1}{2}\int u_{xx}^{2}f'''.\label{1eq:-311}
\end{align}
which is exactly the desired expression.
\end{proof}

\newpage

\bibliography{bib}
\bibliographystyle{siam}

\begin{center}
IRMA, UMR 7501, Université de Strasbourg, CNRS, F-67000 Strasbourg, France

semenov@math.unistra.fr

\end{center}

\end{document}